\pdfoutput=1
\documentclass[11pt]{article}
\usepackage[T1,OT1]{fontenc}
\usepackage{graphicx}
\usepackage{multirow}
\usepackage{amsmath,amssymb,amsfonts}
\usepackage{amsthm}
\usepackage{mathrsfs}
\usepackage{mathtools}
\usepackage[title]{appendix}
\usepackage{xcolor}
\usepackage[margin=1in]{geometry}
\usepackage{authblk}
\usepackage[numbers,sort&compress]{natbib}
\usepackage{hyperref}
\usepackage{bookmark}
\usepackage{textcomp}
\newcommand{\Lacka}{{\fontencoding{T1}\selectfont {\L}\k{a}cka}}
\newcommand{\noopsort}[1]{}
\usepackage{manyfoot}
\usepackage{booktabs}
\usepackage{algorithm}
\usepackage{algorithmicx}
\usepackage{algpseudocode}
\usepackage{listings}

\providecommand{\Newlabel}[2]{}

\theoremstyle{plain}
\newtheorem{theorem}{Theorem}[section]

\newtheorem{corollary}[theorem]{Corollary}
\newtheorem{lemma}[theorem]{Lemma}

\newtheorem{theoremx}{Theorem}

\theoremstyle{remark}
\newtheorem{remark}[theorem]{Remark}
\newtheorem*{remark*}{Remark}
\newtheorem*{question}{Question}
\newtheorem*{claim}{Claim}

\theoremstyle{definition}
\newtheorem{definition}[theorem]{Definition}

\newcommand*{\vphi}{\varphi}

\newcommand*{\cE}{\mathcal{E}}
\newcommand*{\cK}{\mathcal{K}}
\newcommand*{\cL}{\mathcal{L}}
\newcommand*{\cF}{\mathcal{F}}
\newcommand*{\F}{\mathfrak{F}}

\newcommand*{\M}{\mathcal{M}}

\newcommand*{\Rb}{\mathbb{R}}
\newcommand*{\Zb}{\mathbb{Z}}
\newcommand*{\Nb}{\mathbb{N}}

\newcommand*{\dif}{\mathop{}\!\mathrm{d}}
\newcommand*{\Int}{\mathrm{Int}}

\newcommand*{\Merg}{\mathcal{M}_{\mathrm{erg}}}

\newcommand*{\htop}{h_{\mathrm{top}}}
\newcommand*{\ve}{\varepsilon}
\newcommand*{\veh}{\ve^{\mathrm{h}}}
\newcommand*{\veH}{\ve^{\mathrm{H}}}
\newcommand*{\ved}{\ve^{\mathrm{d}}}
\newcommand*{\Per}{\mathrm{Per}}
\newcommand*{\Orb}{\mathrm{Orb}}

\newcommand*{\cov}{\mathrm{cov}}
\newcommand*{\diam}{\mathrm{diam}}

\newcommand*{\supp}{\mathrm{supp}}
\newcommand*{\spa}{\mathrm{span}}
\newcommand*{\dima}{\dim_{\mathrm{a}}}
\newcommand*{\Aff}{\mathrm{Aff}}
\newcommand*{\Relint}{\mathrm{Relint}}

\makeatletter
\let\orgdescriptionlabel\descriptionlabel
\renewcommand*{\descriptionlabel}[1]{%
  \phantomsection
  \let\orglabel\label
  \let\label\@gobble
  \protected@edef\@currentlabel{#1}%
  \protected@edef\@currentlabelname{#1}%
  \let\label\orglabel
  \orgdescriptionlabel{#1}%
}
\makeatother

\numberwithin{equation}{section}

\begin{document}

\title{Conditional entropy realization and approximation by uniquely ergodic measures}

\author[1]{Xiaobo Hou\thanks{\texttt{xiaobohou@dlut.edu.cn}}}
\author[2]{Wanshan Lin\thanks{\texttt{21110180014@m.fudan.edu.cn}}}
\author[2]{Xueting Tian\thanks{Corresponding author: \texttt{xuetingtian@fudan.edu.cn}}}
\author[3]{Yi Yuan\thanks{\texttt{0120250109@xhu.edu.cn}}}
\author[4]{Xutong Zhao\thanks{\texttt{xutongzhao@sjtu.edu.cn}}}

\affil[1]{School of Mathematical Sciences, Dalian University of Technology, Dalian 116024, People's Republic of China}
\affil[2]{School of Mathematical Sciences, Fudan University, Shanghai 200433, People's Republic of China}
\affil[3]{School of Science, Xihua University, Chengdu 610039, People's Republic of China}
\affil[4]{School of Mathematical Sciences, CMA-Shanghai, Shanghai Jiao Tong University, Shanghai 200240, People's Republic of China}

\date{}

\maketitle

\begin{abstract}
    This paper studies conditional entropy realization and weak* approximation by uniquely ergodic measures with compact support. We prove that, after fixing an admissible potential average and an entropy strictly below the entropy supremum over the corresponding average fiber, every invariant measure satisfying these two exact constraints can be approximated weakly* by uniquely ergodic measures with compact support satisfying the same constraints. Each approximating measure is the unique invariant measure on its minimal support, whose topological entropy equals the prescribed metric entropy. This result holds for three broad classes of systems: topologically expanding maps (including topologically Anosov systems), transitive countable Markov shifts, and symbolic systems with non-uniform structure. The proof uses a nested multi-horseshoe construction, with separate arguments addressing non-invertibility, non-compactness and non-uniformity.
\end{abstract}

\noindent\textbf{Keywords:} uniquely ergodic measure, metric entropy, uniformly hyperbolic system, countable Markov shift

\medskip
\noindent\textbf{MSC 2020:} 37A35, 37B40, 37D20, 37B10

\section{Introduction}\label{secIntroduction}
Ergodic measures occupy a central role in describing the long-term statistical behavior of dynamical systems. A fundamental line of inquiry in ergodic theory concerns the abundance and distribution of these measures within the space of invariant measures. Researchers investigated the distribution and abundance of ergodic measures from the perspectives of density, entropy and multifractal analysis. From a topological perspective, Sigmund \cite{sigmund1974dynamical} established that ergodic measures are dense in the space of invariant measures for systems satisfying the periodic specification property, including Axiom A diffeomorphisms restricted to mixing basic sets, topologically mixing subshifts of finite type and mixing interval maps. These results were subsequently extended by Gelfert and Kwietniak \cite{gelfert2018density} to systems satisfying their closeability and linkability assumptions, including $S$-gap shifts and $\beta$-shifts. From the perspective of entropy, Eizenberg, Kifer and Weiss \cite{eizenberg1994large}, and later Pfister and Sullivan \cite{pfister2005large} demonstrated that under conditions like the specification property or approximate product property, if the entropy map is upper semi-continuous, then any $f$-invariant measure can be approximated by ergodic measures in both the weak* topology and entropy. Building on this foundation, Dong, Hou and Tian \cite{dong2024abundance} recently provided a comprehensive analysis of the abundance of ergodic measures in hyperbolic systems and certain partially hyperbolic systems with one-dimensional center bundle via multifractal analysis.

Minimal sets and minimal systems are also crucial objects in the study of dynamical systems. Suppose $(X,f)$ is a dynamical system. Given a nonempty $f$-invariant closed subset $Y$ of $X$, we call $Y$ and $(Y, f|_Y)$ a \emph{minimal set} and a \emph{minimal system} respectively if the orbit of $x$ is dense in $Y$ for any $x \in Y$.
At the same time, minimality can be characterized from the point of view of measure theory. Given an $f$-invariant measure $\mu$ on $X$, its \emph{support} is defined by
\[\supp(\mu) = \left\{x \in X: \mu(U) > 0 \mathrm{\ for\ any\ open\ neighborhood\ } U \mathrm{\ of\ } x\right\}.\]
We say $\mu$ is \emph{minimal} if $\supp(\mu)$ is minimal.

While the existence of minimal sets is guaranteed for compact systems by Zorn's lemma, their statistical flexibility may appear constrained. Periodic measures are minimal but invariably carry zero metric entropy. Moreover, Boshernitzan \cite{boshernitzan1984unique} and Cyr and Kra \cite{cyr2019counting} showed that every minimal subshift with linear block growth has only finitely many ergodic measures. Among minimal measures, uniquely ergodic measures are particularly rigid: their supports carry no other invariant probability measure. This rigidity leads to the following question.

\begin{question}
    How abundant are uniquely ergodic measures with compact support in systems exhibiting hyperbolicity?
\end{question}

More specifically, we consider the following hierarchy of questions.
\begin{itemize}
    \item Are uniquely ergodic measures with compact support dense in the space of invariant measures?

    \item Can every submaximal entropy be realized by a uniquely ergodic measure and by the dynamics on its minimal support?

    \item After prescribing both the integral of a continuous function and a compatible submaximal entropy, are uniquely ergodic measures with compact support weak*-dense in the corresponding exact constraint set of invariant measures?
\end{itemize}

Positive-entropy results reveal considerably greater flexibility beyond periodic measures. Grillenberger \cite{grillenberger1972constructions} constructed strictly ergodic systems with prescribed entropy. Li and Oprocha \cite{li2018properties} showed that, for topologically transitive systems with the shadowing property, invariant measures can be approximated weakly* by ergodic measures supported on almost one-to-one extensions of odometers, with entropy converging to any prescribed lower value. Dong and Tian \cite{dong2018differentstatisticalfuturedynamical} obtained related approximation results for topologically expanding systems. Chang, Li, and Wu \cite{chang2024orbit} studied entropy flexibility for subsystems and orbit closures in symbolic systems, while Arbieto, Oprocha and Rego \cite{arbieto2025entropy} established entropy flexibility through strictly ergodic subsystems for several discrete- and continuous-time systems. For full shifts over $\Zb$, Weiss \cite{weiss2000single} proved that
invariant measures supported on uniquely ergodic subsystems are
entropy dense among ergodic invariant measures. Very recently,
\Lacka{} and Mroczek \cite{lacka2026entropy} extended this result to
full shifts over amenable residually finite groups. These results establish entropy realization or weak*--entropy approximation by measures on minimal or uniquely ergodic subsystems, but they do not combine exact entropy and potential-integral constraints with weak* density in the corresponding exact constraint set.

In this article, we extend the aforementioned abundance results for ergodic measures to uniquely ergodic measures with compact support. More precisely, we prove that, after prescribing an admissible potential integral and a compatible submaximal entropy, such measures are weak*-dense in the corresponding exact constraint set of invariant measures. We establish this conditional entropy realization and approximation result for three classes of systems.

\section{Main results}\label{secMainResults}
In this section, we introduce the main results of this article. First, we introduce some notations and definitions. Let $\Nb, \Nb^+, \Zb, \Rb$ denote the set of natural numbers, positive integers, integers, and real numbers, respectively. Let $(X,d)$ be a (compact or non-compact) metric space. Let $C(X)$, $C_b(X)$ and $C_{b,u}(X)$ be the space of continuous functions, bounded continuous functions and bounded uniformly continuous functions on $X$, respectively. When $X$ is compact, the three spaces of functions mentioned above are identical.

Let $f: X \to X$ be a continuous map. We also call $(X,f)$ a dynamical system. For any $x \in X$, denote $\Orb(x) = \left\{f^i x: i \in \Nb\right\}$. We say $x \in X$ is a \emph{periodic point} of $f$ if there exists $p \in \Nb^+$ such that $x = f^p x$. In this case, its orbit is also called a periodic orbit. A system $(X,f)$ is said to be \emph{non-trivial} if $X$ is not a single periodic orbit. Denote by $\M(X)$ the space of probability measures, $\M(f, X)$ the space of $f$-invariant probability measures and $\Merg(f, X)$ the space of $f$-ergodic probability measures respectively.

We denote the cardinality of a set by $\left\lvert \cdot\right\rvert $. We say $\mu \in \M(f,X)$ is \emph{uniquely ergodic} if it is the unique $f$-invariant probability measure supported on $\supp(\mu)$. Every uniquely ergodic measure is ergodic; if its support is compact, then it is also minimal.

In this article, we primarily investigate three types of dynamical systems. The first case assumes $X$ is compact and $f$ is a \emph{topologically expanding} map (\emph{topologically Anosov} homeomorphism), which means that $X$ has infinitely many points, $f$ is positively expansive (expansive) and satisfies the shadowing property. The second case focuses on the dynamics of topologically transitive countable Markov shifts, which in general are not even locally compact. Readers may refer to Section \ref{sec:pre_cms} for details. Finally, the third case considers symbolic systems with non-uniform structure introduced by Climenhaga, Thompson and Yamamoto \cite{climenhaga2017large}. Readers may refer to Section \ref{sec:pre_symbolic} for details.

\begin{definition}
    We say a system satisfies (H) if it belongs to one of the following three cases.
    \begin{description}
        \item[(H1)] $f$ is a transitive topologically expanding continuous map (topologically Anosov homeomorphism) on a compact metric space $(X,d)$.
        \item[(H2)] $(\Sigma, \sigma)$ is a non-trivial topologically transitive one-sided (two-sided) countable Markov shift.
        \item[(H3)] $(X,\sigma)$ is a one-sided (two-sided) subshift on a finite alphabet with language $\mathcal{L}$ and there exists $\mathcal{G} \subset \mathcal{L}$ such that $\mathcal{G}$ satisfies $(W)$-specification and $\mathcal{L}$ is edit approachable by $\mathcal{G}$.
    \end{description}
\end{definition}

For simplicity, we uniformly use $(X, f)$ to denote all three classes of systems in this section. Throughout the paper, the phase spaces appearing in {\rm (H1)--(H3)}
are equipped with compatible metrics bounded by one; namely, we assume $\diam(X)\leq 1$. In the compact case, this causes no loss of generality, since the
original metric may be rescaled. For countable Markov shifts and
finite-alphabet subshifts, we use the bounded symbolic metrics specified
in the corresponding sections.

Katok \cite{katok1980lyapunov} established a milestone result: every $C^{1+\alpha}$ diffeomorphism $f$ in dimension two possesses horseshoes with large entropies. This implies that such systems admit ergodic measures with arbitrary intermediate metric entropies, meaning the entropy spectrum of ergodic measures includes $[0, \htop(f))$ and equals that of all invariant measures. Katok conjectured this fundamental result holds in any dimension, an open problem known as Katok's conjecture or the intermediate entropy problem.

Li and Oprocha \cite{li2018properties} proved that, for topologically transitive systems with the shadowing property, invariant measures can be approximated weakly* together with their entropy by ergodic measures supported on minimal systems. For the classes considered here, we strengthen this approximation to exact entropy realization by the unique invariant measures of compact minimal subsystems; moreover, the metric entropy of each such measure equals the topological entropy of its support. Consequently, the entropy spectrum of uniquely ergodic measures with compact support contains $[0,\htop(f))$.

\begin{theorem}\label{thm:2.2}
    Suppose $(X,f)$ satisfies (H). Then for any $h \in \left[0,\htop(f,X)\right)$, there exists a compact $f$-invariant set $Y \subseteq X$ such that $(Y,f|_Y)$ is minimal and uniquely ergodic, and
    \[h_\mu(f)=\htop(f,Y)=h,\]
    where $\mu$ is the unique $f$-invariant measure on $Y$.
\end{theorem}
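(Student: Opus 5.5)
Everything reduces to a symbolic construction. In each of (H1)--(H3) I will first extract a subsystem on which the problem becomes a question about subshifts on a finite alphabet. The case $h=0$ is immediate: a periodic orbit carries a uniquely ergodic measure of zero entropy. Periodic orbits are dense in (H1) by shadowing and transitivity, exist in (H2) by transitivity, and in (H3) come from the specification of $\mathcal{G}$.

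So fix $h\in(0,\htop(f,X))$ and pick $h'$ with $h<h'<\htop(f,X)$. The first step is to find a compact invariant set $\Lambda\subseteq X$ together with a finite collection of words or orbit segments which generate, inside $\Lambda$, a conjugate (or finite-to-one, entropy-preserving factor image) of a full shift on $N$ symbols with block length $m$, where $\frac{1}{m}\log N>h'$.
\begin{itemize}
\item In (H1) I would use $(n,\varepsilon)$-separated orbit segments returning near a fixed point, together with shadowing. Expansivity then makes the shadowing map injective, giving a horseshoe.
\item In (H2) I would use Gurevich entropy: a finite transitive sub-Markov shift has entropy close to $\htop$, and a loop-based full shift sits inside it.
\item In (H3) I would concatenate $\mathcal{G}$-words of a fixed length via $(W)$-specification with bounded gluing words. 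Edit approachability guarantees that $\mathcal{G}$ carries almost full entropy.
\end{itemize}
The point of this step is that the coding map $\pi$ from a subshift $Z\subseteq \{1,\dots,N\}^{\Zb}$ (or $\Nb$) into $X$ conjugates $\sigma^m|_Z$ to $f^m$ restricted to a compact set. Since the map is injective, it preserves minimality, unique ergodicity, topological entropy and metric entropy, after dividing by $m$ and taking the $f$-orbit of the image. Taking the union $Y=\bigcup_{i<m} f^i\pi(Z)$ keeps minimality and unique ergodicity provided $Z$ is strictly ergodic and the glued system stays minimal for $f$. I would ensure the latter by choosing $Z$ totally minimal, or by checking directly that every $f$-orbit closure in $Y$ is all of $Y$.

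The core step is then a Grillenberger-type construction. For any $s\in(0,\log N)$, build a strictly ergodic subshift $Z\subseteq\{1,\dots,N\}^{\Zb}$ with $\htop(Z)=s$, take $s=mh$, and also make $Z$ totally minimal and uniquely ergodic for $\sigma^k$ for every $k$. The construction is the standard nested-block one. Choose block families $\mathcal{B}_1\subset\mathcal{B}_2\subset\cdots$ in which each $\mathcal{B}_{k+1}$ consists of concatenations of $\mathcal{B}_k$-blocks such that:
\begin{itemize}
\item each block of $\mathcal{B}_{k+1}$ contains every block of $\mathcal{B}_k$ with nearly the same frequency, which gives minimality and uniform frequencies, hence unique ergodicity;
\item the cardinalities satisfy $\frac{1}{\ell_k}\log|\mathcal{B}_k|\downarrow s$.
\end{itemize}
Entropy is then exactly $s$: the upper bound holds because all words of $Z$ of length $\ell_k$ are subwords of at most two concatenated $\mathcal{B}_k$-blocks, and the lower bound comes from the counts. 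Equality $h_\mu=\htop(Z)$ follows from unique ergodicity and the variational principle.

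I expect the main obstacle to be the non-compact case (H2) together with the coding subtleties. In (H2) the variational principle and the entropy supremum use Gurevich entropy, so I must check that $\htop(f,X)$ is approached by entropies of compact horseshoes. In (H1) with non-invertible $f$, I must make sure the shadowing map produces an injective semiconjugacy from a one-sided shift, using positive expansivity, and that the uniqueness of the invariant measure transfers. I would handle both by working with periodic-point-based horseshoes, so that everything lives in a compact conjugate copy of a subshift and the transfer is trivial.
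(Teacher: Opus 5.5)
Your route is different from the paper's. For (H1) and (H2) it is sound; for (H3) the horseshoe step fails as written.

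\textbf{Comparison with the paper.} The paper proves Theorem \ref{thm:2.2} as the case $\varphi\equiv 0$ of Theorem \ref{thm:A}. It builds a decreasing sequence of horseshoes $\Lambda^k$, each produced by the multi-horseshoe entropy-dense property inside the previous one under the iterate $f^{N_{k-1}}$. These satisfy $\diam\M(f,\Lambda^k)\to 0$ and $h+\delta/2^{k+1}<\htop(f,\Lambda^k)<h+\delta/2^{k-1}$. Unique ergodicity comes from the collapse of the measure spaces. The equality $h_\nu=h$ comes from upper semicontinuity together with $\htop(f,\bigcap_k\Lambda^k)\le h$. You need only one horseshoe of entropy above $h$ plus a purely symbolic input: a strictly ergodic subshift of a full shift with prescribed entropy, essentially \cite{grillenberger1972constructions}. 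That is more modular and classical. However, it does not by itself give the approximation of a prescribed $\mu$ or the exact integral constraint of Theorem \ref{thm:A}, which is what the nested mechanism is built to deliver.

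\textbf{The gap in (H3).} Under $(W)$-specification the connecting word between two $\mathcal{G}$-words has length anywhere in $[0,\tau]$. When you append a further word, the new connector is chosen for the whole word already built, not just its last block. So the infinite concatenations you describe are not indexed shift-equivariantly by a full shift and are not invariant under any fixed power $\sigma^m$. There is therefore no conjugacy through which to transport $Z$.

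The missing ingredient is a freely concatenable family. By Lemma \ref{Lem-FLapproachable} there is $\mathcal{F}\subset\mathcal{L}$ with the free concatenation property such that $\mathcal{L}$ is edit approachable by $\mathcal{F}$. Lemma \ref{Lem-the-size-of-balls} and pigeonholing on lengths then give a length $t$ with $\frac1t\log|\mathcal{F}\cap\mathcal{L}_t|>h'$. The sequences built from words of $\mathcal{F}\cap\mathcal{L}_t$ form a compact set on which $\sigma^t$ is conjugate to a full shift, as in the proof of Theorem \ref{prop-nonuniform-multi-horseshoe}. Alternatively, use the transitive sofic approximants of Theorem \ref{Thm-horseshoe} together with loops at a synchronizing word. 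The same family also supplies the periodic orbit you need for $h=0$ in (H3), which $(W)$-specification does not give directly.

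\textbf{Smaller points.}
\begin{enumerate}
\item You do not need $Z$ totally minimal or totally uniquely ergodic. The averaging map $\tau$ of Lemma \ref{lemTau} sends $\M(f^m,\pi(Z))$ onto $\M(f,Y)$, so unique ergodicity passes from $(\pi(Z),f^m)$ to $(Y,f)$. For minimality: if $y=f^iz$ with $z\in\pi(Z)$ and $0\le i<m$, then $f^{m-i}y=f^mz$ has dense $f^m$-orbit in $\pi(Z)$, so every $f$-orbit is dense in $Y$.
\item Your entropy upper bound is off. Saying that words of length $\ell_k$ lie in two adjacent $\mathcal{B}_k$-blocks gives only $|\mathcal{L}_{\ell_k}(Z)|\le \ell_k|\mathcal{B}_k|^2$, i.e.\ $\htop(\sigma,Z)\le 2s$. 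Instead fix $k$ and let $n\to\infty$. Every word of length $n$ lies in $\lceil n/\ell_k\rceil+1$ consecutive $\mathcal{B}_k$-blocks, so $\htop(\sigma,Z)\le \ell_k^{-1}\log|\mathcal{B}_k|$ for every $k$, and this bound tends to $s$.
\item The coding conjugates $\sigma|_Z$, not $\sigma^m|_Z$, to $f^m|_{\pi(Z)}$. This is what your choice $s=mh$ already presupposes.
\end{enumerate}
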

\begin{remark}
    In fact, for systems satisfying (H1), we do not require the assumption of topological transitivity by utilizing the topological spectral decomposition theorem (see \cite[Theorem 3.4.4]{aoki1980topological}).
\end{remark}

In particular, Theorem \ref{thm:2.2} implies the \emph{minimal-intermediate-entropy property}, namely that every submaximal entropy is realized by a minimal measure. It further realizes the same value as the topological entropy of a compact minimal uniquely ergodic subsystem. For systems satisfying (H1) or (H3), the maximal-entropy endpoint can be included in the following entropy-flexibility statement.

\begin{corollary}\label{cor:2.3}
    Suppose $(X,f)$ satisfies (H1) or (H3). Then for any $h \in \left[0,\htop(f,X)\right]$, there exists an ergodic measure $\mu$ and a compact and invariant subset $Y \subseteq X$ such that
    \[\htop(f, Y) = h_\mu(f) = h.\]
\end{corollary}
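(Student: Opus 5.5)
For $h<\htop(f,X)$ the corollary follows at once from Theorem \ref{thm:2.2}. That theorem gives a compact invariant minimal uniquely ergodic $Y$ with unique invariant measure $\mu$, and $h_\mu(f)=\htop(f,Y)=h$. This $\mu$ is ergodic, since uniquely ergodic measures are ergodic. So the only new case is the endpoint $h=\htop(f,X)$, and there I take $Y=X$, or a suitable compact piece of $X$.

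\emph{Case (H1).} Here $X$ is compact and $f$ is expansive (positively expansive), so $f$ has a uniform expansivity constant. Hence the entropy map $\nu\mapsto h_\nu(f)$ is upper semicontinuous on the compact space $\M(f,X)$. A measure of maximal entropy therefore exists, and by the variational principle $\sup_\nu h_\nu(f)=\htop(f,X)$. Taking the ergodic decomposition of a maximal measure and using affinity of entropy, I obtain an ergodic $\mu$ with $h_\mu(f)=\htop(f,X)$. Setting $Y=X$ finishes this case, since $X$ is compact and invariant.

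\emph{Case (H3).} $X$ is a subshift on a finite alphabet, so it is compact and expansive, with expansivity constant given by the symbolic metric. The same upper semicontinuity argument applies verbatim. The variational principle plus ergodic decomposition gives an ergodic measure of maximal entropy, and again $Y=X$ works. No use of the $(W)$-specification or edit approachability is needed at the endpoint.

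The only point to check is that the expansive systems in (H1) and (H3) really have upper semicontinuous entropy maps. This is standard: a generating partition of diameter below the expansivity constant, with boundaries of small measure, gives $h_\nu(f)=h_\nu(f,\xi)$, and $\nu\mapsto h_\nu(f,\xi)$ is an infimum of continuous functions near $\nu$. For positively expansive maps the one-sided version of the same argument applies. No genuine obstacle arises. The real reason (H2) is excluded is the loss of compactness and of upper semicontinuity for countable Markov shifts, which may have no measure of maximal entropy.
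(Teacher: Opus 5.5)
Your proposal is correct and follows the paper's proof: Theorem \ref{thm:2.2} handles $h<\htop(f,X)$, and upper semi-continuity of the entropy map in cases (H1) and (H3) gives a measure of maximal entropy for the endpoint. Your passage to an ergodic maximal measure via ergodic decomposition, and the choice $Y=X$, just spell out steps the paper leaves implicit.
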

\begin{remark}
    Here we exclude (H2), as a measure of maximal entropy may not exist for countable Markov shifts, see \cite{gurevich1970shift}.
\end{remark}

The result of the above corollary is also called the \emph{entropy flexibility}. Recently, Chang, Li, and Wu \cite{chang2024orbit} proved the minimal-intermediate-entropy property and the entropy flexibility for symbolic systems which satisfy a weak form of specification property and certain entropy gap condition. Arbieto, Oprocha and Rego \cite{arbieto2025entropy} proved similar results for suspension flows of subshifts of finite type.

Given $\varphi \in C_{b,u}(X)$, denote
\[L_\vphi(X, f) = \left[\inf_{\mu \in \M(f, X)} \int \vphi \dif \mu, \sup_{\mu \in \M(f, X)} \int \vphi \dif \mu\right].\]
When no confusion can arise, we often omit $(X,f)$ and simply write $L_\varphi$ for $L_\vphi(X, f)$ for convenience. Denote
\[\mathrm{Int} L_\vphi(X,f) = \left(\inf_{\mu \in \M(f, X)} \int \vphi \dif \mu, \sup_{\mu \in \M(f, X)} \int \vphi \dif \mu \right).\]
For any $a \in L_\varphi$, define the level set
\[R_\varphi(a) = \left\{x \in X: \lim_{n \to \infty} \frac{1}{n}\sum_{i=0}^{n-1}\varphi\left(f^i x\right) = a \right\}.\]

When $X$ is compact, given $\mu \in \M(f,X)$, we say $\mu$ is an \emph{equilibrium state} for $\varphi$ if $P(\varphi) = h_\mu(f) + \int\varphi\dif\mu$ where $P(\varphi)$ is the topological pressure of $f$ with respect to $\vphi$. We also say $\varphi$ has an equilibrium state. Denote
\[E(X,f) = \left\{\varphi \in C(X): \varphi \mathrm{\ has\ a\ unique\ equilibrium\ state}\right\}.\]

Barreira and Saussol \cite{barreira2001variational} proved that, for systems with upper semi-continuous entropy functions, if $\vphi \in C(X)$ satisfies $\mathrm{Int} L_\vphi \ne \emptyset$ and $\spa\left\{\varphi, 1\right\} \subset E(X,f)$, then for any $a \in \mathrm{Int} L_\vphi$, one has
\[\htop\left(f, R_\varphi(a)\right) = \sup\left\{h_\mu(f):\mu \in \M(f,X), \int \varphi \dif \mu = a\right\}.\]
This is called the \emph{conditional variational principle}, which gives a partial description of ergodic measures from the viewpoints of multifractal analysis and entropy simultaneously. It was later generalized to systems with the specification property by Takens \cite{takens2003variational} and to systems with the $g$-almost product property by Pfister and Sullivan \cite{pfister2007topological}. For the systems considered below, we refine this principle by showing that the supremum is unchanged when restricted to uniquely ergodic measures with compact support.

\begin{theorem}\label{thm:2.4}
    Suppose $(X,f)$ satisfies (H1) or (H3). If $\vphi \in C(X)$ satisfies $\mathrm{Int} L_\vphi \ne \emptyset$, then for any $a \in \mathrm{Int} L_\vphi$ one has
    \begin{align*}
        \htop\left(f, R_\varphi(a)\right)
        = \sup\{h_\mu(f):{} & \mu \in \M(f,X) \mathrm{\ is\ uniquely\ ergodic} \\
                             & \mathrm{\ with\ compact\ support},\ \int \varphi \dif \mu = a\}.
    \end{align*}
\end{theorem}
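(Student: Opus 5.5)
We prove the two inequalities separately, writing $S(a)$ for the supremum on the right and $H(a)=\sup\{h_\mu(f):\mu\in\M(f,X),\ \int\vphi\dif\mu=a\}$ for the unrestricted conditional supremum.

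\emph{The inequality $S(a)\le \htop(f,R_\varphi(a))$.} Let $\mu$ be uniquely ergodic with compact support $Y$ and $\int\vphi\dif\mu=a$. On the compact uniquely ergodic system $(Y,f|_Y)$, Birkhoff averages of the continuous function $\vphi|_Y$ converge uniformly to $\int\vphi\dif\mu=a$ at every point. Hence $Y\subseteq R_\vphi(a)$. Monotonicity of Bowen's topological entropy then gives $\htop(f,R_\vphi(a))\ge \htop(f,Y)\ge h_\mu(f)$, where the last step is the variational principle on $Y$. Taking the supremum gives $S(a)\le \htop(f,R_\vphi(a))$.

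\emph{The inequality $\htop(f,R_\varphi(a))\le S(a)$.} We first recall the conditional variational principle $\htop(f,R_\vphi(a))=H(a)$ for $a\in\Int L_\vphi$. Under (H1) this holds because expansiveness gives an upper semi-continuous entropy map, and shadowing plus transitivity gives specification or the $g$-almost product property, so the results of Takens and of Pfister--Sullivan apply. Under (H3) it is the Climenhaga--Thompson--Yamamoto setting, where the same principle is available. The inequality $\htop(f,R_\vphi(a))\le H(a)$ is the standard Bowen-type estimate, so it suffices to show $H(a)\le S(a)$.

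\emph{Reduction to the exact-constraint approximation.} Fix $\ve>0$ and choose $\nu$ with $\int\vphi\dif\nu=a$ and $h_\nu(f)>H(a)-\ve$. If $h_\nu(f)=H(a)$, set $h=h_\nu(f)-\ve$. Since $a$ is interior, the fiber has points with strictly smaller entropy, for example a convex combination of $\nu$ with a low-entropy measure on each side of $a$, adjusted so the integral stays equal to $a$. Using affinity of entropy, we can therefore replace $\nu$ by a measure in the fiber with entropy exactly $h<H(a)$, with $h>H(a)-2\ve$. We then apply the paper's main approximation theorem: for $a\in\Int L_\vphi$ and $h$ strictly below the fiber supremum, every invariant measure with $\int\vphi=a$ and $h_\mu=h$ is a weak* limit of uniquely ergodic measures $\mu_n$ with compact support, $\int\vphi\dif\mu_n=a$ and $h_{\mu_n}(f)=h$. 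This yields $S(a)\ge h>H(a)-2\ve$, and letting $\ve\to0$ completes the proof.

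\emph{Main obstacle.} The main obstacle is this approximation step, which must realize the integral exactly equal to $a$ together with the prescribed entropy. My plan is to build nested multi-horseshoes: at each stage, choose a finite union of horseshoes whose measures have $\vphi$-integrals on both sides of $a$ and entropies close to $h$. Then form a Toeplitz/Grillenberger-type hierarchy of concatenated blocks whose frequencies are tuned so the averages converge to $a$ and the entropy converges to $h$. Inheriting the block structure at every level forces minimality and unique ergodicity, with the unique measure's entropy equal to the topological entropy $h$ of its support.
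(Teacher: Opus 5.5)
Your proposal is correct and is essentially the paper's proof. The upper bound $\htop(f,R_\varphi(a))\le H(a)$ comes from the general Bowen/Pfister--Sullivan estimate, so the aside that transitivity plus shadowing yields specification or the $g$-almost product property, which is not true in general, is not needed. The identity $H(a)=S(a)$ is exactly what Theorem~\ref{thm:A} gives, and you may cite it directly rather than planning its proof; the paper proves it by nested horseshoes, not a Toeplitz hierarchy. The only real variation is your lower bound: uniform convergence of Birkhoff averages on the uniquely ergodic support gives $\supp\mu\subseteq R_\varphi(a)$, followed by monotonicity and the variational principle, whereas the paper uses $G_\mu\subseteq R_\varphi(a)$ and Bowen's $\htop(f,G_\mu)=h_\mu(f)$ for ergodic $\mu$; both arguments are valid.
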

\begin{remark}
    The set of continuous functions $\varphi$ satisfying $\Int L_\varphi \ne \emptyset$ is open and dense in $C(X)$ for systems with $\left\lvert \M(f, X)\right\rvert > 1$, see \cite[Proposition 2.2]{dong2024abundance}.
\end{remark}

We next show that uniquely ergodic measures realize the relative interior of the Lyapunov spectrum. First, we use the following formulation of the concept of multi-average conformal diffeomorphisms introduced by Dong, Hou and Tian \cite{dong2024abundance}. Let $M$ be a compact Riemannian manifold and $f:M \to M$ be a $C^1$ diffeomorphism. Given an $f$-invariant measure $\mu$, for $\mu$-a.e. $x \in M$, denote by
\[\chi_1(x) \geq \chi_2(x) \geq \dots \geq \chi_{\dim M}(x)\]
the Lyapunov exponents at $x$. For any $\mu \in \M(f,M)$ and any $1 \leq i \leq \dim M$, denote $\chi_i(\mu) = \int\chi_i(x)\dif \mu$. We say an Anosov diffeomorphism $f$ is \emph{multi-average conformal}, if there are $t_u, t_s \in \Nb^+$, $\left\{d_i\right\}_{i=1}^{t_u+t_s} \subset \Nb^+$ and $E^1,\dots,E^{t_u+t_s} \subset TM$ such that
\begin{enumerate}
    \item $\sum_{j=1}^{t_u} d_j = \dim E^u$, $\sum_{j=t_u+1}^{t_u+t_s} d_j = \dim E^s$ and $\dim E^j = d_j$ for any $1 \le j \le t_u+t_s$;
    \item $D_x f\left(E_x^j\right) = E_{fx}^j$ for any $x \in M$ and any $1 \leq j \leq t_u+t_s$.
    \item $x \to E_x^j$ is continuous for any $1 \leq j \leq t_u+t_s$.
    \item $E_x^u = E_x^1 \oplus \dots \oplus E_x^{t_u}$, $E_x^s = E_x^{t_u + 1} \oplus \dots \oplus E_x^{t_u + t_s}$ for any $x \in M$.
    \item Let $s_0:=0$ and $s_j:=\sum_{k=1}^j d_k$ for
          $1\leq j\leq t_u+t_s$. For every $\mu\in\Merg(f,M)$ and every
          $1\leq j\leq t_u+t_s$, the restricted cocycle
          $Df|_{E^j}$ has a unique Lyapunov exponent
          $\lambda_j(\mu)$ of multiplicity $d_j$. Moreover,
          \[
              \lambda_1(\mu)\geq\cdots\geq\lambda_{t_u}(\mu)>0>
              \lambda_{t_u+1}(\mu)\geq\cdots\geq
              \lambda_{t_u+t_s}(\mu).
          \]
          Equivalently,
          \[
              \chi_{s_{j-1}+1}(\mu)=\cdots=\chi_{s_j}(\mu)
              =\lambda_j(\mu)
          \]
          for every $1\leq j\leq t_u+t_s$.
\end{enumerate}

Given $Y \subseteq \Rb^m$ for some $m \in \Nb^+$, denote the \emph{relative interior} of $Y$ by $\Relint(Y)$ (see the definition preceding Theorem \ref{thm:relint} in Section \ref{sec:cmie}).

\begin{theorem}\label{thm:2.5}
    Suppose $M$ is a compact Riemannian manifold and $f:M \to M$ is a $C^1$ topologically transitive multi-average conformal Anosov diffeomorphism. Then
    \begin{align*}
          & \mathrm{Relint} \left\{\left(\chi_1(\mu),\dots,\chi_{\dim M}(\mu)\right):\mu \in \M(f,M)\right\}                       \\
        = & \mathrm{Relint} \left\{\left(\chi_1(\mu),\dots,\chi_{\dim M}(\mu)\right):\mu \in \M(f,M) \mathrm{\ is\ uniquely\ ergodic}\right\}.
    \end{align*}
    In particular,
    \begin{align*}
          & \Int \left\{\left(\chi_1(\mu),\dots,\chi_{\dim M}(\mu)\right):\mu \in \M(f,M)\right\}                       \\
        = & \Int \left\{\left(\chi_1(\mu),\dots,\chi_{\dim M}(\mu)\right):\mu \in \M(f,M) \mathrm{\ is\ uniquely\ ergodic}\right\}.
    \end{align*}
\end{theorem}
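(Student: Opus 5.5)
The inclusion ``$\supseteq$'' is immediate: uniquely ergodic measures are invariant, so the second set $B$ is contained in the first set $A$. Since $A$ is convex, $\Relint(B)\subseteq\Relint(A)$ will follow once we know that $\Aff(B)=\Aff(A)$. So the real content is the converse. Given $\mu\in\M(f,M)$ whose exponent vector $\chi(\mu)$ lies in $\Relint(A)$, we must produce a uniquely ergodic $\nu$ with $\chi(\nu)=\chi(\mu)$. I will also show that $B\supseteq\Relint(A)$, which gives $\Relint(B)\supseteq\Relint(A)$ and settles the affine hull issue at the same time.

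The plan is to reduce Lyapunov exponents to integrals of continuous potentials and then apply the main realization/approximation theorem with several potentials. By the multi-average conformal structure, for every invariant $\mu$ and every $1\le j\le t_u+t_s$ we have
\[\lambda_j(\mu)=\int \vphi_j\dif\mu,\qquad \vphi_j(x)=\frac{1}{d_j}\log\lvert\det D_xf|_{E^j_x}\rvert.\]
This holds first for ergodic $\mu$, by Oseledets and the uniqueness of the exponent on $E^j$, and then extends by ergodic decomposition. Each $\vphi_j$ is continuous because $x\mapsto E^j_x$ is continuous. Hence $\chi(\mu)=\Phi(\mu):=L\bigl(\int\vec\vphi\dif\mu\bigr)$, where $\vec\vphi=(\vphi_1,\dots,\vphi_{t_u+t_s})$ and $L$ is the fixed linear map that repeats the $j$th coordinate $d_j$ times. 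Since $L$ is injective, it is enough to prove the statement for the set $\{\int\vec\vphi\dif\mu:\mu\in\M(f,M)\}$, which is compact and convex. Relative interiors commute with injective linear maps.

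Now fix $\mu$ with $\int\vec\vphi\dif\mu=\vec a\in\Relint$. The paper's main theorem (stated later) handles exact potential constraints for a single $\vphi$, and I expect the multi-horseshoe construction to work verbatim for vector-valued $\vec\vphi\in C(M,\Rb^m)$ with $\vec a$ in the relative interior. I would invoke that version with entropy $h=0$ (or any value below the fiber supremum) to get a uniquely ergodic $\nu$ with compact support and $\int\vec\vphi\dif\nu=\vec a$. A transitive Anosov diffeomorphism is topologically Anosov, so it satisfies (H1).

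The key point, and the main obstacle, is the exact hitting of $\vec a$. I would carry it out in three steps:
\begin{enumerate}
\item Because $\vec a\in\Relint$, choose finitely many ergodic measures, and via horseshoes periodic-like measures, whose $\vec\vphi$-averages have $\vec a$ strictly inside their convex hull inside $\Aff$.
\item Build nested horseshoes whose invariant measures have averages forming a shrinking simplex around $\vec a$.
\item Build a Grillenberger-type minimal uniquely ergodic subset by concatenating orbit segments from successive horseshoes, with weights chosen at each stage so that the limiting frequencies converge to $\vec a$ exactly.
\end{enumerate}
The difficulty is that $\vec a$ is only in the relative interior, not the full interior, when $A$ is degenerate. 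I would handle this by working inside $\Aff(A)$: the linear relations that define it hold for all invariant measures, so the horseshoe measures automatically lie in $\Aff(A)$, and the simplex argument runs inside that affine subspace. Passing back through $L$ yields the first equality. The second equality follows because if $\Int A\ne\emptyset$, then $\Aff(A)$ is the whole space and $\Relint$ coincides with $\Int$.
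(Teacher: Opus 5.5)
Your proposal is correct and follows essentially the paper's route. You reduce the exponents to integrals of the continuous potentials $\frac{1}{d_j}\log\lvert\det Df|_{E^j}\rvert$ via Oseledets and ergodic decomposition, and then apply the multi-potential realization result, Theorem~\ref{thm:relint}, to get $B\supseteq\Relint(A)$. Your affine-hull bookkeeping through the injective map $L$ is more careful than the paper's.

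Theorem~\ref{thm:relint} is exactly the vector-valued version you anticipate. Its nested horseshoes satisfy $\diam\M(f,\Lambda^k)\to0$, which gives exactness and unique ergodicity at once, so your Grillenberger-type step is unnecessary.

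When you invoke it with $h=0$, you also need some measure over $\vec a$ with entropy strictly below the fiber supremum. The supremum is positive because $\htop(f)>0$ and $\vec a$ lies in the relative interior. Convex combinations of periodic measures then give a zero-entropy measure over $\vec a$.
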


Dong, Hou and Tian \cite{dong2024abundance} proved that, for topologically Anosov diffeomorphisms, ergodic measures are dense in the set of invariant measures with a prescribed entropy and a prescribed integral of a given continuous function. This result is called the \emph{conditional intermediate-entropy property}. For the three classes in (H), we establish the analogous exact-constraint density result and strengthen the approximating measures from general ergodic measures to uniquely ergodic measures with compact support. The results stated above are corollaries of the following theorem and its remark.

\begin{theoremx}\label{thm:A}
    Suppose $(X,f)$ satisfies (H). Then for any $\vphi \in C_{b,u}(X)$ and any \(a \in \Rb\) such that either \(a\in\Int L_\vphi\) or \(L_\vphi=\{a\}\), any $0 \leq h < \sup\left\{h_\mu(f) : \mu \in \M(f,X), \int \vphi \dif \mu = a\right\}$,
    one has
    \[\left\{\mu \in \M(f,X) : \mu \mathrm{\ is\ uniquely\ ergodic\ with\ compact\ support},\ h_\mu(f) = h, \int \vphi \dif \mu = a\right\}\]
    is nonempty and dense (in the sense of weak* topology) in
    \[\left\{\mu \in \M(f,X) : h_\mu(f) = h, \int \vphi \dif \mu = a\right\}.\]
\end{theoremx}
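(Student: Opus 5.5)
Fix $\varphi$, $a$, $h$, a target $\nu$ with $h_\nu(f)=h$ and $\int\varphi\dif\nu=a$, a weak* neighbourhood $\mathcal U$ of $\nu$, and set $H_a=\sup\{h_\mu(f):\int\varphi\dif\mu=a\}>h$. The plan has two stages. First, a convex-combination reduction at the level of measures produces an auxiliary invariant measure that has slightly \emph{larger} entropy and the same exact average and lies in $\mathcal U$. Second, a nested multi-horseshoe construction builds inside a compact horseshoe a strictly ergodic subsystem whose unique measure has entropy \emph{exactly} $h$ and average \emph{exactly} $a$, and stays in $\mathcal U$.

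\textbf{Step 1 (reduction).} Choose $\omega$ with $\int\varphi\dif\omega=a$ and $h_\omega(f)>h$ close to $H_a$. In the case $a\in\Int L_\varphi$, also choose $\mu_\pm$ with $\int\varphi\dif\mu_-<a<\int\varphi\dif\mu_+$. Then for small $\theta$, the measure $\nu'=(1-\theta)\nu+\theta\omega$ lies in $\mathcal U$, satisfies $\int\varphi\dif\nu'=a$, and has $h_{\nu'}(f)>h$ by affinity of entropy. Next, using the ergodic decomposition together with entropy-dense approximation by ergodic measures (horseshoes via shadowing/expansivity in (H1), finite transitive sub-Markov shifts in (H2), $\mathcal G$-specification plus edit approachability in (H3)), approximate $\nu'$ and $\mu_\pm$ by finite convex combinations of ergodic measures. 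These measures live on a single compact transitive horseshoe $\Lambda\subset X$ carrying a subshift of finite type structure, with entropy of the combination $>h$. The perturbations by $\mu_\pm$ allow the average to be corrected to $a$ exactly. In (H2), non-compactness is handled by working on a finite subgraph, which is why $\varphi\in C_{b,u}$ suffices. In the degenerate case $L_\varphi=\{a\}$, the average constraint is automatic.

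\textbf{Step 2 (nested horseshoes).} On $\Lambda$ (conjugate, or semiconjugate by a finite-to-one map in the non-invertible case, to a mixing SFT after passing to a power), construct a decreasing sequence of compact invariant sets $\Lambda\supset\Lambda_1\supset\Lambda_2\supset\cdots$. Each $\Lambda_k$ is built from concatenations of a family $W_k$ of long words, and every word in $W_{k+1}$ is a concatenation of words of $W_k$ in which every word of $W_k$ appears with nearly equal frequency. This frequency condition is the standard Grillenberger/Oxtoby mechanism. Its consequences are:
\begin{itemize}
\item $Y=\bigcap_k\Lambda_k$ is minimal, because every word of $W_k$ recurs with bounded gaps.
\item $Y$ is uniquely ergodic, because the frequencies of all $W_k$-blocks converge uniformly.
\item Every invariant measure on $\Lambda_k$ is close to $\mathcal U$ and has $\varphi$-average within $\epsilon_k$ of $a$, because the words are chosen as generic orbit segments for the convex combination from Step 1.
\end{itemize}
The number of words $|W_{k+1}|$ is tuned so that $\htop(\Lambda_k)\downarrow h$. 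This gives $\htop(f,Y)=h$, and the variational principle together with unique ergodicity gives $h_\mu(f)=h$.

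\textbf{Main obstacle.} The hard part is the exact average $\int\varphi\dif\mu=a$ simultaneously with exact entropy. My plan is to include at each level $k$ words from generic segments of $\mu_+$ and $\mu_-$, with counts adjusted so that the empirical average of each word of $W_{k+1}$ lies within $\epsilon_{k+1}\to0$ of $a$. The error must be summable and controlled uniformly, which uses the uniform continuity of $\varphi$ and the fact that concatenation errors occupy a vanishing fraction of positions. Then $\int\varphi\dif\mu=\lim$ of block averages $=a$.

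The remaining steps are routine bookkeeping: the entropy loss from the frequency constraints, which is why $h<H_a$ strictly is needed, and transferring through the (semi)conjugacy, where entropy is preserved since the factor map is finite-to-one.
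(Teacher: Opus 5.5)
\textbf{Verdict: genuine gap.} The exact-average mechanism you flag as the main obstacle does not work as written, because it is incompatible with the rest of your Step 2.

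Two of your Step 2 ingredients rely on every $W_{k+1}$-word being a concatenation of $W_k$-words: taking $Y=\bigcap_k\Lambda_k$, and the claim that every invariant measure on $\Lambda_k$ has $\vphi$-average within $\epsilon_k$ of $a$. Segments generic for the fixed measures $\mu_\pm$ are not such concatenations, so inserting them breaks the nesting $\Lambda_{k+1}\subseteq\Lambda_k$. If you instead admit them as free building blocks of $\Lambda_k$, then $\bigcap_k\Lambda_k$ contains orbits built only from $\mu_+$-segments, whose averages stay a fixed distance above $a$. It is worse if the correction segments used at level $k$ grow in length, as generic segments of increasing precision would. Then the language of $Y$ contains arbitrarily long $\mu_+$-generic words, so by compactness $Y$ carries an invariant measure near $\mu_+$. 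Such a $Y$ cannot be uniquely ergodic with average $a$. You also require all $W_k$-words to appear with nearly equal frequency, so these foreign words are your only steering device. The problem is therefore not cosmetic.

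What is missing is that the correcting material must come from inside the current level. The paper does exactly this.
\begin{itemize}
\item At Step $k$ it takes $\mu^k_l,\mu^k_r$ from the sub-horseshoes $\Delta^{k-1}_1,\Delta^{k-1}_2\subseteq\Delta^{k-1}$. All invariant measures there have $\vphi$-integral (after applying $\tau_{k-1}$) strictly below, resp.\ above, $a$.
\item It chooses $\mu^k_1,\mu^k_2$ on the segment between them, with integrals within $\delta/2^{k-1}$ of $a$ on either side.
\item It applies the multi-horseshoe theorem inside $(\Delta^{k-1},f^{N_{k-1}})$. The two-sided property then propagates while $\diam\M(f,\Lambda^k)\to0$.
\item Unique ergodicity comes from these shrinking simplices. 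The equality $h_\nu=h$ comes from upper semi-continuity on the full shift together with $\htop(f,\Lambda^k)<h+\delta/2^{k-1}$.
\end{itemize}

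In your combinatorial framework, the analogue is as follows. Split $W_k=W_k^+\sqcup W_k^-$ according to the side of $a$. Build $W^\pm_{k+1}$ from all $W_k$-words using two fixed non-uniform compositions whose total $W_k^+$-weights differ by about $\epsilon_{k+1}/\epsilon_k$. These ratios telescope to $0$. Given summable junction errors, this forces the block-frequency discrepancy between the two types to vanish, which is what gives unique ergodicity.

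Alternatively, you can keep your correction idea if you change two things. Use one fixed pair of bounded-length correction words with summable insertion densities. Define $Y$ through its language (subwords of $W_k$-words for some $k$) rather than as $\bigcap_k\Lambda_k$.

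Two smaller points. First, bounded gaps for $W_k$-words do not control words straddling block junctions. Obtain minimality, as the paper does, by passing to the support of the unique measure. Second, the lower bound $\htop(f,Y)\geq h$ needs an argument, for example $|\mathcal{L}_{\ell_k}(Y)|\geq|W_k|$, where $\ell_k$ is the common length of $W_k$-words.
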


Theorem \ref{thm:A} gives a conditional entropy realization and weak* approximation result by uniquely ergodic measures with compact support. More precisely, even after fixing the integral of a potential and a compatible submaximal entropy, the collection of such measures is nonempty and weak*-dense in the corresponding exact constraint set of invariant measures. Since every uniquely ergodic measure with compact support is minimal, the same conclusion holds with minimal ergodic measures in place of uniquely ergodic measures with compact support. Compared with \cite{arbieto2025entropy} and \cite{chang2024orbit}, we consider the dimensions associated with both the entropy and the integral of a given function, thereby enabling us to obtain corresponding results in a finer fractal setting.

We now explain the differences and connections between the present paper and \cite{dong2024abundance}. The two works differ in two respects regarding their objects of study. In terms of measures, \cite{dong2024abundance} establishes density results for general ergodic measures in exact entropy-and-integral constraint sets, whereas the present paper constructs uniquely ergodic measures with compact support, which in particular are minimal and ergodic. In terms of dynamical systems, \cite{dong2024abundance} considers various classes of homeomorphisms, including uniformly hyperbolic and partially hyperbolic systems, whereas the present paper also treats non-invertible maps and non-compact systems such as countable Markov shifts. Although both works use the multi-horseshoe entropy-dense property as a key technical tool, the present paper requires a further construction of nested decreasing horseshoes whose invariant-measure spaces shrink to a singleton, thereby enforcing unique ergodicity while preserving the prescribed entropy and integral. The different classes of systems under consideration also require distinct treatments of non-invertibility, non-compactness and non-uniformity. Consequently, while the main results of the two works partially overlap, neither subsumes the other.

\begin{remark}\label{rmk:thmA}
    In Theorem \ref{thm:A}, replacing $\varphi$ by a finite family of continuous functions $\left\{\varphi_i\right\}_{i=1}^k \subseteq C_{b,u}(X)$ with $\Relint L_{\left\{\vphi_j\right\}_{j=1}^k} \ne \emptyset$ (see the definition preceding Theorem \ref{thm:relint} in Section \ref{sec:cmie}), we obtain a similar result. See Theorem \ref{thm:relint}.
\end{remark}

We now present several examples of the above theorem.

\begin{itemize}
    \item Systems restricted on a topologically transitive locally maximal hyperbolic set, see \cite[Corollary 6.4.10, Theorem 18.1.2]{katok1995introduction}.
    \item Topologically transitive subshifts of finite type \cite[Theorem 1]{walters1977pseudo}.
    \item $\beta$-shifts, $S$-gap shifts and their factors \cite[Section 5]{climenhaga2017large}.
    \item Expanding-Markov R\'{e}nyi maps, including Gauss maps, see \cite{iommi2015multifractal,pollicott1999multifractal}.
\end{itemize}

Next, we briefly explain the main techniques and difficulties of our proof. The case $a\in\Int L_\varphi$ of Theorem \ref{thm:A} follows from Theorem \ref{thmA}, Theorem \ref{thmB} and Theorem \ref{thm:cmed_nonuniform}. When \(L_\varphi=\{a\}\), the integral constraint is automatic, since $\int\varphi\,\dif\mu=a$ for any $\mu\in\M(f,X)$. In this degenerate case, the conclusion reduces to unconditional exact-entropy realization and weak* approximation by uniquely ergodic measures with compact support. Its proof is obtained from the proof below by omitting the measures chosen on the two sides of \(a\) and all the corresponding integral-control conditions.

The proof splits into a system-specific multi-horseshoe input and a
common nested-horseshoe reduction. For cases {\rm (H1)}, {\rm (H2)}
and {\rm (H3)}, the required inputs are supplied by Theorems
\ref{thmC}, \ref{thmD} and
\ref{prop-nonuniform-multi-horseshoe}, respectively. Once the
class-specific input provides horseshoes with the required entropy and
weak* estimates, the remaining mechanism is the same. At each stage,
we select nested subhorseshoes whose entropies and potential integrals
approach the prescribed values, while the diameters of their
invariant-measure spaces tend to zero. Their intersection therefore
supports a unique invariant measure satisfying the exact entropy and
integral constraints. We carry out this reduction in full in the proof
of Theorem \ref{thmA}; the proofs of Theorems \ref{thmB} and
\ref{thm:cmed_nonuniform} reuse it after verifying the corresponding
system-specific input.

In generalizing this property to different systems, we must overcome different technical obstacles in each instance. In what follows, we clarify the differences between these three cases in the simplest possible terms. Systems satisfying (H1) are compact and have the shadowing property but no symbolic structure. Systems satisfying (H2) have the shadowing property and the symbolic structure but are not compact. Systems satisfying (H3) are compact and have the symbolic structure but no shadowing property.

In the case of topologically expanding maps, since systems under consideration are no longer homeomorphisms, the multi-horseshoe entropy-dense property only yields horseshoes in the sense of iterates. In the inductive process, we establish a connection between the invariant space of a horseshoe and its iteration measures via a map. This map preserves the entropy, thus ensuring the feasibility of the inductive procedure.

For countable Markov shifts, the difficulty we faced stemmed from non-compactness. Katok's entropy formula, the core of a technical lemma in the construction of horseshoes, may fail for countable Markov shifts (see \cite[Theorem 3.2]{iommi2022escape}). Therefore, we need to find an alternative approach. In \cite{takahasi2020entropy}, Takahasi proved a technical lemma: given an ergodic measure, one can construct a set of words with large cardinality and almost the same Birkhoff averages with respect to given functions where each word starts and ends with the same symbol. Inspired by this lemma, we construct sets of periodic points for invariant measures satisfying similar properties. Further, we improve \cite[Main Theorem]{takahasi2020entropy} to a multi-horseshoe entropy-dense result that supplies the compact initial horseshoes required by the nested construction.

In the case of symbolic systems with non-uniform structure, the extension of our property to those systems is technically demanding due to the lack of the shadowing property or the (global) specification property. In \cite{climenhaga2017large}, Climenhaga, Thompson and Yamamoto proved the entropy-approachable property for those systems. Inspired by this, we can overcome this non-uniformity by applying the edit-approachable property and the weak specification property. It allows us to rigorously establish the existence of a sequence of horseshoes satisfying certain properties.

Thus Theorems \ref{thmA}, \ref{thmB} and
\ref{thm:cmed_nonuniform} are three implementations of the same
nested-horseshoe mechanism rather than independent constructions; the
system-dependent work lies in producing the horseshoe input and the
auxiliary estimates required by the induction.

\subsection*{Organization of this article}
The paper is divided into three parts, addressing topologically expanding maps, countable Markov shifts and symbolic systems with non-uniform structure, respectively. In Section \ref{secPreliminary}, we review the definitions needed to state and prove our results. In Section \ref{sec:mhed}, we establish the multi-horseshoe entropy-dense property for topologically expanding systems. In Section \ref{sec:cmie}, we use nested horseshoes to obtain exact entropy-and-average realization and weak* approximation by uniquely ergodic measures with compact support. In Sections \ref{sec:pre_cms} and \ref{sec:pre_symbolic}, we review the definitions and lemmas needed for countable Markov shifts and symbolic systems with non-uniform structure, respectively. In Sections \ref{sec:minimal_cms} and \ref{sec:minimal_symbolic}, we establish the corresponding realization and approximation results for these two classes.

\part{Topologically expanding maps}
In this part, we deal with the first case. Let $f:X \to X$ be a topologically expanding  continuous map on a compact metric space $(X,d)$.

\section{Preliminary}\label{secPreliminary}
\subsection{The space of probability measures}\label{subsec:M(X)}
In this subsection, we recall the weak* topology and metric on the space of probability measures and give a technical lemma.

Let $D$ be the first Wasserstein distance (also called Kantorovich-Rubinstein distance) on $\M(X)$, a metrization of the weak* topology of $\M(X)$, see \cite[Definition 6.1]{villani2009optimal}. We list some properties of this metric here.
\begin{itemize}
    \item $d(x,y) = D\left(\delta_x, \delta_y\right)$ for any $x,y \in X$.
    \item $D\left(p\mu_1+(1-p)\nu_1, p\mu_2+(1-p)\nu_2\right) \leq pD\left(\mu_1, \mu_2\right) + (1-p) D\left(\nu_1, \nu_2\right)$ for any $0 \leq p \leq 1$ and any $\mu_1, \mu_2, \nu_1, \nu_2 \in \M(X)$.
\end{itemize}
Since $\diam(X)\leq 1$, one has $\diam\bigl(\M(X)\bigr)\leq 1$.

For any $x \in X$, define the \emph{empirical measure} of $x$ (with respect to $f$) as
$$\mathcal{E}_{n}(x):=\frac{1}{n}\sum_{j=0}^{n-1}\delta_{f^{j}x}$$
where $\delta_{x}$ is the Dirac measure at $x$. Let $V_f(x)$ denote the set of all limit points of $\left\{\cE_n(x)\right\}_{n=1}^\infty$ in the weak* topology.
For any $\mu \in \M(f,X)$, define the \emph{generic points} of $\mu$ (with respect to $f$) as $G_\mu:= \left\{x \in X: V_f(x) = \left\{\mu\right\} \right\}$.

The following lemma is easily verified and shows that if any two orbits in finite steps are close enough, then the two empirical measures induced by them are also close.

\begin{lemma}\label{lemMeasuresDistance}
    Let $\ve > 0$, $0 < \delta < 1$, $n \in \Nb^+$ and $\left\{x_i\right\}_{i=0}^{n-1}$, $\left\{y_i\right\}_{i=0}^{n-1}$ be two sequences in $X$. For any nonempty $J \subseteq \left\{0,1,\dots,n-1\right\}$ with $\left(n-\left\lvert J\right\rvert \right) / n < \delta $,
    \[D\left(\frac{1}{n}\sum_{i=0}^{n-1}\delta_{x_i}, \frac{1}{\left\lvert J\right\rvert }\sum_{i \in J}\delta_{x_i}\right) < \delta.\]
    If further $d(x_i, y_i) < \ve$ holds for any $i \in J$, then
    \[D\left(\frac{1}{n}\sum_{i=0}^{n-1}\delta_{x_i}, \frac{1}{n}\sum_{i=0}^{n-1}\delta_{y_i}\right) < \ve + \delta\]
    and hence
    \[D\left(\frac{1}{n}\sum_{i=0}^{n-1}\delta_{x_i}, \frac{1}{\left\lvert J\right\rvert }\sum_{i \in J}\delta_{y_i}\right) < \ve + 2\delta.\]
    In particular, if $d(x_i, y_i) \leq \ve$ holds for any $i \in \left\{0,1,\dots,n-1\right\}$, then
    \[D\left(\frac{1}{n}\sum_{i=0}^{n-1}\delta_{x_i}, \frac{1}{n}\sum_{i=0}^{n-1}\delta_{y_i}\right) \leq \ve.\]
\end{lemma}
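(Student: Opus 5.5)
The plan is to realize both measures on $X$ and compare them through a coupling. The Wasserstein distance $D$ is dual to $1$-Lipschitz test functions (Kantorovich--Rubinstein), and it is bounded above by the transport cost of any coupling. Since $\diam(X)\le 1$, moving a mass $m$ costs at most $m$.

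For the first inequality, I will write $\mu=\frac1n\sum_{i=0}^{n-1}\delta_{x_i}$ and $\mu_J=\frac1{|J|}\sum_{i\in J}\delta_{x_i}$. Then
\[
\mu=\frac{|J|}{n}\mu_J+\frac{n-|J|}{n}\nu,\qquad \nu:=\frac{1}{n-|J|}\sum_{i\notin J}\delta_{x_i},
\]
with the second term absent if $J$ is everything. I also write $\mu_J=\frac{|J|}{n}\mu_J+\frac{n-|J|}{n}\mu_J$. The convexity property of $D$ listed above then gives
\[
D(\mu,\mu_J)\le \frac{n-|J|}{n}\,D(\nu,\mu_J)\le \frac{n-|J|}{n}<\delta,
\]
using $\diam\M(X)\le1$.

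For the second inequality, I couple $\delta_{x_i}$ with $\delta_{y_i}$ index by index, with weight $1/n$ each. This gives
\[
D\Bigl(\frac1n\sum\delta_{x_i},\frac1n\sum\delta_{y_i}\Bigr)\le \frac1n\sum_{i} d(x_i,y_i).
\]
This is the convexity property iterated, together with $D(\delta_x,\delta_y)=d(x,y)$. I split the sum over $J$ and its complement. The indices in $J$ contribute at most $\frac{|J|}{n}\max_{i\in J} d(x_i,y_i)<\ve$ when $J\neq\emptyset$, where strictness comes from the finite maximum. The indices outside $J$ contribute at most $\frac{n-|J|}{n}<\delta$. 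The sum is therefore less than $\ve+\delta$.

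The third inequality follows from the triangle inequality. I combine the second bound with the first inequality applied to the $y$-sequence, namely $D\bigl(\frac1n\sum\delta_{y_i},\frac1{|J|}\sum_{i\in J}\delta_{y_i}\bigr)<\delta$. This gives the bound $\ve+2\delta$.

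The final ``in particular'' claim is the same coupling bound with $J$ equal to the full index set, now with non-strict inequalities, yielding $\le\ve$.

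No step is a real obstacle. The only point needing care is that strict inequalities survive, which uses finiteness of $J$ and the hypothesis $(n-|J|)/n<\delta$.
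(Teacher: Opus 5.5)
Your proof is correct and is essentially the verification the paper leaves to the reader: the paper gives no proof, only calling the lemma ``easily verified'' right after listing $D(\delta_x,\delta_y)=d(x,y)$, the convexity of $D$, and $\diam\M(X)\leq 1$. You use exactly those facts — splitting off the mass $\frac{n-|J|}{n}$ by convexity, bounding the matched part by $\frac1n\sum_i d(x_i,y_i)$, and finishing with the triangle inequality — and you handle the strict inequalities correctly. Only the bound $1$ on the metric is used, so the argument also covers the non-compact settings where the paper reuses the lemma.
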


\subsection{Entropy and pressure}
In this subsection, we recall the definition of entropy and pressure, consisting of the topological entropy for non-compact sets introduced by Bowen in \cite{bowen1973topological}, the metric entropy and the topological pressure.

\subsubsection{Topological entropy}\label{subsec:htop}
In this subsection, we recall the definition of topological entropy. Let $f:X \to X$ be a continuous map on a (compact or non-compact) metric space $(X,d)$.

For any $x \in X$, $n \in \Nb$ and any $\ve > 0$, define
\[B_n(x,\ve) = \left\{y \in X: d\left(f^ix, f^iy\right) < \ve \mathrm{\ for\ any\ } i \in \left\{0,1,\dots,n-1\right\} \right\}.\]
\begin{definition}\label{def:htop}
    Let $E \subseteq X$, $\ve > 0$, and $\F_n(E,\ve)$ be the collection of all finite or countable covers of $E$ by sets of the form $B_m(x,\ve)$ with $m \geq n$. Let
    $$C(E;t,n,\ve,f) = \inf_{\mathcal{C} \in \F_n(E,\ve)}\left\{\sum_{B_m(x,\ve) \in \mathcal{C}} e^{-tm}\right\}$$
    and
    $$C(E;t,\ve,f) = \lim_{n\to\infty}C(E;t,n,\ve,f).$$
    Let
    $$\htop(f,E,\ve)=\inf\left\{t:C(E;t,\ve,f) = 0\right\} = \sup\left\{t:C(E;t,\ve,f) = \infty\right\}.$$
    The \emph{(Bowen) topological entropy} of $E$ with respect to $f$ is defined by
    $$\htop(f,E) = \lim_{\ve\to 0}\htop(f,E,\ve).$$
\end{definition}

Next, we recall some basic facts of topological entropy.

\begin{lemma}[{{\cite[Proposition 2]{bowen1973topological}}}]\label{lem:htop_sum}
    Given a sequence of subsets $\left\{Y_i\right\}_{i=1}^{\infty} \subseteq X$, one has
    \[\htop\left(f, \bigcup_{i=1}^{\infty} Y_i\right) = \sup_{1 \leq i < \infty} \htop\left(f, Y_i\right).\]
\end{lemma}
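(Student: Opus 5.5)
The plan is to prove the two inequalities separately, working directly with Definition \ref{def:htop}. Throughout, write $Y=\bigcup_{i=1}^\infty Y_i$ and $s=\sup_{i}\htop(f,Y_i)$.

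\textbf{Monotonicity gives $\geq$.} If $E\subseteq F$, then every cover in $\F_n(F,\ve)$ is also a cover in $\F_n(E,\ve)$. Hence
\[C(E;t,n,\ve,f)\leq C(F;t,n,\ve,f)\]
for all $t,n,\ve$. Passing to the limit in $n$ and then taking the infimum over $t$ gives $\htop(f,E,\ve)\leq\htop(f,F,\ve)$. Letting $\ve\to0$ gives $\htop(f,E)\leq\htop(f,F)$. Applying this with $E=Y_i$ and $F=Y$ shows $\htop(f,Y)\geq s$.

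\textbf{Two elementary monotonicity facts.} For the reverse inequality we may assume $s<\infty$. The first fact concerns $\ve$. If $\ve'<\ve$, then $B_m(x,\ve')\subseteq B_m(x,\ve)$. So any cover by $\ve'$-Bowen balls can be replaced by the corresponding $\ve$-Bowen balls with the same weights $e^{-tm}$. This gives $C(E;t,n,\ve,f)\leq C(E;t,n,\ve',f)$, and therefore $\htop(f,E,\ve)$ is nonincreasing in $\ve$. In particular $\htop(f,E,\ve)\leq \htop(f,E)$ for every $\ve>0$. The second fact concerns $n$. Since $\F_{n+1}(E,\ve)\subseteq\F_n(E,\ve)$, the quantity $C(E;t,n,\ve,f)$ is nondecreasing in $n$. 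Consequently $C(E;t,n,\ve,f)\leq C(E;t,\ve,f)$ for all $n$.

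\textbf{Covering argument gives $\leq$.} Fix $t>s$ and $\ve>0$. By the first fact, $\htop(f,Y_i,\ve)\leq s<t$, so $C(Y_i;t,\ve,f)=0$ for every $i$. By the second fact, $C(Y_i;t,n,\ve,f)=0$ for every $n$ and every $i$.

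Now fix $n$ and $\eta>0$. For each $i$, choose $\mathcal{C}_i\in\F_n(Y_i,\ve)$ with
\[\sum_{B_m(x,\ve)\in\mathcal{C}_i}e^{-tm}<\eta 2^{-i}.\]
The union $\bigcup_i\mathcal{C}_i$ is a countable cover of $Y$ by Bowen balls $B_m(x,\ve)$ with $m\geq n$, so it lies in $\F_n(Y,\ve)$. Its total weight is less than $\eta$. Since $\eta$ is arbitrary, $C(Y;t,n,\ve,f)=0$ for all $n$, hence $C(Y;t,\ve,f)=0$ and $\htop(f,Y,\ve)\leq t$.

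Letting $\ve\to0$ and then $t\downarrow s$ yields $\htop(f,Y)\leq s$, which completes the proof.

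\textbf{Main point of care.} The one step that needs attention is the passage from $\htop(f,Y_i)<t$ to $C(Y_i;t,n,\ve,f)=0$ at a fixed scale $\ve$ and for every $n$. This is exactly what the two monotonicity facts provide. It is also where the definition's allowance of countable covers is essential: the summable choice $\eta 2^{-i}$ only works because the union of countably many countable covers is again admissible.
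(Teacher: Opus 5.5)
Your proof is correct: the monotonicity in $\ve$ and in $n$, together with the summable choice $\eta 2^{-i}$ of countable covers, is exactly the countable-stability argument for the Carath\'eodory-type quantity $C(\cdot;t,\ve,f)$. The only step left implicit is that $C(E;t,n,\ve,f)$ is nonincreasing in $t$ (since $m\geq 0$), which you use to pass from $\htop(f,Y_i,\ve)<t$ to $C(Y_i;t,\ve,f)=0$, and it is immediate.

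The paper gives no proof and only cites Bowen, whose result rests on the same covering argument but for his open-cover formulation of entropy. Your version works directly with the Bowen-ball Definition~\ref{def:htop} that the paper actually uses, so it does not need the equivalence of the two definitions.
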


A point $x \in X$ is called \emph{wandering} for $f$ if there exists an open neighborhood $U$ of $x$ such that elements of $\left\{f^{-n} U\right\}_{n \geq 0}$ are mutually disjoint. The \emph{non-wandering set} for $f$, denoted by $\Omega(f)$, consists of all points that are not wandering for $f$.

\begin{lemma}[{{\cite[Theorem 2.4]{bowen1970topological}}}]\label{lem:htop_nw}
    If $X$ is compact, then $\htop(f,X) = \htop\left(f, \Omega(f)\right)$.
\end{lemma}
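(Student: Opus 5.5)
We plan to prove the two inequalities separately, using the variational principle for compact systems and Poincaré recurrence.

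\emph{Lower bound.} Since $\Omega(f)\subseteq X$, any cover of $X$ by Bowen balls $B_m(x,\ve)$ with $m\ge n$ is also a cover of $\Omega(f)$. Hence $C(\Omega(f);t,n,\ve,f)\le C(X;t,n,\ve,f)$ for all $t,n,\ve$. Passing to the limits in Definition \ref{def:htop} gives $\htop(f,\Omega(f))\le \htop(f,X)$.

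\emph{Upper bound.} First we identify Bowen's entropy with the classical one for compact invariant sets. When $X$ is compact, Bowen's $\htop(f,X)$ equals the usual topological entropy defined via spanning or separated sets \cite[Proposition 1]{bowen1973topological}. The set $\Omega(f)$ is closed and $f$-invariant, hence compact. The same identification applies to $f|_{\Omega(f)}$, with one small point: in $\htop(f,\Omega(f))$ the centres of the covering balls may lie in $X$, whereas for the subsystem they must lie in $\Omega(f)$. Any ball $B_m(x,\ve)$ meeting $\Omega(f)$ is contained in $B_m(y,2\ve)$ for some $y\in\Omega(f)$, and conversely balls centred in $\Omega(f)$ are admissible covers. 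So the two quantities agree in the limit $\ve\to0$, and $\htop(f,\Omega(f))=\htop(f|_{\Omega(f)})$.

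Next we show that every $\mu\in\M(f,X)$ satisfies $\mu(\Omega(f))=1$. By the Poincaré recurrence theorem applied to a countable base of the topology, $\mu$-almost every point is recurrent. Recurrent points are non-wandering: if $f^{n_k}x\to x$, then every neighbourhood $U$ of $x$ satisfies $f^{n}U\cap U\ne\emptyset$ for some $n\ge1$, equivalently $f^{-n}U\cap U\neq\emptyset$. Hence $\mu$ is also an invariant measure of $f|_{\Omega(f)}$, and $h_\mu(f)=h_\mu(f|_{\Omega(f)})\le \htop(f|_{\Omega(f)})$ by the easy half of the variational principle.

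Finally, taking the supremum over $\mu$ and using the full variational principle on the compact space $X$ gives
\[\htop(f,X)=\sup_{\mu\in\M(f,X)}h_\mu(f)\le \htop(f|_{\Omega(f)})=\htop(f,\Omega(f)).\]
Combined with the lower bound, this proves $\htop(f,X)=\htop(f,\Omega(f))$.

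The only delicate step is the bookkeeping that identifies Bowen's dimension-type entropy of the subset $\Omega(f)\subseteq X$ with the classical entropy of the subsystem, which the $2\ve$ argument handles. Everything else is standard. Alternatively, one could follow Bowen's original direct argument, which covers the wandering part by finitely many pieces that orbits visit only boundedly often, but the variational route is shorter.
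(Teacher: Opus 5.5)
Your proof is correct. The paper gives no argument for this lemma; it simply quotes \cite[Theorem 2.4]{bowen1970topological}. There the statement concerns the classical entropy of the restriction $f|_{\Omega(f)}$ and is proved directly, without invariant measures. That argument is essentially a counting argument with spanning/separated sets: an orbit visits each wandering open set at most once, so it spends only boundedly many iterates outside a fixed neighbourhood of $\Omega(f)$.

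You instead take the measure-theoretic route. Poincar\'e recurrence shows $\M(f,X)=\M(f,\Omega(f))$. The variational principle on $X$, together with its easy half on the compact subsystem $\Omega(f)$, then gives the upper bound, and monotonicity of Bowen's entropy gives the lower bound. Your route is shorter and conceptually transparent. It depends on the full variational principle and on the standard facts that $\Omega(f)$ is closed and forward invariant, which you use without proof. Bowen's argument is elementary and measure-free.

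Your write-up also handles something the bare citation glosses over. The paper's $\htop(f,\Omega(f))$ is the dimension-type quantity of Definition \ref{def:htop}, with ball centres allowed anywhere in $X$, whereas Bowen's 1970 theorem concerns the classical entropy of $f|_{\Omega(f)}$. Your $2\ve$ comparison of covers, combined with the identification of Bowen's entropy with the classical one for compact invariant sets \cite{bowen1973topological}, is exactly the bridge needed to make the quoted result match the paper's notation.
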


\subsubsection{Metric entropy}
In this subsection, we recall the definition of metric entropy. We call $(X, \mathcal{B}, \mu)$ a probability space if $\mathcal{B}$ is a Borel $\sigma$-algebra on $X$ and $\mu$ is a probability measure on $X$. We say $\xi$ is a partition of $X$ if $\bigcup_{A \in \xi} A = X$ and elements in $\xi$ are pairwise disjoint. We say a partition $\xi$ is measurable if any element of $\xi$ is measurable. For any two partitions $\xi, \eta$ of $X$, let
\[\xi \vee \eta = \left\{A \cap B: A \in \xi, B \in \eta\right\}.\]
Let $\xi = \left\{A_i \subseteq X:1 \leq i \leq k\right\}$ be a finite measurable partition of $X$. The \emph{entropy of $\xi$ with respect to $\mu \in \M(X)$} is defined by
$$H_\mu(\xi) = -\sum_{i=1}^k \mu\left(A_i\right) \log\mu\left(A_i\right) $$
where $0 \log 0:= 0$. If further $\mu \in \M(f,X)$, the following limit exists:
$$h_\mu(f,\xi) = \lim_{n\to\infty}\frac{1}{n}H_\mu\left(\bigvee_{i=0}^{n-1}f^{-i}\xi\right).$$
The \emph{metric entropy} of $\mu$ is defined by
$$h_\mu(f) = \sup\left\{h_\mu(f,\xi):\xi\mathrm{\ is\ a\ finite\ measurable\ partition\ of\ }X\right\}.$$
More information about metric entropy can be checked in \cite[Chapter 4]{walters1982introduction}.

\subsubsection{Topological pressure}
In this subsection, we recall the definition of topological pressure. Let $\varphi \in C(X)$. Given an open cover $\alpha$ of $X$, let
\[P_n(\varphi,\alpha) = \inf\left\{\sum_{U \in \gamma} \sup_{x \in U} e^{\sum_{i=0}^{n-1}\varphi\left(f^i x\right)}: \gamma \mathrm{\ is\ a\ finite\ subcover\ of\ } \bigvee_{i=0}^{n-1} f^{-i}\alpha\right\}.\]
This sequence $\log P_n(\varphi,\alpha)$ is subadditive and so the limit
\[P(\varphi,\alpha) = \lim_{n \to \infty} \frac{1}{n}\log P_n(\varphi,\alpha)\]
exists. Define the \emph{topological pressure} of the potential $\varphi$ with respect to $f$ to be
\[P(\varphi) = \lim_{\diam \alpha \to 0} P(\varphi,\alpha)\]
where $\diam \alpha := \sup\left\{\diam U: U \in \alpha\right\}$. More information about topological pressure can be checked in \cite[Section 10.3]{viana2016foundations}.

\subsection{Transitivity, mixing and the shadowing property}
In this subsection, we recall some definitions about transitivity, mixing and the shadowing property. First, we recall the definition of topological transitivity.
\begin{definition}
    We say $(X,f)$ is \emph{topologically transitive} if for any two nonempty open sets $U,V \subseteq X$ there exists $n \in \Nb^+$ such that $f^{-n}(U) \cap V \ne \emptyset$.
\end{definition}

Internal chain transitivity can be viewed as a weaker form of transitivity.

\begin{definition}
    We say $\left\langle x_i \right\rangle_{i=1}^l$ in $X$ is a \emph{$\delta$-chain} for some $\delta > 0$ and $l \in \Nb^+$ if $d\left(f(x_i), x_{i+1}\right) < \delta $ for any $1 \leq i \leq l-1$. Let $A \subseteq X$ be a nonempty compact $f$-invariant set. We say a chain $\left\langle x_i \right\rangle_{i=1}^l$ is \emph{in $A$} if $x_i \in A$ for any $1 \leq i \leq l$. We say a $\delta$-chain $\left\langle x_i \right\rangle_{i=1}^l$ \emph{connects $a$ and $b$} if $a = x_1$ and $d\left(f (x_l), b\right) < \delta$. We call $A$ \emph{internally chain transitive} if for any $x,y \in A$ and any $\ve > 0$, there exists an $\ve$-chain in $A$ connecting $x$ and $y$.
\end{definition}
\begin{remark}
    If $(X,f)$ is topologically transitive, then $X$ is internally chain transitive.
\end{remark}

On the other hand, mixing can be viewed as a stronger form of transitivity.

\begin{definition}
    We say $(X,f)$ is topologically mixing if for any two nonempty open sets $U,V \subseteq X$ there exists $N = N(U,V) \in \Nb^+$ such that for any $n \geq N$, one has $f^{-n}(U) \cap V \ne \emptyset$.
\end{definition}

At the end of this subsection, we recall the definition of the shadowing property.

\begin{definition}
    We say $\left\{x_n\right\}_{n = 0}^{+\infty}$ in $X$ is a $\delta$-pseudo-orbit for some $\delta > 0$ if $d\left(f(x_n), x_{n+1}\right) < \delta $ for any $n \in \Nb$. We say $\left\{x_n\right\}_{n = 0}^{+\infty}$ in $X$ is $\ve$-shadowed by $y$ in $X$ for some $\ve > 0$ if $d\left(x_n,f^n y\right) < \ve$ for any $n \in \Nb$. We say $(X,f)$ satisfies the shadowing property if for any $\ve > 0$, there exists $\delta > 0$ such that any $\delta$-pseudo-orbit is $\ve$-shadowed by some point in $X$.
\end{definition}

Note in some articles, $\delta$-chains are also called $\delta$-pseudo-orbits. In this article, chains are used in the case of finitely many points, while pseudo-orbits are used in the case of infinitely many points.

Full shifts are examples satisfying the above property, see \cite[Example 9.2.15]{viana2016foundations}, \cite[Proposition 7.5]{denker1976ergodic} and \cite{walters1977pseudo} for a proof.

\begin{lemma}\label{lemFullShiftsProperties}
    Every two-sided (one-sided) full shift is expansive (positively expansive), topologically mixing and satisfies the shadowing property.
\end{lemma}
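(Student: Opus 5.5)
The lemma has three parts: expansiveness (positive expansiveness in the one-sided case), topological mixing, and shadowing. I would check each directly on the full shift $\Sigma_k=\{1,\dots,k\}^{\Zb}$ (or $\{1,\dots,k\}^{\Nb}$). The space carries the standard bounded metric $d(x,y)=2^{-n(x,y)}$, where $n(x,y)$ is the largest $n\ge 0$ such that $x_i=y_i$ for all $|i|<n$ (respectively $0\le i<n$). This metric satisfies $\diam\le 1$, as required in Section \ref{secMainResults}. The whole argument rests on one observation: $d(x,y)<1$ holds exactly when $x_0=y_0$, and $d(x,y)\le 2^{-n}$ holds exactly when $x$ and $y$ agree on the central window of length $n$.

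\textbf{Expansiveness.} Take expansivity constant $c=1/2$. Suppose $d(\sigma^i x,\sigma^i y)\le 1/2$ for every $i\in\Zb$ (respectively every $i\in\Nb$). Then $(\sigma^i x)_0=(\sigma^i y)_0$, so $x_i=y_i$ for all $i$, and therefore $x=y$.

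\textbf{Mixing.} It suffices to take $U$ and $V$ to be cylinders, since cylinders form a base. Let $U=[u]$ be fixed on the coordinates $[-m,m]$ and $V=[v]$ be fixed on the same window. For any $n\ge 2m+1$, the two windows, after a shift by $n$, are disjoint. So we can build a point $z$ that carries $v$ on $[-m,m]$ and $u$ on $[n-m,n+m]$, filling the other coordinates arbitrarily. This $z$ lies in $V\cap\sigma^{-n}U$, which gives $N(U,V)=2m+1$. The one-sided case is identical.

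\textbf{Shadowing.} Given $\ve>0$, choose $m$ with $2^{-m}<\ve$ and set $\delta=2^{-m}$. Let $\{x^{(n)}\}$ be a $\delta$-pseudo-orbit. The condition $d(\sigma x^{(n)},x^{(n+1)})<\delta$ means that $\sigma x^{(n)}$ and $x^{(n+1)}$ agree on $|i|\le m$, so $x^{(n)}_{j+1}=x^{(n+1)}_j$ for $|j|\le m$.

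In the one-sided case, define $y_n=x^{(n)}_0$. By induction on $k\le m$, we get $x^{(n)}_k=x^{(n+k)}_0=y_{n+k}$. Hence $\sigma^n y$ agrees with $x^{(n)}$ on $[0,m]$, and so $d(\sigma^n y,x^{(n)})\le 2^{-(m+1)}<\ve$.

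In the two-sided case, the pseudo-orbit is indexed by $n\in\Nb$ as in the definition. Define $y_n=x^{(n)}_0$ for $n\ge 0$ and $y_{-k}=x^{(0)}_{-k}$ for $k>0$. The same chaining argument, run in both directions, gives agreement on $[-m,m]$. For negative offsets at small $n$, where $n-k<0$, the value is read off from $x^{(0)}$ using the overlap relation iterated backwards.

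The only step that needs real care is this index bookkeeping in the two-sided shadowing case. Everything else is immediate from the cylinder structure. Equivalently, one may simply cite the references listed above.
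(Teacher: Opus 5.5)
Your proof is correct, but it takes a different route from the paper. The paper does not prove this lemma; it only refers to \cite{viana2016foundations}, \cite{denker1976ergodic} and \cite{walters1977pseudo}. You instead verify the three properties directly at the level of cylinders, with explicit constants: $c=1/2$, $N(U,V)=2m+1$ and $\delta=2^{-m}$.

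Your forward and backward chaining of the overlap relation $x^{(n)}_{j+1}=x^{(n+1)}_j$ (for $|j|\le m$) in the two-sided shadowing case is right. For an offset $i<0$, you chain back $-i$ steps to $x^{(n+i)}_0$ when $n+i\ge 0$, and $n$ steps to $x^{(0)}_{n+i}$ otherwise.

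What your version gains is self-containedness. Your metric agrees with the symbolic metrics fixed in Parts II and III, up to the base $2$ versus $q$. Nothing in your argument uses finiteness of the alphabet, so it also yields the countable-alphabet statements of Lemma \ref{lem:propertiesCMS}. The citation approach only gains brevity.

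Two small remarks.
\begin{itemize}
\item In the two-sided case, $d(x,y)\le 2^{-n}$ means agreement on $\{|i|<n\}$, which has $2n-1$ coordinates rather than $n$. You use the correct version afterwards, so this is only a slip in wording.
\item The paper applies the lemma to systems such as $(\Delta^k,f^{N_k})$ carrying the metric inherited from $X$, not the shift metric. Your metric-specific argument should therefore be paired with a standard fact. On compact metric spaces, positive expansiveness (expansiveness) and the shadowing property are invariant under topological conjugacy, because a conjugacy and its inverse are uniformly continuous.
\end{itemize}
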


\subsection{Expansivity, the uniform separation property and the entropy-dense property}
In this subsection, we recall the definitions of expansivity, the uniform separation property and the entropy-dense property. First, we recall the definition of expansivity.

\begin{definition}
    When $f:X \to X$ is a homeomorphism, we say $(X,f)$ is expansive if there exists a constant $c > 0$ such that for any $x \ne y \in X$, one has $d\left(f^i x, f^i y\right) > c $ for some $i \in \Zb$. We call $c$ an expansive constant.
\end{definition}
\begin{definition}
    When $f:X \to X$ is a continuous map, we say $(X,f)$ is positively expansive if there exists a constant $c > 0$ such that for any $x \ne y \in X$, one has $d\left(f^i x, f^i y\right) > c $ for some $i \in \Nb$. We call $c$ a positively expansive constant.
\end{definition}

Note positively expansive maps and expansive homeomorphisms are distinguished in this article. In some articles, positively expansive maps are called expansive maps for consistency. The following lemma, always stated for expansive homeomorphisms, also holds for positively expansive maps.

\begin{lemma}[{{\cite[Corollary 9.2.17]{viana2016foundations}}}]\label{lemUpperSemiContinuity}
    If $(X,f)$ is positively expansive, then the entropy function $h_{\left(\cdot\right)}(f) : \M(f,X) \to \Rb$ is upper semi-continuous.
\end{lemma}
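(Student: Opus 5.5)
The statement to prove is Lemma~\ref{lemUpperSemiContinuity}: for a positively expansive map the entropy function $\mu \mapsto h_\mu(f)$ on $\M(f,X)$ is upper semi-continuous. I will follow the standard argument for expansive homeomorphisms and only adjust the generator step, where one-sided iterates replace two-sided ones.

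\textbf{Step 1: a one-sided generator.} Let $c>0$ be a positively expansive constant. I claim that any finite measurable partition $\xi$ with $\diam(A)<c$ for all $A\in\xi$ satisfies $h_\mu(f)=h_\mu(f,\xi)$ for every $\mu\in\M(f,X)$.
\begin{itemize}
  \item If $x\neq y$, positive expansivity gives some $i\in\Nb$ with $d(f^ix,f^iy)>c$. Hence $x$ and $y$ lie in different elements of $f^{-i}\xi$.
  \item So $\bigvee_{i\ge 0}f^{-i}\xi$ separates points. By compactness, the diameters of the elements of $\bigvee_{i=0}^{n-1}f^{-i}\xi$ tend to zero uniformly in $n$.
  \item This uniform shrinking is the usual argument: if not, there are pairs $x_n,y_n$ in a common element with $d(x_n,y_n)\ge\eta$. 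Passing to limits gives $x\neq y$ that are never $c$-separated, contradicting positive expansivity. (Closedness issues are handled by using $\diam<c$ strictly, i.e.\ working with closures.)
  \item Therefore $\bigvee_{i\ge0}f^{-i}\xi$ generates the Borel $\sigma$-algebra. The one-sided Kolmogorov--Sinai theorem for noninvertible maps then gives $h_\mu(f)=h_\mu(f,\xi)$.
\end{itemize}

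\textbf{Step 2: choosing $\xi$ adapted to $\mu_0$.} Fix $\mu_0\in\M(f,X)$ and $\ve>0$. I choose $\xi=\{A_1,\dots,A_k\}$ with $\diam A_j<c$ and $\mu_0(\partial A_j)=0$ for every $j$. To build it, note that around each point all but countably many radii $r<c/2$ give balls with $\mu_0$-null boundary. Cover $X$ by finitely many such balls and disjointify them. Boundaries of the resulting sets lie in the union of the ball boundaries, so they are $\mu_0$-null. By invariance of $\mu_0$, every element of $\bigvee_{i=0}^{n-1}f^{-i}\xi$ also has $\mu_0$-null boundary, since $\partial f^{-i}A\subseteq f^{-i}\partial A$ by continuity.

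\textbf{Step 3: the semicontinuity estimate.} Since $h_\mu(f,\xi)=\inf_n \frac1n H_\mu\bigl(\bigvee_{i=0}^{n-1}f^{-i}\xi\bigr)$ by subadditivity, I pick $n$ with
\[
\frac1n H_{\mu_0}\Bigl(\bigvee_{i=0}^{n-1}f^{-i}\xi\Bigr)<h_{\mu_0}(f)+\ve .
\]
Now let $\mu\to\mu_0$ weak*. For any set $B$ with $\mu_0(\partial B)=0$ we have $\mu(B)\to\mu_0(B)$, and $t\mapsto -t\log t$ is continuous. So for $\mu$ close to $\mu_0$,
\[
\frac1n H_\mu\Bigl(\bigvee_{i=0}^{n-1}f^{-i}\xi\Bigr)<h_{\mu_0}(f)+2\ve .
\]
Applying Step 1 to $\mu$ with the same $\xi$ gives
\[
h_\mu(f)=h_\mu(f,\xi)\le \frac1n H_\mu\Bigl(\bigvee_{i=0}^{n-1}f^{-i}\xi\Bigr)<h_{\mu_0}(f)+2\ve ,
\]
which is upper semi-continuity at $\mu_0$.

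\textbf{Main obstacle.} The delicate point is Step 1 in the noninvertible setting. One has to check that the generator theorem holds with only forward refinements, i.e.\ that $\bigvee_{i\ge0}f^{-i}\xi$ generates the $\sigma$-algebra mod $\mu$. This follows from the uniform diameter shrinking above, so that every open set is a countable union of elements of these refinements up to boundary sets. The remaining steps are routine.
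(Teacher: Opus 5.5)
Your proposal is correct and is essentially the standard argument behind the result the paper cites without proof (Corollary 9.2.17 of Viana--Oliveira): any finite partition $\xi$ of diameter below the positive expansivity constant is a one-sided generator, so $h_\mu(f)=h_\mu(f,\xi)$ for every $\mu$, and choosing $\xi$ with $\mu_0$-null boundaries makes $\mu\mapsto \frac1n H_\mu\bigl(\bigvee_{i=0}^{n-1}f^{-i}\xi\bigr)$ continuous at $\mu_0$. The parenthetical about closures in Step 1 is unnecessary: the maximal diameter $c'<c$ of the finite partition passes to the limit directly, giving $d(f^ix,f^iy)\le c'$ for all $i$.
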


In this article, we deal with the iteration of the systems in some cases. To make our proof more comprehensible, we recall the following lemma.

\begin{lemma}[{{\cite[Corollary 5.22.1]{walters1982introduction}}}]\label{lemIterationExpansive}
    If $(X,f)$ is positively expansive, then $(X,f^n)$ is positively expansive for any $n \in \Nb^+$.
\end{lemma}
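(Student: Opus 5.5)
The claim is that positive expansivity passes from $f$ to every iterate $f^n$. I will prove it directly from the definition, using only that $f$ is uniformly continuous on the compact space $X$. Let $c>0$ be a positively expansive constant for $f$. The aim is a constant $c'>0$ such that for any $x\ne y$ there is $k\in\Nb$ with $d\bigl((f^n)^k x,(f^n)^k y\bigr)>c'$.

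First I choose $c'$. Since $X$ is compact, each of $f,f^2,\dots,f^{n-1}$ is uniformly continuous. Hence there is $0<c'\le c$ such that $d(u,v)\le c'$ implies $d(f^j u,f^j v)\le c$ for every $0\le j\le n-1$. Only finitely many maps are involved, so one $c'$ works for all of them.

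Next I argue by contradiction. Suppose $x\ne y$ and $d\bigl(f^{nk}x,f^{nk}y\bigr)\le c'$ for every $k\in\Nb$. Any $i\in\Nb$ can be written as $i=nk+j$ with $0\le j\le n-1$. Applying the choice of $c'$ to $u=f^{nk}x$ and $v=f^{nk}y$ gives
\[
d\bigl(f^{i}x,f^{i}y\bigr)=d\bigl(f^j(f^{nk}x),f^j(f^{nk}y)\bigr)\le c .
\]
This holds for all $i\in\Nb$, which contradicts positive expansivity of $f$ with constant $c$. Therefore $c'$ is a positively expansive constant for $f^n$.

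None of the steps is hard. The only point that needs care is the inequality convention: the definition requires the strict inequality $d>c$, so the continuity bound must be chosen with the non-strict $\le c$ to match.
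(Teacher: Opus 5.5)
Your proof is correct. The paper gives no argument for this lemma. It cites Walters, where the statement concerns expansive \emph{homeomorphisms} of compact metric spaces, and applies it to positively expansive maps without comment. Your uniform-continuity argument for $f,f^2,\dots,f^{n-1}$ therefore supplies, in a self-contained way, exactly the one-sided version the paper uses.

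Your argument also uses nothing beyond uniform continuity. The same proof works for homeomorphisms: write $i=nk+j$ with $k\in\Zb$ and $0\le j\le n-1$. It also works on non-compact spaces where $f$ is uniformly continuous. An example is a countable Markov shift with the symbolic metric, on which $\sigma$ is $2$-Lipschitz.

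Your appeal to compactness is essential, not cosmetic: without uniform continuity the statement can fail. Take the closed discrete set $X=\{10m,\ 10m+1/m,\ 10m+3,\ 10m+5: m\in\Nb^+\}\subset\Rb$, and let $f$ swap $10m\leftrightarrow 10m+3$ and $10m+1/m\leftrightarrow 10m+5$. Then $f$ is continuous and positively expansive with constant $1$. However, $f^2=\mathrm{id}$ is not positively expansive, because the pairs $10m,\,10m+1/m$ are arbitrarily close. In the paper's setting (Part 1, $X$ compact) this causes no problem.
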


Next, we recall the definition of uniform separation property introduced by Pfister and Sullivan in \cite{pfister2007topological}. First, we recall the definition of $(\delta,n,\ve)$-separated set.
\begin{definition}
    For any $\ve > 0$ and $n \in \Nb^+$, we say two points $x$, $y \in X$ are \emph{$(n,\ve)$-separated} if there exists $0 \leq j \leq n-1$ such that $d \left(f^j x ,f^j y\right) > \ve$. We say a subset $E \subseteq X$ is \emph{$(n,\ve)$-separated} if any pair of different points in $E$ is $(n,\ve)$-separated.

    For any $\delta > 0$, $\ve > 0$ and $n \in \Nb^+$, we say two points $x$, $y \in X$ are \emph{$(\delta,n,\ve)$-separated} if
    $$\frac{1}{n}\left\lvert \left\{ 0 \leq j \leq n-1:d \left(f^j x ,f^j y\right) > \ve\right\} \right\rvert  \geq \delta.$$
    We say a subset $E \subseteq X$ is \emph{$(\delta,n,\ve)$-separated} if any pair of different points in $E$ is $(\delta,n,\ve)$-separated.
\end{definition}

Note all $(\delta,n,\ve)$-separated sets are also $(n,\ve)$-separated. Next, we recall the definition of the uniform separation property.

\begin{definition}
    We say that $(X,f)$ has \emph{uniform separation property} if for any $\eta > 0$, there exist $\delta^* > 0$ and $\ve^* > 0$ such that for any ergodic measure $\mu$ and any neighborhood $F \subseteq \M(X)$ of $\mu$, there exists $n_{F,\mu,\eta}^*$ such that for any integer $n \geq n_{F,\mu,\eta}^*$, there exists a $(\delta^*,n,\ve^*)$-separated set $\Gamma_n \subseteq X_{n,F}$ with
    $$\left\lvert \Gamma_n \right\rvert  \geq e^{n(h_\mu(f)-\eta)}$$
    where $X_{n,F}:=\left\{x \in X: \mathcal{E}_{n}(x) \in F\right\} $.
\end{definition}

The following lemma gives the relationship between expansivity and the uniform separation property.

\begin{lemma}[{{\cite[Theorem 3.1]{pfister2007topological}}}]\label{lemExpansiveImpliesUSP}
    If $(X,f)$ is positively expansive, then $(X,f)$ has the uniform separation property.
\end{lemma}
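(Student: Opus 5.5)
The plan is to compare a maximal $(\delta^*,n,\ve^*)$-separated set with a Katok-type covering count, using positive expansivity twice. It is used once to make the scale $\ve^*$ uniform in $\mu$, and once to turn "close at most times" into "close at all times up to a small combinatorial loss". Let $c>0$ be a positively expansive constant of $(X,f)$ and fix $\eta>0$. I will take $\ve^*:=c/4$ and choose $\delta^*$ at the end, depending only on $\eta$, $c$ and $X$.

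\emph{Step 1 (uniform Katok scale).} For positively expansive maps, every finite measurable partition of diameter $<c$ is a one-sided generator for every $\mu\in\M(f,X)$. From this (or from the Bowen--Misiurewicz $h$-expansiveness argument) I expect to show, by the standard proof of Katok's entropy formula, the following uniform statement. For every ergodic $\mu$, every $\gamma\in(0,1)$ and every $\rho\le c/2$,
\[\liminf_{n\to\infty}\frac1n\log N_\mu(n,\rho,\gamma)\geq h_\mu(f),\]
where $N_\mu(n,\rho,\gamma)$ is the minimal number of Bowen balls $B_n(x,\rho)$ covering a set of $\mu$-measure at least $\gamma$. The point is that the scale $\rho=c/2$ works simultaneously for all $\mu$.

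\emph{Step 2 (Hamming balls versus Bowen balls).} By compactness and expansivity there is $m\in\Nb^+$, depending only on $c$, such that $d(f^ix,f^iy)\leq c/4$ for all $0\le i\le m$ implies $d(x,y)<c/2$. Let $y$ be $(\delta^*,n,\ve^*)$-close to $x$, i.e.\ the set of bad times $\{j:d(f^jx,f^jy)>\ve^*\}$ has fewer than $\delta^* n$ elements. Then outside the union of the windows $[j-m,j]$ around bad times $j$, together with the last $m$ times, we have $d(f^ix,f^iy)<c/2$. So the exceptional set has at most $(m+1)\delta^*n+m$ elements. Let $K$ be the cardinality of a fixed finite cover of $X$ by $c/4$-balls. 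Counting the possible positions of the exceptional set and the possible cover elements visited at those times, I get the following: every set of the form $\{y:y \text{ is not }(\delta^*,n,\ve^*)\text{-separated from }x\}$ is covered by at most
\[\binom{n}{\lfloor (m+1)\delta^* n+m\rfloor}K^{(m+1)\delta^* n+m}\]
Bowen balls of radius $c/2$, after the usual halving of radii. Now choose $\delta^*$ so small, depending only on $m$, $K$ and $\eta$, that this quantity is at most $e^{n\eta/2}$ for all large $n$.

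\emph{Step 3 (conclusion).} Let $\mu$ be ergodic and let $F$ be a neighbourhood of $\mu$. By the ergodic theorem, $\mu(X_{n,F})>1/2$ for $n$ large. Take $\Gamma_n\subseteq X_{n,F}$ to be a maximal $(\delta^*,n,\ve^*)$-separated subset. By maximality, the non-separation sets around the points of $\Gamma_n$ cover $X_{n,F}$. Combining Steps 1 and 2 gives
\[\lvert\Gamma_n\rvert\, e^{n\eta/2}\geq N_\mu(n,c/2,1/2)\geq e^{n(h_\mu(f)-\eta/2)}\]
for all $n\ge n^*_{F,\mu,\eta}$, which is the required bound.

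The main obstacle is Step 1: a Katok-type lower bound at a scale independent of $\mu$. Katok's formula as usually stated involves a limit $\rho\to0$, and that limit would make $m$, and hence $\delta^*$, depend on $\mu$. My first attempt is to run Katok's proof with a partition $\xi$ of diameter $<c/2$, which generates by expansivity. By Shannon--McMillan--Breiman, most atoms of $\xi^n$ have measure at most $e^{-n(h_\mu-\eta/4)}$. A Bowen ball $B_n(x,c/4)$ can meet several atoms of $\xi^n$, and bounding how many seems to require expansivity again, absorbed into the $e^{n\eta/2}$ factor of Step 2. Failing that, I would invoke the fact that entropy of positively expansive systems is computed at any scale below $c$, which is how Pfister and Sullivan \cite{pfister2007topological} proceed.
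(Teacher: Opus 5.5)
The paper does not prove this lemma; it quotes it from Pfister--Sullivan \cite{pfister2007topological}, so there is no internal argument to compare with. Your three-step scheme is a correct, self-contained proof once two points are straightened out. The scheme is: take a maximal $(\delta^*,n,\ve^*)$-separated subset of $X_{n,F}$, cover each non-separation set by subexponentially many Bowen balls at a fixed scale, and bound Katok covering numbers from below at that same scale.

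\emph{Step 1.} The step you call the main obstacle is short, and your worry that the limit $\rho\to0$ forces $\delta^*$ to depend on $\mu$ is unfounded. Fix $\rho\le c$ and any $\rho'>0$. Compactness and positive expansivity give $m=m(\rho')$ such that $d(f^ty,f^tz)\le c$ for $0\le t\le m$ implies $d(y,z)<\rho'$. Hence $B_{n+m}(x,\rho)\subseteq B_n(x,\rho')$, so $N_\mu(n,\rho',\gamma)\le N_\mu(n+m,\rho,\gamma)$, and therefore
\[
\liminf_{n\to\infty}\frac1n\log N_\mu(n,\rho,\gamma)\geq\liminf_{n\to\infty}\frac1n\log N_\mu(n,\rho',\gamma).
\]
Letting $\rho'\to0$ and applying Katok's formula (whose proof does not use invertibility) gives your Step 1 for every $\rho\le c$. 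The $\mu$-dependent quantities $\rho'$ and $m$ only shift the index $n$, so they are absorbed into $n^*_{F,\mu,\eta}$ and never enter $\delta^*$ or $\ve^*$. The partition/Shannon--McMillan--Breiman route you sketch would reintroduce $\mu$-dependence through neighbourhoods of partition boundaries, so it is the wrong way to go.

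\emph{Step 2.} Expansivity plays no role here. Since $\ve^*=c/4<c/2$, the implication you quote already holds with $m=0$, so the windows are unnecessary. The covering bound is pure combinatorics and holds in any compact system at a fixed scale. Group the points $y$ that are not separated from $x$ by two pieces of data: their exact bad-time set $J$, with $\lvert J\rvert<\delta^*n$, and the element of the $c/4$-cover containing $f^jy$ for each $j\in J$. This gives at most $\sum_{k<\delta^*n}\binom{n}{k}K^k$ groups; note the sum over all sizes, not a single binomial coefficient. Each group has Bowen diameter at most $c/2$, so it lies in some $B_n(y_0,3c/4)$.

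Apply Step 1 with $\rho=3c/4\le c$ to avoid the open/closed-ball issue. Then $\ve^*=c/4$ and $\delta^*=\delta^*(\eta,c,X)$ are uniform in $\mu$, Step 3 goes through as you wrote it, and expansivity is used exactly once, in Step 1.
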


Next, we recall the definition of entropy-dense property.

\begin{definition}
    We say $(X,f)$ satisfies the \emph{entropy-dense property} if for any $\mu \in \M(f,X)$, any neighborhood $B(\mu)$ of $\mu$ in $\M(X)$ and any $h < h_\mu(f)$, there exists $\nu \in \Merg(f,X)$ such that $h_\nu(f) > h$ and $\M(f, \supp\left(\nu\right) ) \subseteq B(\mu)$.
\end{definition}
\begin{remark}
    By the variational principle, this definition is equivalent to the following condition: for any $\mu \in \M(f,X)$, any neighborhood $B(\mu)$ of $\mu$ in $\M(X)$ and any $h < h_\mu(f)$, there exists a nonempty closed $f$-invariant set $\Lambda = \Lambda(\mu, B(\mu), h) \subseteq X$ such that $\M(f, \Lambda) \subseteq B(\mu)$ and $\htop(f, \Lambda) > h$.
\end{remark}

By \cite[Theorem 30]{kwietniak2016panorama}, topologically transitive systems with the shadowing property satisfy the approximate product property. By \cite[Proposition 2.3(1)]{pfister2005large}, systems with the approximate product property satisfy the entropy-dense property, which leads to the following lemma.

\begin{lemma}\label{lemTransShaImpliesEDP}
    Suppose $(X,f)$ is topologically transitive and has the shadowing property. Then $(X,f)$ satisfies the entropy-dense property.
\end{lemma}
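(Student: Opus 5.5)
The lemma just stated, Lemma \ref{lemTransShaImpliesEDP}, follows by chaining two results cited in the text just before it. The approximate product property will be the bridge between them.

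First, assume $(X,f)$ is topologically transitive with the shadowing property, and $X$ is compact. By \cite[Theorem 30]{kwietniak2016panorama}, $(X,f)$ then has the approximate product property in the sense of Pfister and Sullivan. The one point to check is that the version of transitivity and shadowing assumed here matches the hypotheses of that theorem. Both are stated for continuous maps on compact metric spaces, not necessarily invertible ones. If the cited result is phrased through chain transitivity instead, the Remark above supplies the link: topological transitivity implies $X$ is internally chain transitive.

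Second, \cite[Proposition 2.3(1)]{pfister2005large} states that the approximate product property implies the following: for every $\mu\in\M(f,X)$, every neighborhood $B(\mu)$ and every $h<h_\mu(f)$, there is an ergodic $\nu$ with $h_\nu(f)>h$ and $\M(f,\supp\nu)\subseteq B(\mu)$. The conclusion asserted there is a closed invariant set $\Lambda$ with $\htop(f,\Lambda)>h$ whose invariant measures all lie in $B(\mu)$. This is exactly the equivalent formulation in the Remark after the definition of the entropy-dense property.

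To recover the measure form, apply the variational principle on $\Lambda$. It gives an ergodic $\nu$ on $\Lambda$ with $h_\nu(f)>h$; we may take $\nu$ ergodic by ergodic decomposition, since entropy is affine. Then $\supp\nu\subseteq\Lambda$, so $\M(f,\supp\nu)\subseteq\M(f,\Lambda)\subseteq B(\mu)$.

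The only real obstacle is this bookkeeping between formulations: matching the hypotheses of the two cited results and passing between the set form and the measure form. The Remark and the variational principle handle that, so no new dynamics is needed.
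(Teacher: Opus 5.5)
Your proposal is correct and follows the paper's argument: the paper also chains \cite[Theorem 30]{kwietniak2016panorama} (transitivity plus shadowing gives the approximate product property) with \cite[Proposition 2.3(1)]{pfister2005large}, and relies on the Remark after the definition of the entropy-dense property to pass between the set form and the measure form. Your explicit conversion via the variational principle on $\Lambda$ is fine, with one small correction: to choose $\nu$ ergodic, affinity of entropy alone is not enough. You need the ergodic-decomposition formula for entropy, or simply the fact that the supremum in the variational principle can be taken over ergodic measures.
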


\section{Multi-horseshoe entropy-dense property}\label{sec:mhed}
In this section, we prove the ``multi-horseshoe'' entropy-dense property for topologically expanding maps. Dong, Hou and Tian proved the ``multi-horseshoe'' entropy-dense property for topologically Anosov diffeomorphisms.

\begin{theorem}\cite[Theorem 3.1]{dong2024abundance}
    Suppose $(X,f)$ is topologically Anosov and topologically transitive. Then for any positive integer $m$, any $f$-invariant measures $\left\{\mu_i\right\}_{i=1}^m \subseteq \M(f,X)$, any $x \in X$ and any $\veh, \veH, \ved > 0$, there exists $n \in \Nb^+$ and compact $f$-invariant subsets $\Lambda_i \subseteq \Lambda \subsetneq X$ such that for each $1 \leq i \leq m$, the following statements hold.
    \begin{enumerate}
        \item $\left(\Lambda_i, f\right)$ and $\left(\Lambda, f\right)$ are conjugate to a two-sided topologically transitive subshift of finite type respectively.
        \item $\htop\left(f,\Lambda_i\right) > h_{\mu_i}(f) - \veh$.
        \item $D_{\mathrm{H}}\left(K, \M(f, \Lambda)\right) < \veH$, $D_{\mathrm{H}}\left(\left\{\mu_i\right\} , \M(f,\Lambda_i)\right) < \veH$ where $K = \cov\left\{\mu_i\right\}_{i=1}^m$ and $D_{\mathrm{H}}$ is the Hausdorff distance.
        \item For any $z \in \Lambda_i$ or $\Lambda$ one has $f^{j+tn}z \in B(x,\ved)$ for some $1 \leq j \leq n$ for any $t \in \Nb$.
    \end{enumerate}
\end{theorem}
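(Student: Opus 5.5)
The plan is to build the horseshoes by shadowing pseudo-orbits assembled from blocks of orbit segments that are long and statistically typical for the given measures. Two reductions come first. Since the entropy map is upper semi-continuous (Lemma~\ref{lemUpperSemiContinuity} applies, because expansive homeomorphisms are covered as well) and $(X,f)$ has the entropy-dense property (Lemma~\ref{lemTransShaImpliesEDP}), each $\mu_i$ may be replaced by an ergodic measure $\nu_i$ that is weak*-close to $\mu_i$, satisfies $h_{\nu_i}(f)>h_{\mu_i}(f)-\veh/2$, and has $\M(f,\supp\nu_i)$ inside a small ball around $\mu_i$. Next, the uniform separation property (Lemma~\ref{lemExpansiveImpliesUSP}) supplies constants $\delta^*,\ve^*$ that depend only on $\veh$. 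For each $i$ and every large $n$ it gives a $(\delta^*,n,\ve^*)$-separated set $\Gamma^i_n\subseteq X_{n,F_i}$ of cardinality at least $e^{n(h_{\nu_i}-\veh/2)}$, where $F_i$ is a small neighborhood of $\nu_i$.

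The key step is to glue these segments together. Transitivity and shadowing give internal chain transitivity, so fix a small $\delta$ (the shadowing parameter for some $\ve\ll\min(\ve^*,\ved)$). By compactness, finitely many $\delta$-chains of uniformly bounded length $\le L$ connect any point of a finite $\delta/2$-net to any other point of the net, and each of them can be routed through $x$. I would fix one common length $N\gg L$ for all blocks. Each block consists of a segment from some $\Gamma^i_N$, followed by a connecting chain that passes through $x$, padded so that every block has the same total length $n=N+L'$; matching lengths is possible by inserting loops, although without mixing the chain lengths may only be determinable modulo the period. Let $\Lambda_i$ be the set of points that shadow bi-infinite concatenations of blocks whose segments all come from $\Gamma^i_N$, and let $\Lambda$ be the set of points that shadow concatenations allowing all $i$. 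Expansivity, together with $\ve<\ve^*/2$ and the $(\delta^*,N,\ve^*)$-separation, makes the coding map injective, so both sets are conjugate to full shifts on $|\Gamma^i_N|$ (respectively $\sum_i|\Gamma^i_N|$) symbols for $f^n$. To obtain $f$-invariant sets, take the union $\bigcup_{j<n}f^j(\cdot)$. This union is conjugate to a transitive subshift of finite type (a suspension-type SFT), giving item~1 up to possible disjointness issues, and the shadowing point is unique by expansivity. Item~4 holds because every block passes $\ve$-close to $x$ within each window of length $n$.

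For the estimates, entropy satisfies $\htop(f,\Lambda_i)\ge \frac1n\log|\Gamma^i_N|\ge \frac{N}{N+L'}(h_{\nu_i}-\veh/2)$, which exceeds $h_{\mu_i}-\veh$ when $N$ is large. For item~3, every invariant measure on $\Lambda_i$ is a limit of empirical measures of length-$kn$ orbit pieces. By Lemma~\ref{lemMeasuresDistance}, each such piece is within $\ve+O(L'/N)$ of an average of measures in $F_i$, which places $\M(f,\Lambda_i)$ near $\nu_i$ and hence near $\mu_i$. For $\Lambda$ the same argument puts every invariant measure near a convex combination of the $\nu_i$, that is, near $K$. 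Conversely, periodic concatenations that use block type $i$ with prescribed frequencies realize measures close to every point of $K$, which bounds the Hausdorff distance in both directions. Finally, $\Lambda\subsetneq X$ follows by choosing $\ve$ small enough that $\Lambda$ misses some open set, using that $X$ is infinite and not itself such a horseshoe, or by removing one admissible word.

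I expect the main obstacle to be the uniform gluing. This means arranging connecting chains of one common length, routed through $x$, so that the concatenation structure is a genuine full shift for $f^n$; with only transitivity (not mixing), chain lengths are constrained by a cyclic period. I would handle this via the spectral decomposition: pass to $f^{p}$ on a mixing component, where connecting chains of every sufficiently large length exist, and take $n$ to be a multiple of $p$.
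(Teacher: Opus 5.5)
There is a genuine gap in item 1, and it is the one you set aside with ``up to possible disjointness issues''. Your construction only shows that $(\Delta,f^n)$ and $(\Delta_i,f^n)$ are conjugate to full shifts. The tower $\Lambda=\bigcup_{j<n}f^j(\Delta)$ is conjugate under $f$ to a transitive SFT (the constant-height-$n$ suspension of the full shift) only if $f^j(\Delta)\cap f^k(\Delta)=\emptyset$ for $0\le j<k<n$. Without this, a point can admit block decompositions at two different phases, and $(\Lambda,f)$ may lose shadowing. Here is a two-letter example. In $\{a,b\}^{\Zb}$ take $n=2$ and blocks $aa$, $ba$. Then $\Lambda=\Delta\cup\sigma\Delta$ is the set of sequences in which consecutive $b$'s are separated by an odd number of $a$'s. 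This shift is strictly sofic: for every $r$, the words $ba^{2r}$ and $a^{2r}b$ are admissible but $ba^{2r}b$ is not.

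Your scheme produces exactly this behaviour, for instance when $x$ is a fixed point and $\mu_1=\delta_x$. The separated set for $\mu_1$ may then be $\{x\}$, the constant loop at $x$ becomes a block, and $x\in f^j(\Delta)$ for every $j$. The paper quotes the present statement from \cite{dong2024abundance} without reproving it, and its own analogue for maps, Theorem \ref{thmC}, only claims that $(\Delta,f^n)$ is a full shift.

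Where the paper does obtain a transitive SFT for the original shift, namely item (1) of Theorem \ref{prop-nonuniform-multi-horseshoe} in the two-sided case, it first forces disjointness with a marker. Every block begins with $\pi\pi$, where $\pi$ has minimal period $T$. The remaining $T$-subblocks are drawn from $\Theta^j_T$, from which the at most $T$ words of length $T$ occurring in $\pi\pi$ have been removed. The Claim there then gives $\sigma^k(Y_j)\cap\sigma^{k'}(Y_j)=\emptyset$ for $0\le k<k'<MT$. After that, Lemma \ref{lemma-f^n-f} transfers expansivity, transitivity and shadowing to $(\Lambda,\sigma)$, and shadowing characterizes SFTs \cite{walters1977pseudo}.

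For a topologically Anosov $f$ you need the same device:
\begin{enumerate}
\item start each block with an orbit piece that follows a periodic orbit of minimal period $T$ twice;
\item discard the aligned subsegments that come $2\ve$-close to one of the $T$-windows of this piece (by separation there are only boundedly many);
\item use expansivity together with the minimal period to exclude the small phase shifts;
\item apply the homeomorphism version of Lemma \ref{lemma-f^n-f}, and use that an expansive homeomorphism with shadowing on the zero-dimensional set $\Lambda$ is conjugate to an SFT.
\end{enumerate}

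The obstacle you single out instead is not a real one, and no spectral decomposition is needed. In Lemma \ref{lemSeparatedSet} the segments are pigeonholed so that all of them start near one net point and end near one net point. The connecting chains are then fixed, so every block for $\mu_i$ is a closed $\delta$-loop at $x$ of a single length $n_i$. Concatenating $n/n_i$ such loops gives blocks of the common length $n=\prod_i n_i$, as in the proof of Theorem \ref{thmC}. This also puts $x$ at a fixed offset in every block, so a single $j$ serves all $t$ in item 4.

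Two smaller points. First, your argument for $\Lambda\subsetneq X$ is not valid as stated, since $X$ may itself be a transitive SFT. Argue instead, as in the proofs of Theorems \ref{thmC} and \ref{prop-nonuniform-multi-horseshoe}, that $\M(f,X)$ has infinitely many ergodic (hence extreme) points. Then $D_{\mathrm{H}}(K,\M(f,X))>0$, and choosing $\veH$ below this number makes $\Lambda=X$ contradict item 3. Second, injectivity of the coding across different indices $i$ does not follow from separation within each $\Gamma^i_N$. You must first discard repeated measures and then choose the neighbourhoods $F_i$ small compared with $\min_{i\ne j}D(\mu_i,\mu_j)$, as with $D_0$ in the proof of Theorem \ref{thmC}.
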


The following theorem is a generalization of the ``multi-horseshoe'' entropy-dense property for topologically Anosov diffeomorphisms \cite[Theorem 3.1]{dong2024abundance}.

\begin{theoremx}\label{thmC}
    Suppose $(X,f)$ is topologically expanding and topologically transitive. Then for any positive integer $m$, any $m$ distinct $f$-invariant measures $\left\{\mu_i\right\}_{i=1}^m \subseteq \M(f,X)$, any $x \in X$ and any $\veh, \veH, \ved > 0$, there exists $n \in \Nb^+$ and compact $f^n$-invariant subsets $\Delta_i \subseteq \Delta \subsetneq X$ such that for each $1 \leq i \leq m$, the following statements hold.
    \begin{enumerate}
        \item $\left(\Delta_i, f^n|_{\Delta_i}\right)$ and $\left(\Delta, f^n|_{\Delta}\right)$ are topologically conjugate to a one-sided full shift respectively.
        \item $\htop\left(f,\Lambda_i\right) = \htop \left(f^n,\Delta_i\right)  / n > h_{\mu_i}(f) - \veh$ where $\Lambda_i = \bigcup_{j=0}^{n-1}f^j \left(\Delta_i\right) $.
        \item $D_{\mathrm{H}}\left(K, \M(f, \Lambda)\right) < \veH$ and $D_{\mathrm{H}}\left(\left\{\mu_i\right\} , \M(f,\Lambda_i)\right) < \veH$ where $K = \cov\left\{\mu_i\right\}_{i=1}^m$ and $\Lambda = \bigcup_{j=0}^{n-1}f^j \left(\Delta\right) $.
        \item For any $z \in \Lambda$, there exists $1 \leq j \leq n$ such that $f^{j+tn}z \in B(x,\ved)$ for any $t \in \Nb$.
    \end{enumerate}
\end{theoremx}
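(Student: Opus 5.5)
We follow the scheme of \cite[Theorem 3.1]{dong2024abundance}, replacing two-sided shadowing by forward shadowing and passing to an iterate so that the resulting horseshoes are one-sided full shifts.

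\textbf{Step 1: reduce to ergodic measures close to each $\mu_i$.} By Lemma \ref{lemTransShaImpliesEDP}, $(X,f)$ has the entropy-dense property. Lemma \ref{lemUpperSemiContinuity} gives upper semi-continuity of entropy, and we use the ergodic decomposition. Together these let us replace each $\mu_i$ by a finite convex combination $\sum_k \lambda_{ik}\nu_{ik}$ of ergodic measures with rational weights. This combination is within $\veH/4$ of $\mu_i$ and has $\sum_k\lambda_{ik}h_{\nu_{ik}}(f) > h_{\mu_i}(f)-\veh/4$.

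\textbf{Step 2: build separated orbit segments.} By Lemma \ref{lemExpansiveImpliesUSP} we have the uniform separation property. For each $\nu_{ik}$ and all large $N$ it yields a $(\delta^*,N,\ve^*)$-separated set of points whose empirical measures $\cE_N$ lie within $\veH/4$ of $\nu_{ik}$, of cardinality at least $e^{N(h_{\nu_{ik}}-\eta)}$.

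Transitivity and shadowing make $X$ chain transitive. So we fix, for a small $\delta$ (the shadowing constant for a scale $\ve\ll\min\{\ve^*,\ved,\text{expansive constant}\}$), $\delta$-chains of bounded length $\le L$. These chains connect the point $x$ to each segment start and each segment end back to $x$. By concatenating, for each $i$, segments in the proportions $\lambda_{ik}$ (blocks of length $\approx\lambda_{ik}M$), each framed by the connecting chains through $x$, we obtain a family $W_i$ of $\delta$-pseudo-orbit words of a common length $n$. Padding via extra loops at $x$ equalizes the lengths across all $i$. Each word starts and ends near $x$, so arbitrary concatenations of words from $W:=\bigcup_i W_i$ are $\delta$-pseudo-orbits. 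We choose the $W_i$ pairwise disjoint, which is possible since the $\mu_i$ are distinct, or by simply indexing. We have $|W_i|\ge e^{n(h_{\mu_i}-\veh/2)}$, and every word has empirical measure within $\veH/2$ of $\mu_i$, since the connecting pieces have proportion $O(L/M)$ (Lemma \ref{lemMeasuresDistance}).

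\textbf{Step 3: define the horseshoes and check conjugacy.} Let $\Delta_i$ (resp. $\Delta$) be the set of points $\ve$-shadowing some infinite concatenation of words from $W_i$ (resp. $W$). This set is compact and $f^n$-invariant, and $\Delta_i\subseteq\Delta$. The shadowing point is unique by positive expansivity, since $\ve$ is smaller than half the expansive constant, so we get a map from the one-sided full shift on $|W_i|$ symbols onto $\Delta_i$. This map is continuous by a standard compactness argument, and it intertwines $\sigma$ with $f^n$. Injectivity follows from the separation: distinct words are $(\delta^*,N,\ve^*)$-separated, hence their shadows differ by more than $\ve^*-2\ve>0$ somewhere. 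This gives item 1, and $\Delta\ne X$ because item 4 forces recurrence near $x$ with bounded gaps, which fails for some point of $X$ when $X$ is infinite and transitive. Item 2 follows from $\htop(f^n,\Delta_i)=\log|W_i|$ and $\htop(f,\Lambda_i)=\htop(f^n,\Delta_i)/n$, because $\Lambda_i=\bigcup_{j<n}f^j\Delta_i$. Item 4 holds because every word passes within $\delta$ of $x$ at a fixed phase, so shadowing points visit $B(x,\ved)$ at times $j+tn$.

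\textbf{Step 4: the Hausdorff estimates (main obstacle).} Any $\nu\in\M(f,\Lambda_i)$ is a limit of empirical measures of points whose $f$-orbits are concatenations of $n$-blocks near $\mu_i$. These points shadow within $\ve$, so by Lemma \ref{lemMeasuresDistance} $\nu$ lies within $\veH$ of $\mu_i$. For $\Lambda$, concatenations with prescribed frequencies of $W_i$-blocks give measures near any point of $K=\cov\{\mu_i\}$, and conversely every invariant measure on $\Lambda$ is near $K$.

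The delicate part is bookkeeping the non-invertibility. We must relate $\M(f^n,\Delta)$ to $\M(f,\Lambda)$ through the map $\nu\mapsto\frac1n\sum_{j<n}f^j_*\nu$, and check that the phase shift by $j$ does not spoil the empirical-measure estimates. It also does not spoil the entropy identity, which for non-invertible $f$ requires noting that $f^j\Delta_i$ carries no more entropy than $\Delta_i$. I expect this, together with choosing $M,N,\delta,\ve$ in the right order, to be the main technical step.
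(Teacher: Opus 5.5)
Your overall plan matches the paper's, which is Lemma \ref{lemSeparatedSet} plus the proof of Theorem \ref{thmC}. It uses a finite ergodic decomposition, uniform separation for long orbit segments, $\delta$-chains through $x$, shadowing plus positive expansivity to code a one-sided full shift under $f^n$, and generic points for the Hausdorff estimates. However, two steps do not go through as you state them.

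\textbf{Common length and alignment within $W_i$.} The chain from $x$ to a segment's start, and the chain from its end back to $x$, must depend on the segment, since the segments start at exponentially many different points. So the lengths of words in $W_i$ vary, and so do the time positions of the segments inside them. Your injectivity argument (distinct words are $(\delta^*,N,\ve^*)$-separated) needs two distinct words to carry different segments at the \emph{same} positions. If the chain lengths differ, a segment $y$ is compared with a time-shift of $y'$, and the separation says nothing. The two pseudo-orbits can then be $2\ve$-close, their shadows coincide, and $\log|W_i|$ overcounts.

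Padding by loops at $x$ equalizes the total length but not the internal alignment. Also, when $f$ is transitive but not mixing, $\delta$-loop lengths at $x$ are constrained (e.g.\ all divisible by the cyclic period), so even the padding needs an argument.

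The missing device is the pigeonhole step in Lemma \ref{lemSeparatedSet}. Cover $X$ by finitely many balls $B(x_s,\delta/2)$, $1\le s\le l$, and fix one chain $x\to x_s$ and one chain $x_s\to x$ for each $s$. Then pass to a subset of each separated set whose points start in a common ball and end in a common ball. This loses only a factor $l^2$ per segment, which is exponentially negligible. Every word in a family then uses identical chains, so all words have one length and aligned segments. Across different $i$ the paper does not pad. It closes the words into periodic points via positive expansivity, obtains families for every period $pn_i$, and takes $n=\prod_i n_i$.

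\textbf{Separation between families.} ``Simply indexing'' does not make the coding on $W=\bigcup_i W_i$ injective. A $W_i$-word and a $W_j$-word that are $2\ve$-close along the whole block give the same shadowing point. You need the paper's argument. With $D_0=\min_{i\ne j}D(\mu_i,\mu_j)$, take the measure tolerance $\veH_0<D_0$; your $\veH/4$, $\veH/2$ for arbitrary $\veH$ do not suffice. If two words from different families were $(n,3\ve)$-close, Lemma \ref{lemMeasuresDistance} would put their empirical measures within $3\ve<\veH_0/2$ of each other. That contradicts their lying within $\veH_0/4$ of $\mu_i$ and $\mu_j$ respectively. This is exactly where the distinctness of the $\mu_i$ is used.

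With these repairs the rest works. Step 4 is routine rather than the main obstacle. For ergodic $\nu$ on $\Lambda_i$ one has $\nu(\Delta_i)\ge 1/n$, because $f^j\Delta_i\subseteq f^{-(n-j)}\Delta_i$. So a generic point can be taken in $\Delta_i$ and its $n$-blocks read off directly.

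Also, $\Delta\ne X$ is immediate: every point of $\Delta$ lies within $\ve+\delta$ of $x$, and that ball is proper once $\ve+\delta$ is small.

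Your factor-map observation is a genuine plus. Since $f^j$ maps $(\Delta_i,f^n)$ onto $(f^j\Delta_i,f^n)$, one has $\htop(f^n,f^j\Delta_i)\le\htop(f^n,\Delta_i)$, which gives the equality claimed in item 2. The paper's proof only writes the inequality $\ge$.
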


Here we briefly explain the differences between Theorem \ref{thmC} and \cite[Theorem 3.1]{dong2024abundance}. Influenced by the restriction that $f$ is a map, one can only obtain that $\left(\Delta_i, f^n\right)$ is topologically conjugate to a one-sided full shift. In contrast, \cite[Theorem 3.1]{dong2024abundance} further established that $\left(\Lambda_i, f\right)$ is topologically conjugate to a topologically transitive subshift of finite type on two-sided sequences.

To prove Theorem \ref{thmC}, we introduce the following lemma first, which generalizes \cite[Corollary 4.13]{dong2022different} to the case of maps.

\begin{lemma}\label{lemSeparatedSet}
    Suppose $(X,f)$ is topologically transitive and topologically expanding. Then for any $\eta > 0$, there exists $\ve_1^* = \ve_1^*(\eta) > 0$ such that for any $\mu \in \M(f,X)$ and its neighborhood $F_\mu$, there exists $0 < \ve_2^* = \ve_2^*\left(\eta, \mu, F_\mu\right) < \ve_1^*$ such that for any $x \in X$, any $0 < \ve \leq \ve_2^*$, any $N \in \Nb^+$, there exists an $n = n\left(\eta, \mu, F_\mu, \ve, x\right) \geq N$ such that for any $p \in \Nb^+$, there exists a $\left(pn, \ve_1^* / 3\right) $-separated set $\Gamma_{pn}$ such that
    \begin{enumerate}
        \item $\Gamma_{pn} \subseteq X_{pn,F_\mu} \cap B(x,\ve) \cap P_{pn}(f)$;
        \item $\left(\log\left\lvert \Gamma_{pn}\right\rvert \right) / \left(pn\right)  > h_\mu(f) - \eta$.
    \end{enumerate}
\end{lemma}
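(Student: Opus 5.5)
The approach is to reduce first to ergodic measures, then build the separated set inside $B(x,\ve)$ by concatenating Bowen balls via shadowing, and finally make every point periodic using positive expansivity.

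\textbf{Reduction to the ergodic case.} Fix $\eta>0$. Take $c$ a positively expansive constant and let $\ve_1^*$ be comparable to $c$ (for instance $\ve_1^*=c$, shrunk if necessary so that the uniform separation property of Lemma \ref{lemExpansiveImpliesUSP} holds at scale $\ve_1^*$ for $\eta/4$). Given $\mu\in\M(f,X)$ and $F_\mu$, choose a smaller convex neighborhood $F'$ with $\overline{F'}^{\,\delta}\subseteq F_\mu$. By the entropy-dense property (Lemma \ref{lemTransShaImpliesEDP}), pick an ergodic $\nu\in F'$ with $h_\nu(f)>h_\mu(f)-\eta/4$. This reduces everything to ergodic $\nu$.

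\textbf{Separated set with a periodic return to $x$.} Use the uniform separation property for $\nu$: for all large $k$ there is a $(\delta^*,k,\ve^*)$-separated, and hence $(k,\ve^*)$-separated, set $E_k\subseteq X_{k,F'}$ with $|E_k|\ge e^{k(h_\nu-\eta/4)}$. Set $\ve_1^*$ as at most $\ve^*$; it depends only on $\eta$, which is the point of the uniform separation property. Choose $\ve_2^*$ small compared with $\ve_1^*$ and with the modulus in $F'$, and take $0<\ve\le\ve_2^*$. By shadowing, let $\delta'$ shadow at scale $\ve/2$ (and $<\ve_1^*/3$). By transitivity and compactness, there is a uniform $M$ such that any point can be $\delta'$-chained into $B(x,\delta')$, and out of it to any point, within $M$ steps. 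Concretely, cover $X$ by finitely many $\delta'$-balls and use transitivity for each pair.

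For each $y\in E_k$, form the periodic $\delta'$-chain $x\to y\to\cdots\to f^{k-1}y\to$ (bridge of length $\le M$) $\to x$. Pad the bridges to a common length $M$, using that transitive shadowing systems have connecting chains of all sufficiently large lengths up to the period of the spectral decomposition. Then $n=k+2M$ (or a suitable multiple), which can be taken $\ge N$. For $p\ge1$, concatenate $p$ such blocks with arbitrary choices $y_1,\dots,y_p$. This gives a periodic $\delta'$-pseudo-orbit of period $pn$, which is $\ve/2$-shadowed by some $z$.

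Since $f$ is positively expansive with shadowing, the shadowing point of a periodic pseudo-orbit is unique when the scale is below $c/2$. Since $f^{pn}z$ shadows the same pseudo-orbit, $f^{pn}z=z$, so $z\in P_{pn}(f)$. Also $z\in B(x,\ve)$. Distinct choices yield points that are $(pn,\ve^*-\ve)$-separated, hence $(pn,\ve_1^*/3)$-separated. Lemma \ref{lemMeasuresDistance} gives that $\cE_{pn}(z)$ is within $\ve+2M/n$ of a convex combination of the $\cE_k(y_j)\in F'$, so it lies in $F_\mu$ once $k$ is large. The count is $|E_k|^p$, so
\[
\frac{\log|\Gamma_{pn}|}{pn}\ \ge\ \frac{k\,(h_\mu-\eta/2)}{k+2M}\ >\ h_\mu-\eta
\]
for $k$ large.

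\textbf{Main obstacle.} The delicate step is arranging the exact period $pn$ with uniform gap lengths while keeping the separation constant $\ve_1^*$ independent of $\mu$. I would handle the gap lengths by working with a mixing component under $f^{\ell}$ from the spectral decomposition, and by choosing $k$ as a multiple of $\ell$.
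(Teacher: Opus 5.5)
Your architecture is the paper's: separated sets from the uniform separation property at a scale depending only on $\eta$, bridges to and from $x$ via transitivity and a finite cover, concatenation of $p$ blocks by shadowing, periodicity from positive expansivity, and Lemma \ref{lemMeasuresDistance} plus convexity for the empirical measures. One difference is a legitimate alternative. You take a single ergodic $\nu$ near $\mu$ with $h_\nu(f)>h_\mu(f)-\eta/4$ from the entropy-dense property (Lemma \ref{lemTransShaImpliesEDP}). The paper instead approximates $\mu$ by $\frac1m\sum_{i=1}^m\nu_i$ using the ergodic decomposition, and places one orbit segment from each $\nu_i$ in every block. Given that lemma, your reduction is valid and avoids the bookkeeping over $i$; the paper's route needs only the ergodic decomposition.

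The step that does not work as written is the uniform block length. You cannot pad the bridges to a common length $M$ over all of $E_k$. Suppose $X=C_0\cup\dots\cup C_{\ell-1}$ is the cyclic decomposition with $x\in C_0$, and $\delta'$ is below the distance between distinct $C_j$. Then every $\delta'$-chain from $x$ to a point $y\in C_j$ has a number of steps congruent to $j$ modulo $\ell$. Nothing prevents $E_k$ from meeting several components: an ergodic $\nu$ gives each $C_j$ mass $1/\ell$.

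Choosing $k$ a multiple of $\ell$ and equalizing only the block totals does not rescue this. The orbit segments of different $y$'s would then sit at different positions. Your claim that distinct words give $(pn,\ve^*-\ve)$-separated points compares $f^sz$ and $f^sz'$ at the same time $s$, so it would no longer be justified. Periodicity alone is not the issue; alignment is.

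So you need a pigeonhole step in any case. Restricting $E_k$ to its largest intersection with a single $C_j$ would repair your version. Once you pigeonhole, though, the spectral decomposition is superfluous.

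The paper covers $X$ by balls $B(x_1,\delta/2),\dots,B(x_l,\delta/2)$. It fixes one chain from $x$ to each $x_i$ and one chain back. It then keeps the subset of $E_k$ lying in $B(x_{i_1},\delta/2)\cap f^{-k}B(x_{i_2},\delta/2)$ for a fixed pair $(i_1,i_2)$; this subset has at least $|E_k|/l^2$ elements. Every word then uses the same two bridges, so all blocks have identical length and aligned orbit segments. The loss factor $l^{-2}$ per block is absorbed by taking $k$ large. Pigeonholing on the at most $M^2$ pairs of bridge lengths would work equally well.
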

\begin{proof}
    Since $(X, f)$ is topologically expanding, it satisfies the shadowing property and is positively expansive. Let $c > 0$ a positive expansivity constant for the map $f$. By Lemma \ref{lemExpansiveImpliesUSP}, $(X,f)$ has the uniform separation property. Then for the given $\eta > 0$, there exist $\ve^* > 0$ such that for any $\mu \in \Merg(f,X)$ and any neighborhood $F \subseteq \M(X)$ of $\mu$, there exists $n^*_{F,\mu,\eta}$ such that for any $n \geq n^*_{F,\mu,\eta}$, there exists a $(n, \ve^*)$-separated set $\Gamma_n \subseteq X_{n,F}$ with
    $\left\lvert \Gamma_n \right\rvert  \geq e^{n(h_\mu(f)-\eta/4)}$. Let $\ve_1^* = \ve^*$.

    Fix $\mu \in \mathcal{M}(f, X)$ and its neighborhood $F_\mu$ in the weak* topology. Fix $a > 0$ such that $B(\mu, a) \subseteq F_\mu$. Let $\ve^*_2 = \min\left\{\ve_1^*/9, a/4, c/2\right\}$. Fix $x \in X$, $0 < \ve \le \ve^*_2$ and $N \in \Nb^+$. Since $(X,f)$ has the shadowing property, there is $0 < \delta < \ve$ such that any $\delta$-pseudo-orbit can be $\ve$-shadowed by some point in $X$.

    By the Ergodic Decomposition Theorem, there exists a convex combination of ergodic measures $\frac{1}{m}\sum_{i=1}^m \nu_i$ such that
    \begin{equation}\label{eq:lem4.2_D}
        D\left(\frac{1}{m}\sum_{i=1}^m \nu_i,\mu\right) < a/4
    \end{equation}
    and
    \begin{equation}\label{eq:4.1}
        \frac{1}{m}\sum_{i=1}^m h_{\nu_i}(f) > h_\mu(f) - \eta/4.
    \end{equation}
    For any $1 \leq i \leq m$, by the uniform separation property of $(X,f)$, there exists $N_i \in \Nb^+$ such that for any $n \geq N_i$, there exists a $(n, \ve^*)$-separated set $\Gamma_n^{i} \subseteq X_{n,B\left(\nu_i,\ve/4\right)}$ with
    $\left\lvert \Gamma_n^{i} \right\rvert  \geq e^{n(h_{\nu_i}(f)-\eta/4)}$.

    Note $\left\{B(x,\delta/2):x\in X\right\}$ is an open cover of $X$. By the compactness of $X$, there exists a finite subcover $\left\{B(x_i,\delta/2)\right\}_{i=1}^l$. Since $(X,f)$ is topologically transitive, for any $1 \leq i \leq l$, there exists a $\delta$-chain $\mathbf{C}_{x x_i}$ connecting $x$ and $x_i$, and a $\delta$-chain $\mathbf{C}_{x_i x}$ connecting $x_i$ and $x$. Let $L$ be the maximum of the lengths of these pseudo-orbits. Take $k \in \Nb^+$ large enough such that
    \begin{equation}\label{eq:Lem4.2_L}
        4L / k \leq \ve
    \end{equation}
    \begin{equation}\label{eq:Lem4.2_l}
        k \geq 4\left(\log l + L\max\left\{h_{\mu}(f)-\eta,0\right\}  \right) / \eta
    \end{equation}
    \begin{equation*}
        mk > N
    \end{equation*}
    \begin{equation*}
        k \geq \max\left\{N_i:1 \leq i \leq m\right\}.
    \end{equation*}

    For any $1 \leq i \leq m$, there is a $\left(k,\ve^*\right)$-separated set $\Gamma_{k}^{i}\subseteq X_{k,B(\nu_i,a/4)}$ with
    \begin{equation}\label{eq:Lem4.2_hk}
        \frac{\log\left\lvert \Gamma_{k}^{i} \right\rvert }{k}\geq h_{\nu_i}(f)-\eta/4
    \end{equation}
    By the pigeonhole principle, there exist indices $1\leq i_1,i_2\leq l$ and subsets $\widetilde{\Gamma_{k}^{i}} \subseteq \Gamma_{k}^{i}$ such that
    \begin{equation}\label{eq:Lem4.2_pigeonhole}
        \left\lvert \widetilde{\Gamma_{k}^{i}}\right\rvert \geq \left\lvert\Gamma_{k}^{i} \right\rvert / l^2
    \end{equation}
    and $\widetilde{\Gamma_{k}^{i}} \subseteq B\left(x_{i_1},\delta/2\right) \cap f^{-k}B\left(x_{i_2},\delta/2\right)$.

    Let $\Gamma = \widetilde{\Gamma_{k}^{1}} \times \dots \times \widetilde{\Gamma_{k}^{m}}$. Any $\left(y_1,\dots,y_m\right) \in \Gamma$ corresponds to a $\delta$-chain
    \[\mathbf{C}_{\overline{y}} = \mathbf{C}_{xx_{1_1}}\left\langle y_1,fy_1,\dots,f^{k-1} y_1\right\rangle \mathbf{C}_{x_{1_2}x}\dots\mathbf{C}_{xx_{m_1}}\left\langle y_m,fy_m,\dots,f^{k-1} y_m\right\rangle \mathbf{C}_{x_{m_2}x}\]
    with length in $\left[mk,mk+2mL\right]$. Let $n$ be the length of this chain. Fix $p \in \Nb^+$. Chains $\mathbf{C}_{\overline{y}}$ generated by points in $\Gamma$ can be concatenated freely. By the shadowing property of $(X,f)$, there exists $z\left(\overline{y}\right) \in X$ $\ve$-shadows $\mathbf{C}_{\overline{y}}$ chain generated by $\overline{y} \in \Gamma^p := \prod_{i=1}^{p} \Gamma$. Let $\Gamma_{pn} = \left\{z\left(\overline{y}\right): \overline{y} \in \Gamma^p\right\}$.

    Since $0 < \ve \leq \ve^* / 9$, $\Gamma_{pn}$ is $\left(pn,\ve^*/3\right)$-separated. By the construction of those chains, $\Gamma_{pn} \subseteq B(x,\ve) \cap f^{-pn}B(x,\ve)$. Since $f^{pn}z\left(\overline{y}\right)$ also shadows the chain generated by $\overline{y}$ for any $\overline{y} \in \Gamma^p$, one has $d\left(f^{pn+i}z,f^{i}z\right) < 2\ve \leq 2\ve^*_2 \leq c$ for any $z \in \Gamma_{pn}$. Since $c$ is a positively expansive constant of $(X,f)$, one has $f^{pn}z = z$ for any $z \in \Gamma_{pn}$, which implies $\Gamma_{pn} \subseteq P_{pn}(f)$. For any $z = z(\overline{y}) \in \Gamma_{pn}$, assume $\overline{y} = (y_{1,1},\dots,y_{p,m})$ where $\left(y_{s,1},\dots,y_{s,m}\right)  \in \Gamma$ for any $1 \leq s \leq p$.
    \begin{align*}
        D\left(\cE_{pn}(z),\mu\right)                                       & \leq D\left(\cE_{pn}(z),\frac{1}{m}\sum_{i=1}^m \nu_i\right) + D\left(\frac{1}{m}\sum_{i=1}^m \nu_i,\mu\right) \\
        (\text{by Lemma \ref{lemMeasuresDistance} and (\ref{eq:lem4.2_D})}) & < \frac{1}{pm}\sum_{s=1}^{p}\sum_{i=1}^{m}D\left(\cE_{k}(y_{s,i}),\nu_i\right)  + \ve + 4L/k + a/4             \\
        (\text{by (\ref{eq:Lem4.2_L})})                                     & \leq a/2 + 2\ve^*_2 \leq a,
    \end{align*}
    which implies that $\Gamma_{pn} \subseteq X_{pn,B(\mu,a)} \subseteq X_{pn,F_\mu}$. Finally,
    \begin{align*}
        \left\lvert \Gamma_{pn}\right\rvert      & = \left(\prod_{i=1}^m \left\lvert \widetilde{\Gamma_{k}^i} \right\rvert \right)^p \\
        (\text{by (\ref{eq:Lem4.2_pigeonhole})}) & \geq \left(\prod_{i=1}^m \left\lvert \Gamma_{k}^i\right\rvert  / l^2\right)^p     \\
        (\text{by (\ref{eq:Lem4.2_hk})})         & > e^{kp\sum_{i=1}^m \left(h_{\nu_i}(f)-\eta/4\right)} / l^{2mp}                   \\
        (\text{by (\ref{eq:4.1})})               & > e^{kmp\left(h_\mu(f)-\eta/2\right) } / l^{2mp}                                  \\
        (\text{by (\ref{eq:Lem4.2_l})})          & > e^{pn\left(h_{\mu}(f)-\eta\right)},
    \end{align*}
    which completes the proof.
\end{proof}

Next, we prove Theorem \ref{thmC}.

\begin{proof}[Proof of Theorem \ref{thmC}]
    Fix $m \in \Nb$, $\left\{\mu_i\right\}_{i=1}^m \subseteq \M(f,X)$, $x \in X$ and $\veh, \veH, \ved > 0$. Without loss of generality, assume $m \geq 2$. Let
    \[D_0 = \min\left\{D\left(\mu_i, \mu_j\right) : 1 \leq i < j \leq m \right\}.\]
    By Lemma \ref{lemTransShaImpliesEDP}, $(X,f)$ has the entropy-dense property and hence $\left\lvert \Merg(f,X)\right\rvert = +\infty$. By \cite[Theorem 6.10 (iii)]{walters1982introduction}, ergodic measures are extreme points in $\M(f,X)$. Hence, $\left\lvert K \cap \Merg(f,X) \right\rvert \leq m < \left\lvert \Merg(f,X)\right\rvert$, which implies that $\Merg(f,X) \not \subseteq K$ and hence $D_\mathrm{H}\left(K,\M(f,X)\right) > 0$. Since $\left\lvert X\right\rvert = \infty$, $f$ is topologically transitive and $f$ satisfies the shadowing property, one has $\htop(f,X) > 0$, see \cite[Section 2.3]{dong2024abundance}. Let $0 < \veh_0 \leq \min\left\{\htop(f), \veh\right\} $, $0 < \veH_0 < \min\left\{D_\mathrm{H}\left(K,\M(f,X)\right), D_0, \veH\right\}$.

    By Lemma \ref{lemSeparatedSet}, there exists $\ve^*_1 > 0$ and $0 < \ve_2^* < \ve^*_1$ such that for any $0 < \delta < \ve_2^*$ and any $1 \leq i \leq m$, there exists $n_i = n_i(\delta) \in \Nb^+$ such that for any $p \in \Nb^+$, there exists a $\left(pn_i, \ve_1^* / 3\right) $-separated set $\Gamma_{pn_i}^{\mu_i}$ such that
    \begin{enumerate}
        \item $\Gamma_{pn_i}^{\mu_i} \subseteq X_{pn_i,B\left(\mu_i, \veH_0 / 4\right) } \cap B(x,\delta / 2) \cap P_{pn_i}(f)$;
        \item $\left(\log\left\lvert \Gamma_{pn_i}^{\mu_i} \right\rvert \right) / \left(pn_i\right)  > h_{\mu_i} - \veh_0$.
    \end{enumerate}

    Let $n = n(\delta) = \prod_{i=1}^m n_i$.
    Fix
    \[0 < \ve < \min\left\{ \veH_0 / 6, \ve_1^* / 9, \ved / 2, c / 3 \right\}\]
    where $c$ is a positively expansive constant of $(X, f)$. Since $(X,f)$ satisfies the shadowing property, there exists $0 < \delta < \ve$ such that any $\delta$-pseudo-orbit can be $\ve$-shadowed by some point in $X$. Then for any $1 \leq i \leq m$, there exists $(n, \ve_1^* / 3) $-separated set $\Gamma_{n}^{i}$ such that
    \begin{enumerate}
        \item $\Gamma_{n}^{i} \subseteq X_{n,B\left(\mu_i, \veH_0 / 4\right) } \cap B(x,\delta / 2) \cap P_{n}(f)$;
        \item $\left(\log\left\lvert \Gamma_{n}^{i} \right\rvert \right) / n  > h_{\mu_i} - \veh_0$.
    \end{enumerate}

    Denote $\mathbf{C}_y^n = \left\langle y, fy, \ldots, f^{n-1}y \right\rangle $ for any $y \in X$. Let $\Gamma_n = \bigsqcup_{i=1}^m \Gamma_n^{i}$. Then $\Gamma_n$ is $\left(n,3\ve\right)$-separated. Otherwise, there are $y_1,y_2 \in \Gamma_n$ with $y_1 \ne y_2$ and $d_n\left(y_1, y_2\right) \leq 3\ve < \ve_1^* / 3$ and hence $D\left(\cE_n\left(y_1\right), \cE_n\left(y_2\right)\right) \leq 3\ve < \veH_0 / 2$ by Lemma \ref{lemMeasuresDistance}. If $y_1, y_2 \in \Gamma_n^i$ for some $1 \leq i \leq m$, then $d_n\left(y_1,y_2\right) > \ve_1^* / 3$ since $\Gamma_n^i$ is $\left(n,\ve_1^*/3\right)$-separated, which leads to a contradiction. If $y_1 \in \Gamma_n^i$ and $y_2 \in \Gamma_n^j$ for some $1 \leq i \ne j \leq m$, then $y_1 \in X_{n,B\left(\mu_i, \veH_0 / 4\right) }$ and $y_2 \in X_{n,B\left(\mu_j, \veH_0 / 4\right) }$. Hence, $D\left(\cE_n\left(y_1\right), \cE_n\left(y_2\right)\right) > D_0 - \veH_0 / 2 > \veH_0 / 2$, which leads to a contradiction.

    For any $y \in \Gamma_n$, $\mathbf{C}_y^n$ is a $\delta$-chain and those $\delta$-chains can be concatenated with each other to form a new $\delta$-chain or a $\delta$-pseudo-orbit. Denote $r_i = \left\lvert \Gamma_n^{i} \right\rvert $ and $r = \left\lvert \Gamma_n\right\rvert $. Let
    \[\Sigma_{r_i}^i = \left\{ \overline{y}  = \left(y_0,y_1,\ldots\right) : y_j \in \Gamma_n^{i} \mathrm{\ for\ any\ }j \in \Nb  \right\}, \]
    \[\Sigma_{r} = \left\{ \overline{y}  = \left(y_0,y_1,\ldots\right) : y_j \in \Gamma_n \mathrm{\ for\ any\ }j \in \Nb  \right\}. \]
    Then $\Sigma_{r_i}^i \subseteq \Sigma_r$ and $\left(\Sigma_{r_i}^i,\sigma|_{\Sigma_{r_i}^i}\right)$ and $\left(\Sigma_{r},\sigma|_{\Sigma_r}\right)$ are one-sided full shifts for any $1 \leq i \leq m$.

    Denote $\mathbf{C}_{\overline{y} } = \left\langle \mathbf{C}_{y_0}^n \mathbf{C}_{y_1}^n \ldots\right\rangle = \left\langle y_0^*, y_1^*, \ldots\right\rangle $ as the $\delta$-pseudo-orbit generated by $\overline{y} $ for any $\overline{y}  \in \Sigma_{r}$. Define
    \[Y_{\overline{y} } = \left\{z \in X : d\left(f^j z, y_j^*\right) \leq \ve \mathrm{\ for\ any\ } j \in \Nb \right\}.\]
    Since $(X,f)$ satisfies the shadowing property and $\ve < c / 3$, one has $\left\lvert Y_{\overline{y} }\right\rvert = 1$ for any $\overline{y}  \in \Sigma_{r}$. Since $\Gamma_n$ is $(n, 3\ve)$-separated, $Y_{\overline{y}} \cap Y_{\overline{w}} = \emptyset$ for any $\overline{y}, \overline{w} \in \Sigma_{r}$ with $\overline{y} \ne \overline{w}$. Define $\pi: \Delta \to \Sigma_r$ by $\pi(z) = \overline{y} $ for any $z \in Y_{\overline{y}}$ where $\Delta = \bigsqcup_{\overline{y} \in \Sigma_{r}} Y_{\overline{y}}$. Since each $\mathbf{C}_{\overline{y}}$ consists of $\delta$-chains of length $n$, we have $\Delta$ is $f^n$-invariant. Similarly, define $\pi_i: \Delta_i \to \Sigma_{r_i}^i$ by $\pi_i(z) = \overline{y} $ for any $z \in Y_{\overline{y}}$ where $\Delta_i = \bigsqcup_{\overline{y} \in \Sigma_{r_i}^i} Y_{\overline{y}}$, and we have $\Delta_i$ is $f^n$-invariant for any $1 \leq i \leq m$.

    First, we prove that $\pi$ is a topological conjugation between $\left(\Delta, f^n\right)$ and $\left(\Sigma_r, \sigma\right)$. Since $\left\lvert Y_{\overline{y} }\right\rvert = 1$ for any $\overline{y}  \in \Sigma_{r}$, one has $\pi$ is bijective. To prove $\pi$ is a homeomorphism, it suffices to prove that $\pi^{-1} = Y_{\left(\cdot\right)}: \Sigma_r \to \Delta$ is continuous. Assume $\left\{\overline{y^k}\right\}_{k=1}^\infty \subseteq \Sigma_r$ satisfies $\lim_{k \to \infty} \overline{y^k} = \overline{y}$. Then $\lim_{k \to \infty} y^k_i = y_i$ for any $i \in \Nb$. Since $\Sigma_r$ is compact, $\overline{y} \in \Sigma_r$. Since $\ve < c / 3$, one has  $\lim_{k \to \infty} \pi^{-1}\left(\overline{y^k}\right) = \pi^{-1}\left(\overline{y}\right)$.
    Similarly, $\pi_i$ is a conjugation between $\left(\Delta_i, f^n\right)$ and $\left(\Sigma_{r_i}^i, \sigma\right)$ for any $1 \leq i \leq m$, which proves item (1) in the theorem.

    Second, we prove that $\htop(f,\Lambda_i) > h_{\mu_i}(f) - \veh$.
    \begin{align*}
        \htop(f,\Lambda_i) & = \htop(f^n, \Lambda_i) / n  \geq \htop(f^n, \Delta_i) / n                                           \\
                           & = \htop(\sigma, \Sigma^i_{r_i}) / n  = \left(\log\left\lvert \Gamma_{n}^{i} \right\rvert \right) / n \\
                           & > h_{\mu_i} - \veh_0  \ge h_{\mu_i} - \veh,
    \end{align*}
    which proves item (2) in the theorem.

    Next, we prove that $D_{\mathrm{H}}\left(K, \M(f, \Lambda)\right) < \veH$ and $D_{\mathrm{H}}\left(\left\{\mu_i\right\}, \M(f,\Lambda_i)\right) < \veH$ for any $1 \leq i \leq m$. On one hand, given $\nu_i \in \Merg(f,\Lambda_i)$, one has $\nu_i\left(G_{\nu_i}\right) = 1$ and $\nu_i\left(\Delta_i\right) \geq 1/n > 0$. Pick $z_i \in G_{\nu_i} \cap \Delta_i$ and hence $z_i \in Y_{\overline{y}}$ for some $\overline{y} = \left(y_0, y_1, \dots\right) \in \Sigma_{r_i}^i$. Then for any $t \in \Nb$,
    \begin{align*}
        D\left(\cE_n\left(f^{tn}z_i\right), \mu_i\right) & \leq D\left(\cE_n\left(f^{tn}z_i\right), \cE_n\left(y_t\right)\right) + D\left(\cE_n\left(y_t\right), \mu_i\right) \\
                                                         & <  \ve + \veH_0 / 4 < \veH_0 / 2.
    \end{align*}
    Besides, $\nu_i = \lim_{t \to \infty} \cE_{tn}\left(z_i\right) $. Therefore, $D\left(\mu_i, \nu_i\right) \leq \veH_0 / 2$. By the ergodic decomposition theorem, $\M\left(f,\Lambda_i\right) \subseteq  \overline{B\left(\mu_i, \veH_0 / 2\right)}$ and hence $D_{\mathrm{H}}\left(\left\{\mu_i\right\} , \M(f,\Lambda_i)\right) \leq \veH_0 / 2$. Since $K$ is convex and $\Lambda_i \subseteq \Lambda$, one has $K \subseteq B\left(\M(f,\Lambda), \veH_0\right)$ and hence $K \subseteq B\left(\M(f,\Lambda), \veH\right)$. On the other hand, given $\mu \in \Merg(f,\Lambda)$, pick $z \in G_\mu \cap \Delta$. There is $\overline{y} \in \Sigma_r$ such that $\mathbf{C}_{\overline{y}}$ is $\ve$-shadowed by $z$. By a process similar to the above, there exists $\nu \in K$ such that $D\left(\mu, \nu\right) < \veH_0 / 2$.
    By the ergodic decomposition theorem, $\M(f,\Lambda) \subseteq B\left(K, \veH_0\right) \subseteq B\left(K, \veH\right)$. Therefore, $D_{\mathrm{H}}\left(K, \M(f,\Lambda)\right) < \veH$, which proves item (3) in the theorem.

    Finally, we prove item (4). For any $z \in \Lambda$, there exist
    $0 \leq q \leq n-1$ and $w \in \Delta$ such that $z=f^q w$.
    Let $j=n-q$. If
    \[
        \pi(w)=\left(y_0,y_1,\dots\right)\in\Sigma_r,
    \]
    then, for any $t\in\Nb$,
    \[
        \begin{aligned}
            d\left(f^{j+tn}z,x\right)
             & =d\left(f^{(t+1)n}w,x\right)           \\
             & \leq d\left(f^{(t+1)n}w,y_{t+1}\right)
            +d\left(y_{t+1},x\right)                  \\
             & \leq \ve+\delta/2<\ved.
        \end{aligned}
    \]
\end{proof}

\section{Conditional entropy realization for topologically expanding systems}\label{sec:cmie}
In this section, we establish exact entropy-and-average realization and weak* approximation by uniquely ergodic measures with compact support for topologically expanding maps and topologically Anosov homeomorphisms. Thus Theorems \ref{thmA}, \ref{thmB} and
\ref{thm:cmed_nonuniform} are three implementations of the same
nested-horseshoe mechanism rather than independent constructions; the
system-dependent work lies in producing the horseshoe input and the
auxiliary estimates required by the induction.

\begin{theorem}\label{thmA}
    Suppose $f$ is a topologically transitive and topologically expanding (topologically Anosov) continuous map (homeomorphism) on a compact metric space $(X,d)$. Let $\vphi \in C(X)$ such that $\mathrm{Int} L_\vphi \ne \emptyset$. For any $\ve > 0$, any $a \in \mathrm{Int} L_\vphi$, any $\mu \in \M(f,X)$ satisfying $\int\vphi\dif\mu = a$ and $h_\mu(f) < \sup\left\{h_\nu(f): \nu \in \M(f,X), \int \varphi \dif \nu = a\right\} $, and any $0 \leq h \leq h_\mu(f)$, there exists a compact $f$-invariant set $Y \subseteq X$ such that
    \begin{enumerate}
        \item $\left(Y,f\right) $ is minimal and uniquely ergodic;
        \item $h_{\nu}(f) = \htop(f,Y) = h$ where $\nu$ is the unique $f$-invariant measure on $Y$;
        \item $\int \vphi \dif \nu = a$;
        \item $D(\nu,\mu) < \ve$.
    \end{enumerate}
\end{theorem}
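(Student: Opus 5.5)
We build $Y$ as the intersection of a decreasing sequence of compact invariant sets $\Lambda^{(1)}\supseteq\Lambda^{(2)}\supseteq\cdots$, each a finite union of iterates of a set conjugate (under a power of $f$) to a full shift. Each is produced by Theorem \ref{thmC} applied inside the previous one. We arrange three things along the sequence: $\diam\M(f,\Lambda^{(k)})\to0$; every measure on $\Lambda^{(k)}$ has $\vphi$-integral within $\ve_k\to0$ of $a$; and $\htop(f,\Lambda^{(k)})$ decreases to $h$.

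\emph{First step: the initial data.} Since $a\in\Int L_\vphi$ and $h_\mu(f)<\sup\{h_\nu:\int\vphi\dif\nu=a\}$, pick $\mu_-,\mu_+$ with $\int\vphi\dif\mu_\pm\lessgtr a$. Using the upper semicontinuity of entropy (Lemma \ref{lemUpperSemiContinuity}), pick also a measure $\omega$ with $\int\vphi\dif\omega=a$ and $h_\omega>h_\mu$. Form the convex combination $\mu'=(1-t)\mu+t\omega$ with small $t$. Then $\mu'$ lies within $\ve/2$ of $\mu$, has $\int\vphi\dif\mu'=a$ and $h_{\mu'}>h\ge0$, and there is strict entropy slack over $h$ (if $h=h_\mu$ this uses $h_\omega>h_\mu$). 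Now apply Theorem \ref{thmC} to the three measures $\mu',\mu'_\pm$, where $\mu'_\pm$ are small perturbations of $\mu'$ towards $\mu_\pm$. These perturbations keep entropy above $h$, and their integrals straddle $a$. The output is a horseshoe $\Lambda$ with subhorseshoes $\Lambda_\pm$, all invariant measures close to $\mu'$, and entropies exceeding $h$. The point of having both sides of $a$ is that on the full shift $\Sigma_r$ we can choose finite words mixing the $\Gamma^+$ and $\Gamma^-$ symbols in prescribed frequencies. This gives, inside $\Lambda$, a new sub-full-shift (for a higher power of $f$) whose invariant measures all have $\vphi$-integral within $\ve_2$ of $a$, by the periodicity/shadowing estimates of Lemma \ref{lemMeasuresDistance}.

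\emph{Induction and entropy tuning.} At stage $k$ we have $\Lambda^{(k)}$ conjugate (under $f^{n_k}$ on a base $\Delta^{(k)}$) to a full shift on $r_k$ symbols, with entropy $>h$. Since it is a full shift, the entropy argument of Theorem \ref{thmC} is not needed. Instead choose $N_k$ large and a set $W_k$ of words of length $N_k$ over the $r_k$ symbols such that:
\begin{enumerate}
\item every word of $W_k$ has all symbol-block frequencies, and hence its empirical measure and $\vphi$-average, within $\ve_k$ of a fixed target measure (the unique limit we steer towards);
\item $\frac{1}{n_kN_k}\log|W_k|$ lies in $(h,h+1/k)$, which is possible because the number of such typical words grows at the full entropy rate, exceeding $h$, and we can discard words;
\item words in $W_k$ contain a common marker so that concatenations are uniquely decodable.
\end{enumerate}
The full shift $W_k^{\Nb}$ then gives $\Lambda^{(k+1)}\subseteq\Lambda^{(k)}$. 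In the non-invertible case we transfer measures between $(\Delta,f^{n})$ and $(\Lambda,f)$ via $\nu\mapsto\frac1n\sum_{j<n}f^j_*\nu$, which preserves the entropy scaling as in item (2) of Theorem \ref{thmC}. Then $Y=\bigcap_k\Lambda^{(k)}$ is compact and invariant. All its invariant measures lie in $\bigcap_k\M(f,\Lambda^{(k)})$, whose diameters tend to $0$, so $Y$ is uniquely ergodic with measure $\nu$, $\int\vphi\dif\nu=a$, and $D(\nu,\mu)<\ve$. Minimality follows by replacing $Y$ with $\supp\nu$, which is minimal since it is uniquely ergodic and supports the only measure.

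\emph{Main obstacle: the exact value of the entropy.} We expect $\htop(f,Y)=\lim_k\htop(f,\Lambda^{(k)})=h$, but entropy is only upper semicontinuous along decreasing intersections. The upper bound $\htop(f,Y)\le h+1/k$ is immediate. The lower bound needs a nested counting argument: the number of $(n_1\cdots,\ve)$-distinguishable orbit segments of $Y$ of length $n_kN_k$ is at least $\prod_{j\le k}|W_j|$-type counts. This is where the recursive choices must make the lost entropy $\sum_k(\text{defect}_k)$ summable, with the $N_k$ growing fast enough that the marker and boundary losses vanish. Then $h_\nu=\htop(f,Y)$ follows from the variational principle and unique ergodicity. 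In the degenerate case $h=0$ we simply take $|W_k|$ subexponential (e.g.\ $2$ words with growing $N_k$), which gives a Toeplitz-like zero-entropy construction.
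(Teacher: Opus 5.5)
You have the paper's architecture: nested full-shift horseshoes $\Lambda^{(k)}$ with $\diam\M(f,\Lambda^{(k)})\to0$, followed by passing to $\supp\nu$. But there is a genuine gap at exactly the step you flag as the main obstacle. The proposal never proves $h_\nu(f)\ge h$ (equivalently $\htop(f,Y)\ge h$). The nested counting argument is only announced, and as sketched it would not work. The alphabet at level $k+1$ is already $W_k$, so there is no product $\prod_{j\le k}|W_j|$ to exploit. Nothing in your conditions forces the words of $W_k$ to keep occurring inside $W_{k+1},W_{k+2},\dots$, and hence to occur in $Y$ at all.

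The missing idea is that no counting is needed. Because $f$ is positively expansive, $\omega\mapsto h_\omega(f)$ is upper semi-continuous (Lemma~\ref{lemUpperSemiContinuity}). You invoke this lemma in your first step, where it is not needed, since $\omega$ comes straight from the definition of the supremum. Instead, use it at the end:
\begin{itemize}
\item By the variational principle, choose $\nu_k\in\M(f,\Lambda^{(k)})$ with $h_{\nu_k}(f)>h$.
\item Since $\nu\in\M(f,\Lambda^{(k)})$ for every $k$ and $\diam\M(f,\Lambda^{(k)})\to0$, you get $\nu_k\to\nu$, hence $h_\nu(f)\ge\limsup_k h_{\nu_k}(f)\ge h$.
\item Together with $h_\nu(f)\le\htop(f,\Lambda^{(k)})<h+1/k$, this gives $h_\nu(f)=h$, and then $\htop(f,\supp\nu)=h$ by the variational principle.
\end{itemize}
This is exactly how the paper concludes, with $\nu_k=\tau_k\mu_1^{k+1}$. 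The same argument shows that, for expansive maps, topological entropy is continuous along decreasing compact invariant sets, so the loss you feared does not occur. In particular, $h=0$ needs no separate Toeplitz-type construction.

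There is a second, smaller gap in your inductive step. At stage $k$ you need all invariant measures on $\Lambda^{(k+1)}$ to have $\vphi$-integral within $\ve_{k+1}$ of $a$ while entropy stays above $h$. For that you need a target in $\M(f,\Lambda^{(k)})$ with integral essentially equal to $a$ and entropy strictly above $h$. Knowing only that every measure on $\Lambda^{(k)}$ has integral within $\ve_k$ of $a$ does not supply one. They might all lie in, say, $(a+\ve_k/2,a+\ve_k)$, and then the construction stalls. So the two-sided structure you set up in the first step has to be reproduced at every level.

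The paper does this by carrying two subhorseshoes $\Delta^k_1,\Delta^k_2$ whose invariant measures have integrals strictly on opposite sides of $a$. It then proceeds as follows:
\begin{itemize}
\item Inside each subhorseshoe, it lowers the entropy into a prescribed window by a convex combination with a periodic (zero-entropy) measure of that same subhorseshoe.
\item It picks $\mu_1^{k+1},\mu_2^{k+1}$ on the segment between the two resulting measures, with integrals straddling $a$.
\item It re-applies Theorem~\ref{thmC} to the full shift $(\Delta^k,f^{N_k})$, which is itself transitive and topologically expanding.
\end{itemize}
Upper semi-continuity on that full shift also gives the upper entropy bound for the next horseshoe. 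This route avoids your hand-built word sets $W_k$, the markers for unique decodability, and the frequency estimates for arbitrary concatenations.
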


Before proving this theorem, we need the following lemma.

\begin{lemma}\label{lemTau}
    Suppose $f:X \to X$ is a continuous map on a compact metric space $(X,d)$, $k \in \Nb^+$ and $\Delta \subseteq X$ is compact and $f^k$-invariant. The map $\tau: \M\left(\Delta\right)  \to \M(\Lambda)$ is defined by
    \[\tau\mu = \frac{1}{k}\left(\mu + \mu\circ f^{-1} + \ldots + \mu\circ f^{-(k-1)}\right)  \]
    where $\Lambda = \bigcup_{i=0}^{k-1}f^i\left(\Delta\right) $. Then the following statements hold.
    \begin{enumerate}
        \item $\tau$ is continuous and affine.
        \item $\tau\left(\M\left(f^k, \Delta\right)\right) = \M(f,\Lambda)$.
        \item $h_{\tau\mu}(f^k) = h_{\mu}(f^k)$ for any $\mu \in \M\left(f^k,\Delta\right)$.
    \end{enumerate}
\end{lemma}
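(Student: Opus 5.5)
The plan is to verify the three items directly from the definition of $\tau$. For (2), the inclusion of $\tau(\M(f^k,\Delta))$ in $\M(f,\Lambda)$ is a telescoping computation. The reverse inclusion reduces, via convexity and compactness, to ergodic measures, which I handle with generic points. For (3), I combine factor maps with the affinity of metric entropy. I expect the surjectivity in (2) to be the main obstacle.

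\emph{Item (1).} For $0\le i\le k-1$ and $\mu\in\M(\Delta)$, the measure $\mu\circ f^{-i}$ is the push-forward of $\mu$ under $f^i$. It is carried by $f^i(\Delta)\subseteq\Lambda$, so $\tau\mu\in\M(\Lambda)$. Push-forward under a continuous map is weak*-continuous, since $\int g\dif(\mu\circ f^{-i})=\int g\circ f^i\dif\mu$ and $g\circ f^i$ is continuous whenever $g$ is. Push-forward is also affine in $\mu$. Averaging finitely many continuous affine maps gives a continuous affine map.

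\emph{Item (2), first inclusion.} Let $\mu\in\M(f^k,\Delta)$. Then
\[
\tau\mu\circ f^{-1}=\tfrac1k\bigl(\mu\circ f^{-1}+\dots+\mu\circ f^{-k}\bigr).
\]
Since $\mu\circ f^{-k}=\mu$, this equals $\tau\mu$. Hence $\tau\bigl(\M(f^k,\Delta)\bigr)\subseteq\M(f,\Lambda)$.

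\emph{Item (2), reverse inclusion.} By (1), $\tau\bigl(\M(f^k,\Delta)\bigr)$ is the continuous affine image of a compact convex set, so it is compact and convex. By the ergodic decomposition (or Krein--Milman), $\M(f,\Lambda)$ is the closed convex hull of $\Merg(f,\Lambda)$. It therefore suffices to show that every ergodic $\nu\in\Merg(f,\Lambda)$ lies in the image.

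\begin{itemize}
\item Since $\nu(G_\nu)=1$ and $\Lambda=\bigcup_{q=0}^{k-1}f^q(\Delta)$, some $z\in G_\nu$ can be written as $z=f^q w$ with $w\in\Delta$ and $0\le q\le k-1$.
\item Shifting an orbit by finitely many steps does not change the limits of its empirical measures. Hence $w\in G_\nu$, that is, $\cE_{n}(w)\to\nu$.
\item Consider the $f^k$-empirical measures $\mu_t=\frac1t\sum_{s=0}^{t-1}\delta_{f^{sk}w}$. They are supported on the compact $f^k$-invariant set $\Delta$. Any weak* limit point $\mu$ of $(\mu_t)$ lies in $\M(f^k,\Delta)$ by the standard Krylov--Bogolyubov argument.
\item A direct computation gives $\tau\mu_t=\cE_{tk}(w)$, which tends to $\nu$. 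By continuity of $\tau$, we get $\tau\mu=\nu$.
\end{itemize}

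This step needs some care because $\Delta$ need not have positive $\nu$-measure in an obvious way. The generic-point argument avoids restricting $\nu$ to pieces of $\Lambda$ and only uses the compactness of $\Delta$.

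\emph{Item (3).} Fix $\mu\in\M(f^k,\Delta)$ and $0\le i\le k-1$. Each $\mu_i:=\mu\circ f^{-i}$ is $f^k$-invariant, because $f^i$ commutes with $f^k$.
\begin{itemize}
\item The map $f^i$ is a factor map from $(X,\mu,f^k)$ to $(X,\mu_i,f^k)$. Hence $h_{\mu_i}(f^k)\le h_\mu(f^k)$.
\item The map $f^{k-i}$ is a factor map from $(X,\mu_i,f^k)$ to $(X,\mu_i\circ f^{-(k-i)},f^k)$. Since $\mu_i\circ f^{-(k-i)}=\mu\circ f^{-k}=\mu$, this gives the reverse inequality.
\end{itemize}
So $h_{\mu_i}(f^k)=h_\mu(f^k)$ for every $i$. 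Finally, entropy is affine on $f^k$-invariant measures \cite[Theorem 8.1]{walters1982introduction}. Applying this to $\tau\mu=\frac1k\sum_i\mu_i$ yields $h_{\tau\mu}(f^k)=\frac1k\sum_i h_{\mu_i}(f^k)=h_\mu(f^k)$.
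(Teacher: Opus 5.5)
Your proof is correct and follows essentially the same route as the paper. Item (1) uses continuity and affinity of push-forward. Item (2) uses a generic point in $\Delta$, a limit point of its $f^k$-empirical measures, and a convexity/ergodic-decomposition closure. Item (3) uses affinity of entropy together with the equality $h_{\mu\circ f^{-i}}(f^k)=h_\mu(f^k)$.

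You also make explicit two points the paper leaves to assertion or citation. First, you obtain a generic point in $\Delta$ by shifting back a generic point $z=f^qw\in\Lambda$, whereas the paper simply asserts $\nu(\Delta)>0$. Second, you prove the entropy equality for non-invertible $f$ via the two factor maps $f^i$ and $f^{k-i}$, where the paper cites a lemma of Denker--Grillenberger--Sigmund stated for homeomorphisms. Finally, your telescoping gives the inclusion $\tau\bigl(\M(f^k,\Delta)\bigr)\subseteq\M(f,\Lambda)$ directly, rather than passing through ergodic measures.
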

\begin{proof}
    The continuity of $\tau$ is a direct corollary of \cite[Theorem 6.7]{walters1982introduction}. The affinity of $\tau$ can be easily checked. Then item (1) is proved.

    Given $\mu \in \Merg\left(f^k, \Delta\right) \subseteq \Merg\left(f^k, \Lambda\right)$ and $A \subseteq \Lambda$ with $f^{-1}A = A$ and hence $f^{-k}A = A$. Then $\mu(A)$ is $0$ or $1$. We have
    \[\tau\mu\left(A\right) = \frac{1}{k}\left(\mu(A) + \mu\left(f^{-1} A\right)  + \ldots + \mu \left(f^{-(k-1)} A\right) \right) = \mu(A).\]
    Therefore, $\tau\mu(A)$ is $0$ or $1$, which proves that $\tau\left(\Merg\left(f^k, \Delta\right)\right) \subseteq \Merg(f,\Lambda)$. On the other hand, given $\mu \in \Merg(f,\Lambda)$, we have $\mu(\Delta) > 0$ and $\mu\left(G_\mu\right) = 1$. Take $x \in G_\mu \cap \Delta$ where $G_\mu$ are defined in Section \ref{subsec:M(X)}. Let
    \[\nu = \lim_{l \to \infty} \frac{1}{n_l} \sum_{i=0}^{n_l -1} \delta_{f^{ki}x}\]
    and we have $\nu \in V_{f^k}(x) \subseteq \M\left(f^k, \Delta\right)$.  Then
    \[\tau \nu = \lim_{l \to \infty} \frac{1}{n_l} \sum_{i=0}^{n_l -1} \tau\delta_{f^{ki}x} = \lim_{l \to \infty} \frac{1}{kn_l} \sum_{i=0}^{kn_l -1} \delta_{f^{i}x} = \mu,\]
    which implies that $\Merg(f,\Lambda) \subseteq \tau\left(\M\left(f^k, \Delta\right)\right)$. Since $\tau$ is continuous and affine, we have $\tau\left(\M\left(f^k, \Delta\right)\right) = \M(f,\Lambda)$ by ergodic decomposition. Then item (2) is proved.

    Item (3) comes from the affinity of entropy function and \cite[Lemma 18.5]{denker1976ergodic} (although the original result was stated for homeomorphisms, it is easy to see that it remains valid for continuous maps).
\end{proof}
\begin{remark}\label{rmk:tau-barycenter}
    For every \(\mu\in\M(\Delta)\), the map \(\tau\) in
    Lemma \ref{lemTau} admits the barycentric representation
    \[
        \tau\mu
        =\frac{1}{k}\sum_{i=0}^{k-1}\mu\circ f^{-i}
        =\int_\Delta \cE_k(x)\,\dif\mu(x).
    \]
    Here the last integral is understood in the weak sense. Indeed,
    for every \(\psi\in C(X)\),
    \[
        \begin{split}
            \int\psi\,\dif\left(\int_\Delta
            \cE_k(x)\,\dif\mu(x)\right)
             & =\int_\Delta\frac{1}{k}
            \sum_{i=0}^{k-1}\psi(f^ix)\,\dif\mu(x) \\
             & =\int\psi\,\dif(\tau\mu).
        \end{split}
    \]
    Consequently, if \(\nu\in\M(f,\Lambda)\) and
    \(\mu\in\M(f^k,\Delta)\) satisfies \(\tau\mu=\nu\), then
    \[
        \nu=\int_\Delta\cE_k(x)\,\dif\mu(x).
    \]
    In particular, the convexity of the first Wasserstein distance
    gives
    \[
        D(\nu,\alpha)
        \leq\int_\Delta
        D\bigl(\cE_k(x),\alpha\bigr)\,\dif\mu(x)
    \]
    for every \(\alpha\in\M(X)\).
\end{remark}

For any $\mu, \nu \in \M(X)$, define $\left[\mu,\nu\right] := \left\{\theta\mu + (1-\theta)\nu:\theta \in [0,1]\right\} $ be the convex hull of $\mu$ and $\nu$. Given $x \in X$, we say $x$ is a periodic point if $f^n(x) = x$ for some $n \in \Nb^+$. We call $\mu \in \M(f,X)$ a periodic measure if $\supp(\mu) = \Orb(x)$ for some periodic point $x$. Now, we prove Theorem \ref{thmA}.

\begin{proof}[Proof of Theorem \ref{thmA}]
    We only deal with the case of maps. The case of homeomorphisms is similar.

    Fix $\ve > 0$, $a \in \Int L_\varphi$, $\mu \in \M(f,X)$ satisfying $\int\varphi\dif\mu = a$ and
    \[
        h_\mu(f) < \sup\left\{h_\omega(f): \omega \in \M(f,X), \int \varphi \dif \omega = a\right\}
    \]
    Fix $0 \leq h \leq h_\mu(f)$. We prove the theorem by countable steps. Note for measures and subsets, we use superscript $k$ to denote sets in Step $k$ and subscripts $\left\{1,2\right\} $ to denote different horseshoes in the same step with respect to different measures.

    \textbf{Step (1).} Since $a \in \Int L_\vphi$, there exist $\mu_{l}^1$ and $\mu_{r}^1$ such that $\int \vphi \dif \mu_{l}^1 < a < \int \vphi \dif \mu_{r}^1$.
    Since
    \[h_\mu(f) < \sup\left\{h_\omega(f):\omega \in \M(f,X), \int\vphi\dif\omega=a\right\},\]
    there exists $\mu_{a,s}^1 \in \M(f,X)$ such that
    \[\int \vphi \dif \mu_{a,s}^1 = a\]
    and
    \[h_\mu(f) < h_{\mu_{a,s}^1}(f) \leq \sup\left\{h_\omega(f):\omega \in \M(f,X), \int\vphi\dif\omega=a\right\}.\]
    Take $\mu_{a}^1 \in \left(\mu,\mu_{a,s}^1\right] \cap B\left(\mu,\ve / 6\right)$. Then $h_{\mu_{a}^1}(f) > h$ and $\int\vphi\dif\mu_{a}^1 = a$.
    Fix $\delta = h_{\mu_a^1}(f) - h > 0$. By Lemma \ref{lemSeparatedSet}, periodic measures are dense in $\M(f,X)$. Then there exist two periodic measures whose integrals with respect to $\varphi$ are greater than and less than $a$, respectively. By convex combination and the density of periodic measures, there exists $\mu_0^1 \in \M(f,X)$ satisfying $h_{\mu_0^1}(f) = 0$, $\int\vphi\dif\mu_0^1 = a$ and $\mu_0^1 \in B(\mu,\ve/6)$. Therefore, there exists $\mu^1 \in \left[\mu_0^1, \mu_{a}^1\right] \subseteq B(\mu, \ve/6)$ such that $h_{\mu^1}(f) = h + 5\delta/8$. There exist $\mu_{1}^1 \in \left[\mu_{l}^1, \mu^1\right]$, $\mu_{2}^1 \in \left[\mu^1, \mu_{r}^1\right]$ such that
    \begin{description}
        \item[(a)\label{1.a}] $a - \delta < \int \vphi \dif \mu_{1}^1 < a < \int \vphi \dif \mu_{2}^1 < a + \delta$;
        \item[(b)\label{1.b}] $h + \delta / 2 < h_{\mu_{j}^1}(f) < h + 3\delta / 4$ for any $j \in \left\{1,2\right\}$;
        \item[(c)\label{1.c}] $D\left(\mu_{1}^1, \mu_{2}^1\right) < \ve / 6$, $D\left(\mu_1^1,\mu^1\right) < \ve / 6$, $D\left(\mu_2^1,\mu^1\right) < \ve / 6$.
    \end{description}
    Then there exists $0 < \eta_1 < \ve / 6$ such that
    \begin{description}
        \item[(d)\label{1.e}] $\int \vphi \dif \nu < a \mathrm{\ for\ any\ } \nu \in B\left(\mu_{1}^1,\eta_1\right),\quad \int \vphi \dif \nu > a \mathrm{\ for\ any\ } \nu \in B\left(\mu_{2}^1,\eta_1\right)$.
    \end{description}
    By Theorem \ref{thmC}, there exists $n_1 \in \Nb^+$ and three $f^{N_1}$-invariant compact subsets $\Delta^1_1, \Delta^1_2, \Delta^{1}$ with $\Delta^1_1, \Delta^1_2 \subseteq \Delta^{1}$ where $N_1 := n_1$ such that for any $j \in \left\{1,2\right\} $,
    \begin{description}
        \item[(1)\label{1.1}] $\left(\Delta^1, f^{N_1}\right)$ and $\left(\Delta^1_j, f^{N_1}\right)$ are topologically conjugate to a one-sided full shift respectively;
        \item[(2)\label{1.2}]
              \[D_{\mathrm{H}}\left(\left[\mu_{1}^1,\mu_{2}^1\right], \M \left(f, \bigcup_{i=0}^{n_1 - 1} f^i \left(\Delta^1\right) \right)   \right) < \eta_1,\]
              \[D_{\mathrm{H}}\left(\left\{\mu_{j}^1\right\}, \M \left(f, \bigcup_{i=0}^{n_1 - 1} f^i \left(\Delta^1_j\right) \right) \right) < \eta_1;\]
        \item[(3)\label{1.3}]
              \begin{align*}
                  h + \delta/4 & < \htop\left(f^{N_1}, \Delta^1_j\right) / N_1 = \htop\left(f, \bigcup_{i=0}^{n_1 - 1} f^i \left(\Delta^1_j\right) \right) \\ & \leq \htop\left(f, \bigcup_{i=0}^{n_1 - 1} f^i \left(\Delta^1\right) \right).
              \end{align*}
    \end{description}
    Let $\Lambda^1 = \bigcup_{i=0}^{N_1 - 1} f^i \left(\Delta^1\right) $. By item \ref{1.c} and item \ref{1.2}, $\diam \M \left(f, \Lambda^1\right) < 2\eta_1 + \ve/ 6  < \ve/2$.

    \textbf{Step (2).} Let $\tau_1 : \M\left(\Delta^1\right)  \to \M(\Lambda^1)$ be the map defined by
    \[\tau_1\nu = \frac{1}{N_1}\left(\nu + \nu\circ f^{-1} + \ldots + \nu\circ f^{-(N_1-1)}\right).\]
    By Lemma \ref{lemTau}, $\tau_1$ is continuous. Then there exists $0 < \gamma_2 < \ve/6 $ such that
    \[D\left(\tau_1\nu,\tau_1\omega\right) < \ve/6 \mathrm{\ for\ any\ } \nu, \omega \in \M\left(\Delta^1\right) \mathrm{\ with\ } D\left(\nu, \omega\right) < \gamma_2.\]

    \begin{claim}
        There exists $\mu_{l}^2 \in \M\left(f^{N_1}, \Delta^1_1\right)$ such that
        \[h + \delta / 4 < h_{\mu_{l}^2}\left(f^{N_1}\right) / N_1  < h + 3\delta/8 \]
        and
        \[\int \vphi \dif \left(\tau_1\mu_{l}^2\right)  < a.\]
    \end{claim}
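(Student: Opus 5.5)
We construct $\mu_l^2$ as a convex combination, inside $\M(f^{N_1},\Delta_1^1)$, of a high-entropy measure and a zero-entropy measure, and then transfer everything to $f$ through $\tau_1$ using Lemma \ref{lemTau}. The two facts we need are that $\tau_1$ preserves entropy and that everything in the image of $\M(f^{N_1},\Delta^1_1)$ lies close to $\mu_1^1$.

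\emph{Step 1: the integral bound is automatic.} Put $\Lambda_1^1=\bigcup_{i=0}^{N_1-1}f^i(\Delta_1^1)$. By Lemma \ref{lemTau}(2), applied with $\Delta_1^1$ in place of $\Delta$, we have $\tau_1\bigl(\M(f^{N_1},\Delta_1^1)\bigr)=\M(f,\Lambda_1^1)$. Item \ref{1.2} then places every measure of this image in $B(\mu_1^1,\eta_1)$. By item \ref{1.e}, $\int\vphi\dif(\tau_1\omega)<a$ for every $\omega\in\M(f^{N_1},\Delta_1^1)$. So the integral condition holds for any choice of $\mu_l^2$, and only the entropy window remains.

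\emph{Step 2: the endpoints of the entropy range.} Since $(\Delta_1^1,f^{N_1})$ is conjugate to a one-sided full shift (item \ref{1.1}), the entropy map on $\M(f^{N_1},\Delta^1_1)$ is upper semicontinuous. Hence there is a measure of maximal entropy $\omega_{\max}$, with $h_{\omega_{\max}}(f^{N_1})=\htop(f^{N_1},\Delta_1^1)>N_1(h+\delta/4)$ by item \ref{1.3}. There is also a periodic measure $\omega_0$ on $\Delta_1^1$, coming from a fixed point of the full shift, with $h_{\omega_0}(f^{N_1})=0$.

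\emph{Step 3: intermediate value.} The entropy function $h_{(\cdot)}(f^{N_1})$ is affine on $\M(f^{N_1},\Delta^1_1)$, so along the segment $\theta\mapsto\theta\omega_{\max}+(1-\theta)\omega_0$ it equals $\theta\, h_{\omega_{\max}}(f^{N_1})$. This is continuous and increasing in $\theta$ and runs from $0$ to a value above $N_1(h+\delta/4)$. We choose $\theta$ so that the entropy equals $N_1(h+5\delta/16)$, and call the resulting measure $\mu_l^2$. Dividing by $N_1$ gives $h+\delta/4<h_{\mu_l^2}(f^{N_1})/N_1=h+5\delta/16<h+3\delta/8$. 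Since $\mu_l^2$ is $f^{N_1}$-invariant and supported on $\Delta_1^1$, Step 1 gives $\int\vphi\dif(\tau_1\mu_l^2)<a$.

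The one step that needs care is the entropy normalization: the claim is stated for $h_{\mu_l^2}(f^{N_1})/N_1$, while item \ref{1.3} is phrased through $\htop(f^{N_1},\Delta_1^1)/N_1$. These quantities are on the same scale, so no appeal to Lemma \ref{lemTau}(3) is needed here. That lemma becomes relevant only later, when one converts to $h_{\tau_1\mu_l^2}(f)=h_{\mu_l^2}(f^{N_1})/N_1$.
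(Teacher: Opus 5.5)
\textbf{The gap: a fixed entropy target.} In Step 3 you aim for entropy exactly $N_1(h+5\delta/16)$, and that choice is not justified. Along the segment from $\omega_0$ to $\omega_{\max}$ the entropy only ranges over $[0,h_{\omega_{\max}}(f^{N_1})]$. The only bound recorded in Step (1) is item (3): $h_{\omega_{\max}}(f^{N_1})/N_1=\htop(f^{N_1},\Delta_1^1)/N_1>h+\delta/4$.

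Nothing in Step (1) rules out $\htop(f^{N_1},\Delta_1^1)/N_1\in(h+\delta/4,\,h+5\delta/16)$. Theorem \ref{thmC} only gives $\htop(f,\Lambda_1^1)>h_{\mu_1^1}(f)-\veh$. The fact that $\M(f,\Lambda_1^1)$ lies in $B(\mu_1^1,\eta_1)$ gives no lower bound on entropy, since entropy is not lower semicontinuous. In that case no $\theta\in[0,1]$ reaches your target.

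\textbf{The fix.} Choose the target adaptively. Take any $c$ with $h+\delta/4<c<\min\{h+3\delta/8,\ h_{\omega_{\max}}(f^{N_1})/N_1\}$; this interval is nonempty. Then put $\theta=N_1c/h_{\omega_{\max}}(f^{N_1})\in(0,1)$. With this change your argument is complete.

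\textbf{Comparison with the paper.} The paper builds its high-entropy endpoint differently:
\begin{enumerate}
\item It picks $\xi\in\M(f,\Lambda_1^1)$ with $h_\xi(f)>h+\delta/4$ by the variational principle for $f$.
\item It lifts $\xi$ to $\xi_1\in\M(f^{N_1},\Delta_1^1)$ using the surjectivity part of Lemma \ref{lemTau}(2).
\item It needs Lemma \ref{lemTau}(3) to conclude $h_{\xi_1}(f^{N_1})/N_1=h_\xi(f)$.
\item It uses density of periodic measures to find a zero-entropy $\xi_0$ with $\int\vphi\dif(\tau_1\xi_0)<a$.
\end{enumerate}

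You stay entirely inside $(\Delta_1^1,f^{N_1})$ and use a measure of maximal entropy there. So you need only the easy inclusion $\tau_1(\M(f^{N_1},\Delta_1^1))\subseteq\M(f,\Lambda_1^1)$, and no entropy transfer through $\tau_1$. Your Step 1 observation, that the integral bound holds for every $f^{N_1}$-invariant measure on $\Delta_1^1$, also makes the paper's density argument for $\xi_0$ unnecessary: any fixed point of the full shift will do. Both proofs finish with the same intermediate-value step for the affine entropy map.
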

    \begin{proof}[Proof of the claim]
        By the classical variational principle and item \ref{1.3} in Step (1), there exists $\xi \in \M(f,\bigcup_{i=0}^{n_1 - 1} f^i \left(\Delta^1_1\right))$ such that $h_\xi(f) > h + \delta/4$. By item \ref{1.e} and item \ref{1.2} in Step (1), $\int \vphi \dif \xi  < a$. By Lemma \ref{lemTau}, there exists $\xi_1 \in \M(f^{N_1},\Delta^1_1)$ such that $\tau_1\xi_1 = \xi$. Since $\left(\Delta^1_1, f^{N_1}\right)$ is topologically conjugate to a one-sided full shift, periodic measures are dense in $\M(f^{N_1},\Delta_1^1)$. Then there exists $\xi_0$ such that $\int\vphi\dif\left(\tau_1\xi_0\right) < a$ and $h_{\xi_0}\left(f^{N_1}\right)  = 0$. By convex combination of $\xi_0$ and $\xi_1$, we complete the proof of the claim.
    \end{proof}

    Similarly, there exists $\mu_{r}^2 \in \M\left(f^{N_1}, \Delta^1_2\right)$ such that
    \[h + \delta / 4 < h_{\mu_{r}^2}\left(f^{N_1}\right) / N_1  < h + 3\delta/8,\quad \int \vphi \dif \left(\tau_1\mu_{r}^2\right)  > a.\]
    By Lemma \ref{lemTau}, $\tau_1$ is affine. Similar to Step (1), there exist $\mu_{1}^2, \mu_{2}^2 \in \left[\mu_{l}^2, \mu_{r}^2\right] \subseteq \M\left(f^{N_1}, \Delta^1\right)$ such that
    \begin{description}
        \item[(a)\label{2.a}] $a - \delta / 2 < \int \vphi \dif \left(\tau_1\mu_{1}^2\right)  < a < \int \vphi \dif \left(\tau_1\mu_{2}^2\right)  < a + \delta / 2$;
        \item[(b)] $h + \delta / 4 < h_{\mu_{j}^2}\left(f^{N_1}\right) / N_1 < h + 3 \delta / 8$ for any $j \in \left\{1,2\right\}$;
        \item[(c)\label{2.c}] $D\left(\mu_{1}^2, \mu_{2}^2\right) < \gamma_2 / 3$.
    \end{description}
    By Lemma \ref{lemFullShiftsProperties}, $\left(\Delta^1, f^{N_1}\right) $ is positively expansive and hence the entropy function with respect to $f^{N_1}$ is upper semi-continuous by Lemma \ref{lemUpperSemiContinuity}. By Lemma \ref{lemTau}, $h_{\tau_1\mu_{j}^2}\left(f^{N_1}\right) = h_{\mu_{j}^2}\left(f^{N_1}\right)$ for any $j \in \left\{1,2\right\} $. Then there exists $0 < \eta_2 < \gamma_2 / 3$ such that
    \begin{description}
        \item[(d)] $h_{\nu}\left(f^{N_1}\right) / N_1 < h + \delta / 2$ for any $f^{N_1}$-invariant measure $\nu \in B\left( \left[\mu_{1}^2, \mu_{2}^2\right]  , \eta_2\right)$;
        \item[(e)]
              \[\int \vphi \dif \left(\tau_1\nu\right)  < a \mathrm{\ for\ any\ } \nu \in B\left(\mu_{1}^2,\eta_2,\right)\]
              and
              \[\int \vphi \dif \left(\tau_1\nu\right)  > a \mathrm{\ for\ any\ } \nu \in B\left(\mu_{2}^2,\eta_2\right).\]
    \end{description}
    By Lemma \ref{lemFullShiftsProperties} and item (1) in Step (1), $\left(\Delta^1, f^{N_1}\right)$ is topologically transitive and satisfies the shadowing property. By Theorem \ref{thmC}, there exists $n_2 \in \Nb^+$ and three $f^{N_2}$-invariant compact subsets $\Delta^2_1, \Delta^2_2, \Delta^{2}$ with $\Delta^2_1, \Delta^2_2 \subseteq \Delta^{2}$ where $N_2 := n_1n_2$ such that for any $j \in \left\{1,2\right\} $,
    \begin{description}
        \item[(1)] $\left(\Delta^2, f^{N_2}\right)$ and $\left(\Delta^2_j, f^{N_2}\right)$ are topologically conjugate to a one-sided full shift respectively;
        \item[(2)\label{2.2}]
              \[D_{\mathrm{H}}\left(\left[\mu_{1}^2,\mu_{2}^2\right], \M \left(f^{N_1}, \bigcup_{i=0}^{n_2 - 1} f^{iN_1} \left(\Delta^2\right) \right)   \right) < \eta_2\]
              and
              \[D_{\mathrm{H}}\left(\left\{\mu_{j}^2\right\} , \M \left(f^{N_1}, \bigcup_{i=0}^{n_2 - 1} f^{iN_1}\left(\Delta^2_j\right) \right) \right) < \eta_2;\]
        \item[(3)\label{2.3}]
              \begin{align*}
                  h + \delta / 8 & < \htop\left(f^{N_2}, \Delta^2_j\right) / N_2  \leq \htop\left(f^{N_2}, \Delta^2\right) / N_2 \\ & = \htop\left(f^{N_1}, \bigcup_{i=0}^{n_2 - 1} f^{iN_1}\left(\Delta^2\right)   \right) / N_1 < h + \delta/2.
              \end{align*}
    \end{description}
    Let $\Lambda^2 = \bigcup_{i=0}^{N_2 - 1} f^i \Delta^2$. By Lemma \ref{lemTau} and item \ref{2.3} in Step (2), $h + \delta / 8 < \htop\left(f,\Lambda^2\right) < h+\delta/2$. By item \ref{2.c} and item \ref{2.2} in Step (2),
    \[\diam \M \left(f^{N_1}, \bigcup_{i=0}^{n_2 - 1} f^{iN_1} \left(\Delta^2\right) \right) < 2\eta_2 + \gamma_2 / 3 < \gamma_2\]
    and hence
    \[\diam \M \left(f, \Lambda^2\right) = \diam\left(\tau_1\M \left(f^{N_1}, \bigcup_{i=0}^{n_2 - 1} f^{iN_1} \left(\Delta^2\right) \right)\right)  < \ve / 6.\]

    \textbf{Step (k).} Let $\tau_{k-1} : \M\left(f^{N_{k-1}}, \Delta^{k-1}\right)  \to \M(f,X)$ be the map defined by
    \[\tau_{k-1}\nu = \frac{1}{N_{k-1}}\left(\nu + \nu\circ f^{-1} + \ldots + \nu\circ f^{-(N_{k-1}-1)}\right).\]
    By Lemma \ref{lemTau}, $\tau_{k-1}$ is continuous. Then there exists $0 < \gamma_k < \ve / \left(3\cdot2^{k-1}\right)  $ such that
    \[D\left(\tau_{k-1}\nu,\tau_{k-1}\omega\right) < \ve / \left(3\cdot2^{k-1}\right)\]
    for any $\nu, \omega \in \M\left(f^{N_{k-1}}, \Delta^{k-1}\right)$ with $D\left(\nu, \omega\right) < \gamma_k$.
    Similar to the previous steps, there exists $\mu_{l}^k \in \M\left(f^{N_{k-1}}, \Delta^{k-1}_1\right)$ such that
    \[h + \delta / 2^k < h_{\mu_{l}^k}\left(f^{N_{k-1}}\right) / N_{k-1} < h + 3\delta / 2^{k+1},\quad \int \vphi \dif \left(\tau_{k-1}\mu_{l}^k\right) < a.\]
    Similarly, there exists $\mu_{r}^k \in \M\left(f^{N_{k-1}}, \Delta^{k-1}_2\right)$ such that
    \[h + \delta / 2^k < h_{\mu_{r}^k}\left(f^{N_{k-1}}\right) / N_{k-1} < h + 3\delta / 2^{k+1}, \quad \int \vphi \dif \left(\tau_{k-1}\mu_{r}^k\right)  > a.\]
    By Lemma \ref{lemTau}, $\tau_{k-1}$ is affine. Similar to Step (1), there are $\mu_{1}^k, \mu_{2}^k \in \left[\mu_{l}^k, \mu_{r}^k\right] \subseteq \M\left(f^{N_{k-1}}, \Delta^{k-1}\right)$ such that
    \begin{description}
        \item[(a)] $a - \delta / 2^{k-1} < \int \vphi \dif \left(\tau_{k-1}\mu_{1}^k\right)  < a < \int \vphi \dif \left(\tau_{k-1}\mu_{2}^k\right)  < a + \delta / 2^{k-1}$;
        \item[(b)] $h + \delta / 2^k < h_{\mu_{j}^k}\left(f^{N_{k-1}}\right) / N_{k-1} < h + 3 \delta / 2^{k+1}$ for any $j \in \left\{1,2\right\}$;
        \item[(c)\label{k.c}] $D\left(\mu_{1}^k, \mu_{2}^k\right) < \gamma_k / 3$.
    \end{description}
    By Lemma \ref{lemFullShiftsProperties}, $\left(\Delta^{k-1}, f^{N_{k-1}}\right)$ is positively expansive and hence the entropy function with respect to $f^{N_{k-1}}$ is upper semi-continuous by Lemma \ref{lemUpperSemiContinuity}. Then there exists $0 < \eta_k < \gamma_k / 3$ such that
    \begin{description}
        \item[(d)] $h_{\nu}\left(f^{N_{k-1}}\right) / N_{k-1} < h + \delta / 2^{k-1}$ for any $f^{N_{k-1}}$-invariant measure $\nu \in B\left( \left[\mu_{1}^{k}, \mu_{2}^k\right]  , \eta_k\right)$;
        \item[(e)] \[\int \vphi \dif \left(\tau_{k-1}\nu\right)  < a \mathrm{\ for\ any\ } \nu \in B\left(\mu_{1}^k,\eta_k\right),\]
              \[\int \vphi \dif \left(\tau_{k-1}\nu\right)  > a \mathrm{\ for\ any\ } \nu \in B\left(\mu_{2}^k,\eta_k\right).\]
    \end{description}
    By Lemma \ref{lemFullShiftsProperties} and item (1) in Step $(k-1)$, $\left(\Delta^{k-1}, f^{N_{k-1}}\right)$ is topologically transitive and satisfies the shadowing property. By Theorem \ref{thmC}, there exists $n_k \in \Nb^+$ and three $f^{N_k}$-invariant compact subsets $\Delta^k_1, \Delta^k_2, \Delta^{k}$ with $\Delta^k_1, \Delta^k_2 \subseteq \Delta^{k}$ where $N_k := N_{k-1}n_k$ such that for any $j \in \left\{1,2\right\} $,
    \begin{description}
        \item[(1)] $\left(\Delta^k, f^{N_k}\right)$ and $\left(\Delta^k_j, f^{N_k}\right)$ are topologically conjugate to a one-sided full shift respectively;
        \item[(2)\label{k.2}] \[D_{\mathrm{H}}\left(\left[\mu_{1}^k,\mu_{2}^k\right], \M \left(f^{N_{k-1}}, \bigcup_{i=0}^{n_k - 1} f^{iN_{k-1}} \left(\Delta^k\right) \right)   \right) < \eta_k\]
              and
              \[D_{\mathrm{H}}\left(\left\{\mu_{j}^k\right\} , \M \left(f^{N_{k-1}}, \bigcup_{i=0}^{n_k - 1} f^{iN_{k-1}}\left(\Delta^k_j\right) \right) \right) < \eta_k;\]
        \item[(3)\label{k.3}] \begin{align*}
                  h + \delta / 2^{k+1} & < \htop\left(f^{N_k}, \Delta^k_j\right) / N_k \leq \htop\left(f^{N_k}, \Delta^k\right) / N_k \\ & = \htop\left(f^{N_{k-1}}, \bigcup_{i=0}^{n_k - 1} f^{iN_{k-1}}\left(\Delta^k\right) \right) / N_{k-1} < h + \delta / 2^{k-1}.
              \end{align*}
    \end{description}
    Let $\Lambda^k = \bigcup_{i=0}^{N_k - 1} f^{i}\Delta^k$. By Lemma \ref{lemTau} and item \ref{k.3} in Step $(k)$, similar to Step (2),
    \[h + \delta / 2^{k+1} < \htop\left(f, \Lambda^k\right) < h + \delta / 2^{k-1}.\]
    By item \ref{k.c} and item \ref{k.2} in Step $(k)$,
    \[\diam \M \left(f^{N_{k-1}}, \bigcup_{i=0}^{n_k - 1} f^{iN_{k-1}} \left(\Delta^k\right) \right) < 2\eta_k + \gamma_k/3 <  \gamma_k\]
    and hence
    \[\diam \M \left(f, \Lambda^k\right) = \diam \left(\tau_{k-1} \M \left(f^{N_{k-1}}, \bigcup_{i=0}^{n_k - 1} f^{iN_{k-1}} \left(\Delta^k\right) \right)\right)  < \ve / \left(3\cdot2^{k-1}\right).\]

    \textbf{After countable steps.} We obtain a sequence of subsets $\left\{\Lambda^k\right\}_{k=1}^{\infty}$ satisfying $\Lambda^{k+1} \subseteq \Lambda^k$ for any $k \in \Nb^+$. Let $\Lambda = \bigcap_{k=1}^{\infty} \Lambda^k$ and hence $\M\left(f,\Lambda\right) \subseteq \M\left(f, \Lambda^k\right)$ for any $k \in \Nb^+$. Moreover, $\lim_{k \to \infty} \diam \M\left(f, \Lambda^k\right) = 0$. Therefore, $\M\left(f,\Lambda\right)$ is a single $f$-invariant measure $\left\{\nu\right\}$. Let $Y = \supp\left(\nu\right)$. Since $Y \subseteq \Lambda$, one has $\M(f,Y)=\{\nu\}$, so $\left(Y,f|_Y\right)$ is uniquely ergodic. By \cite[Theorem 6.17]{walters1982introduction}, it is also minimal, and hence item (1) in Theorem \ref{thmA} is satisfied.

    Since $\lim_{k \to \infty} \diam \M\left(f, \Lambda^k\right) = 0$, one has $\nu = \lim_{k \to \infty} \tau_{k}\mu^{k+1}_1$. Since the entropy map is upper semi-continuous, by item (b) in each step,
    \[h_\nu\left(f\right) \geq \limsup_{k \to \infty} h_{\tau_{k}\mu_1^{k+1}}(f) = \limsup_{k \to \infty} h_{\mu_1^{k+1}}\left(f^{N_{k}}\right) / N_{k}  \geq h.\]
    On the other hand, by item (3) in each step, $\htop\left(f,\Lambda\right) \leq \htop\left(f,\Lambda^{k+1}\right) \leq h + \delta / 2^{k}$ for any $k \in \Nb^+$ and hence $\htop\left(f,\Lambda\right) \leq h$. Note $\nu$ is the unique measure with maximal entropy and hence $\htop(f, \Lambda) =  h_\nu(f) = h$. Therefore, item (2) in Theorem \ref{thmA} is satisfied.

    Since $\lim_{k \to \infty} \diam \M\left(f, \Lambda^k\right) = 0$, one has $\nu = \lim_{k \to \infty} \tau_{k}\mu^{k+1}_1 = \lim_{k \to \infty} \tau_{k}\mu^{k+1}_2$. By the item (a) in each step, item (3) in Theorem \ref{thmA} is satisfied.

    Finally,
    \begin{align*}
        D\left(\mu,\nu\right) & \leq D\left(\mu,\mu^1\right) + D\left(\mu^1,\mu^1_1\right) + D\left(\mu^1_1,\nu\right) \\
                              & < \ve/6 + \ve/6 + \eta_1 + \diam \M \left(f, \Lambda^1\right)                          \\
                              & < \ve/6 + 5\ve/6 = \ve,
    \end{align*}
    which means that item (4) in Theorem \ref{thmA} is satisfied.
\end{proof}

Next, we prove the corollaries of Theorem \ref{thm:A} for topologically expanding maps. First, we recall the topological spectral decomposition theorem, see \cite[Theorem 3.4.4]{aoki1980topological}.

\begin{theorem}[Topological Spectral Decomposition Theorem]\label{thm:top_spec_decomp}
    Let $(X,f)$ be a positively expansive dynamical system with the shadowing property and assume that $f$ is surjective. Then the following properties hold:
    \begin{enumerate}
        \item $\Omega(f)$ contains a finite sequence of pairwise disjoint $f$-invariant closed subsets $\left\{B_i\right\}_{i=1}^n$ such that $\Omega(f) = \bigcup_{i=1}^n B_i$ and each subsystem $\left(B_i, f\right)$ is topologically transitive (each $B_i$ is called a basic set).
        \item For every basic set $B$, there exists an integer $m > 0$ and a finite sequence of pairwise disjoint $f^m$ invariant closed sets $\left\{C_i\right\}_{i=0}^{m-1}$ such that $B = \bigcup_{i=0}^{m-1} C_i$, $f\left(C_i\right) = C_{i+1\ (\mathrm{ mod\ } m)}$ and each subsystem $\left(C_i, f^m\right)$ is topologically mixing (each $C_i$ is called an elementary set).
    \end{enumerate}
\end{theorem}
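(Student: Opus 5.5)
The plan is to work entirely with $\delta$-chains, using positive expansivity and shadowing, following the classical Bowen/Smale argument. Fix a positively expansive constant $c$. For $\ve<c/3$, choose $\delta$ so that every $\delta$-pseudo-orbit is $\ve$-shadowed.

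\textbf{Step 1 (the non-wandering set).} I first show that $\Omega(f)$ equals the closure of the periodic points, and also equals the chain recurrent set $CR(f)$.

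\begin{itemize}
\item If $x\in\Omega(f)$, there is $y$ close to $x$ with $f^ny$ close to $x$. Then the periodic $\delta$-pseudo-orbit $\langle y,\dots,f^{n-1}y\rangle$, repeated, is $\ve$-shadowed by some $z$.
\item Exactly as in the proof of Lemma \ref{lemSeparatedSet}, $d(f^{n+i}z,f^iz)<2\ve<c$ for all $i$, so expansivity gives $f^nz=z$. Hence $x$ is a limit of periodic points.
\item The same shadowing argument applied to any $\delta$-chain from $x$ back to $x$ gives $CR(f)\subseteq\overline{\Per(f)}\subseteq\Omega(f)\subseteq CR(f)$.
\item Surjectivity of $f$ is used here so that $\Omega(f)$ is nonempty and $f$-invariant with $f(\Omega(f))=\Omega(f)$.
\end{itemize}

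\textbf{Step 2 (basic sets).} On $\Omega(f)$ I define $x\sim y$ if, for every $\ve'>0$, there are $\ve'$-chains from $x$ to $y$ and from $y$ to $x$. This is an equivalence relation on $CR(f)=\Omega(f)$, and its classes $B_i$ are closed and $f$-invariant.

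\begin{itemize}
\item \emph{Finitely many classes.} If $x,y\in\Omega(f)$ with $d(x,y)<\delta/2$, then $x\sim y$. Indeed, by Step 1, $x$ and $y$ are approximated by periodic points, and a chain can jump between nearby points. Because shadowing converts $\delta$-chains into true orbits, this works uniformly at the fixed scale $\delta$ rather than only for arbitrarily small $\ve'$. So each class is open in $\Omega(f)$, and compactness gives only finitely many.
\item \emph{Transitivity.} Let $U,V$ be relatively open sets in $B_i$. I take a $\delta$-chain within $B_i$ from a periodic point in $U$ to one in $V$ and shadow it. The shadowing point lies in $\Omega(f)$, since it can be made periodic by closing the chain back up. It also lies in $B_i$, since it is $\ve$-close to points of $B_i$ and classes are $\delta/2$-separated. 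Hence $f^{-n}U\cap V\neq\emptyset$.
\end{itemize}

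\textbf{Step 3 (elementary sets).} Fix a basic set $B$ and the scale $\delta$. Let $m$ be the gcd of the lengths of all $\delta$-chain loops in $B$. Fix $x_0\in B$ and let $C_j$ be the set of $y\in B$ reachable from $x_0$ by a $\delta$-chain of length $\equiv j \pmod m$. I expect three properties.

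\begin{itemize}
\item \emph{Well defined.} The residue class of a point is unique, since two chains of different residue, closed up by a return chain, would produce a loop of length not divisible by $m$.
\item \emph{Cyclic structure.} Each $C_j$ is open and closed in $B$, and $f(C_j)=C_{j+1}$.
\item \emph{Finiteness of $m$.} Loop lengths are bounded on a finite $\delta/2$-net, so $m$ is finite.
\end{itemize}

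For mixing of $(C_0,f^m)$, the set of loop lengths at a point is closed under addition and has gcd $m$. A numerical-semigroup argument then shows that all sufficiently large multiples of $m$ occur as lengths of chains between any two points of $C_0$, uniformly on the finite net. Shadowing these chains gives $f^{-km}U\cap V\neq\emptyset$ for all large $k$.

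\textbf{Main obstacle.} The hardest step is making the chain classes and residue classes \emph{uniformly} open at the fixed scale $\delta$. This means showing that the combinatorics at scale $\delta$ already captures the relations "for every $\ve'$". Expansivity plus shadowing must be used to show that shrinking the scale below $\delta$ does not split classes.

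The approach I would try first is to route every comparison through periodic points. Two periodic points that are $\delta$-close are linked by actual orbits obtained from shadowing the concatenated pseudo-orbit. Those orbits give $\ve'$-chains for every $\ve'$, since a true orbit is an $\ve'$-chain at every scale.
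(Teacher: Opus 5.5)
The paper gives no proof of this statement; it is quoted from \cite[Theorem 3.4.4]{aoki1980topological}, so your argument has to stand on its own. Its architecture is the standard one: periodic points dense in $\Omega(f)$, chain classes as basic sets, and residues of chain lengths as elementary sets. Step 1 is fine. However, the two uniformity claims you flag as the main obstacle carry the whole proof, they are not established, and the mechanism you propose for the first one does not work as written.

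\emph{First gap.} Let $p,q$ be periodic with $d(p,q)<\delta$, and let $a$ be the period of $p$. If you shadow ``orbit of $p$, jump to $q$, orbit of $q$'' once, positive expansivity does give $z$ with $f^{a}z=q$ exactly. But $z$ is only $\ve$-close to $p$, so ``$p$ followed by the orbit of $z$'' is not an $\ve'$-chain for $\ve'<\ve$, and $p\sim q$ does not follow. The missing step is to wind $k$ times around the orbit of $p$ before jumping. The shadowing point $z_k$ then satisfies $f^{ka}z_k=q$ and $d(f^iz_k,f^ip)<\ve$ for $0\le i<ka$. Every limit point $z_*$ of $(z_k)$ therefore has $d(f^iz_*,f^ip)\le\ve<c$ for all $i\geq0$, hence equals $p$. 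Thus $z_k\to p$, and $\langle p,fz_k,\dots,f^{ka-1}z_k\rangle$ is an $\ve'$-chain from $p$ to $q$ once $k$ is large. Density of periodic points and closedness of chain classes then give $x\sim y$ for all $x,y\in\Omega(f)$ with $d(x,y)<\delta/2$.

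\emph{Second gap (Step 3).} Shadowing at scale $\delta$ only puts a witness within $\ve$ of the chain, so to meet arbitrarily small $U,V$ you must use $\delta'$-chains with $\delta'\ll\delta$. There are fewer $\delta'$-loops, so their gcd could a priori be a proper multiple of your $m$, which would destroy mixing of $(C_0,f^m)$. Your ``finiteness of $m$'' bullet does not address this; a gcd of positive integers is always finite. The needed fact is the following. Every loop of length $n$ through $x_0\in B$ is shadowed by a point $z$ with $f^nz=z$, as in Step 1, and $z\in B$ because $z\in\Omega(f)$ is close to $x_0$. Hence every loop length is a multiple of the gcd $g$ of the periods of periodic points in $B$, while periodic orbits are loops at every scale. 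So the loop gcd equals $g$ at all sufficiently fine scales, and since finer chains are also coarser chains, the classes $C_j$ do not change either.

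\emph{Scale mismatch.} ``$z$ close to $x_0$ implies $z\in B$'' requires the shadowing precision to be below half the separation scale from Step 2. Your transitivity argument has the same mismatch: ``$\ve$-close'' together with ``classes are $\delta/2$-separated'' gives nothing unless $\ve<\delta/2$, whereas you chose $\delta$ from $\ve$. The fix is to choose $\delta_0=\delta(\ve_0)$ for the separation first. Then do all shadowing with some $\ve_1<\delta_0/2$, also adapted to $U,V$, using $\delta(\ve_1)$-chains. Close every chain into a loop, so that the witness is periodic and lies in the correct $B_i$ or $C_j$.

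\emph{Surjectivity.} This hypothesis plays no role in your argument, contrary to your remark in Step 1. $\Omega(f)\neq\emptyset$ and $f(\Omega(f))\subseteq\Omega(f)$ hold for any continuous self-map of a compact space, and $f(\Omega(f))=\Omega(f)$ follows from $\Omega(f)=\overline{\Per(f)}$.
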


\begin{proof}[Proof of Theorem \ref{thm:2.2}]
    First suppose that \((X,f)\) satisfies (H). Take
    \(\varphi\equiv0\). Then $L_\varphi=\{0\}$, so the integral constraint is automatic. The degenerate case of Theorem \ref{thm:A} therefore gives, for every $0\leq h<\htop(f,X)$, a uniquely ergodic measure $\mu$ with compact support such that $h_\mu(f)=h$. Let $Y=\supp(\mu)$. Then $(Y,f|_Y)$ is minimal and uniquely ergodic, and the variational principle gives $\htop(f,Y)=h_\mu(f)=h$.

    It remains to prove the stronger assertion in the preceding
    remark for a not necessarily transitive system satisfying (H1). By \cite[Corollary 1(i)]{moothathu2011implications}, we have $f|_{\Omega(f)}:\Omega(f) \to \Omega(f)$ is surjective. By \cite[Lemma 1]{moothathu2011implications}, we have $\left(\Omega(f), f\right)$ has the shadowing property. By Theorem \ref{thm:top_spec_decomp}, there exists a finite sequence of pairwise disjoint $f$-invariant closed subsets $\left\{B_i\right\}_{i=1}^l$ of $\Omega\left(f|_{\Omega(f)}\right)$ such that $\Omega\left(f|_{\Omega(f)}\right) = \bigcup_{i=1}^l B_i$ and each subsystem $\left(B_i, f\right)$ (i.e., $\left(B_i, f|_{\Omega(f)}\right)$) is topologically transitive. By Lemma \ref{lem:htop_sum} and Lemma \ref{lem:htop_nw}, $\htop(f,X) = \htop\left(f, \Omega(f)\right) = \htop\left(f, \Omega\left(f|_{\Omega(f)}\right) \right) = \max_{1 \leq i \leq l}\left\{\htop\left(f, B_i\right) \right\}$. Then $\htop(f,X) = \htop\left(f, B_i\right)$ for some $1 \leq i \leq l$. Fix such an $i$. Applying the degenerate case $\varphi\equiv0$ of Theorem \ref{thm:A} to $(B_i,f)$, for every $0\leq h<\htop(f,B_i)$ there exists a uniquely ergodic measure $\mu$ with compact support contained in $B_i$ such that $h_\mu(f)=h$. Let $Y=\supp(\mu)$. Then $(Y,f|_Y)$ is minimal and uniquely ergodic, and the variational principle gives $\htop(f,Y)=h_\mu(f)=h$. Since $\htop(f,B_i)=\htop(f,X)$, the same conclusion holds for $(X,f)$.
\end{proof}

\begin{proof}[Proof of corollary \ref{cor:2.3}]
    In case (H1) and (H3), the entropy function is upper semi-continuous and there exists a measure of maximal entropy. Combined with Theorem \ref{thm:2.2}, we prove this corollary.
\end{proof}

\begin{proof}[Proof of Theorem \ref{thm:2.4}]
    Fix $a \in \Int L_\varphi$. For any $\mu \in \Merg(f,X)$ satisfying $\int\vphi\dif\mu = a$, one has $G_\mu \subseteq R_\varphi(a)$ where $G_\mu$ is the set of generic points of $\mu$ defined in Section \ref{subsec:M(X)}. By \cite[Theorem 3]{bowen1973topological},
    \[\htop\left(f, R_\varphi(a)\right) \geq \htop\left(f, G_\mu\right) = h_\mu(f).\]
    Since $\mu$ is arbitrary, one has
    \[\htop\left(f, R_\varphi(a)\right) \geq \sup\left\{h_\mu(f):\mu \in \Merg(f,X),\int\varphi\dif\mu = a\right\}.\]
    On the other hand, let $\M_a = \left\{\mu \in \M(f,X):\int \varphi \dif \mu = a\right\}$. Then $R_\varphi(a) \subseteq G^{\M_a}$ where
    \[G^{\M_a}:= \left\{x \in X:\left\{\cE_n(x)\right\}_{n=1}^\infty \mathrm{\ has\ all\ its\ limit\ points\ in\ } \M_a \right\}.\]
    Note $G^{\M_a} \subseteq \prescript{\M_a}{}{G}$ where
    \[\prescript{\M_a}{}{G} := \left\{x \in X:\left\{\cE_n(x)\right\}_{n=1}^\infty \mathrm{\ has\ a\ limit\ point\ in\ } \M_a \right\}.\]
    Since $\M_a$ is closed, by \cite[Theorem 4.1 (1)]{pfister2007topological}, one has
    \[\htop\left(f, R_\varphi(a)\right) \leq \htop\left(f,\prescript{\M_a}{}{G}\right) \leq \sup\left\{h_\mu(f): \mu \in \M_a\right\}.\]
    By Theorem \ref{thm:A}, one has
    \begin{align*}
          & \sup\left\{h_\mu(f) :\mu \in \M(f,X), \int \varphi \dif \mu = a\right\} \\
        ={} & \sup\left\{h_\mu(f):\mu \in \M(f,X) \mathrm{\ is\ uniquely\ ergodic\ with\ compact\ support},\int \varphi \dif \mu = a\right\},
    \end{align*}
    which completes the proof.
\end{proof}

As mentioned in Remark \ref{rmk:thmA}, we present the following generalized theorem with a sketch of proof. Let $m \in \Nb^+$ and $Y$ be a nonempty convex subset of $\Rb^m$. The set of all affine combinations of points in $Y$ is defined as
\[\mathrm{Aff}(Y) = \left\{\sum_{i=1}^{k}\alpha_i y_i: k \in \Nb^+, y_i \in Y, \alpha_i \in \Rb \mathrm{\ for\ any\ } 1 \leq i \leq k, \sum_{i=1}^{k}\alpha_i = 1\right\},\]
called the \emph{affine hull} of $Y$. We define the \emph{affine dimension} of $Y$ as the dimension of its affine hull, denoted by $\dim_{\mathrm{a}}(Y)$. The \emph{relative interior} of $Y$ is defined as its interior within the affine hull of $Y$. In other words,
\[\mathrm{Relint}(Y) = \left\{y \in Y:\mathrm{there\ exists\ } \ve > 0 \mathrm{\ such\ that\ } B(y,\ve) \cap \mathrm{Aff}(Y) \subseteq Y\right\}.\]
The following properties of relative interiors can be easily checked.
\begin{enumerate}
    \item If $Y$ is a singleton ($\dima Y = 0$), then $\Aff(Y) = \Relint(Y) = Y$.
    \item If $0 \leq \dima(Y) < m$, then $\Int(Y) = \emptyset$.
    \item If $\dima(Y) = m$, then $\Int(Y) = \Relint(Y)$.
\end{enumerate}
Given $\left\{\vphi_j\right\}_{j=1}^m \subset C(X)$ and $m \in \Nb^+$, define
\[L_{\left\{\vphi_j\right\}_{j=1}^m} = \left\{\left(\int\vphi_1\dif\mu,\dots,\int\vphi_m\dif\mu\right):\mu \in \M(f,X)\right\}.\]
Since $\M(f,X)$ is compact and convex, we have $L_{\left\{\vphi_j\right\}_{j=1}^m}$ is a compact and convex subset of $\Rb^m$.

\begin{theorem}\label{thm:relint}
    Suppose $f$ is a topologically transitive and topologically expanding (topologically Anosov) continuous map (homeomorphism) on a compact metric space $(X,d)$. Let $m \in \Nb^+$ and $\left\{\vphi_j\right\}_{j=1}^m \subset C(X)$ such that $\Relint L_{\left\{\vphi_j\right\}_{j=1}^m} \ne \emptyset$. For any $x \in X$, any $\ve > 0$, any $\left(a_1,\dots,a_m\right) \in \Relint L_{\left\{\vphi_j\right\}_{j=1}^m}$, any $\mu \in \M(f,X)$ satisfying
    \[\left(\int\vphi_1\dif\mu, \dots, \int\vphi_m\dif\mu\right)   = \left(a_1,\dots,a_m\right)\]
    and
    \[h_\mu(f) < \sup\left\{h_\omega(f): \omega \in \M(f,X), \left(\int\vphi_1\dif\omega, \dots, \int\vphi_m\dif\omega\right)   = \left(a_1,\dots,a_m\right)\right\},\]
    and any $0 \leq h \leq h_\mu(f)$, there exists a compact $f$-invariant set $Y \subseteq X$ such that
    \begin{enumerate}
        \item $\left(Y,f\right) $ is minimal and uniquely ergodic;
        \item $h_{\nu}(f) = \htop(f,Y) = h$ where $\nu$ is the unique $f$-invariant measure on $Y$;
        \item \[\left(\int\vphi_1\dif\nu, \dots, \int\vphi_m\dif\nu\right)   = \left(a_1,\dots,a_m\right);\]
        \item $D(\nu,\mu) < \ve$.
    \end{enumerate}
\end{theorem}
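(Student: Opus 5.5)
The plan is to rerun the nested-horseshoe proof of Theorem \ref{thmA}, but with the pair of ``left/right'' measures straddling $a$ replaced by a simplex of measures whose integral vectors surround $(a_1,\dots,a_m)$ inside the affine hull. Write $\Phi(\nu)=\left(\int\vphi_1\dif\nu,\dots,\int\vphi_m\dif\nu\right)$, $L=L_{\{\vphi_j\}_{j=1}^m}$, $A=\Aff(L)$ and $d=\dima(L)$. The key observation is that $\Phi(\nu)\in L\subseteq A$ for every $\nu\in\M(f,X)$. So every integral vector we ever produce already lies in $A$, and openness only has to be checked relative to $A$. If $d=0$ the constraint is automatic and we are in the degenerate case of Theorem \ref{thm:A}. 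So assume $d\geq 1$.

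\textbf{Step (1).} Since $\mathbf a=(a_1,\dots,a_m)\in\Relint L$, I choose $\mu^1_{v,1},\dots,\mu^1_{v,d+1}\in\M(f,X)$ whose images $\Phi(\mu^1_{v,j})$ are the vertices of a $d$-simplex containing $\mathbf a$ in its relative interior. As in Step (1) of Theorem \ref{thmA}, I then take $\mu^1\in B(\mu,\ve/6)$ with $\Phi(\mu^1)=\mathbf a$ and $h_{\mu^1}(f)=h+5\delta/8$. This uses:
\begin{itemize}
\item a measure $\mu_a^1\in(\mu,\mu_{a,s}^1]$ of entropy larger than $h$;
\item a zero-entropy measure with $\Phi=\mathbf a$ near $\mu$, obtained from periodic measures (dense by Lemma \ref{lemSeparatedSet}) via convex combinations. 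Here $d+1$ periodic measures surrounding $\mathbf a$ are combined, which is possible because a small perturbation of simplex vertices inside $A$ keeps $\mathbf a$ in the relative interior.
\end{itemize}
Next I set $\mu^1_j\in[\mu^1,\mu^1_{v,j}]$ very close to $\mu^1$, for $j=1,\dots,d+1$. Then $\Phi(\mu^1_j)$ are still vertices of a simplex $S^1\subset A$ with $\mathbf a\in\Relint S^1$, and $S^1$ lies within $\delta$ of $\mathbf a$. The entropies lie in $(h+\delta/2,h+3\delta/4)$, and the $\mu^1_j$ are pairwise $\ve/6$-close. Choose $\eta_1$ so small that any choice $\nu_j\in B(\mu^1_j,\eta_1)$ still has $\Phi(\nu_j)$ spanning a simplex in $A$ containing $\mathbf a$ in its relative interior. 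This works because the condition is relatively open in the vertices and $\Phi$ is continuous. Finally apply Theorem \ref{thmC} with these $d+1$ measures, producing $\Delta^1_j\subseteq\Delta^1$.

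\textbf{Step ($k$).} Inside each $\Delta^{k-1}_j$, I pick $\mu^k_{v,j}\in\M(f^{N_{k-1}},\Delta^{k-1}_j)$ with normalized entropy in $(h+\delta/2^k,h+3\delta/2^{k+1})$. This mimics the Claim. By the variational principle there is a measure with large entropy, and it automatically has $\tau_{k-1}$-image in $B(\mu^{k-1}_j,\eta_{k-1})$. Its convex combination with a periodic measure of $\Delta^{k-1}_j$ (full shift) still lies in that ball, since $\M(f,\bigcup_i f^i\Delta^{k-1}_j)$ is within $\eta_{k-1}$ of $\mu^{k-1}_j$. By the choice of $\eta_{k-1}$, the vectors $\Phi(\tau_{k-1}\mu^k_{v,j})$ span a simplex around $\mathbf a$. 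Using affinity of $\tau_{k-1}$, I pull these in toward a common point $\mu^k\in\mathrm{cov}\{\mu^k_{v,j}\}$ with $\Phi(\tau_{k-1}\mu^k)=\mathbf a$, obtaining $\mu^k_j$ with:
\begin{itemize}
\item pairwise distances $<\gamma_k/3$;
\item simplex $\Phi(\tau_{k-1}\mu^k_j)$ of diameter $<\delta/2^{k-1}$ still containing $\mathbf a$ relatively interior.
\end{itemize}
Then I choose $\eta_k$ for both the entropy upper bound (via upper semicontinuity, Lemma \ref{lemUpperSemiContinuity}) and the simplex stability. I apply Theorem \ref{thmC} to $(\Delta^{k-1},f^{N_{k-1}})$ exactly as before. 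The entropy and diameter estimates for $\Lambda^k$ are copied verbatim from Theorem \ref{thmA}.

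\textbf{Limit.} The intersection $\bigcap_k\Lambda^k$ carries a unique $\nu$, which is the limit of $\tau_k\mu^{k+1}_j$ for every $j$. Hence $\Phi(\nu)=\mathbf a$, since the simplices shrink to $\mathbf a$. Entropy equals $h$ and $D(\nu,\mu)<\ve$, both as in Theorem \ref{thmA}.

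The main obstacle I expect is the quantitative stability lemma: if $\mathbf a\in\Relint$ of a simplex in $A$ with vertices $p_j$, then $\mathbf a$ stays in the relative interior for all vertex perturbations within $A$ of size $<\rho$. I would prove it via barycentric coordinates, which are continuous in the vertices as long as the vertices stay affinely independent. One must also ensure that the perturbed vectors do not leave $A$, which is automatic because they come from invariant measures.
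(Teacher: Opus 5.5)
Your proposal is correct and follows essentially the same route as the paper's sketch: you rerun the nested-horseshoe construction of Theorem \ref{thmA}, replacing the two measures straddling $a$ by a finite family whose integral vectors surround $\mathbf{a}$ inside $\Aff(L)$, replacing $\nu\mapsto\int\vphi\dif(\tau_k\nu)$ by the vector-valued affine map $\Phi\circ\tau_k$, and choosing the Hausdorff errors $\eta_k$ small enough that the surrounding property persists from step to step, with the entropy, diameter and limit arguments copied from Theorem \ref{thmA}. The only difference is the surrounding configuration: you use $d+1$ simplex vertices with a barycentric-coordinate stability lemma, whereas the paper uses $2^d$ measures, one in each open orthant of affine coordinates on $\Aff(L)$ centred at $\mathbf{a}$; there stability under small perturbations is immediate coordinatewise, and the surrounding property follows from the elementary fact that a finite set meeting every open orthant has the origin in the interior of its convex hull.
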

\begin{proof}[Sketch of proof]
    Define the continuous affine map
    \[
        \mathbf{I}:\M(f,X)\longrightarrow\Rb^m,\qquad
        \mathbf{I}(\omega)
        =
        \left(
        \int\vphi_1\dif\omega,\dots,\int\vphi_m\dif\omega
        \right),
    \]
    and denote $\left(a_1,\dots,a_m\right)$ by
    $\overrightarrow{a}$. We follow the proof of Theorem
    \ref{thmA}, replacing the scalar map
    $\omega\mapsto\int\vphi\dif\omega$ by $\mathbf{I}$.

    If $
        \dima\left(
        L_{\left\{\vphi_j\right\}_{j=1}^m}
        \right)=0
    $ then
    $L_{\left\{\vphi_j\right\}_{j=1}^m}
        =\{\overrightarrow{a}\}$, and hence
    $\mathbf{I}(\omega)=\overrightarrow{a}$ for every
    $\omega\in\M(f,X)$. The result follows from the entropy part of the
    proof of Theorem \ref{thmA}, with the integral conditions omitted.

    Now suppose that
    $
        0<
        \dima\left(
        L_{\left\{\vphi_j\right\}_{j=1}^m}
        \right)
        =d\leq m.
    $
    After translating $\overrightarrow{a}$ to the origin and choosing
    affine coordinates, we identify
    $
        \Aff\left(
        L_{\left\{\vphi_j\right\}_{j=1}^m}
        \right)
    $
    with $\Rb^d$. Since
    $\overrightarrow{a}\in
        \Relint L_{\left\{\vphi_j\right\}_{j=1}^m}$,
    every sufficiently small neighborhood of the origin in $\Rb^d$ is
    contained in
    $L_{\left\{\vphi_j\right\}_{j=1}^m}
        -\overrightarrow{a}$.

    In Step (1) of the proof of Theorem \ref{thmA}, replace the two
    measures whose integrals lie on the two sides of $a$ by a family
    \[
        \left\{\mu_w^1:w\in\{l,r\}^d\right\}
        \subseteq\M(f,X)
    \]
    such that
    $\mathbf{I}(\mu_w^1)-\overrightarrow{a}$ belongs to the orthant
    indexed by $w$. Using convex combinations, these measures can be
    chosen sufficiently close to $\mu$, with their entropies in the
    same interval as in Step (1) of that proof. Moreover,
    \[
        \overrightarrow{a}
        \in
        \Relint\cov\left\{
        \mathbf{I}(\mu_w^1):w\in\{l,r\}^d
        \right\}.
    \]
    Here we use the elementary fact that if a finite set contains a
    point in every open orthant of $\Rb^d$, then the origin belongs to
    the interior of its convex hull.

    Apply Theorem \ref{thmC} to the family
    $\{\mu_w^1:w\in\{l,r\}^d\}$. In each subsequent step, replace the
    scalar affine map
    \[
        \nu\longmapsto\int\vphi\dif(\tau_k\nu)
    \]
    in the proof of Theorem \ref{thmA} by the vector-valued affine map
    $\mathbf{I}\circ\tau_k$. The measures can again be chosen in the
    $2^d$ orthants, arbitrarily close to
    $\overrightarrow{a}$, while preserving the entropy estimates.
    Choosing the Hausdorff errors sufficiently small preserves the
    condition that $\overrightarrow{a}$ lies in the relative interior
    of the convex hull of the corresponding integral vectors.

    The remainder of the construction is the same as in the proof of
    Theorem \ref{thmA}. We obtain a decreasing sequence of compact
    invariant sets $\{\Lambda^k\}_{k\geq1}$ such that
    \[
        \diam\M(f,\Lambda^k)\longrightarrow0,
        \qquad
        \htop(f,\Lambda^k)\longrightarrow h,
    \]
    and the corresponding integral vectors converge to
    $\overrightarrow{a}$. Hence
    $\bigcap_{k\geq1}\Lambda^k$ supports a unique invariant measure
    $\nu$ satisfying
    \[
        h_\nu(f)=h,\qquad
        \mathbf{I}(\nu)=\overrightarrow{a},\qquad
        D(\nu,\mu)<\ve.
    \]
    Let $Y=\supp(\nu)$. Then $\M(f,Y)=\{\nu\}$, so $(Y,f|_Y)$ is uniquely ergodic; by \cite[Theorem 6.17]{walters1982introduction}, it is also minimal. The variational principle gives $\htop(f,Y)=h_\nu(f)=h$. Thus all the required conclusions hold.
\end{proof}

\begin{proof}[Proof of Theorem \ref{thm:2.5}]
    By Theorem \ref{thm:relint},
    \[\Relint \left\{\left(\int \vphi_1 \dif \mu,\dots,\int \vphi_m \dif \mu\right) : \mu \mathrm{\ uniquely\ ergodic}, \mu \in \M(f,M)\right\} = \Relint L_{\left\{\vphi_j\right\}_{1}^m}\]
    for any $m \in \Nb^+$ and any $\left\{\vphi_j\right\}_{1}^m \subseteq C(M)$. For any $1 \leq i \leq t_u+t_s$, define $\psi_i(x) = \log \left\lvert \det Df|_{E_x^i}\right\rvert $. By the definition of multi-average conformality and the ergodic decomposition theorem, one has
    \[
        \int\psi_i\,\dif\mu
        =d_i\chi_{s_{i-1}+1}(\mu)
    \]
    for every $\mu\in\M(f,M)$. Denote $\phi_i = \psi_i / d_i$. Since $E_x^i$ is continuous with respect to $x$, one has that $\phi_i(x)$ is a continuous function. Therefore,
    \begin{align*}
          & \mathrm{Relint} \left\{\left(\chi_1(\mu),\dots,\chi_{\dim M}(\mu)\right):\mu \in \M(f,M)\right\}                       \\
        = & \mathrm{Relint} \left\{\left(\chi_1(\mu),\dots,\chi_{\dim M}(\mu)\right):\mu \in \M(f,M) \mathrm{\ is\ uniquely\ ergodic}\right\}.
    \end{align*}
    When $\dima\left(L_{\left\{\vphi_j\right\}_{j=1}^m}\right) = m$, we have $\Relint \left(L_{\left\{\vphi_j\right\}_{j=1}^m}\right) = \Int L_{\left\{\vphi_j\right\}_{j=1}^m}$. When $\dima\left(L_{\left\{\vphi_j\right\}_{j=1}^m}\right) < m$, we have $\Int L_{\left\{\vphi_j\right\}_{j=1}^m} = \emptyset$. Therefore,
    \begin{align*}
          & \Int \left\{\left(\chi_1(\mu),\dots,\chi_{\dim M}(\mu)\right):\mu \in \M(f,M)\right\}                       \\
        = & \Int \left\{\left(\chi_1(\mu),\dots,\chi_{\dim M}(\mu)\right):\mu \in \M(f,M) \mathrm{\ is\ uniquely\ ergodic}\right\}.
    \end{align*}
\end{proof}

\part{Countable Markov shifts}
In this part, we study the countable Markov shifts.

\section{Preliminary}\label{sec:pre_cms}
\subsection{Basic definitions}
We begin this section by reviewing the fundamental definitions, properties, and necessary lemmas associated with countable Markov shifts. For further detail and broader context, readers are referred to \cite{kitchens1998symbolic}.

Let $M$ be a $\Nb \times \Nb$ matrix with entries $0$ or $1$. The symbolic space associated to $M$ with alphabet $\Nb$ is defined by
\[\Sigma := \left\{(x_0x_1\ldots) \in \Nb^{\Nb}: M(x_i, x_{i+1}) = 1 \mathrm{\ for\ any\ i} \in \Nb\right\}.\]
We endow $\Nb$ with the discrete topology and ${\Nb}^{\Nb}$ with the product topology. We consider the induced topology on $\Sigma$ given by the natural inclusion $\Sigma \subseteq {\Nb}^{\Nb}$. Note in general, $\Sigma$ is non-compact. When $M$ is irreducible, $\Sigma = \Sigma(M)$ is locally compact if and only if for every $i \in \Nb$ we have $\sum_{j \in \Nb} M(i,j) < \infty$ (see \cite[Observation 7.2.3]{kitchens1998symbolic}).

The \emph{shift map} $\sigma: \Sigma \to \Sigma$ is defined by $\left(\sigma(x)\right)_i = x_{i+1}$ where $x = \left(x_0x_1\ldots\right) \in \Sigma$. Note $\sigma$ is a continuous map. We call $\left(\Sigma, \sigma\right)$ a one-sided \emph{countable Markov shift}. The matrix $M$ can be identified with a directed graph $G$ with no multiple edges (but allowing edges connecting a vertex to itself).

A \emph{word} of length $N$ is a string $w = w_0 w_1 \ldots w_{N-1} $ of letters in the alphabet. We call $w$ an \emph{admissible word} if $M\left(w_i, w_{i+1}\right) = 1$ for any $i \in \left\{0, \ldots, N-2\right\}$. In particular, we say that $w$ \emph{connects} $w_0$ and $w_{N-1}$. We also use $l(w)$ to denote the length of a word $w$. In this part, parenthesized sequences denote points in the space, whereas unparenthesized sequences represent words.

Given a word $w = w_0 w_1 \ldots w_{N-1}$, a \emph{cylinder} of length $N$ is the set
\[\left[w_0 \ldots w_{N-1}\right] : = \left\{x = \left(x_0x_1\ldots\right) \in \Sigma: x_i = w_i \mathrm{\ for\ } 0 \leq i \leq N-1 \right\}.\]
We use the notation $[x]_N$ to denote the cylinder of length $N$ containing $x$. In general, we use bold big letters to denote cylinders. We also use $l(\mathbf{C})$ to denote the length of a cylinder $\mathbf{C}$. Note the topology generated by the cylinders coincides with that induced by the product topology.

Denote by $\cL_n$ the set of all admissible words with length $n$ and denote $\cL = \bigcup_{n=1}^{\infty} \cL_n$ the set of all admissible words. For convenience, put $\cL_0 = \left\{e\right\}$ and $ve = v = ev$ for any $v \in \cL$. Given $\cK \subseteq \cL$, denote $\left[\cK\right] = \bigcup_{w \in \cK} \left[w\right]$. Given $\Lambda \subseteq \Sigma$ and $n \in \Nb$, denote $[\Lambda]_n = \bigcup_{x \in \Lambda}[x]_n$.

The space $\Sigma$ is metrisable. Let $d: \Sigma \times \Sigma \to \Rb$ be the function defined by
\begin{align*}
    d(x,y):=
    \left\{
    \begin{array}{ll}
        1      & \mathrm{if\ } x_0 \ne y_0;                                                                              \\
        2^{-k} & \mathrm{if\ } x_i = y_i \mathrm{\ for\ } i \in \left\{0,\ldots,k-1\right\} \mathrm{\ and\ }x_k \ne y_k; \\
        0      & \mathrm{if\ } x = y.                                                                                    \\
    \end{array}
    \right.
\end{align*}
The function $d$ is a metric, and it generates the same topology as that of the cylinders. With this metric, $\Sigma$ is bounded.

Next, we recall some basic definitions and properties of the space of probability measures $\M(\Sigma)$ on countable Markov shifts. The weak* topology in the space of invariant measures of countable Markov shifts is similar to the case of compact metric spaces. A sequence of measures $\left\{\mu_n\right\}_n^\infty$ in $\M(\Sigma)$ converges to a measure $\mu$ in the weak* topology if for any $\vphi \in C_b(\Sigma)$, one has $\lim_{n \to \infty} \int \vphi \dif \mu_n = \int \vphi \dif \mu$. Note that in this notion of convergence, we can replace the space of bounded continuous functions $C_b(\Sigma)$ by the space of bounded uniformly continuous functions $C_{b,u}(\Sigma)$ (see \cite[Remark 8.3.1]{bogachev2007measure}). A basis for this topology is given by the collection of sets of the form
\[V\left(\varphi_1,\dots,\varphi_k,\mu,\ve\right):=\left\{\nu \in \M(\Sigma):\left\lvert \int \varphi_i \dif \mu - \int \varphi_i \dif \nu\right\rvert < \ve, \mathrm{\ for\ } i \in \left\{1,\dots,k\right\} \right\}.\]
In general, the space of $\sigma$-invariant probability measures $\M(\sigma, \Sigma)$ is closed but not compact with respect to the weak* topology. And the space $\M(\sigma, \Sigma)$ is convex whose extreme points are ergodic measures, see \cite[Theorem 6.10]{walters1982introduction}.

Since $\Sigma$ is a Polish space, we can equip it with the first Wasserstein metric (also called the Kantorovich-Rubinstein metric) $D$, see \cite[Definition 6.1]{villani2009optimal}. It metrizes the weak* topology (see \cite[Corollary 6.13]{villani2009optimal}). Note the first Wasserstein metric is convex and $D\left(\delta_x, \delta_y\right) = d(x,y)$ for any $x,y\in\Sigma$. Lemma \ref{lemMeasuresDistance} still holds for countable Markov shifts.

The definition of the one-sided Markov shift provided above can be naturally extended to the two-sided case. Unless otherwise specified, we shall primarily focus on one-sided Markov shifts. The results of this paper remain analogous for the two-sided setting.

\subsection{Dynamical properties}
The definition of transitivity, mixing, expansivity and the shadowing property of countable Markov shifts are as same as the case of continuous maps on compact space. Here, we list some basic properties. For transitivity and mixing, we have the following equivalent definitions.

\begin{definition}
    We say that $(\Sigma, \sigma)$ is \emph{topologically transitive} if for each pair $a,b \in \Nb$, there exists an admissible word connecting $a$ and $b$. We say that $(\Sigma, \sigma)$ is \emph{topologically mixing} if for each pair $a,b \in \Nb$, there exists $N \in \Nb$ such that for any $n > N$, there exists an admissible word connecting $a$ and $b$ with length $n$.
\end{definition}

Moreover, we have the following properties for countable Markov shifts.

\begin{lemma}\label{lem:propertiesCMS}
    Let $(\Sigma,\sigma)$ be a one-sided (two-sided) countable Markov shift.
    \begin{enumerate}
        \item $(\Sigma,\sigma)$ is positively expansive (expansive).
        \item $(\Sigma,\sigma)$ satisfies the shadowing property.
    \end{enumerate}
\end{lemma}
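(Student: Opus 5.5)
The plan is to check both items directly from the definitions, using the metric $d$ introduced above. The case distinction depends only on whether two points share a long common prefix (one-sided) or a long common central block (two-sided). I treat the one-sided case in detail; the two-sided case follows by replacing prefixes with central blocks $x_{-k}\dots x_k$ and forward orbits with full orbits.

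For item (1), I will take $c=1/2$ as positively expansive constant. Let $x\neq y$ in $\Sigma$ and let $k$ be the first index with $x_k\neq y_k$. Then $(\sigma^k x)_0\neq(\sigma^k y)_0$, so $d(\sigma^k x,\sigma^k y)=1>c$. In the two-sided case, pick $k\in\Zb$ with $x_k\neq y_k$ and apply $\sigma^k$ with $k$ possibly negative; this gives expansivity with the same constant.

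For item (2), fix $\ve>0$ and choose $m$ with $2^{-m}<\ve$. I will take $\delta=2^{-m}$. The point is that $d(\sigma x_n,x_{n+1})<\delta$ means $\sigma x_n$ and $x_{n+1}$ agree on their first $m+1$ coordinates; I will use only agreement on the first coordinate. Given a $\delta$-pseudo-orbit $\{x_n\}_{n\ge0}$, define $y=(y_0y_1\dots)$ by $y_n=(x_n)_0$. This $y$ lies in $\Sigma$, because $y_{n+1}=(x_{n+1})_0=(\sigma x_n)_0=(x_n)_1$, and $M\bigl((x_n)_0,(x_n)_1\bigr)=1$ holds since $x_n\in\Sigma$. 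Next, an induction shows that $\sigma^n y$ and $x_n$ agree on coordinates $0,\dots,m$. Indeed, $(\sigma^n y)_j=(x_{n+j})_0$, and the chain of one-step agreements $\sigma x_{n+i}\approx x_{n+i+1}$ on $m+1$ coordinates propagates to give $(x_{n+j})_0=(x_n)_j$ for $j\le m$. Hence $d(\sigma^n y,x_n)\le 2^{-(m+1)}<\ve$ for every $n$, which is exactly the shadowing property. The two-sided version is identical: define $y_n=(x_n)_0$ for all $n\in\Zb$, using bi-infinite pseudo-orbits and agreement on central blocks of radius $m$.

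The main obstacle is bookkeeping rather than any genuine difficulty. The induction in item (2) must respect the convention in the definition of $d$, where $d=2^{-k}$ means agreement on exactly $k$ initial symbols. In the two-sided case the same induction has to be run in both time directions. Non-compactness plays no role here, because everything is coordinatewise and admissibility is a local condition.
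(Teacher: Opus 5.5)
Your proposal is correct and is essentially the argument the paper has in mind: item (1) is read off from the metric by shifting the first disagreeing coordinate to position $0$, and item (2) is the Walters-type construction for shifts of finite type that the paper refers to, setting $y_n=(x_n)_0$ and propagating the agreement of $\sigma x_n$ and $x_{n+1}$ on coordinates $0,\dots,m$. Your remark that you use ``only agreement on the first coordinate'' holds only for the admissibility of $y$; the identity $(x_{n+j})_0=(x_n)_j$ needs agreement on coordinates $0,\dots,j-1$, which your induction does use. Also, since the paper states shadowing for forward pseudo-orbits $\{x_n\}_{n\geq0}$, in the two-sided case either extend $x_0$ backward along its true orbit or set $y_n=(x_0)_n$ for $n<0$; the same coordinate chase then gives agreement on $[-m,m]$.
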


The first item of the above lemma can be easily checked by the metric on $\Sigma$. The second item can be checked in a similar way of \cite[Theorem 1]{walters1977pseudo}.

Ergodic decomposition theorem can also be applied to countable Markov shifts.

\begin{lemma}[{{see \cite[Lemma 2.5]{takahasi2020entropy}}}]\label{lem:ErgodicDecompositionCMS}
    For any $\mu \in \M(\sigma, \Sigma)$, there exists a Borel probability measure $\tau$ on $\M(\sigma,\Sigma)$ such that $\tau\left(\Merg(\sigma,\Sigma)\right) = 1$ and for any $\vphi \in C_b(\Sigma)$,
    \[\int\vphi\dif\mu = \int_{\Merg(\sigma,\Sigma)}\left(\int \vphi \dif \nu\right) \dif \tau(\nu).\]
\end{lemma}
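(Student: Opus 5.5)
This is an ergodic decomposition statement for a Polish (possibly non-compact) space, so I will reduce it to the standard ergodic decomposition on a standard Borel space. I will not try to rebuild a compact-space proof. The plan has three steps.

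\textbf{Step 1: the decomposition map.} Since $\Sigma$ is a closed subset of $\Nb^{\Nb}$, it is Polish, and its Borel $\sigma$-algebra is standard and countably generated by cylinders. Fix $\mu\in\M(\sigma,\Sigma)$. I will use the classical construction via conditional measures with respect to the $\sigma$-algebra $\mathcal{I}$ of $\sigma$-invariant Borel sets, that is, $\mathcal{I} = \left\{A: \sigma^{-1}A = A\right\}$. Because the space is standard Borel, a regular conditional probability $x\mapsto\mu_x$ given $\mathcal{I}$ exists, and I will show that $\mu_x\in\Merg(\sigma,\Sigma)$ for $\mu$-a.e. $x$.
\begin{enumerate}
    \item Invariance. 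Apply the identity $E_\mu(\psi\circ\sigma\mid\mathcal{I}) = E_\mu(\psi\mid\mathcal{I})$ to the countable family of indicators of cylinders. Since cylinders form a countable $\pi$-system generating the Borel sets, this gives $\mu_x\circ\sigma^{-1}=\mu_x$ for a.e. $x$.
    \item Ergodicity. For each indicator $\psi$ of a cylinder, Birkhoff's theorem gives convergence of the averages to $E_\mu(\psi\mid\mathcal{I})(x)$ a.e. Comparing this with $\mu_x$-almost sure limits, I will conclude that $\mu_x$ assigns $0$ or $1$ to each invariant set.
\end{enumerate}
Alternatively, I will simply cite the standard ergodic decomposition for Borel actions on standard Borel spaces (Varadarajan, or Phelps' Choquet-theoretic version). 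Either route yields the same measurable map $x\mapsto\mu_x$.

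\textbf{Step 2: the measure $\tau$.} I will take $\tau$ to be the pushforward of $\mu$ under the map $x\mapsto\mu_x$, viewed as a map $\Sigma\to\M(\sigma,\Sigma)$. Two facts are needed.
\begin{enumerate}
    \item Measurability. The map is measurable when $\M(\Sigma)$ carries the Borel structure of the weak* topology. This holds because that Borel structure is generated by the evaluations $\nu\mapsto\nu(\mathbf{C})$ on cylinders $\mathbf{C}$, and each evaluation is measurable in $x$ by construction.
    \item Ergodic sets are Borel. $\Merg(\sigma,\Sigma)$ is a Borel subset of $\M(\Sigma)$. I will show this by characterizing ergodicity through a countable family of cylinder conditions, using the Birkhoff/$L^2$ criterion
    \[
        \lim_{n\to\infty}\frac{1}{n}\sum_{i=0}^{n-1}\nu\left(\sigma^{-i}\mathbf{C}\cap\mathbf{D}\right)=\nu(\mathbf{C})\,\nu(\mathbf{D})
    \]
    for all cylinders $\mathbf{C},\mathbf{D}$.
\end{enumerate}
With these, $\tau\left(\Merg(\sigma,\Sigma)\right)=1$.

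\textbf{Step 3: the integral formula.} For $\vphi\in C_b(\Sigma)$, the disintegration gives
\[
    \int\vphi\dif\mu=\int\left(\int\vphi\dif\mu_x\right)\dif\mu(x).
\]
Changing variables through the pushforward turns the right-hand side into $\int\left(\int\vphi\dif\nu\right)\dif\tau(\nu)$. Boundedness of $\vphi$ ensures that every integral involved is finite.

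The main obstacle is the Borel measurability bookkeeping in the non-compact, not locally compact setting, in particular showing that $\Merg$ is Borel. Compactness-based Choquet arguments are unavailable because $\M(\sigma,\Sigma)$ is not compact. I will handle this entirely through the countable cylinder basis, which never requires compactness.
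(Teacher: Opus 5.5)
Your proposal is correct, and it is a more self-contained route than the paper's. The paper gives no argument for this lemma; it imports the statement from Takahasi, where it is the standard ergodic decomposition for a Borel map of a standard Borel space. You reconstruct that result directly: regular conditional probabilities given $\mathcal{I}$, pushforward to $\M(\sigma,\Sigma)$, and all measurability handled through the countable cylinder base. What this buys is an explicit explanation of why neither compactness nor local compactness is needed; the citation buys brevity. Your measurability claims hold:
\begin{itemize}
\item Cylinders are clopen, so $\nu\mapsto\nu(\mathbf{C})$ is weak*-continuous. A Dynkin argument together with second countability of $\M(\Sigma)$ shows that these evaluations generate the weak* Borel $\sigma$-algebra.
\item Testing ergodicity only on pairs of cylinders does characterize it. Von Neumann's theorem gives $\int_{\mathbf{D}}E_\nu(1_{\mathbf{C}}\mid\mathcal{I})\dif\nu=\nu(\mathbf{C})\nu(\mathbf{D})$ for all cylinders $\mathbf{D}$, hence $E_\nu(1_{\mathbf{C}}\mid\mathcal{I})=\nu(\mathbf{C})$ a.e. Then $\nu(A\cap\mathbf{C})=\nu(A)\nu(\mathbf{C})$ for invariant $A$, and so $\nu(A)=\nu(A)^2$. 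Consequently $\Merg(\sigma,\Sigma)$ is Borel.
\end{itemize}

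The one step you should write out, rather than summarize as ``comparing with $\mu_x$-almost sure limits'', is the ergodicity of $\mu_x$. The identity $\mu_x(A)=1_A(x)$ for $\mu$-a.e.\ $x$ holds for each fixed $A\in\mathcal{I}$ separately. Since $\mathcal{I}$ is not countably generated, you cannot conclude from it directly that $\mu_x$ is carried by points that ``look like'' $x$.
\begin{enumerate}
\item Apply the identity to the countably many invariant sets $\{y:\mu_y(\mathbf{C})\in(p,q)\}$, with $\mathbf{C}$ a cylinder and $p<q$ rational. This yields, for $\mu$-a.e.\ $x$, that $\mu_y(\mathbf{C})=\mu_x(\mathbf{C})$ for $\mu_x$-a.e.\ $y$ and every $\mathbf{C}$.
\item Birkhoff's theorem for $\mu$ gives a Borel set $G$ with $\mu(G)=1$ on which the averages of $1_{\mathbf{C}}$ converge to $\mu_y(\mathbf{C})$ for all $\mathbf{C}$. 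Then $\mu_x(G)=1$ for a.e.\ $x$, because $\int\mu_x(G)\dif\mu(x)=\mu(G)$.
\item Birkhoff's theorem for the invariant measure $\mu_x$ then gives $E_{\mu_x}(1_{\mathbf{C}}\mid\mathcal{I})=\mu_x(\mathbf{C})$ $\mu_x$-a.e., and ergodicity follows exactly as in your cylinder criterion.
\end{enumerate}
If you cite instead of prove, use Varadarajan's standard-Borel theorem. Phelps' Choquet-theoretic treatment is set on compact spaces, which is precisely the hypothesis unavailable here.
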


\subsection{Entropy}
In this subsection, we recall the definitions of metric entropy and topological entropy in countable Markov shifts.
\subsubsection{Metric entropy}
\begin{definition}
    Let $\mu \in \M(\Sigma)$ and $\xi$ be a countable (finite or infinite) measurable partition of  $\Sigma$. The entropy of $\xi$ with respect to $\mu$ is defined by
    \[H_\mu(\xi) = - \sum_{A \in \xi}\mu(A)\log\mu(A).\]
    If further $\mu \in \M(\sigma, \Sigma)$, the following limit exists:
    $$h_\mu(\sigma,\xi) = \lim_{n\to\infty}\frac{1}{n}H_\mu\left(\bigvee_{i=0}^{n-1}\sigma^{-i}\xi\right).$$
    The \emph{metric entropy} of $\mu$ is defined by
    $$h_\mu(\sigma) = \sup\left\{h_\mu(\sigma,\xi):\xi\mathrm{\ is\ a\ countable\ measurable\ partition\ of\ }\Sigma \mathrm{\ with\ } H_\mu(\xi) < \infty\right\}.$$
\end{definition}

Here, partitions can be infinite and hence it is possible that $H_\mu(\xi) = \infty$ for some $\xi$. Therefore, the definition of metric entropy in countable Markov shifts is a little different to the case of maps on compact metric space.

By the affinity of entropy function and the ergodic decomposition theorem, we have the following lemma.

\begin{lemma}[{{\cite[Lemma 2.6]{takahasi2020entropy}}}]\label{lem:EntropyDecompositionCMS}
    Let $\mu \in \M(\sigma,\Sigma)$. Let $\tau$ be a Borel probability measure on $\M(\sigma, \Sigma)$ as in Lemma \ref{lem:ErgodicDecompositionCMS}. Then
    \[h_\mu(\sigma) = \int_{\Merg(\sigma,\Sigma)} h_{\nu}(\sigma)\dif \tau(\nu).\]
\end{lemma}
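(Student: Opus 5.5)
The statement to prove is Lemma \ref{lem:EntropyDecompositionCMS}: for $\mu\in\M(\sigma,\Sigma)$ with ergodic decomposition $\tau$ as in Lemma \ref{lem:ErgodicDecompositionCMS}, one has $h_\mu(\sigma)=\int h_\nu(\sigma)\dif\tau(\nu)$. I would prove it by reducing to the classical Jacobs theorem for measure-preserving transformations on standard Borel spaces. This theorem is purely measure-theoretic and needs no compactness.

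\textbf{Step 1: identify the decomposition with conditional measures.} The metric entropy defined above for countable Markov shifts coincides with the usual Kolmogorov--Sinai entropy of the measure-preserving system $(\Sigma,\mathcal B,\mu,\sigma)$. Indeed, the supremum over countable partitions with $H_\mu(\xi)<\infty$ equals the supremum over finite partitions, since every finite partition is such a countable partition, and conversely finite-entropy countable partitions are approximated by finite ones. Because $\Sigma$ is Polish, it is standard Borel. So $\mu$ admits a disintegration $\mu=\int\mu_x\dif\mu(x)$ over the $\sigma$-algebra $\mathcal I$ of $\sigma$-invariant sets, with $\mu_x$ ergodic for $\mu$-a.e.\ $x$.

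Next, the measure $\tau$ in Lemma \ref{lem:ErgodicDecompositionCMS} must equal the push-forward of $\mu$ under $x\mapsto\mu_x$. I would argue by uniqueness of the ergodic decomposition. The set $\M(\sigma,\Sigma)$ is a Borel subset of the Polish space $\M(\Sigma)$, and $C_b(\Sigma)$ determines measures. Distinct ergodic measures are mutually singular, so a representing measure concentrated on $\Merg$ is unique (Choquet simplex property). If one prefers not to rely on uniqueness, it is enough to take $\tau$ to be this push-forward, which also satisfies the representation in Lemma \ref{lem:ErgodicDecompositionCMS}.

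\textbf{Step 2: affinity of entropy under integration (Jacobs).} Fix a finite partition $\xi$ and write $\xi^n=\bigvee_{i<n}\sigma^{-i}\xi$. Using the measurable choice of $\mu_x$, one has $H_\mu(\xi^n)\ge\int H_{\mu_x}(\xi^n)\dif\mu$ by concavity. One also has $H_\mu(\xi^n)\le\int H_{\mu_x}(\xi^n)\dif\mu+H$ of the "which component" information. Since $\mathcal I$ need not be finitely generated, the standard route is to use the conditional entropy identity $h_\mu(\sigma,\xi)=H_\mu(\xi\mid\xi^-_\infty)$. Then $H_\mu(\xi\mid\xi^-_\infty\vee\mathcal I)=H_\mu(\xi\mid\xi^-_\infty)$, because $\mathcal I$ is contained in the tail/Pinsker-type $\sigma$-algebra mod $\mu$. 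Disintegrating along $\mathcal I$ gives $h_\mu(\sigma,\xi)=\int h_{\mu_x}(\sigma,\xi)\dif\mu(x)$. Taking the supremum over an increasing generating sequence of finite partitions $\xi_k$ and applying monotone convergence yields $h_\mu(\sigma)=\int h_{\mu_x}(\sigma)\dif\mu$. Here I would use the partitions into cylinders of length $k$ over symbols $\le k$, lumping the rest into one atom; these generate the Borel $\sigma$-algebra. The only subtle point is that $h_\nu(\sigma,\xi_k)\uparrow h_\nu(\sigma)$ for each $\nu$ simultaneously. This follows from the Kolmogorov--Sinai theorem applied to each $\nu$, since $\bigvee\xi_k$ generates. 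Finally, rewrite the integral as $\int h_\nu\dif\tau(\nu)$ via the push-forward.

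\textbf{Main obstacle.} The delicate point is the infinite alphabet: $h_\nu(\sigma)$ may be $+\infty$, and $H_\nu$ of the cylinder partition may be infinite. I avoid this by working only with finite partitions $\xi_k$ and monotone limits, which allow value $+\infty$ on both sides. A secondary issue is the measurability of $\nu\mapsto h_\nu(\sigma)$. It is a supremum of the measurable maps $\nu\mapsto h_\nu(\sigma,\xi_k)$, each of which is a limit of $\nu\mapsto\frac1nH_\nu(\xi_k^n)$; these are measurable because $\nu\mapsto\nu(A)$ is Borel for Borel $A$.
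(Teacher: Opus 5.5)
Your strategy is sound, and it is the standard Jacobs argument behind the cited result. The paper itself gives no proof, only the citation to Takahasi and the remark ``affinity of entropy and ergodic decomposition''. The argument is: identify $\tau$ with the push-forward of $x\mapsto\mu_x$, show $h_\mu(\sigma,\xi)=H_\mu(\xi\mid\xi^-_\infty\vee\mathcal I)=\int h_{\mu_x}(\sigma,\xi)\dif\mu(x)$ for finite $\xi$, then take the supremum by monotone convergence. The invariant-$\sigma$-algebra step is correct. For a finite invariant partition $\eta$ one has $H_\mu(\xi\mid\xi^-_\infty\vee\eta)=h_\mu(\sigma,\xi\vee\eta)=h_\mu(\sigma,\xi)$, and martingale convergence handles $\mathcal I$.

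\textbf{The step that fails is your choice of partitions.} The $\xi_k$ (cylinders of length $k$ over symbols $\le k$, plus one ``rest'' atom) are neither increasing nor generating.
\begin{itemize}
\item \emph{Not increasing.} In the full shift on $\Nb$, the rest atom of $\xi_{k+1}$ meets both $[0^k]\in\xi_k$ (take $(0^k,k+2,\dots)$) and the rest atom of $\xi_k$.
\item \emph{Not generating.} Take $x=(4,9,16,25,\dots)$ and $y=(5,9,16,25,\dots)$. Both lie in the rest atom of every $\xi_k$, and $\sigma x=\sigma y$. So they lie in the same atom of every $\sigma^{-i}\xi_k$, and no set in $\bigvee_k\bigvee_{i\ge0}\sigma^{-i}\xi_k$ separates them.
\item \emph{It matters for invariant measures.} A finite-entropy Bernoulli measure with sufficiently heavy-tailed marginal gives positive mass to points whose running maximum exceeds $k$ for every $k$. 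On that set the zeroth coordinate is not measurable, even mod the measure, with respect to the generated $\sigma$-algebra.
\end{itemize}
Hence your claim $h_\nu(\sigma,\xi_k)\uparrow h_\nu(\sigma)$ has no justification.

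\textbf{Repair.} Lump only in the zeroth coordinate. Set $\eta_k=\{[0],\dots,[k-1],\Sigma\setminus\bigcup_{j<k}[j]\}$, which is increasing in $k$, and $\zeta_k=\bigvee_{i=0}^{k-1}\sigma^{-i}\eta_k$, which is also increasing. Every cylinder $[w]$ is a union of $\zeta_k$-atoms once $k\ge l(w)$ and $k>\max_i w_i$. So $\bigcup_k\zeta_k$ generates the Borel $\sigma$-algebra pointwise, for every measure at once. Then $h_\nu(\sigma)=\lim_k h_\nu(\sigma,\zeta_k)=\lim_k h_\nu(\sigma,\eta_k)$ monotonically for every $\nu$ (e.g.\ Walters, Theorem 4.22). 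With this choice the rest of your argument goes through, including the case $h_\mu(\sigma)=+\infty$.

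\textbf{Uniqueness of $\tau$ needs a real argument.} An appeal to the Choquet simplex property does not suffice, since $\M(\sigma,\Sigma)$ is not compact. Mutual singularity of ergodic measures is also not by itself a proof. One clean route:
\begin{itemize}
\item Let $\beta(x)=\lim_n\cE_n(x)$ on the Borel invariant set where this limit exists.
\item For ergodic $\nu$, $\nu(\beta^{-1}\{\nu\})=1$, by Birkhoff applied to the countable family of cylinder indicators.
\item Extend the representation in Lemma \ref{lem:ErgodicDecompositionCMS} from $C_b(\Sigma)$ to Borel sets. This gives $\tau(B)=\mu(\beta^{-1}B)$ for every Borel $B\subseteq\Merg(\sigma,\Sigma)$ and every admissible $\tau$.
\end{itemize}
Your fallback of just choosing $\tau$ to be the push-forward proves the identity only for that $\tau$. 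That suffices for how the paper later uses the lemma, but not for the statement as written.
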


Takahasi \cite{takahasi2020entropy} proved that if $(\Sigma,\sigma)$ is topologically transitive, then any $\sigma$-invariant probability measure $\mu$ is entropy-approachable by ergodic measures supported on compact sets.

\begin{lemma}[{{\cite[Main Theorem]{takahasi2020entropy}}}]\label{lem:EntropyApproachableCMS}
    If $(\Sigma,\sigma)$ is topologically transitive, then for any $\mu \in \M(\sigma,\Sigma)$ there exists a sequence of ergodic measures $\left\{\mu_n:n \in \Nb^+\right\}$ supported on compact sets such that $\lim_{n \to \infty} \mu_n = \mu$ and $\lim_{n \to \infty} h_{\mu_n}(\sigma) = h_{\mu}(\sigma)$.
\end{lemma}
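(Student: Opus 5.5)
The statement to prove is Lemma \ref{lem:EntropyApproachableCMS}: in a topologically transitive countable Markov shift, every invariant measure $\mu$ is a weak* limit of ergodic measures $\mu_n$ with compact support and $h_{\mu_n}(\sigma)\to h_\mu(\sigma)$. Since Katok's entropy formula is unavailable, the plan is to work with cylinder words and a compact truncation.

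\textbf{Step 1 (ergodic reduction).} By Lemma \ref{lem:ErgodicDecompositionCMS}, Lemma \ref{lem:EntropyDecompositionCMS} and the convexity of $D$, approximate $\mu$ by a finite convex combination $\frac1m\sum_{i=1}^m\nu_i$ of ergodic measures (with rational weights, so equal weights after repetition). This combination is $D$-close to $\mu$, and its average entropy is close to $h_\mu(\sigma)$ when $h_\mu(\sigma)<\infty$, or large when $h_\mu(\sigma)=\infty$.

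\textbf{Step 2 (good words for each ergodic $\nu$).} Fix a finite-entropy partition, namely the partition into $1$-cylinders refined by grouping a tail $\{x_0\ge K\}$ into one set. Then use the Shannon--McMillan--Breiman theorem and Birkhoff's theorem for finitely many bounded uniformly continuous test functions, together with a recurrence argument. For large $n$ this yields a set $W_n$ of admissible words of length $n$ with the following properties:
\begin{itemize}
\item every word begins and ends with a fixed symbol $b$ with $\nu([b])>0$;
\item every letter lies in a finite alphabet $\{0,\dots,K'\}$;
\item every cylinder has empirical measure close to $\nu$;
\item $|W_n|\ge e^{n(h_\nu-\eta)}$.
\end{itemize}
Restricting to $b$ at both ends costs only a polynomial factor by pigeonholing over return positions; this follows Takahasi's lemma. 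The main obstacle is the letter bound: points whose orbit segment visits symbols above $K'$ must be discarded without losing entropy. I would handle this by choosing $K'$ so that $\nu(x_0>K')$ is tiny, accepting words with few large symbols, and then replacing each excursion by its block code. Alternatively, I would use the fact that $\sum_j\nu([j])\log$-type tail estimates make the number of such codings subexponentially small.

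\textbf{Step 3 (gluing and shadowing).} Using transitivity, fix finitely many connecting words between the symbols $b_i$. Concatenate freely the words from $W_n^{(1)},\dots,W_n^{(m)}$ with these connectors, as in the proof of Lemma \ref{lemSeparatedSet}. This defines a subshift $\Lambda$ over a finite alphabet, hence compact, that is conjugate (for a power of $\sigma$) to a full shift on $\prod_i|W_n^{(i)}|$ symbols. Its invariant measures all lie in a small neighbourhood of $\frac1m\sum\nu_i$ by Lemma \ref{lemMeasuresDistance} and ergodic decomposition, and its topological entropy is at least $\frac1m\sum(h_{\nu_i}-\eta)-O(L/n)$.

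Take $\mu_n$ to be the measure of maximal entropy of $\Lambda$. It is ergodic with compact support, weak*-close to $\mu$, and satisfies $h_{\mu_n}\ge h_\mu-2\eta$.

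\textbf{Step 4 (upper bound).} The reverse inequality $\limsup h_{\mu_n}\le h_\mu$ fails in general because entropy is not upper semicontinuous on non-compact shifts. To fix this, cap the construction by discarding words from $W_n$ until $|W_n|\le e^{n(h_\nu+\eta)}$; SMB already gives this bound for typical words. Then $h_{\mu_n}\le h_{\mathrm{top}}(\Lambda)\le \frac1m\sum h_{\nu_i}+\eta$. In the infinite entropy case only the lower bound is needed. Letting $\eta\to0$ and the neighbourhoods shrink gives the sequence.
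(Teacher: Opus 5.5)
This lemma is not proved in the paper; it is quoted from \cite[Main Theorem]{takahasi2020entropy}. Your outline is the standard argument and matches the architecture the paper itself uses in Lemma \ref{lem:SeparatedSetCMS} and Theorem \ref{thmD}, where Lemma \ref{lem:takahasi_lem2.3} supplies the word-counting input. That outline consists of:
\begin{itemize}
\item a finite equal-weight average of ergodic measures;
\item for each of them, a word set with fixed end symbols, controlled Birkhoff averages and two-sided cardinality bounds;
\item gluing through transitivity connectors into a compact set on which a power of $\sigma$ is a full shift;
\item counting.
\end{itemize}
I see no genuine gap. However, the step you call the main obstacle is not one, and one of your two remedies for it would fail.

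You never need a letter bound fixed before the word length. Take $b<K$ and a length $l$ given by your pigeonhole step. Consider the points that are SMB-typical, Birkhoff-good for your test functions, and satisfy $x_0=x_{l-1}=b$. For each $\xi_K$-name of length $l$ that meets this set, pick one such point and record its actual admissible word $x_0\dots x_{l-1}$. Distinct names give distinct words, so SMB directly places $|W|$ between $e^{l(h_\nu(\sigma,\xi_K)-2\eta)}$ and $e^{l(h_\nu(\sigma,\xi_K)+\eta)}$ for large $l$. Because $W$ is finite, its letters lie in some $\{0,\dots,K'\}$ with $K'$ depending on $l$, and that is all compactness of $\Lambda$ requires. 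This is exactly how the paper chooses $M$ in Lemma \ref{lem:SeparatedSetCMS}: only after the finite sets $F_i$ are fixed. So nothing has to be discarded or recoded, and the cap in Step 4 comes for free.

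Drop the alternative via tail estimates on $\sum_j\nu([j])\log(1/\nu([j]))$. This sum can be infinite even when $h_\nu(\sigma)=0$; for example, code an irrational rotation by a countable interval partition of infinite entropy. So no such estimate is available.

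Your route has one advantage over quoting Lemma \ref{lem:takahasi_lem2.3}. Since you run SMB on the finite partitions $\xi_K$, and $h_\nu(\sigma,\xi_K)\to h_\nu(\sigma)$ as $K\to\infty$, you also handle ergodic components of infinite entropy, which Lemma \ref{lem:takahasi_lem2.3} excludes. To use this, write the lower bound as $e^{l(h_\nu(\sigma,\xi_K)-\eta)}$ rather than $e^{l(h_\nu-\eta)}$.

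Two small details remain to fill in.
\begin{itemize}
\item In Step 1, $\M(\sigma,\Sigma)$ is not compact, so discretizing the decomposition measure needs tightness. Alternatively, as in Lemma \ref{lem:SeparatedSetCMS}, require closeness only in finitely many test functions, which already defines a weak* neighbourhood.
\item If the pigeonholed lengths $l_i$ differ between the $\nu_i$, either keep them in $[n,(1+\delta)n]$ so the weights $l_i/\sum_j l_j$ are close to $1/m$, or repeat each block $L/l_i$ times with $L=\prod_j l_j$, as the paper does.
\end{itemize}
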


\subsubsection{Topological entropy}
In this section, we recall the definitions of topological entropy. In Section \ref{subsec:htop}, we recall the topological entropy for non-compact sets introduced by Bowen \cite{bowen1973topological}. However, for non-compact spaces the Bowen entropy depends on the metric (see \cite{handel1995metrics}). For countable Markov shifts, we use the topological entropy defined by Gurevich \cite{gurevich1969topological}.

\begin{definition}
    Suppose $(\Sigma, \sigma)$ is topologically transitive. Given $a \in \Nb$, the (Gurevich) topological entropy of $(\Sigma, \sigma)$ is defined by
    \[h_{G}(\sigma, \Sigma) = \limsup_{n \to \infty} \frac{1}{n}\log\left\lvert \left\{x \in \Sigma: \sigma^n x = x,\ x \in [a]\right\} \right\rvert.\]
\end{definition}

The limsup is independent of the choice of $a$. When $(\Sigma, \sigma)$ is topologically mixing, then the upper limit can be replaced by limit (see \cite[Theorem 1]{sarig1999thermodynamic}). It was proved by Gurevich \cite{gurevich1969topological} that the Gurevich topological entropy satisfies the variational principle.
\[h_G(\sigma, \Sigma) = \sup\left\{h_{\mu}(\sigma):\mu \in \M(\sigma,\Sigma)\right\}.\]

\begin{remark}
    We let $\htop(\sigma, \Sigma)$ represent the Gurevich entropy $h_G(\sigma, \Sigma)$. In addition, throughout the proof, we only consider the topological entropy $\htop(\sigma, B)$ of compact invariant sets, in which case it represents the standard topological entropy and coincides with the Bowen topological entropy.
\end{remark}

\section{Conditional entropy realization for countable Markov shifts}\label{sec:minimal_cms}
In this section, we establish a multi-horseshoe entropy-dense result for countable Markov shifts and use it to obtain exact entropy-and-average realization and weak* approximation by uniquely ergodic measures with compact support. In \cite{takahasi2020entropy}, Takahasi proved that for topologically transitive one-sided (two-sided) countable Markov shifts, any shift-invariant Borel probability measure is entropy-approachable by ergodic measures supported on compact sets. The following theorem generalizes the above result to the case of multiple measures.

\begin{theoremx}\label{thmD}
    Suppose $(\Sigma,\sigma)$ is a non-trivial topologically transitive one-sided (two-sided) countable Markov shift. Then for any $m, k \in \Nb^+$, any $\left\{\mu_i\right\}_{i=1}^m \subset \M(\sigma, \Sigma)$ with finite entropy, any $x \in \Sigma$, any $\left\{\vphi_j\right\}_{j=1}^k \subset C_{b,u}(\Sigma)$ and any $\ve > 0$, there exists $n \in \Nb^+$ and compact $\sigma^n$-invariant subsets $\Delta_i \subseteq \Delta \subseteq \Sigma$ such that for each $1 \leq i \leq m$, the following statements hold.
    \begin{enumerate}
        \item $\left(\Delta_i, \sigma^n|_{\Delta_i}\right)$ and $\left(\Delta, \sigma^n|_{\Delta}\right)$ are topologically conjugate to a one-sided (two-sided) full shift respectively.
        \item $\htop\left(\sigma,\Lambda_i\right) \geq \htop \left(\sigma^n,\Delta_i\right)  / n > h_{\mu_i}(\sigma) - \ve$ where $\Lambda_i = \bigcup_{j=0}^{n-1}\sigma^j \left(\Delta_i\right) $.
        \item For any $\nu_i \in \M\left(\sigma,\Lambda_i\right)$ and any $1 \leq j \leq k$, one has
              \[\left\lvert \int\vphi_j\dif\mu_i - \int\vphi_j\dif\nu_i\right\rvert  < \ve.\]
              For any $\nu \in \M\left(\sigma,\Lambda\right)$ where $\Lambda = \bigcup_{j=0}^{n-1}\sigma^j \left(\Delta\right) $, there exists $\mu \in \cov\left(\left\{\mu_i\right\}_{i=1}^m\right)$ such that
              \[\left\lvert \int\vphi_j\dif\mu - \int\vphi_j\dif\nu\right\rvert  < \ve\]
              for any $1 \leq j \leq k$.
        \item For any $z \in \Lambda_i$ or $\Lambda$ one has $\sigma^{j+tn}z \in B(x,\ve)$ for some $1 \leq j \leq n$ for any $t \in \Nb$.
    \end{enumerate}
\end{theoremx}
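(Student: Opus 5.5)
The plan follows the proof of Theorem \ref{thmC}, with Lemma \ref{lemSeparatedSet} replaced by a symbolic lemma that avoids the uniform separation property and Katok's entropy formula, since both may fail on non-compact $\Sigma$. The first step is a Takahasi-type lemma. Given $\mu\in\M(\sigma,\Sigma)$ with finite entropy, a symbol $b$ (the first symbol of $x$), functions $\{\vphi_j\}_{j=1}^k\subset C_{b,u}(\Sigma)$ and $\eta>0$, we seek $n_0$ such that for every multiple $n$ of $n_0$ there is a set $W_n$ of admissible words of length $n$ with three properties: each word begins with $b$ and can be followed by $b$; $\log|W_n|/n>h_\mu(\sigma)-\eta$; and every periodic point $z$ obtained by concatenating words of $W_n$ satisfies $\left\lvert \frac1n\sum_{t=0}^{n-1}\vphi_j(\sigma^{t}z)-\int\vphi_j\dif\mu\right\rvert<\eta$ on each block.

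To build $W_n$, I first use Lemmas \ref{lem:ErgodicDecompositionCMS} and \ref{lem:EntropyDecompositionCMS} to approximate $\mu$ by a rational convex combination $\frac1p\sum_s\nu_s$ of ergodic measures. This approximation must hold both in the integrals of the $\vphi_j$ and in entropy, and finite entropy makes the entropy part possible. For each $\nu_s$, Lemma \ref{lem:EntropyApproachableCMS} lets me assume $\nu_s$ is supported on a compact set, hence on a finite-alphabet subshift. On that compact support the Shannon--McMillan--Breiman theorem and the Birkhoff theorem are available. Uniform continuity of $\vphi_j$ gives that Birkhoff sums depend, up to $\eta$, only on long cylinders. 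Together these yield many words of length $\ell$ with the correct averages and with count at least $e^{\ell(h_{\nu_s}-\eta/4)}$. Pigeonholing on the first and last symbols costs only a factor depending on the finite alphabet. I then join consecutive pieces and the symbol $b$ by fixed bridge words supplied by transitivity, whose lengths are bounded by some $L$, and take $\ell\gg L$. This gives words of a common length $n_\mu$. I pad to a common length across the different measures by using a fixed loop at $b$ or by taking powers, so that all $W^{(i)}$ have length $n=\prod n_i$, exactly as in the proof of Theorem \ref{thmC}. \textbf{This step is the main obstacle}: we must control both cardinality and averages without compactness, and we must ensure that bridges of different lengths do not spoil a common period. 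I would first try to handle the period issue by using bridges back to $b$ and then concatenating $n/n_{\mu_i}$ copies.

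Given the word sets $W^{(i)}$, the horseshoes are $\Delta_i=\{z: z \text{ is an infinite concatenation of words in } W^{(i)}\}$ and $\Delta$ is the same with $W=\bigsqcup_i W^{(i)}$. The sets $W^{(i)}$ are disjoint because their averages differ, or by tagging; even if they are not, taking the union as a set of words suffices. Every word starts with $b$, so $\Delta$ is a closed subset of finitely many cylinders over finitely many symbols, hence compact. It is $\sigma^n$-invariant, and the coding map to the full shift on $|W|$ symbols is a conjugacy because distinct words of equal length give distinct cylinders. This gives item (1). Item (2) follows from $\htop(\sigma^n,\Delta_i)=\log|W^{(i)}|$ and $\htop(\sigma,\Lambda_i)\geq \htop(\sigma^n,\Delta_i)/n$.

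For item (3), I argue as in Theorem \ref{thmC}. For ergodic $\nu\in\M(\sigma,\Lambda_i)$, pick a generic point lying in $\Delta_i$, which is possible since $\nu(\Delta_i)\ge 1/n$. Its block averages are within $\eta$ of $\int\vphi_j\dif\mu_i$, so the same bound holds for $\nu$. For $\Lambda$, the block averages of a generic point are averages of values close to various $\int\vphi_j\dif\mu_i$. Their limit frequencies give weights $p_i$, and $\mu=\sum p_i\mu_i\in\cov\{\mu_i\}$ is the required approximant. Ergodic decomposition extends both conclusions to non-ergodic measures, with $\eta<\ve$. Item (4) holds because every $n$-block starts with the prefix $b$ followed by the initial segment of $x$; I would choose words beginning with the cylinder $[x]_N$ with $2^{-N}<\ve$ rather than only with $b$. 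Then $\sigma^{j+tn}z\in B(x,\ve)$ with $j=n-q$, as in Theorem \ref{thmC}. The two-sided case is identical.
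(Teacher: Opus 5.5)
Your proposal is correct and has the same overall structure as the paper's proof. Both build word sets of a fixed length from an ergodic approximation of $\mu$, reach a common period $n=\prod n_i$ by concatenating $n/n_i$ words, get free concatenation from a common prefix of $x$, and obtain full-shift horseshoes $\Delta_i\subseteq\Delta$. The difference is the counting input.

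The paper proves Lemma \ref{lem:SeparatedSetCMS} directly from Takahasi's Lemma \ref{lem:takahasi_lem2.3}. That lemma works on the countable alphabet, but it only supplies \emph{some} length $l_i\geq m$ for each ergodic component. This is why the paper repeats the $i$-th block $L_i=L/l_i$ times, with $L=\prod l_i$, to give each component equal time.

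You instead invoke Takahasi's Main Theorem (Lemma \ref{lem:EntropyApproachableCMS}) to pass to compactly supported, hence finite-alphabet, ergodic measures. There the classical Shannon--McMillan--Breiman and Birkhoff theorems give good words for \emph{every} large $\ell$, so a common $\ell$ removes the balancing device. In fact Lemma \ref{lem:EntropyApproachableCMS} applies to $\mu$ itself, so on your route the ergodic decomposition and the rational combination can be dropped. A single compactly supported ergodic $\mu'$ with $\int\vphi_j\dif\mu'$ close to $\int\vphi_j\dif\mu$ and $h_{\mu'}>h_\mu-\eta/2$ suffices. The price is that you use the Main Theorem as a black box, while the paper needs only the counting lemma.

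The step you flagged as the main obstacle is already settled by your own pigeonholing. Once all pieces for a component share their length and endpoint symbols, the bridges are fixed, so every concatenated word has the same length.

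For item (3), the paper avoids generic points. It lifts $\nu$ to $\lambda\in\M(\sigma^n,\Delta)$ by Lemma \ref{lemTau} and integrates the block averages. These are controlled at every point of $\Delta$, not only at periodic points, because $D_n(\vphi_j)=o(n)$. It then takes $q_i=\lambda(C_i)$, which handles non-ergodic $\nu$ directly. Your generic-point argument also works, with two adjustments. The block frequencies along the $\sigma^n$-orbit need not converge, so you should take subsequential limits. The passage to non-ergodic measures uses that the relevant constraint sets are closed and convex.

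Finally, the two-sided case is not literally identical. The period words must also end with $x_{-N}\cdots x_{-1}$. The paper arranges this by building bi-infinite periodic points through the central block and then shifting.
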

\begin{remark}
    It follows from item (3) that for any $\mu \in \cov\left(\left\{\mu_i\right\}_{i=1}^m\right)$, there exists $\nu \in \M\left(\sigma, \bigcup_{i=1}^m \Lambda_i\right) \subseteq \M(\sigma,\Lambda)$ such that
    \[\left\lvert \int\vphi_j\dif\mu - \int\vphi_j\dif\nu\right\rvert  < \ve\]
    for any $1 \leq j \leq k$, which leads to a ``Hausdorff'' result similar to Theorem \ref{thmC}.
\end{remark}

We follow a proof similar to that for topologically expanding maps on compact metric space. For any function $\psi$ on $\Sigma$, define
\[D_n(\psi) = \sup_{x \in \Sigma, y \in \left[x\right]_n} \left\lvert \sum_{i=0}^{n-1} \psi\left(\sigma^i x\right) - \sum_{i=0}^{n-1} \psi\left(\sigma^i y\right) \right\rvert.\]
Define $\cL(a,b)$ to be the set of admissible words with $a$ and $b$ as the first and last letter, respectively. $\cL_n(a,b)$ is defined similarly. The following lemma shows that any ergodic measure can be approximated by finite collections of cylinders or separated sets.

\begin{lemma}[{{\cite[Lemma 2.3]{takahasi2020entropy}}}]\label{lem:takahasi_lem2.3}
    Let $m \geq 1$ be an integer and $\psi_1, \dots, \psi_m:\Sigma \to \Rb$ be measurable functions satisfying $D_n\left(\psi_j\right) = o(n) $ as $n \to \infty$ for any $1 \leq j \leq m$. Let $\mu \in \Merg(\sigma, \Sigma)$ satisfy $h_\mu(\sigma) < \infty$, $\psi_j \in L^1(\mu)$ for any $1 \leq j \leq m$, and let $M \geq 1$ be an integer with $\sum_{k=0}^{M-1} \mu\left([k]\right) > 0$. For any $ \ve > 0$, there exists $N \geq 1$ such that for any $n \geq N$, there exists an integer $l \geq n$, $(a,b) \in \left\{0,\dots,M-1\right\}^2 $, and a finite set $F^l \subseteq \cL_l(a,b)$ such that for any $1 \leq j \leq m$, the following holds:
    \[\left\lvert \frac{1}{l}\log \left\lvert F^l\right\rvert - h_\mu(\sigma)\right\rvert \leq \ve,\]
    \[\sup_{x \in \left[F^l\right]}\left\lvert \frac{1}{l}\sum_{i=0}^{l-1} \psi_j\left(\sigma^i x\right)  - \int\psi_j\dif\mu\right\rvert \leq \ve.\]
\end{lemma}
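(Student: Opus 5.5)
The plan is a Katok-type counting argument built on the Shannon--McMillan--Breiman theorem. The Birkhoff ergodic theorem will control the averages, the assumption $D_n(\psi_j)=o(n)$ will make that control uniform on whole cylinders, and a pigeonhole argument will fix the end letters $a,b$.

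\textbf{Step 1: a finite-entropy partition.} The partition $\xi=\{[k]:k\in\Nb\}$ into $1$-cylinders generates, but $H_\mu(\xi)$ may be infinite. For $K\geq M$ I would use the coarser partition
\[
\xi_K=\bigl\{[0],\dots,[K-1],\ \Sigma\setminus([0]\cup\dots\cup[K-1])\bigr\}.
\]
It has finite entropy, and $h_\mu(\sigma,\xi_K)\to h_\mu(\sigma)$ as $K\to\infty$, since the $\xi_K$ increase to a generating partition and $h_\mu(\sigma)<\infty$. Fix $K$ with $h_\mu(\sigma,\xi_K)>h_\mu(\sigma)-\ve/4$.

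\textbf{Step 2: a large good set.} Put $A=[0]\cup\dots\cup[M-1]$, so $\mu(A)>0$. For large $n$, let $G_n$ be the set of points $x\in A$ satisfying three conditions:
\begin{enumerate}
    \item the Shannon--McMillan--Breiman estimate $\mu(\xi_K^{(l)}(x))\leq e^{-l(h_\mu(\sigma)-\ve/2)}$ holds for all $l\geq n$, where $\xi_K^{(l)}=\bigvee_{i<l}\sigma^{-i}\xi_K$;
    \item the Birkhoff averages satisfy $\bigl|\frac1l\sum_{i<l}\psi_j(\sigma^ix)-\int\psi_j\dif\mu\bigr|<\ve/3$ for all $l\geq n$ and all $j\leq m$;
    \item $x$ visits $A$ at some time in every window $[l,(1+\ve')l]$ with $l\geq n$.
\end{enumerate}
Condition 3 holds for typical points by ergodicity and $\mu(A)>0$. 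Hence $\mu(G_n)>\mu(A)/2$ for $n$ large.

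\textbf{Step 3: pigeonholing the length and end letters.} Each $x\in G_n$ has a return time $l(x)\in[n,(1+\ve')n]$ with $\sigma^{l(x)-1}x\in A$. Pigeonholing over the at most $\ve' n+1$ possible lengths and the $M^2$ pairs $(a,b)$ gives a fixed $l$ and $(a,b)$ for which
\[
G':=\bigl\{x\in G_n\cap[a]:\ l(x)=l,\ \sigma^{l-1}x\in[b]\bigr\}
\]
satisfies $\mu(G')\geq \mu(A)/\bigl(2M^2(\ve'n+1)\bigr)$.

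\textbf{Step 4: counting words and trimming.} Every atom of $\xi_K^{(l)}$ meeting $G'$ has measure at most $e^{-l(h_\mu(\sigma)-\ve/2)}$. So $G'$ meets at least $\mu(G')e^{l(h_\mu(\sigma)-\ve/2)}\geq e^{l(h_\mu(\sigma)-\ve)}$ such atoms, the loss being subexponential. Distinct atoms contain distinct length-$l$ words, each lying in $\cL_l(a,b)$. Picking one word per atom (or a subset of these if there are too many) gives $F^l\subseteq\cL_l(a,b)$ with $\bigl|\frac1l\log|F^l|-h_\mu(\sigma)\bigr|\leq\ve$.

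\textbf{Step 5: uniformity on cylinders.} Every cylinder $[w]$ with $w\in F^l$ contains a point of $G_n$. Any other $y\in[w]$ has Birkhoff sums differing from that point's by at most $D_l(\psi_j)$. Taking $n$ large enough that $D_l(\psi_j)/l<\ve/3$ for all $l\geq n$ yields the required bound on $\sup_{x\in[F^l]}$.

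The main obstacle is the infinite alphabet. It is needed both for the SMB-based lower count, which Step 1 handles via the finite partition $\xi_K$, and for the upper bound, which Step 4 handles simply by discarding words. The secondary issue is keeping the return-time pigeonholing of Step 3 subexponential, which it is.
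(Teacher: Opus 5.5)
Your argument is correct. The paper gives no proof of this lemma; it is quoted from Takahasi. So there is no in-text argument to compare against, and your proposal stands as a self-contained proof of the statement as written.

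The key ingredients all hold up:
\begin{itemize}
\item Shannon--McMillan--Breiman is applied to the finite coarsening $\xi_K$ of the $1$-cylinder partition. This sidesteps both the possibly infinite $H_\mu$ of the $1$-cylinder partition and the Katok entropy formula, whose possible failure for countable Markov shifts the paper mentions. The convergence $h_\mu(\sigma,\xi_K)\to h_\mu(\sigma)$ follows by applying the increasing-generator theorem to the finite algebras $\bigvee_{i<K}\sigma^{-i}\xi_K$.
\item The ergodic theorem, together with the hypothesis $D_l(\psi_j)=o(l)$, transfers the Birkhoff estimate from one good point to its whole $l$-cylinder.
\item Pigeonholing the return time and the end letters costs only a polynomial factor, so the SMB lower count survives.
\item Distinct atoms of $\xi_K^{(l)}$ yield distinct admissible words in $\cL_l(a,b)$. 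Trimming then gives the two-sided entropy bound.
\end{itemize}

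Two cosmetic points. A visit to $A$ at a time $t\in[n,(1+\ve')n]$ gives $l(x)=t+1$, so the lengths actually range over $[n+1,(1+\ve')n+1]$; this is harmless. The trimming step needs $l$ large enough that some integer lies in $[e^{l(h_\mu(\sigma)-\ve)},e^{l(h_\mu(\sigma)+\ve)}]$, which is automatic for large $n$.
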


Note every bounded uniformly continuous function $\vphi \in C_{b,u}(\Sigma)$ satisfies $D_n\left(\vphi\right) = o(n) $ as $n \to \infty$.

\begin{lemma}\label{lem:SeparatedSetCMS}
    Suppose $(\Sigma,\sigma)$ is a topologically transitive one-sided
    (respectively, two-sided) countable Markov shift. Then for any
    $\mu\in\M(\sigma,\Sigma)$ satisfying
    $0\leq h_\mu(\sigma)<\infty$, any $k\in\Nb^+$, any
    $\{\varphi_j\}_{j=1}^k\subset C_{b,u}(\Sigma)$, any $x\in\Sigma$,
    any $\ve>0$, and any $N\in\Nb^+$, there exist $n\geq N$ and
    $M\in\Nb$ such that, for every $p\in\Nb^+$, there exists a
    $(pn,1/3)$-separated set $\Gamma_{pn}$ satisfying
    \begin{enumerate}
        \item
              $\Gamma_{pn}\subseteq B(x,\ve)\cap\Per_{pn}(\sigma)
                  \cap\{0,\ldots,M\}^{\Nb}$ in the one-sided case, and
              $\Gamma_{pn}\subseteq B(x,\ve)\cap\Per_{pn}(\sigma)
                  \cap\{0,\ldots,M\}^{\Zb}$ in the two-sided case;

        \item
              \[
                  \frac{1}{pn}\log|\Gamma_{pn}|
                  >h_\mu(\sigma)-\ve;
              \]

        \item for every $1\leq j\leq k$ and every $z\in\Gamma_{pn}$,
              \[
                  \left|
                  \int\varphi_j\dif\mu
                  -\frac{1}{pn}\sum_{t=0}^{pn-1}
                  \varphi_j(\sigma^t z)
                  \right|<\ve.
              \]
    \end{enumerate}
\end{lemma}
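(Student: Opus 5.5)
We follow the proof of Lemma \ref{lemSeparatedSet}, replacing the uniform separation property (which needs compactness) by Lemma \ref{lem:takahasi_lem2.3}. The letters used to connect words will also be read off directly, which replaces shadowing.

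\textbf{Step 1: reduce to finitely many ergodic measures.} First we reduce to a finite convex combination of ergodic measures with finite entropy. By Lemma \ref{lem:ErgodicDecompositionCMS} and Lemma \ref{lem:EntropyDecompositionCMS}, and since the $\vphi_j$ are bounded, we can choose ergodic $\nu_1,\dots,\nu_m$ with finite entropy and $\frac1m\sum_i\nu_i$ close to $\mu$ in the following sense: the integrals of each $\vphi_j$ differ by less than $\ve/4$, and $\frac1m\sum_i h_{\nu_i}(\sigma)>h_\mu(\sigma)-\ve/4$. The approximation can be obtained by discretizing the decomposition measure. This is possible because $h_\mu<\infty$ and, by the ergodic decomposition, the entropy is integrable, so the set of ergodic components with infinite entropy is null. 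We then take rational weights and repeat measures to make the weights equal.

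\textbf{Step 2: words for each ergodic component.} Fix $x$ and let $s$ be such that $2^{-s}<\ve$. Pick $M_0$ so that every $\nu_i$ charges $\bigcup_{c<M_0}[c]$. For each $i$, apply Lemma \ref{lem:takahasi_lem2.3} to $\nu_i$ with the functions $\vphi_1,\dots,\vphi_k$ and a common large length threshold. This gives lengths $l_i$, letters $a_i,b_i<M_0$, and sets $F_i\subseteq\cL_{l_i}(a_i,b_i)$. Their cardinalities satisfy $\frac1{l_i}\log|F_i|\approx h_{\nu_i}$, and the Birkhoff averages of each $\vphi_j$ over $[F_i]$ are uniformly close to $\int\vphi_j\dif\nu_i$.

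The words in $F_i$ are finite, so only finitely many letters occur in them. By transitivity, choose admissible connecting words: from $x_{s-1}$ to $a_i$, and from $b_i$ to $x_0$. All the connecting words use finitely many letters, and their lengths are bounded by some $L$.

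Two mismatches remain to be handled. First, the lengths $l_i$ differ. This is fixed by taking powers $F_i^{q_i}$: concatenating words that start at $a_i$ and end at $b_i$ requires a connector from $b_i$ to $a_i$, and transitivity supplies one. We arrange the powers so that the total lengths $q_il_i$ are nearly equal and large compared with $L$ and $s$. Second, the connector lengths are only bounded by $L$ rather than exactly fixed. Here we use the fact that the block length $n$ needs to be fixed only once: we concatenate the blocks
\[x_0\cdots x_{s-1}\,(\text{connector})\,w_1\,(\text{connector})\cdots w_m\,(\text{connector}),\]
ending with a connector back to $x_0$, and we use the same connectors for every choice of $w_i$. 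Then all such blocks have one common length $n$.

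\textbf{Step 3: define $\Gamma_{pn}$ and check the properties.} Let $\Gamma_{pn}$ be the set of periodic points obtained by repeating free concatenations of $p$ such blocks. These points lie in $\Per_{pn}(\sigma)$, in $B(x,\ve)$ because each begins with $x_0\cdots x_{s-1}$, and in $\{0,\dots,M\}^{\Nb}$, where $M$ bounds all letters used. Distinct choices of words differ in some coordinate, so after shifting to that coordinate the points are at distance $1$. Hence $\Gamma_{pn}$ is $(pn,1/3)$-separated.

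For the cardinality, $\log|\Gamma_{pn}|\ge p\sum_i q_i\log|F_i|$. This is larger than $pn(h_\mu-\ve)$ once the connector overhead $mL+s$ is small relative to $n$. For the Birkhoff estimate, the connector segments contribute at most $\|\vphi_j\|_\infty(mL+s)/n$. Within each word segment, the orbit of $z$ follows a point of $[w]$ for the whole length of $w$, but the tail beyond the word differs. This error is controlled by uniform continuity: split off the last few coordinates of each word, whose number $r$ is fixed by the modulus of continuity, and note that this costs $O(r/l_i)$.

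\textbf{Main obstacle.} The key step is the bookkeeping that makes all block lengths equal while keeping every letter in a finite set, since unlike Lemma \ref{lemSeparatedSet} there is no pigeonhole over a finite cover. Using Takahasi's words with fixed endpoint letters, together with connectors chosen once and for all, should resolve it.
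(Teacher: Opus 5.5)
Your plan is the paper's argument. You reduce to an equal-weight average of finite-entropy ergodic measures, apply Lemma \ref{lem:takahasi_lem2.3} to each, and glue the resulting words with connectors between letters of a finite set. You then repeat the $F_i$-words (your powers $q_i$, the paper's $L_i=L/l_i$) so each component occupies about the same number of symbols, begin each block with the cylinder word of $x$, and take the periodic points built from $p$ blocks.

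\textbf{The gap: the connector overhead.} Inside each power $F_i^{q_i}$ you insert $q_i-1$ connectors from $b_i$ to $a_i$. So the non-word part of a block is $s+\sum_i(q_i-1)\lvert c_{b_ia_i}\rvert$ plus the outer connectors, not $mL+s$. Its share of $n$ is of order $L/\min_i l_i$, and making $q_il_i$ large compared with $L$ does not reduce it. For instance, if $l_i$ is comparable to $L$, roughly half of every block is connector. The growth rate of $\lvert\Gamma_{pn}\rvert$ then drops to about half of $h_\mu$, and the Birkhoff averages are only controlled up to a constant error.

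Both the cardinality bound and the Birkhoff bound need $l_i$ large compared with $L/\ve$. In your ordering this cannot be arranged: the connectors, and hence $L$, are chosen after Lemma \ref{lem:takahasi_lem2.3} has produced $a_i,b_i$. The length threshold must be fixed before that.

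The fix is what the paper does. Since $a_i,b_i<M_0$, first fix a connector $\mathbf{v_{ab}}$ for every pair $a,b$ in a finite letter set containing $\{0,\dots,M_0-1\}$ and the letters of the cylinder word of $x$. Let $l^*$ bound their lengths, and only then choose the threshold $m\geq 32l^*/\ve$ in Lemma \ref{lem:takahasi_lem2.3}. Then $n_{\mathrm b}$ includes $\sum_i(L_i-1)\,l(\mathbf{v_{b_ia_i}})$, and $L_i\leq L/m$ gives $n_{\mathrm b}/n\leq 4l^*/(4l^*+m)$; see (\ref{eq:7.4})--(\ref{eq:7.5}).

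\textbf{Minor points.}
\begin{itemize}
\item Your uniform-continuity treatment of the tails is unnecessary. Lemma \ref{lem:takahasi_lem2.3} bounds the Birkhoff average over the first $l$ coordinates for every point of $[F^l]$. If a word $w\in F_i$ sits at position $P$ in $z$, then $\sigma^Pz\in[w]\subseteq[F_i]$, so the bound applies to $\sigma^Pz$ directly.
\item Taking $q_i=\prod_{j\neq i}l_j$ makes the component lengths exactly equal. Your ``nearly equal'' choice also works.
\item You did not treat the two-sided case. There you should use the centred word $x_{-s}\cdots x_s$ and apply $\sigma^s$ to the resulting bi-infinite periodic points, as the paper does, so that they lie in $B(x,\ve)$.
\end{itemize}
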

\begin{proof}
    Without loss of generality, we assume $\left\lvert \varphi_j\right\rvert \leq 1$ for any $1 \leq j \leq k$. Fix $\mu \in \M(\sigma, \Sigma)$ with $0 \leq h_{\mu}(\sigma) < \infty$, $\left\{\varphi_j\right\}_{j=1}^k \subset C_{b,u}(\Sigma)$, $x \in \Sigma$, $\ve > 0$ and $N \in \Nb^+$. Assume $0 < \ve < 1$ and $\ve < h_{\mu}(\sigma)$ if $h_{\mu}(\sigma) > 0$. By Lemma \ref{lem:ErgodicDecompositionCMS} and Lemma \ref{lem:EntropyDecompositionCMS}, there exists $r \in \Nb^+$ and $\left\{\mu_i\right\}_{i=1}^r \subset \Merg\left(\sigma, \Sigma\right)$ such that
    \[ \left\lvert h_{\mu'} -  h_\mu\right\rvert  < \ve/2,\]
    \[\left\lvert \int\varphi_j\dif\mu-\int\varphi_j\dif\mu'\right\rvert  <  \ve / 2,\]
    for any $1 \leq j \leq k$ where
    $\mu' = \frac{1}{r} \sum_{i=1}^r \mu_i$.

    Choose a cylinder $\mathbf U$ with $x \in \mathbf U \subseteq B(x,\ve)$. In the one-sided case, we write $\mathbf U = [\mathbf{u}]$, where $\mathbf{u}=u_0\cdots u_{s-1}$ is an admissible word. In the two-sided case we write \( \mathbf U=[x_{-s}\cdots x_s]_{[-s,s]} \) and put $\mathbf{u}=x_{-s}\cdots x_s$. In both cases, denote the first and last letters of $\mathbf{u}$ by $u_{0}$ and $u_{-1}$, respectively.

    Fix $q \in \Nb$ such that $q > \max\left\{u_j: 0 \leq j \leq l\left(\mathbf{u}\right) -1\right\} $ and
    \[
        \sum_{j=0}^{q-1}\mu_i\left([j]\right) > 0
    \]
    for any $1 \leq i \leq r$. Note $\left(\Sigma,\sigma\right)$ is topologically transitive. For any $a, b \in \left\{0,1,\dots,q-1\right\}$, fix $\mathbf{v_{ab}}$ such that $a\mathbf{v_{ab}}b$ is admissible. Take $l^* \in \Nb$ such that
    \[
        l^* \geq \max\left\{ l\left(\mathbf{v_{ab}}\right):a,b \in \left\{0,1,\dots,q-1\right\}\right\}, \qquad l^* \geq l(\mathbf{u}).
    \]

    By Lemma \ref{lem:takahasi_lem2.3}, for any $1 \leq i \leq r$, there exists $m^* \in \Nb$ such that for any $m \geq m^*$, there exists $l_i \geq m$, $(a_i,b_i) \in \left\{0,\dots,q-1\right\}^2 $ and a finite set $F_i \subseteq \cL_{l_i}(a_i,b_i)$ such that
    \begin{equation}\label{eq:7.1}
        \frac{1}{l_i}\log \left\lvert F_i\right\rvert \geq h_{\mu_i}(\sigma) - \ve / 4
    \end{equation}
    and
    \begin{equation}\label{eq:7.2}
        \left\lvert \frac{1}{l_i}\sum_{t=0}^{l_i-1}\varphi_j\left(\sigma^t y\right)  -\int\varphi_j\dif\mu_i\right\rvert   \leq \ve / 4
    \end{equation}
    for any $1 \leq j \leq k$ and any $y \in \left[F_i\right]$. Take $m \in \Nb^+$ such that
    \begin{equation}\label{eq:7.3}
        m \geq \max\left\{m^*, N, 32l^* / \ve, 16l^*h_{\mu'}(\sigma)/\ve - 8l^*\right\}.
    \end{equation}
    Take $M \in \Nb$ large enough such that $M \geq q$ and every letter of words in
    \[
        \left\{\mathbf{u}\right\} \cup\left\{\mathbf{v_{ab}}: 0 \leq a,b < q\right\} \cup\bigcup_{i=1}^r F_i
    \]
    is smaller than $M$.

    Let $L = \prod_{i=1}^r l_i$ and $L_i = L / l_i \leq L/m$ for any $1 \leq i \leq r$. Let
    \[n_{\mathrm{b}} = l(\mathbf{u})+ l(\mathbf{v_{u_{-1}a_1}}) + l(\mathbf{v_{b_ru_0}}) + \sum_{i=1}^{r-1}l\left(\mathbf{v_{b_ia_{i+1}}}\right) + \sum_{i=1}^{r}\left(L_i-1\right)l(\mathbf{v_{b_ia_i}})\]
    and $n = n_{\mathrm{b}} + rL$. Then
    \begin{equation}\label{eq:7.4}
        n_{\mathrm{b}} \leq \left(r + 2 + \sum_{i=1}^{r}L_i\right)l^* \leq \left(r + 2 + \frac{rL}{m}\right)l^*.
    \end{equation}
    Therefore,
    \begin{equation}\label{eq:7.5}
        \begin{aligned}
                                          & \phantom{{}\leq{}} \frac{n_{\mathrm{b}}}{n}                                                                              \\
            (\text{by (\ref{eq:7.4})})    & \leq \frac{\left(r+2+\frac{rL}{m}\right)l^*}{\left(r+2+\frac{rL}{m}\right)l^* + rL} \quad                                \\
                                          & = \frac{\left(1+\frac{2}{r}\right)l^* + \frac{l^*}{m}L }{\left(1+\frac{2}{r}\right)l^* + \left(1+\frac{l^*}{m}\right) L} \\
                                          & \leq \frac{4l^*}{4l^*+ m} \quad (\mathrm{by\ } m \leq L \mathrm{\ and\ } r \geq 1)                                       \\
            (\mathrm{by\ (\ref{eq:7.3})}) & \leq \frac{1}{1+8/\ve}.
        \end{aligned}
    \end{equation}
    If $h_\mu(\sigma) > 0$, we also have
    \begin{equation}\label{eq:7.51}
        \frac{n_{\mathrm{b}}}{n} \leq \frac{\ve / 4}{h_{\mu'}(\sigma)- \ve/4}
    \end{equation}
    by (\ref{eq:7.3}) and the above inequality.
    Here, the justification for the inequality in each line is provided at the end of the line. Fix $p \in \Nb^+$. Let $\Gamma_{pn}$ be the set of all points generated by words of the following form (when $r > 1$, similar for $r = 1$) for the one-sided case.
    \begin{align*}
        \mathbf{uv_{u_{-1}a_1}} & \mathbf{w_{1,1,1}v_{b_1a_1}w_{1,1,2}v_{b_1a_1}}\dots\mathbf{v_{b_1a_1}w_{1,1,L_1}v_{b_1a_2}}     \\
                                & \mathbf{w_{1,2,1}v_{b_2a_2}w_{1,2,2}v_{b_2a_2}}\dots\mathbf{v_{b_2a_2}w_{1,2,L_2}v_{b_2a_3}}     \\
                                & \dots                                                                                            \\
                                & \mathbf{w_{1,r,1}v_{b_r a_r}w_{1,r,2}v_{b_r a_r}}\dots\mathbf{v_{b_r a_r}w_{1,r,L_r}v_{b_r u_0}} \\
        \mathbf{uv_{u_{-1}a_1}} & \mathbf{w_{2,1,1}v_{b_1a_1}w_{2,1,2}v_{b_1a_1}}\dots\mathbf{v_{b_1a_1}w_{2,1,L_1}v_{b_1a_2}}     \\
                                & \mathbf{w_{2,2,1}v_{b_2a_2}w_{2,2,2}v_{b_2a_2}}\dots\mathbf{v_{b_2a_2}w_{2,2,L_2}v_{b_2a_3}}     \\
                                & \dots                                                                                            \\
                                & \mathbf{w_{2,r,1}v_{b_r a_r}w_{2,r,2}v_{b_r a_r}}\dots\mathbf{v_{b_r a_r}w_{2,r,L_r}v_{b_r u_0}} \\
                                & \dots                                                                                            \\
        \mathbf{uv_{u_{-1}a_1}} & \mathbf{w_{p,1,1}v_{b_1a_1}w_{p,1,2}v_{b_1a_1}}\dots\mathbf{v_{b_1a_1}w_{p,1,L_1}v_{b_1a_2}}     \\
                                & \mathbf{w_{p,2,1}v_{b_2a_2}w_{p,2,2}v_{b_2a_2}}\dots\mathbf{v_{b_2a_2}w_{p,2,L_2}v_{b_2a_3}}     \\
                                & \dots                                                                                            \\
                                & \mathbf{w_{p,r,1}v_{b_r a_r}w_{p,r,2}v_{b_r a_r}}\dots\mathbf{v_{b_r a_r}w_{p,r,L_r}v_{b_r u_0}} \\
    \end{align*}
    where $\mathbf{w_{t,i,j}} \in F_i$ for any $1 \leq t \leq p$, any $1 \leq i \leq r$ and any $1 \leq j \leq L_i$. In the two-sided case, first construct the corresponding bi-infinite periodic points and then apply $\sigma^s$ to them. Thus, $\Gamma_{pn}$ is $(pn,1/3)$-separated in both cases.

    It can be easily checked that
    \[
        \Gamma_{pn} \subseteq B(x,\ve) \cap \Per_{pn}(\sigma) \cap \left\{0,\dots,M\right\}^{\Nb}
    \]
    for one-sided case and
    \[
        \Gamma_{pn} \subseteq B(x,\ve) \cap \Per_{pn}(\sigma) \cap \left\{0,\dots,M\right\}^{\Zb}
    \]
    for two-sided case. It remains to prove that
    \[
        \left(\log\left\lvert \Gamma_{pn}\right\rvert \right) / \left(pn\right)  > h_\mu - \ve
    \]
    and
    \[
        \left\lvert \int\varphi_j\dif\mu - \frac{1}{pn}\sum_{i=0}^{pn-1}\varphi_j\left(\sigma^iz\right) \right\rvert < \ve
    \]
    for any $1 \leq j \leq k$ and any $z \in \Gamma_{pn}$.

    For any $1 \leq j \leq k$ and any $z \in \Gamma_{pn}$,
    \begin{align*}
                                                                 & \quad \left\lvert \int\varphi_j\dif\mu' - \frac{1}{pn}\sum_{i=0}^{pn-1}\varphi_j\left(\sigma^i z\right) \right\rvert                                                                                                             \\
        (\mathrm{similar\ to\ Lemma\ \ref{lemMeasuresDistance}}) & \leq \frac{2n_{\mathrm{b}}}{n} + \frac{rL}{n}\max_{1 \leq i \leq r}\max\left\{\left\lvert \int\varphi_j\dif\mu_i - \frac{1}{l_i}\sum_{t=0}^{l_i-1}\varphi_j\left(\sigma^t y\right) \right\rvert: y \in \left[F_i\right] \right\} \\
        (\mathrm{by\ (\ref{eq:7.5})})                            & \leq \frac{2}{1+8/\ve} + \ve/4                                                                                                                                                                                                   \\
                                                                 & \leq  \ve / 2.
    \end{align*}
    Therefore, $\left\lvert \int\varphi_j\dif\mu - \frac{1}{pn}\sum_{i=0}^{pn-1}\varphi_j\left(\sigma^i z\right) \right\rvert < \ve$ for any $1 \leq j \leq k$ and any $z \in \Gamma_{pn}$.

    On the other hand, if $h_\mu(\sigma) = 0$, note $F_i \ne \emptyset$ for any $1 \leq i \leq r$ and hence $\Gamma_{pn} \ne \emptyset$. If $h_\mu(\sigma) > 0$, then
    \begin{align*}
                                                                    & \phantom{{}\leq{}} \frac{1}{pn}\log \left\lvert\Gamma_{pn}\right\rvert               \\
        (\mathrm{by\ the\ construction\ of\ } \Gamma_{pn})          & \geq \frac{1}{pn}\log\left(\prod_{i=1}^r \left\lvert F_i\right\rvert^{L_i} \right)^p \\
                                                                    & = \frac{1}{n}\sum_{i=1}^r L_i\log\left\lvert F_i\right\rvert                         \\
        (\mathrm{by\ }L_i = L / l_i \mathrm{\ and\ (\ref{eq:7.1})}) & \geq \frac{L}{n}\sum_{i=1}^{r}\left(h_{\mu_i}(\sigma)-\ve/4\right)                   \\
                                                                    & = \frac{rL}{n}\left(h_{\mu'}(\sigma)-\ve/4\right)                                    \\
                                                                    & \geq \left(1-\frac{n_{\mathrm{b}}}{n}\right) \left(h_{\mu'}(\sigma)-\ve/4\right)     \\
        (\mathrm{by\ }(\ref{eq:7.51}))                              & \geq h_{\mu'}(\sigma)-\ve/2 > h_{\mu}(\sigma) - \ve,
    \end{align*}
    which completes the proof.
\end{proof}

\begin{proof}[Proof of Theorem \ref{thmD}]
    Fix $m \in \Nb^+$, $k \in \Nb$, $\left\{\mu_i\right\}_{i=1}^m \subseteq \M(\sigma,\Sigma)$ with finite entropy, $x \in \Sigma$, $\left\{\vphi_j\right\}_{j=1}^k \subseteq C_{b,u}(\Sigma)$ and $\ve>0$. For any $z\in\Sigma$ and $n\in\Nb^+$, write
    \[S_n\vphi_j(z)=\frac{1}{n}\sum_{t=0}^{n-1}\vphi_j(\sigma^t z).\]

    Choose $q\in\Nb^+$ sufficiently large such that the cylinder \(\mathbf U=[x_0\ldots x_q]\) in the one-sided case, and the cylinder $\mathbf U=[x_{-q}\ldots x_q]$ in the two-sided case, is contained in $B(x,\ve)$. Fix $0<\eta<\ve/3$ sufficiently small such that $B(x,\eta)\subseteq \mathbf U$. Since $D_n(\vphi_j)=o(n)$ for any $1\leq j\leq k$, there exists $N>2q$ such that
    \begin{equation}\label{eq:thmD_variation}
        \frac{D_l(\vphi_j)}{l}<\eta
    \end{equation}
    for any $l\geq N$ and any $1\leq j\leq k$.

    Applying Lemma \ref{lem:SeparatedSetCMS} to $\mu_i$, $x$, $\eta$ and $N$, there exist $n_i\geq N$ and $M_i\in\Nb$ such that for any $p\in\Nb^+$, there exists a $(pn_i,1/3)$-separated set $\Gamma_{pn_i}^i$ satisfying
    \begin{enumerate}
        \item $\Gamma_{pn_i}^i\subseteq B(x,\eta)\cap\Per_{pn_i}(\sigma)\cap\left\{0,\ldots,M_i\right\}^{\Nb}$ in the one-sided case, with the analogous inclusion in $\left\{0,\ldots,M_i\right\}^{\Zb}$ in the two-sided case;
        \item
              \begin{equation}\label{eq:thmD_cardinality}
                  \frac{1}{pn_i}\log\left\lvert\Gamma_{pn_i}^i\right\rvert>h_{\mu_i}(\sigma)-\eta;
              \end{equation}
        \item for any $1\leq j\leq k$ and any $y\in\Gamma_{pn_i}^i$,
              \begin{equation}\label{eq:thmD_average}
                  \left\lvert S_{pn_i}\vphi_j(y)-\int\vphi_j\dif\mu_i\right\rvert<\eta.
              \end{equation}
    \end{enumerate}
    Let
    \[n=\prod_{i=1}^m n_i,\qquad p_i=\frac{n}{n_i},\qquad \Gamma_n^i=\Gamma_{p_i n_i}^i\]
    and $M=\max\left\{M_i:1\leq i\leq m\right\}$. Thus every $\Gamma_n^i$ consists of $n$-periodic points using only the symbols in $\left\{0,\ldots,M\right\}$, and every point in $\Gamma_n^i$ belongs to the common cylinder $\mathbf U$.

    For any $y\in\bigcup_{i=1}^m\Gamma_n^i$, denote by
    \[\mathbf w(y)=y_0y_1\ldots y_{n-1}\]
    its period word, and put
    \[\mathcal W_i=\left\{\mathbf w(y):y\in\Gamma_n^i\right\},\qquad \mathcal W=\bigcup_{i=1}^m\mathcal W_i.\]
    Since every $y\in\bigcup_{i=1}^m\Gamma_n^i$ is $n$-periodic, its period word $\mathbf w(y)$ determines $y$. Hence,
    \begin{equation}\label{eq:thmD_words}
        \left\lvert\mathcal W_i\right\rvert=\left\lvert\Gamma_n^i\right\rvert.
    \end{equation}
    Moreover, all words in $\mathcal W$ have the same first symbol $x_0$. For any $\mathbf w(y)\in\mathcal W$, the transition from its last symbol $y_{n-1}$ to its first symbol $y_0=x_0$ is admissible since $y$ is $n$-periodic. It follows that words in $\mathcal W$ can be concatenated freely.

    Let $\mathbb I=\Nb$ in the one-sided case and $\mathbb I=\Zb$ in the two-sided case. For $\overline{\mathbf w}=(\mathbf w_t)_{t\in\mathbb I}\in\mathcal W^{\mathbb I}$, denote by $\Psi(\overline{\mathbf w})$ the point obtained by concatenating the words $\mathbf w_t$ in their natural order, aligned so that the word $\mathbf w_0$ starts at coordinate zero. Define
    \[\Delta=\Psi\left(\mathcal W^{\mathbb I}\right),\qquad \Delta_i=\Psi\left(\mathcal W_i^{\mathbb I}\right).\]
    Clearly, $\Delta_i\subseteq\Delta$ and $\sigma^n(\Delta_i)=\Delta_i$, $\sigma^n(\Delta)=\Delta$. Since $\mathcal W$ is finite and uses only symbols in $\left\{0,\ldots,M\right\}$, the sets $\Delta_i$ and $\Delta$ are compact. Moreover, $\Psi$ is a homeomorphism and
    \[\Psi\circ\sigma=\sigma^n\circ\Psi.\]
    Therefore, $\left(\Delta_i,\sigma^n|_{\Delta_i}\right)$ and $\left(\Delta,\sigma^n|_{\Delta}\right)$ are topologically conjugate to the one-sided (two-sided) full shifts on $\mathcal W_i$ and $\mathcal W$, respectively. This proves item (1).

    Put
    \[\Lambda_i=\bigcup_{s=0}^{n-1}\sigma^s(\Delta_i),\qquad \Lambda=\bigcup_{s=0}^{n-1}\sigma^s(\Delta).\]
    These are compact $\sigma$-invariant subsets of $\Sigma$. By item (1), (\ref{eq:thmD_cardinality}) and (\ref{eq:thmD_words}),
    \begin{align*}
        \htop(\sigma,\Lambda_i)
         & =\frac{1}{n}\htop(\sigma^n,\Lambda_i)               \\
         & \geq\frac{1}{n}\htop(\sigma^n,\Delta_i)             \\
         & =\frac{1}{n}\log\left\lvert\mathcal W_i\right\rvert \\
         & =\frac{1}{n}\log\left\lvert\Gamma_n^i\right\rvert   \\
         & >h_{\mu_i}(\sigma)-\eta
        >h_{\mu_i}(\sigma)-\ve.
    \end{align*}
    This proves item (2).

    We next prove item (3). For every $z\in\Delta_i$, there is a unique $y\in\Gamma_n^i$ such that the coordinates of $z$ from $0$ to $n-1$ form the word $\mathbf w(y)$. It follows from the definition of $D_n(\vphi_j)$, (\ref{eq:thmD_variation}) and (\ref{eq:thmD_average}) that
    \begin{equation}\label{eq:thmD_block_average}
        \left\lvert S_n\vphi_j(z)-\int\vphi_j\dif\mu_i\right\rvert
        \leq \frac{D_n(\vphi_j)}{n}
        +\left\lvert S_n\vphi_j(y)-\int\vphi_j\dif\mu_i\right\rvert
        <2\eta
    \end{equation}
    for any $1\leq j\leq k$.

    Define the map $\tau_n:\M(\Delta)\to\M(\Lambda)$ by
    \[\tau_n\lambda=\frac{1}{n}\sum_{s=0}^{n-1}\lambda\circ\sigma^{-s}.\]
    By Lemma \ref{lemTau},
    \begin{equation}\label{eq:thmD_tau}
        \tau_n\left(\M(\sigma^n,\Delta)\right)=\M(\sigma,\Lambda).
    \end{equation}
    The same statement holds with $\Delta$ and $\Lambda$ replaced by $\Delta_i$ and $\Lambda_i$, respectively. Given $\nu_i\in\M(\sigma,\Lambda_i)$, by (\ref{eq:thmD_tau}) there exists $\lambda_i\in\M(\sigma^n,\Delta_i)$ such that $\nu_i=\tau_n\lambda_i$. Hence, by (\ref{eq:thmD_block_average}),
    \begin{align*}
        \left\lvert\int\vphi_j\dif\nu_i-\int\vphi_j\dif\mu_i\right\rvert
         & =\left\lvert\int S_n\vphi_j\dif\lambda_i-\int\vphi_j\dif\mu_i\right\rvert \\
         & <2\eta<\ve
    \end{align*}
    for any $1\leq j\leq k$.

    Now fix $\nu\in\M(\sigma,\Lambda)$. By (\ref{eq:thmD_tau}), there exists $\lambda\in\M(\sigma^n,\Delta)$ such that $\nu=\tau_n\lambda$. If a word in $\mathcal W$ belongs to more than one $\mathcal W_i$, assign it to one such index and fix a map
    \[\iota:\mathcal W\to\left\{1,\ldots,m\right\}\quad\text{such that}\quad \mathbf w\in\mathcal W_{\iota(\mathbf w)}.\]
    For $1\leq i\leq m$, let
    \[C_i=\left\{z\in\Delta:\iota(z_0\ldots z_{n-1})=i\right\},\qquad q_i=\lambda(C_i).\]
    Then $\left\{C_i\right\}_{i=1}^m$ is a partition of $\Delta$, $q_i\geq0$ and $\sum_{i=1}^m q_i=1$. Define
    \[\mu=\sum_{i=1}^m q_i\mu_i\in\cov\left(\left\{\mu_i\right\}_{i=1}^m\right).\]
    The same argument as in (\ref{eq:thmD_block_average}) shows that for any $z\in C_i$,
    \[\left\lvert S_n\vphi_j(z)-\int\vphi_j\dif\mu_i\right\rvert<2\eta.\]
    Therefore,
    \begin{align*}
        \left\lvert\int\vphi_j\dif\nu-\int\vphi_j\dif\mu\right\rvert
         & =\left\lvert\int S_n\vphi_j\dif\lambda-
        \sum_{i=1}^m q_i\int\vphi_j\dif\mu_i\right\rvert                         \\
         & \leq\sum_{i=1}^m\int_{C_i}
        \left\lvert S_n\vphi_j(z)-\int\vphi_j\dif\mu_i\right\rvert\dif\lambda(z) \\
         & <2\eta<\ve
    \end{align*}
    for any $1\leq j\leq k$. This proves item (3).

    Finally, we prove item (4). We first claim that
    \begin{equation}\label{eq:thmD_return}
        \sigma^{tn}z\in\mathbf U\subseteq B(x,\ve)
    \end{equation}
    for any $z\in\Delta$ and any $t\in\mathbb I$ for which $tn\geq0$. In the one-sided case, this follows since the word starting at every coordinate $tn$ belongs to $\mathcal W$ and hence its first $q+1$ symbols agree with $x_0\ldots x_q$. In the two-sided case, its coordinates from $0$ to $q$ agree with $x_0\ldots x_q$. Its coordinates from $-q$ to $-1$ are the last $q$ symbols of the preceding period word $\mathbf w(y)$. Since $y$ is $n$-periodic and belongs to $\mathbf U$, these symbols are
    \[y_{n-q}\ldots y_{n-1}=y_{-q}\ldots y_{-1}=x_{-q}\ldots x_{-1}.
    \]
    Thus (\ref{eq:thmD_return}) also holds in the two-sided case.

    Given $z\in\Lambda$, there exist $w\in\Delta$ and $0\leq s\leq n-1$ such that $z=\sigma^s w$. Let $j=n-s\in\left\{1,\ldots,n\right\}$. It follows from (\ref{eq:thmD_return}) that
    \[\sigma^{j+tn}z=\sigma^{(t+1)n}w\in B(x,\ve)\]
    for any $t\in\Nb$. The same argument applies to every $z\in\Lambda_i$. This proves item (4) and completes the proof.
\end{proof}

Theorem \ref{thmD}, together with Lemma
\ref{lem:EntropyApproachableCMS}, provides the countable-state input
needed to run the nested construction from the proof of Theorem
\ref{thmA}.

\begin{theorem}\label{thmB}
    Suppose $(\Sigma,\sigma)$ is a non-trivial topologically transitive one-sided (two-sided) countable Markov shift. Let $\vphi \in C_{b,u}(\Sigma)$ such that $\mathrm{Int} L_\vphi \ne \emptyset$. Then for any $\ve > 0$, any $a \in \mathrm{Int} L_\vphi$, any $\mu \in \M(\sigma,\Sigma)$ satisfying $\int\vphi\dif\mu = a$ and
    \[h_\mu(\sigma) < \sup\left\{h_\omega(\sigma):\omega \in \M(\sigma,\Sigma), \int\varphi\dif\omega = a\right\},\]
    and any $0 \leq h \leq h_{\mu}(\sigma)$, there exists a compact invariant subset $\Gamma \subset \Sigma$ such that
    \begin{enumerate}
        \item $\left(\Gamma,\sigma|_{\Gamma}\right)$ is minimal and uniquely ergodic;
        \item $h_{\nu}(\sigma) = \htop\left(\sigma, \Gamma\right) = h$ where $\nu$ is the unique invariant measure on $\Gamma$;
        \item $\int \vphi \dif \nu = a$;
        \item $D(\nu,\mu) < \ve$.
    \end{enumerate}
\end{theorem}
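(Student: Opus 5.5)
The plan is to rerun the nested-horseshoe argument from the proof of Theorem \ref{thmA}. Theorem \ref{thmD} replaces Theorem \ref{thmC} as the horseshoe input, and Lemma \ref{lem:EntropyApproachableCMS} together with Lemma \ref{lem:SeparatedSetCMS} replaces the density of periodic measures and upper semicontinuity wherever these are used at the first step.

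\textbf{Step (1).} Fix $\ve$, $a$, $\mu$, $h$. Since $a\in\Int L_\vphi$, pick $\mu_l^1,\mu_r^1$ with $\int\vphi\dif\mu_l^1<a<\int\vphi\dif\mu_r^1$. Pick $\mu_{a,s}^1$ with integral $a$ and entropy strictly bigger than $h_\mu(\sigma)$, and replace $\mu$ by a point $\mu_a^1\in(\mu,\mu_{a,s}^1]$ close to $\mu$. The measures $\mu_l^1,\mu_r^1,\mu_a^1$ are then moved to have finite entropy, which is automatic if $h_\mu<\infty$ and otherwise is arranged by convex combination with compactly supported ergodic measures from Lemma \ref{lem:EntropyApproachableCMS}. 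Lemma \ref{lem:SeparatedSetCMS} with $p=1$ gives periodic measures near any measure, so zero-entropy measures with integral $a$ near $\mu$ exist by combining two periodic measures on either side of $a$. Convex combinations then produce $\mu^1$ with $h_{\mu^1}=h+5\delta/8$, and two measures $\mu_1^1,\mu_2^1$ with integrals on either side of $a$, within $\delta$ of $a$, and entropies in $(h+\delta/2,h+3\delta/4)$. Apply Theorem \ref{thmD} to $\{\mu_1^1,\mu_2^1\}$ with the finite family of test functions consisting of $\vphi$ together with finitely many bounded Lipschitz functions determining a weak* neighbourhood of small $D$-radius. This yields compact $\sigma^{N_1}$-invariant full-shift horseshoes $\Delta_1^1,\Delta_2^1\subseteq\Delta^1$. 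By item (3), every invariant measure on $\Lambda_1^1$ has integral below $a$ and every one on $\Lambda_2^1$ has integral above $a$. Moreover $\M(\sigma,\Lambda^1)$ lies in a small neighbourhood of $[\mu_1^1,\mu_2^1]$ in the finitely many test integrals; a sufficiently rich family of Lipschitz test functions converts this into a $D$-diameter bound.

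\textbf{Step (k), k\geq 2.} From this point everything happens inside the compact set $\Lambda^{k-1}$, where $(\Delta^{k-1},\sigma^{N_{k-1}})$ is a one-sided full shift on finitely many symbols. Such a shift is compact, positively expansive, transitive and has shadowing, so Theorem \ref{thmC} applies to it directly. Steps $(2),\dots,(k)$ of the proof of Theorem \ref{thmA} can then be copied verbatim. One uses the map $\tau_{k-1}$ of Lemma \ref{lemTau}, which is continuous and affine, preserves entropy, and satisfies $\tau_{k-1}(\M(\sigma^{N_{k-1}},\Delta^{k-1}))=\M(\sigma,\Lambda^{k-1})$; the lemma is stated on compact $X$, but $\Lambda^{k-1}$ is compact, so it can be applied there. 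One also uses upper semicontinuity of entropy on the expansive compact system $\Delta^{k-1}$ and density of periodic measures in the full shift. The result is a decreasing sequence of compact invariant sets $\Lambda^k$ with
\[
\htop(\sigma,\Lambda^k)\in\bigl(h+\delta/2^{k+1},h+\delta/2^{k-1}\bigr),\qquad \diam\M(\sigma,\Lambda^k)\to0,
\]
and with measures $\tau_k\mu_1^{k+1},\tau_k\mu_2^{k+1}\in\M(\sigma,\Lambda^k)$ whose $\vphi$-integrals bracket $a$ within $\delta/2^{k}$.

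\textbf{Conclusion.} The intersection $\bigcap_k\Lambda^k$ is a nonempty compact set, so it carries a unique invariant measure $\nu$; let $\Gamma=\supp\nu$. Then $\Gamma$ is uniquely ergodic and hence minimal. Since $\M(\sigma,\Lambda^1)$ is compact, entropy is upper semicontinuous there, so $h_\nu\geq h$, while the topological entropy bounds give $\htop(\sigma,\Gamma)\leq h$; thus $h_\nu=\htop(\sigma,\Gamma)=h$ by the variational principle. Items (3) and (4) follow exactly as before.

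\textbf{Main obstacle.} I expect the hard part to be Step (1), where non-compactness matters. Theorem \ref{thmD} controls only finitely many integrals rather than the Hausdorff distance, and infinite or non-upper-semicontinuous entropy must be avoided when choosing $\mu^1$. My first attempt would be to choose the Lipschitz test functions from a countable family metrizing weak* convergence on the relevant measures; if that fails, I would bound $D$ directly using item (4) of Theorem \ref{thmD} and the block structure, in the manner of the proof of Theorem \ref{thmC}.
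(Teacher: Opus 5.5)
Your overall route is the paper's. You rerun the nested construction of Theorem \ref{thmA} with Theorem \ref{thmD} supplying the Step (1) horseshoes, and you convert item (3) of Theorem \ref{thmD} into Hausdorff bounds by a finite-subcover argument over the compact set $[\mu_1^1,\mu_2^1]$; your ``first attempt'' is exactly what the paper does, and it works. After that everything happens inside compact finite full shifts, and $h_\nu\geq h$ comes from upper semicontinuity on the first horseshoe. There is, however, a genuine error in the finite-entropy preparation of Step (1).

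The hypothesis $h_\mu(\sigma)<\sup\{h_\omega(\sigma):\int\vphi\dif\omega=a\}$ already forces $h_\mu(\sigma)<\infty$. So your ``otherwise'' case never occurs, and you rely on the claim that finite entropy is then automatic. It is not: $\mu_{a,s}^1$, $\mu_l^1$ and $\mu_r^1$ are arbitrary measures with prescribed integrals, and any of them may have infinite entropy.

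Entropy is affine with values in $[0,\infty]$. So if $h_{\mu_{a,s}^1}(\sigma)=\infty$, then $\mu_a^1=(1-t)\mu+t\mu_{a,s}^1$ has infinite entropy for every $t>0$. Then $\delta=\infty$, and every point of $[\mu_0^1,\mu_a^1]$ other than $\mu_0^1$ has infinite entropy, so the interpolation producing $\mu^1,\mu_1^1,\mu_2^1$ with finite entropies just above $h$ breaks down. Likewise, if $h_{\mu_l^1}(\sigma)=\infty$, every $\mu_1^1\in[\mu_l^1,\mu^1)$ has infinite entropy, and Theorem \ref{thmD}, which requires finite entropy, cannot be applied. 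For the same reason, ``convex combination with compactly supported ergodic measures'' can never repair an infinite-entropy measure; you must replace it, not mix it.

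The paper's fix runs as follows.
\begin{enumerate}
\item Choose $h_\mu(\sigma)<c<\sup\{h_\omega(\sigma):\int\vphi\dif\omega=a\}$ and a measure $\rho$ with $\int\vphi\dif\rho=a$ and $h_\rho(\sigma)>c$.
\item Lemma \ref{lem:EntropyApproachableCMS} gives compactly supported ergodic $\rho_n\to\rho$ with $h_{\rho_n}(\sigma)\to h_\rho(\sigma)$. Each $h_{\rho_n}(\sigma)$ is finite, because a compact invariant set uses only finitely many symbols.
\item Mix $\rho_n$ with a periodic measure $\omega_l$ or $\omega_r$ to restore the integral $a$ exactly. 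The weight on $\rho_n$ tends to one because $\int\vphi\dif\rho_n\to a$. This yields a finite-entropy $\rho_a$ with $\int\vphi\dif\rho_a=a$ and $h_{\rho_a}(\sigma)>c$, which you use in place of $\mu_{a,s}^1$.
\item Take $\mu_l^1=\omega_l$ and $\mu_r^1=\omega_r$ themselves.
\end{enumerate}
Producing the periodic $\omega_l,\omega_r$ through Lemma \ref{lem:SeparatedSetCMS} also needs finite-entropy measures on both sides of $a$. Those are obtained first via Lemma \ref{lem:EntropyApproachableCMS}.

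A smaller point: upper semicontinuity of entropy on $\M(\sigma,\Lambda^1)$ does not follow from compactness of that set. It holds because $\Lambda^1$ is a compact subshift over a finite alphabet, hence (positively) expansive, so Lemma \ref{lemUpperSemiContinuity} applies. This is exactly the content of the remark following the paper's proof.
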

\begin{proof}
    Fix $a \in \Int L_\varphi$, $\mu \in \M(\sigma,\Sigma)$ and
    $0 \leq h \leq h_\mu(\sigma)$ as in the statement. Since
    \[
        h_\mu(\sigma)
        <
        \sup\left\{h_\omega(\sigma):
        \omega\in\M(\sigma,\Sigma),\
        \int\varphi\dif\omega=a\right\},
    \]
    one has $h_\mu(\sigma)<\infty$.

    We first show that the measures used in Step (1) of the proof of
    Theorem \ref{thmA} can be chosen to have finite entropy. Take
    \[
        h_\mu(\sigma)<c <
        \sup\left\{h_\omega(\sigma):
        \omega\in\M(\sigma,\Sigma),\
        \int\varphi\dif\omega=a\right\}.
    \]
    There exists $\rho\in\M(\sigma,\Sigma)$ such that $\int\varphi\dif\rho=a$ and $h_\rho(\sigma)>c$.
    By Lemma \ref{lem:EntropyApproachableCMS}, there exists a sequence
    $\{\rho_n\}_{n\geq1}$ of ergodic measures supported on compact sets
    such that
    \[
        \rho_n\to\rho,
        \qquad h_{\rho_n}(\sigma)\to h_\rho(\sigma).
    \]
    In particular, every $\rho_n$ has finite entropy and
    $\int\varphi\dif\rho_n\to a$.

    Since $a\in\Int L_\varphi$ and periodic measures are dense in
    $\M(\sigma,\Sigma)$, there exist periodic measures
    $\omega_l,\omega_r$ such that $\int\varphi\dif\omega_l<a< \int\varphi\dif\omega_r$. By taking a convex combination of $\rho_n$ with $\omega_l$ or $\omega_r$, according as $\int\varphi\dif\rho_n>a$ or $\int\varphi\dif\rho_n<a$, we obtain $\widetilde\rho_n\in\M(\sigma,\Sigma)$ satisfying $\int\varphi\dif\widetilde\rho_n=a$. Since $\int\varphi\dif\rho_n\to a$, the coefficient of $\rho_n$ in
    this convex combination tends to one. Hence, for all sufficiently
    large $n$, we have $c<h_{\widetilde\rho_n}(\sigma)<\infty$. Fix such a measure and denote it by $\rho_a$.

    Take $\mu_a^1\in(\mu,\rho_a]$ sufficiently close to $\mu$. Then
    \[
        \int\varphi\dif\mu_a^1=a,\qquad
        h_{\mu_a^1}(\sigma)>h_\mu(\sigma)\geq h,
        \qquad D(\mu_a^1,\mu)<\ve/6.
    \]
    Moreover, using again the density of periodic measures and taking
    suitable convex combinations with $\omega_l$ or $\omega_r$, there
    exists $\mu_0^1\in\M(\sigma,\Sigma)$ such that
    \[
        \int\varphi\dif\mu_0^1=a,\qquad
        h_{\mu_0^1}(\sigma)=0,\qquad
        D(\mu_0^1,\mu)<\ve/6.
    \]

    Now take $\mu_l^1=\omega_l$ and $\mu_r^1=\omega_r$ in Step (1) of
    the proof of Theorem \ref{thmA}. Thus, all measures to which
    Theorem \ref{thmD} is applied in Step (1) have finite entropy.
    After Step (1), the construction is carried out inside compact
    subsystems conjugate to finite full shifts, so the same is true in
    every subsequent step.

    The remainder of the proof is the same as that of Theorem
    \ref{thmA}, with Theorem \ref{thmC} replaced by Theorem
    \ref{thmD}. Since \(D\) metrizes the weak* topology, the compactness of \(K=\cov\{\mu_i\}_{i=1}^m\) and a finite-subcover argument allow the test functions in Theorem \ref{thmD} to be chosen so that, for any prescribed \(\gamma>0\), item {\rm (3)} and the subsequent remark give
    \[
        D_{\mathrm H}\bigl(K,\M(\sigma,\Lambda)\bigr)<\gamma,
        \qquad
        D_{\mathrm H}\bigl(\{\mu_i\},\M(\sigma,\Lambda_i)\bigr)<\gamma.
    \]
    Thus Theorem \ref{thmD} provides all the Hausdorff estimates
    required in the proof of Theorem \ref{thmA}.
\end{proof}

\begin{remark}
    The only place the upper semi-continuity of the system's entropy function is used in the proof is in the final estimate of the lower bound for the entropy of $\nu$. In general, the entropy function is not upper semi-continuous \cite[Remark 8.2 (a)]{iommi2022escape}. However, the upper semi-continuity of the entropy function in the first horseshoe $\left(\Delta, \sigma^{N_1}\right)$ leads to the same conclusion.
\end{remark}

Finally, we apply our results to the Expanding-Markov-Renyi (EMR) interval maps. EMR maps were considered by Pollicott and Weiss in \cite{pollicott1999multifractal} when studying multifractal analysis of pointwise dimension.

\begin{definition}
    A map $T:I \to I$ (where $I = [0,1]$) is an \emph{EMR map}  if there exists a countable family $\left\{I_i\right\}_{i=0}^\infty$ of closed intervals (with disjoint interiors $\Int I_i$) with $I_i \subset I$ for any $i \in \Nb$ and $I = \bigcup_{i=0}^\infty I_i$, satisfying:
    \begin{enumerate}
        \item the map is $C^2$ on $\bigcup_{i=0}^\infty \Int I_i$;
        \item there exists $\xi > 1$ and $N \in \Nb$ such that for any $n \geq N$, one has $\left\lvert \left(T^n\right)'(x) \right\rvert > \xi^n$ for any $x \in \bigcup_{i=0}^\infty \Int I_i$;
        \item the map $T$ is Markov, and it can be coded by a full shift on a countable alphabet;
        \item the map satisfies the Renyi condition, that is, there exists a positive number $K > 0$ such that
              \[\sup_{n \in \Nb} \sup_{x,y,z\in I_n} \frac{\left\lvert T''(x)\right\rvert }{\left\lvert T'(y)\right\rvert \left\lvert T'(z)\right\rvert} \leq K\]
    \end{enumerate}
\end{definition}

Let $E = \bigcup_{i=0}^\infty \partial I_i$ and $O = \bigcup_{n=0}^\infty T^{-n} E$. Then \(O\) is countable and \(I\setminus O\) is forward
invariant. We assume throughout the paper that zero is the unique accumulation point of $E$. By the Markov property, every point
\(x\in I\setminus O\) has a unique symbolic coding; see
\cite[Section 2]{pollicott1999multifractal}. Let \((\Sigma,\sigma)\) denote the corresponding full shift on a countable alphabet. The natural
coding map $\pi:(\Sigma,\sigma)\longrightarrow(I\setminus O,T)$ is a topological conjugacy and continuous. In particular, it induces a homeomorphism $\pi_*: \M(\sigma, \Sigma) \to \M(T, I \setminus O)$.

\begin{lemma}
    $\pi:\Sigma \to I \setminus O$ is uniformly continuous.
\end{lemma}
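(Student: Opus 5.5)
We need to show that $\pi:\Sigma\to I\setminus O$ is uniformly continuous with respect to the symbolic metric $d$ on $\Sigma$ and the Euclidean metric on $I$. The plan is to reduce uniform continuity to a uniform bound on the lengths of cylinder intervals. If $x,y\in\Sigma$ satisfy $d(x,y)\leq 2^{-n}$, then $x_i=y_i$ for $0\leq i\leq n-1$. Both $\pi(x)$ and $\pi(y)$ then lie in the same $n$-th level cylinder interval
\[
I_{x_0\ldots x_{n-1}}=I_{x_0}\cap T^{-1}I_{x_1}\cap\cdots\cap T^{-(n-1)}I_{x_{n-1}}.
\]
Hence
\[
\left\lvert \pi(x)-\pi(y)\right\rvert\leq \sup\left\{\left\lvert I_w\right\rvert : w\in\cL_n\right\}=:\delta_n,
\]
and it suffices to prove $\delta_n\to0$ as $n\to\infty$.

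To bound $\delta_n$, use the expansion condition (2) together with the Markov/full-branch structure in (3). For $n\geq N$, the restriction $T^n|_{\Int I_w}$ is a $C^2$ diffeomorphism onto the interior of $I_{w_{n-1}}$, or onto a subinterval of $I=[0,1]$. By the mean value theorem there is $\zeta\in\Int I_w$ with
\[
1\geq \left\lvert T^n(\Int I_w)\right\rvert=\left\lvert (T^n)'(\zeta)\right\rvert\,\left\lvert I_w\right\rvert>\xi^n\left\lvert I_w\right\rvert.
\]
Therefore $\left\lvert I_w\right\rvert<\xi^{-n}$ uniformly in $w\in\cL_n$. This gives $\delta_n\leq \xi^{-n}\to0$.

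Given $\ve>0$, choose $n\geq N$ with $\xi^{-n}<\ve$. Then $d(x,y)<2^{-(n-1)}$, which forces $x$ and $y$ to agree on their first $n$ symbols, and hence $\left\lvert\pi(x)-\pi(y)\right\rvert<\ve$. This establishes uniform continuity.

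The main point to verify carefully is that $T^n$ is injective and differentiable on the whole interior of each $n$-cylinder interval, so that the mean value theorem applies with derivative bound $\xi^n$ everywhere. This follows from $T$ being $C^2$ and monotone on each $\Int I_i$ (it is expanding with $\left\lvert T'\right\rvert>0$, so it is injective on each branch), combined with the Markov property: $T$ maps $\Int I_w$ into $\Int I_{w_1\ldots w_{n-1}}$ and avoids the boundary set $E$. The Rényi condition is not needed here, since it is only relevant for distortion estimates. One also has to handle the case of an empty or degenerate cylinder, but there the bound is trivial. Points of $O$ are excluded, so each $\pi(x)$ lies in the relevant open cylinder and the estimate applies directly.
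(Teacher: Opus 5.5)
Your argument is correct and is essentially the paper's proof: uniform expansion plus the mean value theorem bounds every $n$-cylinder interval by $\xi^{-n}$ independently of the word, and the symbolic metric forces codes at distance less than $2^{-(n-1)}$ to share their first $n$ symbols. The only cosmetic difference is that you push the $n$-cylinder forward by $T^n$ into $[0,1]$, while the paper pushes the $k$-cylinder forward by $T^{k-1}$ onto $I_{\omega_{k-1}}$. (Note that it is $T^{n-1}$, not $T^n$, that maps $I_w$ onto $I_{w_{n-1}}$, but your fallback ``a subinterval of $[0,1]$'' is the correct statement and is all you use.)
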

\begin{proof}
    According to condition (2) of the EMR definition, there exists a positive integer $N$ and $\xi > 1$ such that $\left\lvert (T^n)'(x)\right\rvert > \xi^n$ for any $x \in I \setminus O$ and any $n \geq N$. Let $\omega = (\omega_0\omega_1\dots) \in \Sigma$. The point $x = \pi(\omega)$ is the unique point contained in the intersection of nested closed cylinders:
    $$x = \bigcap_{k=1}^{\infty} I_{\omega_0\omega_1\dots\omega_{k-1}}$$
    where $I_{\omega_0 \omega_1 \dots \omega_{k-1}} := \left\{x \in I : T^i(x) \in I_{\omega_i} \mathrm{\ for\ any\ } 0 \leq i \leq k - 1\right\} $. For any $k \geq N+1$, by the Mean Value Theorem,
    \[\diam\left(I_{\omega_0 \dots \omega_{k-1}}\right) < \diam I_{\omega_{k-1}} \cdot \xi^{-(k-1)} \leq \xi^{-(k-1)}\]
    For any $\ve > 0$, we can choose $k \ge N+1$ large enough such that $\xi^{-(k-1)} < \ve$. Now, let $\delta = 2^{-k}$. For any two points $\omega, \eta \in \Sigma$, if $d(\omega, \eta) < \delta$, then $\omega_i = \eta_i$ for $0 \leq i < k$. Consequently, both $\pi(\omega)$ and $\pi(\eta)$ belong to the same $k$-th order cylinder set $I_{\omega_0 \dots \omega_{k-1}}$. Therefore,
    $$\left\lvert \pi(\omega) - \pi(\eta)\right\rvert  \leq \diam\left(I_{\omega_0 \dots \omega_{k-1}}\right)  < \ve.$$
    Since $k$ (and thus $\delta$) depends only on $\ve$ and $\xi$, the map $\pi$ is uniformly continuous.
\end{proof}

\begin{theorem}
    Suppose $(I,f)$ is an EMR map. Let $\vphi \in C_{b,u}(I \setminus O)$ such that $\mathrm{Int} L_\vphi \ne \emptyset$. Then for any $\ve > 0$, any $a \in \mathrm{Int} L_\vphi$, any $\mu \in \M(f,I \setminus O)$ satisfying $\int\vphi\dif\mu = a$ and
    \[h_\mu(f) < \sup\left\{h_\omega(f):\omega \in \M(f,I \setminus O), \int\varphi\dif\omega = a\right\},\]
    and any $0 \leq h \leq h_{\mu}(f)$, there exists a compact invariant subset $\Gamma \subset I \setminus O$ such that
    \begin{enumerate}
        \item $\left(\Gamma,f|_{\Gamma}\right)$ is minimal and uniquely ergodic;
        \item $h_{\nu}(f) = \htop\left(f, \Gamma\right) = h$ where $\nu$ is the unique invariant measure on $\Gamma$;
        \item $\int \vphi \dif \nu = a$;
        \item $D(\nu,\mu) < \ve$.
    \end{enumerate}
\end{theorem}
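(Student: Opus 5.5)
We will obtain the EMR statement by transferring Theorem \ref{thmB} through the coding map $\pi$. Since $T$ is coded by a full shift on a countable alphabet, $(\Sigma,\sigma)$ is a non-trivial topologically transitive (indeed mixing) one-sided countable Markov shift. The conjugacy $\pi:\Sigma\to I\setminus O$ satisfies $\pi\circ\sigma=T\circ\pi$, and it is uniformly continuous by the preceding lemma.

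First we pull back the data. Put $\psi=\vphi\circ\pi$. Since $\vphi$ is bounded and uniformly continuous on $I\setminus O$ and $\pi$ is uniformly continuous, $\psi\in C_{b,u}(\Sigma)$. Because $\pi_*:\M(\sigma,\Sigma)\to\M(T,I\setminus O)$ is a bijection (a homeomorphism, as stated), we have $\int\psi\dif\lambda=\int\vphi\dif(\pi_*\lambda)$. Hence $L_\psi(\Sigma,\sigma)=L_\vphi(I\setminus O,T)$, and in particular $a\in\Int L_\psi$. Entropy is preserved by measurable conjugacy, so $h_{\pi_*\lambda}(T)=h_\lambda(\sigma)$. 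Therefore $\lambda_0:=\pi_*^{-1}\mu$ satisfies $\int\psi\dif\lambda_0=a$, and $h_{\lambda_0}(\sigma)$ lies strictly below the supremum of the entropies over the fiber $\{\int\psi=a\}$, which is the same supremum as the one for $T$.

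Next comes the choice of tolerance, which is the only real point to check: we need to transfer the $D$-closeness. Since $\pi_*$ is continuous at $\lambda_0$, there is $\ve'>0$ such that $D(\lambda,\lambda_0)<\ve'$ implies $D(\pi_*\lambda,\mu)<\ve$. Concretely, uniform continuity of $\pi$ also gives this through the Kantorovich--Rubinstein duality: for $1$-Lipschitz test functions $g$ on $I$, the composition $g\circ\pi$ is uniformly continuous. If one prefers not to use abstract continuity of $\pi_*$, a finite-subcover or approximation argument on the compact set of relevant test functions works. We then apply Theorem \ref{thmB} with $\ve'$, $\psi$, $a$, $\lambda_0$ and $h$. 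This yields a compact $\sigma$-invariant set $\Gamma'$ that is minimal and uniquely ergodic, with unique measure $\nu'$ satisfying $h_{\nu'}(\sigma)=\htop(\sigma,\Gamma')=h$, $\int\psi\dif\nu'=a$ and $D(\nu',\lambda_0)<\ve'$.

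Finally we push forward. Set $\Gamma=\pi(\Gamma')$. It is compact, being the continuous image of a compact set, and it is contained in $I\setminus O$. It is $T$-invariant, and $\pi|_{\Gamma'}:\Gamma'\to\Gamma$ is a continuous bijection from a compact space to a Hausdorff space, hence a homeomorphism conjugating $\sigma|_{\Gamma'}$ to $T|_\Gamma$. Minimality, unique ergodicity (via $\pi_*$), $\htop(T,\Gamma)=\htop(\sigma,\Gamma')=h$ and $h_\nu(T)=h$ for $\nu=\pi_*\nu'$ all transfer directly. We also get $\int\vphi\dif\nu=\int\psi\dif\nu'=a$ and $D(\nu,\mu)<\ve$ by the choice of $\ve'$. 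The main obstacle is mild: it is only the transfer of weak* closeness and the fact that the metric $D$ on $\M(I\setminus O)$ is taken with respect to the interval metric. This is handled by the continuity of $\pi_*$ as above.
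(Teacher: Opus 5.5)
Your proposal is correct and is essentially the paper's proof. You pull back to $\psi=\vphi\circ\pi\in C_{b,u}(\Sigma)$, choose the tolerance from the continuity of $\pi_*$ at $\pi_*^{-1}\mu$, apply Theorem~\ref{thmB}, and push the resulting compact minimal uniquely ergodic set forward under the conjugacy. You also make explicit several steps the paper leaves implicit: that $\pi|_{\Gamma'}$ is a homeomorphism onto $\pi(\Gamma')$, and that entropies, integrals and $L_\vphi$ are preserved.
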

\begin{proof}
    Fix $\ve > 0$, $a \in \mathrm{Int} L_\vphi$, $\mu \in \M(f,I \setminus O)$ and $0 \leq h \leq h_{\mu}(f)$ satisfying $\int\vphi\dif\mu = a$ and
    \[h_\mu(f) < \sup\left\{h_\omega(f):\omega \in \M(f,I \setminus O), \int\varphi\dif\omega = a\right\}.\]

    Let $\widetilde\mu=\pi_*^{-1}\mu$. By the continuity of $\pi_*$ at
    $\widetilde\mu$, there exists $\delta>0$ such that
    \[
        D_\Sigma(\widetilde\nu,\widetilde\mu)<\delta
        \quad\Longrightarrow\quad
        D_I(\pi_*\widetilde\nu,\mu)<\ve
    \]
    for any $\widetilde\nu\in\M(\sigma,\Sigma)$. Let $\psi = \vphi \circ \pi$. Then $\psi \in C_{b,u}(\Sigma)$ and $\mathrm{Int} L_{\psi} \ne \emptyset$. Since $(\Sigma,\sigma)$ is topologically conjugate to $(f,I \setminus O)$, the conclusion follows from Theorem \ref{thmB}.
\end{proof}

\part{Symbolic systems with non-uniform structure}
In this part, we study the subshifts with non-uniform structure, i.e., subshifts on a finite alphabet with language $\mathcal{L}$ such that $\mathcal{G} \subset \mathcal{L}$ has $(W)$-specification and $\mathcal{L}$ is edit approachable by $\mathcal{G}$. For convenience, system $(X, \sigma)$ is denoted by $X$ in this part.

\section{Preliminary}\label{sec:pre_symbolic}
\subsection{Symbolic systems}
For any finite alphabet $A$, the \emph{full symbolic space} is the set
$$A^{\Zb} = \left\{(\ldots x_{-1}x_0x_1\ldots) : x_i \in A\right\}$$
which is viewed as a compact topological space with the discrete product topology. The shift action on full symbolic space is defined by
\[ \sigma:A^{\Zb} \to A^{\Zb}, \quad (\ldots x_{-1}x_{0}x_1\ldots) \mapsto (\ldots x_0x_1x_2\ldots). \]
$\left(A^{\Zb}, \sigma\right)$ forms a dynamical system under the discrete product topology and is called a two-sided shift. We equip $A^{\Zb}$ with the following metric. Let $q = \left\lvert A\right\rvert $. For $x = \left(\ldots x_{-1}x_0x_1\ldots\right) $ and $y = \left(\ldots y_{-1}y_0y_1\ldots\right)$ with $x \ne y$, let
\[ d(x,y) =  q^{-\min\left\{\left\lvert i\right\rvert : i \in \Zb, x_i \ne y_i \right\} }.\]

A closed subset $X \subseteq A^{\Zb}$ is called \emph{two-sided subshift} if it is invariant under the shift action $\sigma$. Define $A^n = \left\{x_0\ldots x_{n-1}: x_i \in A\right\} $. Then $w \in A^n$ is a \emph{word} of the subshift $X$ if there is an $x \in X$ and $k \in \Zb$ such that $w = x_{k}x_{k+1}\ldots x_{k+n-1}$. We say $n$ is the \emph{length} of $w$. The \emph{language} of a subshift $X$, denoted by $\cL(X)$, is the set of all words of the subshift $X$. Denote $\cL_n(X) := \cL(X) \cap A^n$, all words of $X$ with length $n$.

Similarly, one can define one-sided shifts and one-sided subshifts. $A^{\Nb} = \left\{(x_0\ldots) : x_i \in A\right\} $ is called the \emph{one-sided full symbolic space}. The shift action on one-sided full symbolic space is defined by
\[ \sigma:A^{\Nb} \to A^{\Nb}, \quad (x_0x_1\ldots) \mapsto (x_1x_2\ldots). \]
$\left(A^{\Nb}, \sigma\right)$ forms a dynamical system under the discrete product topology and is called a one-sided shift. We equip $A^{\Nb}$ with the following metric. For $x = \left(x_0x_1\ldots\right) $ and $y = \left(y_0y_1\ldots\right)$ with $x \ne y$, let
\[ d(x,y) =  q^{-\min\left\{i \in \Nb: x_i \ne y_i \right\} }.\]
A closed subset $X \subseteq A^{\Nb}$ is called \emph{one-sided subshift} if it is invariant under the shift action $\sigma$. We endow the space $\M(X)$ with the same metric as defined in Section \ref{subsec:M(X)}. For $w=w_0\ldots w_{n-1}\in\mathcal L_n$, let
\[
    [w]:=\{x\in X:x_i=w_i\text{ for }0\leq i\leq n-1\}.
\]

For two-sided shifts, the following lemma shows the relationship of one system and its iteration from the aspects of expansiveness, transitivity and the shadowing property.

\begin{lemma}\label{lemma-f^n-f}\cite[Proposition 3.3]{dong2024abundance}
    Let $X$ be a  two-sided shift. Consider a compact set $\Delta \subseteq X$ which satisfies $\sigma^n(\Delta) = \Delta$ for some $n \in \Nb^+$ and let $\Lambda =\cup_{i=0}^{n-1}\sigma^i(\Delta)$. If $\sigma^i(\Delta) \cap \sigma^j(\Delta) = \emptyset$ for any $0 \leq i < j \leq n-1$, then the following statements hold.
    \begin{enumerate}
        \item If $(\Delta, \sigma^n)$ is expansive, then $(\Lambda, \sigma)$ is expansive.
        \item If $(\Delta, \sigma^n)$ is topologically transitive, then $(\Lambda, \sigma)$ is topologically transitive.
        \item If $(\Delta, \sigma^n)$ has the shadowing property, then $(\Lambda, \sigma)$ also has the shadowing property.
    \end{enumerate}
\end{lemma}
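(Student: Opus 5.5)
We prove the three items of Lemma \ref{lemma-f^n-f} separately. Each one transfers the property from $(\Delta,\sigma^n)$ to $(\Lambda,\sigma)$ using the disjoint decomposition $\Lambda=\bigsqcup_{i=0}^{n-1}\sigma^i(\Delta)$. The basic tool is a gap constant. Each $\sigma^i(\Delta)$ is compact, being a continuous image of a compact set, so
\[
\rho:=\min_{0\le i<j\le n-1}\operatorname{dist}\bigl(\sigma^i\Delta,\sigma^j\Delta\bigr)>0 .
\]
We also use uniform continuity of $\sigma,\sigma^{-1},\dots,\sigma^{\pm n}$ on the compact space $X$. Since $\sigma(\sigma^{n-1}\Delta)=\sigma^n\Delta=\Delta$, the shift cyclically permutes the pieces. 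Hence every $z\in\Lambda$ lies in a unique piece $\sigma^{i(z)}\Delta$ and satisfies $\sigma^{-i(z)}z\in\Delta$.

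\textbf{Expansivity.} Let $c$ be an expansive constant for $(\Delta,\sigma^n)$. Choose $c'<\rho$ such that $d(u,v)<c'$ implies $d(\sigma^{k}u,\sigma^{k}v)<c$ for all $|k|\le n$. Take $z\ne w$ in $\Lambda$ with $d(\sigma^k z,\sigma^k w)\le c'$ for all $k\in\Zb$. Because $c'<\rho$, the points $z$ and $w$ lie in the same piece, and so do all their iterates. Pull both back by $\sigma^{-i(z)}$ into $\Delta$; two-sidedness makes this legitimate. Their $\sigma^{n}$-orbits then stay $c$-close, so the pulled-back points coincide, and hence $z=w$.

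\textbf{Transitivity.} Let $U,V\subseteq\Lambda$ be nonempty open sets. Shrinking them, assume $U\subseteq\sigma^i\Delta$ and $V\subseteq\sigma^j\Delta$; this is possible because each piece is clopen in $\Lambda$ by the gap $\rho$. The sets $\sigma^{-i}U$ and $\sigma^{-j}V$ are nonempty and open in $\Delta$, since $\sigma$ is a homeomorphism of $X$ and $\sigma^i:\Delta\to\sigma^i\Delta$ is a homeomorphism. Transitivity of $\sigma^n$ on $\Delta$ gives $m$ with $\sigma^{nm}(\sigma^{-i}U)\cap\sigma^{-j}V\neq\emptyset$. Therefore $\sigma^{nm+j-i}U\cap V\ne\emptyset$, and adding a multiple of $n$ to $m$ makes the exponent positive.

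\textbf{Shadowing.} This is the main step. Given $\ve>0$, use uniform continuity to pick $\ve'$ such that $d(u,v)<\ve'$ implies $d(\sigma^k u,\sigma^k v)<\ve$ for $0\le k<n$. Let $\delta'$ be a shadowing constant for $(\Delta,\sigma^n)$ and accuracy $\ve'$. Then choose $\delta<\rho/2$ small enough that any $\delta$-chain of length $n$ in $X$ stays within $\delta'$ of the true orbit of its first point, that is, $d(\sigma^{n}x_0,x_n)<\delta'$; this follows from the standard telescoping estimate with uniform continuity.

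Now let $(x_k)$ be a $\delta$-pseudo-orbit in $\Lambda$. Since $\delta<\rho/2$, the condition $d(\sigma x_k,x_{k+1})<\delta$ forces $x_{k+1}$ to lie in the piece containing $\sigma x_k$. So the piece index advances by exactly one modulo $n$ at each step. After shifting indices by $n-i(x_0)$, we may assume $x_0\in\Delta$, and then $x_{kn}\in\Delta$ for all $k$. The subsequence $y_k:=x_{kn}$ is a $\delta'$-pseudo-orbit for $\sigma^n$ in $\Delta$, so it is $\ve'$-shadowed by some $y\in\Delta$.

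It remains to check the intermediate indices. For $0\le r<n$, we have $d(\sigma^{kn+r}y,\sigma^r x_{kn})<\ve$ by the choice of $\ve'$. We also have $d(\sigma^r x_{kn},x_{kn+r})$ small by the chain estimate, provided $\delta$ was chosen so this error is below $\ve$ as well. Together these give shadowing with constant $2\ve$. Finally, undo the initial index shift: set $z=\sigma^{-(n-i(x_0))}y$, which is again legitimate because the shift is invertible. A further uniform-continuity adjustment of the constants for these finitely many backward steps recovers shadowing from index $0$.
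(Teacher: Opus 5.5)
Your argument is correct. The paper gives no proof of this lemma; it imports it from \cite[Proposition 3.3]{dong2024abundance}. So your write-up is a self-contained substitute rather than a variant of an in-paper argument. It also shows that nothing symbolic is needed. You only use that $\sigma$ is a homeomorphism of a compact metric space, and that the gap $\rho>0$ does two things: it makes each $\sigma^i(\Delta)$ clopen in $\Lambda$, and it forces $\delta$-pseudo-orbits with $\delta<\rho$ to run through the pieces cyclically. In the symbolic setting, item (1) is even automatic, since every closed invariant subset of the expansive two-sided shift is expansive.

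A few details can be tightened.

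\begin{enumerate}
\item In (1), the uniform-continuity condition on $c'$ is unnecessary. The $\sigma^n$-orbits of the pulled-back points are sub-orbits of the $\sigma$-orbits of $z,w$, so $c'<\min\{\rho,c\}$ suffices.
\item In (2), the exponent $nm+j-i$ is already positive for $m\geq 1$, because $|j-i|\leq n-1$.
\item In (3), the backward adjustment must be made when $\ve'$ is chosen, before $\delta'$, since $\delta'$ depends on $\ve'$. For example, require that $d(u,v)<2\ve'$ implies $d(\sigma^k u,\sigma^k v)<\ve$ for all $|k|\leq n$. Then choose $\delta$ so that the chain errors are below $\min\{\ve',\delta'\}$.
\end{enumerate}

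Alternatively, you can avoid backward iterates entirely. If $x_0\in\sigma^i(\Delta)$, pick $w\in\Delta$ with $\sigma^i w=x_0$. The sequence $w,\sigma w,\dots,\sigma^{i-1}w,x_0,x_1,\dots$ is a $\delta$-pseudo-orbit starting in $\Delta$. If $y\in\Delta$ shadows it as in your argument, then $\sigma^i y\in\Lambda$ shadows $(x_k)_{k\geq 0}$.
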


\subsection{Topological entropy and metric entropy}
Let $X$ be a one-sided or two-sided subshift on a finite alphabet with language $\mathcal{L}$. Given a collection $\mathcal{D} \subset \mathcal{L}$, the topological entropy of $\mathcal{D}$ is
$$\htop(\sigma, \mathcal{D}):= \limsup_{n\rightarrow\infty}\frac{1}{n}\log\left\lvert \mathcal{D}_n\right\rvert.$$
The entropy of an invariant measure $\mu \in \mathcal{M}(\sigma, X)$ is
$$h_{\mu}(\sigma) := \lim_{n\rightarrow\infty}\frac{1}{n}\sum_{w\in\mathcal{L}_n}-\mu([w])\log\mu([w]).$$
For a fixed function $\varphi \in C(X)$, we define a function $\varphi_n : \mathcal{L}_n \rightarrow \mathbb{R}$  by $\varphi_n(w)= \sup_ {x\in [w]} S_n \varphi(x)$, where $S_n\varphi(x)=\varphi(x)+\varphi(\sigma x)+\cdots+ \varphi(\sigma^{n-1}x)$.  We consider the quantities $\Lambda_n(\mathcal{D},\varphi)= \sum_{ w\in \mathcal{D}_n} e^{\varphi_n(w)}$. The upper capacity pressure of $\varphi$ on $\mathcal{D}$ is given by $$P(\mathcal{D},\varphi)= \limsup_{ n\rightarrow \infty} \frac{1}{n} \log \Lambda_n(\mathcal{D},\varphi).$$
We write $P(\varphi)=P(\mathcal{L},\varphi)$.

For non-compact sets,
Bowen developed a satisfying definition via dimension language \cite{bowen1973topological}. Readers may refer to Definition \ref{def:htop}.

\subsection{Symbolic systems with non-uniform structure}
Climenhaga, Thompson and Yamamoto \cite{climenhaga2017large} introduced a class of subshifts, called \emph{systems with non-uniform structure}. The details of the definitions will be given in the following.

Let $X$ be a one-sided or two-sided subshift on a finite alphabet with language $\mathcal{L}$.

\begin{definition}[{{\cite[Definition 2.5]{climenhaga2017large}}}]
    Define an \emph{edit} of a word $w \in \cL$ to be a transformation of $w$ by one of the following actions, where $a, a' \in A$ are arbitrary letters and $u_1,u_2$ are possibly empty words:
    \begin{enumerate}
        \item Substitution:
              $w=u_1au_2\mapsto w'=u_1a'u_2$.
        \item Insertion:
              $w=u_1u_2\mapsto w'=u_1a'u_2$.
        \item Deletion:
              $w=u_1au_2\mapsto w'=u_1u_2$,
    \end{enumerate}
    Given $v, w \in \mathcal{L}$, define the \emph{edit distance} between $v$ and $w$ to be the minimum number of edits required to transform the word $v$ into the word $w$, which is denoted by $\widehat{d}(v, w)$.
\end{definition}

\begin{definition}[{{\cite[Definition 2.7]{climenhaga2017large}}}]\label{def:edit_approachable}
    A non-decreasing function $g: \mathbb{N}^+ \rightarrow \mathbb{N}^+$ is called a \emph{mistake function} if $\lim_{n \rightarrow \infty} g(n)/n = 0$. For any $\mathcal{G} \subset \mathcal{L}$, we say that $\mathcal{L}$ is \emph{edit approachable} by $\mathcal{G}$ if there exists a mistake function $g$ such that for any $w \in \mathcal{L}$, there exists $v \in \mathcal{G}$ with $\widehat{d}(w, v) \leq g(|w|)$.
\end{definition}

The following lemmas describe the size of the ball in the metric of edit distance.

\begin{lemma}[{{\cite[Lemma 2.6]{climenhaga2017large}}}]\label{Lem-the-size-of-balls}
    There is $C > 0$ such that for any $n \in \Nb^+$, any $w \in \mathcal{L}_n$ and any $0 < \delta \leq 1$, we have
    $$ \left\lvert \left\{v \in \mathcal{L} : \widehat{d}(v, w) \leq \delta n\right\}\right\rvert   \leq Cn^C(e^{C\delta}e^{-\delta \log \delta})^n.$$
\end{lemma}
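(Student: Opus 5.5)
The statement to prove is Lemma \ref{Lem-the-size-of-balls}: the edit-distance ball of radius $\delta n$ around a word of length $n$ has cardinality at most $Cn^C(e^{C\delta}e^{-\delta\log\delta})^n$. The plan is a direct counting argument over edit scripts, independent of the dynamics. Any $v$ with $\widehat{d}(v,w)\leq k$, where $k=\lfloor\delta n\rfloor$, is obtained from $w$ by a sequence of at most $k$ edits. So $|\{v:\widehat d(v,w)\le k\}|$ is bounded by the number of such edit sequences. The language constraint $v\in\mathcal L$ only helps, since we may count all words over $A$.

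\textbf{Step 1: normal form for edits.} To avoid overcounting orderings, I will encode each $v$ by an alignment of $w$ with $v$ rather than by an ordered sequence of edits. Concretely, an optimal transformation can be represented by
\begin{itemize}
\item[(i)] a set of at most $k$ positions of $w$ that are deleted or substituted;
\item[(ii)] a set of at most $k$ insertion slots, chosen with multiplicity among the $n+1$ gaps (or $n+k+1$ gaps of the final word);
\item[(iii)] a choice of at most $k$ letters from $A$ for the substitutions and insertions, together with a type label (substitute or delete) for each position in (i).
\end{itemize}
With $q=|A|$, this gives the bound
\[
\sum_{j\leq k}\binom{n}{j}2^j\cdot\sum_{i\leq k}\binom{n+k+i}{i}\cdot q^{2k}.
\]

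\textbf{Step 2: estimating the binomials.} Use $\sum_{j\le k}\binom{N}{j}\le (k+1)\binom{N}{k}$ for $k\le N/2$, and otherwise bound trivially. Then use $\binom{N}{k}\le (eN/k)^k$. With $N\le 3n$ and $k\le \delta n$, this gives $(3e/\delta)^{\delta n}=e^{\delta n(\log(3e))}e^{-\delta n\log\delta}$. The factors $2^k$ and $q^{2k}$ are $e^{O(\delta n)}$. The factors $(k+1)$ are polynomial in $n$.

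\textbf{Step 3: assembling.} Collecting all factors gives
\[
(n+1)^2\bigl(e^{C_0\delta}e^{-\delta\log\delta}\bigr)^{2n},
\]
with $C_0$ depending only on $q$. The square in the exponent comes from counting deletions and insertions separately.

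\textbf{Main obstacle.} The one point needing care is that the deletion count and the insertion count must together total at most $k$, not each at most $k$. Otherwise the exponent of $e^{-\delta\log\delta}$ doubles. I would handle this by choosing a single subset of size $k$ from a ground set of size $2n+k+1$ that encodes both the positions and the gaps. This yields one binomial $\binom{2n+k+1}{k}$ and hence the exact form $e^{-\delta n\log\delta}$. Enlarging $C$ absorbs the constants and the polynomial factors $(k+1)\le n+1$, and covers the edge case $\delta n<1$, where the ball has size at most $1+(2n+1)q$.
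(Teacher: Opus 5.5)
Your argument is correct and is the standard one. The paper does not reprove this lemma; it quotes it from Climenhaga--Thompson--Yamamoto, where it is obtained by the same direct count of the positions, types and letters of at most $\delta n$ edits, followed by a binomial (Stirling-type) estimate.

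Only your ``main obstacle'' version gives the stated exponent; the Step 3 bound with exponent $2n$ does not. The single-binomial claim is confirmed by Vandermonde's identity and the hockey-stick identity:
\[
\sum_{i+j=t}\binom{n}{j}\binom{n+i}{i}\le\binom{2n+t}{t},
\qquad
\sum_{t\le k}\binom{2n+t}{t}=\binom{2n+k+1}{k}.
\]
After this, the monotonicity of $k\mapsto k\log(4en/k)$ for $k\le n$ yields the bound $(4e/\delta)^{\delta n}$.
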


Since \(X\) is compact, the metric used in
\cite[Lemma 2.2]{zhao2018topological} and the Wasserstein metric
\(D\) are uniformly equivalent on \(\M(X)\); hence the stated
form follows.

\begin{lemma}[{{\cite[Lemma 2.2]{zhao2018topological}}}]\label{Lem-closedistance}
    For any mistake function $g$, there is a sequence of positive numbers $\left\{\delta_n\right\}_{n=1}^{\infty}$ with $\lim_{n\to\infty} \delta_n = 0$ such that for any $x, y \in X$ and any $m, n \in \Nb^+$ with $\widehat{d}(x_0\dots x_{n-1}, y_0 \dots y_{m-1}) \leq g(n)$, one has
    $$D\left(\cE_n(x), \cE_m(y)\right) \leq \delta_n.$$
\end{lemma}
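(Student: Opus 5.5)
The plan is to reduce the statement to the case of a single word pair and then pass from edit distance to empirical measures through an explicit coupling. Write $v=x_0\dots x_{n-1}$ and $w=y_0\dots y_{m-1}$ with $\widehat d(v,w)\le g(n)$. An optimal edit script of length $k\le g(n)$ gives an alignment. It is a monotone partial matching between positions of $v$ and of $w$ in which all but at most $k$ positions of $v$, and all but at most $k$ positions of $w$, are matched, and matched positions carry equal letters. The set of matched pairs then splits into at most $k+1$ maximal \emph{runs}. On each run, consecutive indices $i,i+1,\dots$ of $v$ are matched to consecutive indices $j,j+1,\dots$ of $w$ with identical letters. Note also that $|n-m|\le k$.

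First I will compare the two orbit segments on a run. If a run has length $\ell$ and position $i+t$ in $v$ is matched with $j+t$ in $w$, then for $0\le t<\ell$ the points $\sigma^{i+t}x$ and $\sigma^{j+t}y$ agree on at least the coordinates $0,\dots,\ell-t-1$ (one-sided case). So $d(\sigma^{i+t}x,\sigma^{j+t}y)\le q^{-(\ell-t)}$. In the two-sided case I will only use the positive coordinates, at the price of also requiring agreement on $t$ coordinates to the left. This gives $d\le q^{-\min(t,\ell-t)}$, which is handled the same way. Summing over the run gives a total displacement of at most $2\sum_{s\ge1}q^{-s}\le 2$ per run, i.e.\ $O(1)$ per run. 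The subtle point here is that the letters of $x$ beyond index $n-1$ (and those before $0$) are not controlled. This is why the estimate must be made run by run, so that only the run's own length is used for the agreement.

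Next I will build a coupling. Pair $\delta_{\sigma^{i}x}$ with $\delta_{\sigma^{j}y}$ along matched pairs. Assign the at most $2k$ unmatched positions, together with the mass discrepancy coming from the normalizations $1/n$ versus $1/m$, arbitrarily, at cost at most $\diam\le1$ per unit of mass. By the convexity of $D$ and the bound $D(\delta_a,\delta_b)=d(a,b)$, this yields
\[D\bigl(\cE_n(x),\cE_m(y)\bigr)\le \frac{C_0(k+1)}{n}+\frac{2k}{n}+\frac{|n-m|}{n}\le \frac{C_1\,(g(n)+1)}{n},\]
with $C_0,C_1$ depending only on $q$. Since the statement fixes $D$ as the Wasserstein metric, I will work with it directly rather than through \cite{zhao2018topological}. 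So I set $\delta_n:=C_1(g(n)+1)/n$, which tends to $0$ because $g$ is a mistake function.

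The main obstacle is bookkeeping rather than a deep argument: defining the run decomposition carefully so that each matched pair lies in a single run, and checking that the normalization mismatch between $1/n$ and $1/m$ costs only $O(|n-m|/n)$. I would first handle the normalizations by comparing $\cE_m(y)$ with the measure obtained by reweighting by $1/n$. After that the estimate follows from Lemma \ref{lemMeasuresDistance}, applied to the matched subsequences.
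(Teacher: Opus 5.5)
Your argument is correct, and it takes a genuinely different route from the paper. The paper gives no proof of its own. It imports \cite[Lemma 2.2]{zhao2018topological}, which is stated for a different metric on $\M(X)$, and observes that on the compact space $\M(X)$ that metric and the Wasserstein metric $D$ are uniformly equivalent. Composing Zhao's $\delta_n$ with a modulus of uniform continuity then gives the stated form.

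You instead prove the estimate directly for $D$:
\begin{itemize}
\item an optimal alignment gives a monotone matching with at most $k\le g(n)$ unmatched positions on each side, split into at most $k+1$ runs;
\item the run-by-run bound $\sum_t d(\sigma^{i+t}x,\sigma^{j+t}y)=O(1)$ controls exactly the coordinates outside the window that the edit distance does not see (beyond $n-1$, and before $0$ in the two-sided case);
\item a coupling finishes the argument.
\end{itemize}
Your route gives an explicit rate $\delta_n=O\bigl((g(n)+1)/n\bigr)$ and needs neither the external reference nor compactness, since it works for arbitrary sequences once $\diam X\le 1$. The paper's route is shorter but only qualitative.

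Two small points of bookkeeping:
\begin{itemize}
\item The ``reweighted'' measure $\frac1n\sum_{j<m}\delta_{\sigma^j y}$ is not a probability measure. It is cleaner to do one of the following:
\begin{itemize}
\item compare both empirical measures with the uniform averages over the matched index set via Lemma \ref{lemMeasuresDistance}, as you indicate at the end;
\item couple directly, sending mass $1/\max\{n,m\}$ along each matched pair and coupling the leftover mass, which is at most $k/\max\{n,m\}$, arbitrarily at cost at most $1$ per unit.
\end{itemize}
\item For the finitely many $n$ with $g(n)\geq n/2$, the matched set may be empty and Lemma \ref{lemMeasuresDistance} does not apply. There you should use the trivial bound $D\le 1$, which your formula for $\delta_n$ already covers once $C_1\geq 2$.
\end{itemize}
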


\subsection{Specification property for symbolic systems}
In this subsection, we recall the specification property for symbolic systems and some related lemmas.

Let $X$ be a one-sided or two-sided subshift on a finite alphabet with language $\mathcal{L}$. Given words $u, v$, we use juxtaposition $u v$ to denote the word obtained by concatenation. Let $e$ denote the empty word. In the following definition, the gluing word is allowed to be $e$.

\begin{definition}[{{\cite[Definition 2.2]{climenhaga2017large}}}]
    Consider a subset $\mathcal{G} \subset \mathcal{L}$. We say $\mathcal{G} $ has \emph{(W)-specification} with gap length $\tau \in \mathbb{N}$ if for every $u, v \in \mathcal{G}$, there exists $w \in \mathcal{L} \cup \left\{e\right\} $ such that $u w v \in \mathcal{G}$ and $|w| \leq \tau$. If the word $w$ can always be taken to have length exactly $\tau$, we say that $\mathcal{G}$ has \emph{(S)-specification}.
\end{definition}

An important special case, which corresponds to specification with gap length $0$, is the following.

\begin{definition}[{{\cite[Definition 2.3]{climenhaga2017large}}}]
    We say the collection $\mathcal{G} \subset \mathcal{L}$ has the \emph{free concatenation property} if for all $u, w \in \mathcal{G}$, we have $uw \in \mathcal{G}$.
\end{definition}

By applying \cite[Proposition 4.2, Lemma 4.3]{climenhaga2017large}, we get the following lemma which is the key point to construct horseshoes.

\begin{lemma}\label{Lem-FLapproachable}
    Let $X$ be a one-sided or two-sided subshift on a finite alphabet with language $\mathcal{L}$ and $\mathcal{G} \subset \mathcal{L}$. If $\mathcal{G}$ has $(W)$-specification and $\mathcal{L}$ is edit approachable by $\mathcal{G}$, then there exists $\mathcal{F} \subset \mathcal{L}$ such that $\mathcal{F}$ has free concatenation property and $\mathcal{L}$ is edit approachable by $\mathcal{F}$.
\end{lemma}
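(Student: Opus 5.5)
The lemma is essentially a repackaging of \cite[Proposition 4.2, Lemma 4.3]{climenhaga2017large}. I would prove it by building an explicit collection $\mathcal{F}$ that is closed under concatenation, and then showing that every word of $\mathcal{L}$ is a small edit away from a word of $\mathcal{F}$. The guiding observation is formal. For any collection $\mathcal{H}\subset\mathcal{L}$, the collection
\[
\mathcal{F}_{\mathcal{H}}:=\left\{w\in\mathcal{L}: wh\in\mathcal{H}\text{ for every }h\in\mathcal{H}\right\}
\]
has the free concatenation property. Indeed, if $w_1,w_2\in\mathcal{F}_{\mathcal{H}}$ and $h\in\mathcal{H}$, then $w_2h\in\mathcal{H}$, hence $w_1w_2h\in\mathcal{H}\subset\mathcal{L}$. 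In particular $w_1w_2\in\mathcal{L}$, since $\mathcal{L}$ is factor-closed. So the whole task is to choose $\mathcal{H}$ so that $\mathcal{F}_{\mathcal{H}}$ is large enough to be edit-approaching.

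\textbf{Step 1 (from (W) to a uniform gap).} The first step passes from (W)-specification to a version with a gluing rule that does not depend on the right-hand word. Since gap lengths lie in $\{0,\dots,\tau\}$ and $A$ is finite, there are only finitely many possible gluing words. Fix a word $c\in\mathcal{G}$ and restrict to $\mathcal{H}=\{h\in\mathcal{G}: h\text{ begins with }c\}$. For $g\in\mathcal{G}$, (W)-specification gives $t_g$ with $|t_g|\le\tau$ and $g t_g c\in\mathcal{G}$. Following \cite[Proposition 4.2]{climenhaga2017large}, I would replace $\mathcal{G}$ by a subcollection obtained by a pigeonhole selection. The goal is a subcollection in which the gluing word in front of $c$ can be taken to work simultaneously for all continuations in $\mathcal{H}$. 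This new collection should still satisfy a specification condition, and $\mathcal{L}$ should still be edit approachable by it, because the selection only prepends or appends at most $|c|+\tau$ symbols. That changes edit distance by a bounded amount, which can be absorbed into a new mistake function $g'(n)=g(n)+|c|+\tau$.

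\textbf{Step 2 (define $\mathcal{F}$ and check approachability).} With the synchronizing collection in hand, I take $\mathcal{F}$ as in \cite[Lemma 4.3]{climenhaga2017large}. Its elements are words of the form $g\,t$ with $g$ in the (modified) good collection and $t$ the uniform gluing word leading into $c$. I would check free concatenation via the formal observation above. For edit approachability, given $w\in\mathcal{L}$ I first find $v\in\mathcal{G}$ with $\widehat{d}(w,v)\le g(|w|)$. I then modify $v$ by at most $|c|+2\tau$ edits to land in $\mathcal{F}$. The resulting function $n\mapsto g(n)+|c|+2\tau$ is non-decreasing with sublinear growth, so it is again a mistake function.

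\textbf{Main obstacle.} The main difficulty is Step 1. (W)-specification supplies a gluing word that depends on both words being glued, whereas membership in $\mathcal{F}$ demands a uniform prolongation property against all future words. The plan to overcome this is to anchor every word at the fixed synchronizing word $c$, so that the right-hand partner is effectively always $c$, and then rely on the finiteness of the set of possible gluing words. If a direct pigeonhole argument does not give uniformity, I would instead invoke \cite[Proposition 4.2]{climenhaga2017large} verbatim for this step. Compactness of the alphabet makes all constants uniform, so the rest of the argument is routine bookkeeping of edit distances.
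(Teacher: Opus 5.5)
The paper gives no argument of its own for this lemma; it is quoted directly from \cite[Proposition~4.2, Lemma~4.3]{climenhaga2017large}. Your fallback (``invoke Proposition~4.2 verbatim'') is therefore exactly the paper's proof. Your formal observation is also correct: for $\mathcal{H}\neq\emptyset$, $\mathcal{F}_{\mathcal{H}}$ has the free concatenation property.

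The self-contained route in Steps 1--2, however, has a genuine gap exactly where you suspect one. You need, for each $g$, a bounded connector $t$ with $gth\in\mathcal{H}$ for \emph{every} $h\in\mathcal{H}$, i.e.\ $gt\in\mathcal{F}_{\mathcal{H}}$.
Anchoring at $c$ does not reduce this to the single partner $c$. Knowing $gtc\in\mathcal{G}$ says nothing about $gth$ for longer $h$ beginning with $c$, because membership in $\mathcal{G}$ is not determined by a prefix.
A pigeonhole over the finitely many connectors of length at most $\tau$ gives, for each $g$, a connector that works for a large set of right partners, not for all of them. Discarding the bad partners changes $\mathcal{H}$, and hence the requirement, and this must be done for infinitely many $g$ simultaneously.
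Your reason why edit approachability survives (``only prepends or appends at most $|c|+\tau$ symbols'') covers the anchoring, not the discarding. Moreover, Step 2 approximates $w$ by an element of the original $\mathcal{G}$, which is inconsistent with having passed to a subcollection in Step 1.

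The mechanism can in fact fail under the hypotheses of the lemma. Take the full shift on $\{0,1\}$. Let $N_1(x)$ be the number of $1$'s in $x$, put $\mathrm{ex}(x)=N_1(x)-\lfloor |x|/2\rfloor$, and set $\mathcal{G}=\{x:\mathrm{ex}(x)\neq0\}$.
\begin{itemize}
\item Since $\mathrm{ex}(u1v)=\mathrm{ex}(u0v)+1$, one of $u0v,u1v$ lies in $\mathcal{G}$, so $\mathcal{G}$ has (S)-specification with $\tau=1$.
\item Changing one $0$ into a $1$ shows $\widehat{d}(w,\mathcal{G})\le1$ for every word $w$.
\item One checks that $\mathrm{ex}(wh)=\mathrm{ex}(w)+\mathrm{ex}(h)-\epsilon$, where $\epsilon=1$ if $|w|$ and $|h|$ are both odd and $\epsilon=0$ otherwise.
\item Words $cs$ with $s$ long realize every nonzero value of $\mathrm{ex}$ with either parity of length, and all of them lie in $\mathcal{H}$.
\end{itemize}
Hence, for every anchor $c$, each $w\in\mathcal{F}_{\mathcal{H}}$ has even length and exactly $|w|/2$ ones. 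Each edit changes $N_1-N_0$ by at most $2$, so $\widehat{d}(0^n,\mathcal{F}_{\mathcal{H}})\ge n/2$. Thus no $\mathcal{F}\subseteq\mathcal{F}_{\mathcal{H}}$ edit-approaches $\mathcal{L}$, even though the lemma holds trivially here with $\mathcal{F}=\mathcal{L}$.

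An argument along your lines must therefore replace $\mathcal{G}$ by a genuinely different collection before forming $\mathcal{F}_{\mathcal{H}}$. In this example, taking $\mathcal{H}$ to be the even-length words with odd excess works, since then $\mathcal{F}_{\mathcal{H}}$ contains all even-length words with even excess. Pigeonholing connectors gives no mechanism for finding such a collection in general. As it stands, the only complete part of your proposal is the fallback citation, which is how the paper proves the lemma.
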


A natural question is whether the symbolic systems with non-uniform structure are entropy-dense. The following result proved by Climenhaga, Thompson and Yamamoto shows the entropy-dense property of those systems. It is analogous to the Katok's approximation theorem in smooth dynamical systems, see \cite[Theorem 3.3]{avila2021c1}.

\begin{theorem}[{{\cite[Proposition 3.6, Remark 4.5]{climenhaga2017large}}}]\label{Thm-horseshoe}
    Let $X$ be a one-sided or two-sided subshift on a finite alphabet with language $\mathcal{L}$. Suppose that $\mathcal{G} \subset \mathcal{L}$ has the $(W)$-specification property and $\mathcal{L}$ is edit approachable by $\mathcal{G}$. Then there exists an increasing sequence $\{X_l\}_{l\in\mathbb{N}^{+}}$ of compact $\sigma$-invariant subsets of $X$ with the following properties.
    \begin{enumerate}
        \item \label{item:thm 8.10(1)} Each $X_l$ is a topologically transitive sofic shift and has $(W)$-specification.
        \item \label{item:thm 8.10(2)} There is $T \in \Nb$ such that for every $n$ and every $w \in \cL(X_n)$, there are $u,v \in \cL$ with $\left\lvert u\right\rvert, \left\lvert v\right\rvert \le n + T $ such that $uwv \in \mathcal{G}$.
        \item \label{item:thm 8.10(3)} Every invariant measure on $X$ is entropy approachable by ergodic measures on $\{X_l\}_{l\in\mathbb{N}^{+}}$, i.e., for any $\eta> 0$, any $\mu \in \M(\sigma,X)$, and any neighborhood $U$ of $\mu \in \M(\sigma,X)$, there exist $l \geq 1$ and $\nu \in \Merg(\sigma, X_l)\cap U$ such that $h_{\nu}(\sigma) >h_{\mu}(\sigma)-\eta$.
    \end{enumerate}

    If further every word in $\cL$ can be extended to a word in $\mathcal{G}$, then $X=\overline{\cup_{l\in \Nb^+}X_l}$.
\end{theorem}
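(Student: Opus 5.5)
The plan is to reduce everything to a free-concatenation collection and build the $X_l$ as "renewal systems" generated by finitely many of its words. First, apply Lemma \ref{Lem-FLapproachable} to obtain $\mathcal{F}\subset\cL$ with the free concatenation property such that $\cL$ is edit approachable by $\mathcal{F}$ with some mistake function $g$. I would track the construction in \cite[Proposition 4.2, Lemma 4.3]{climenhaga2017large}. There each word of $\mathcal{F}$ is produced from words of $\mathcal{G}$ by gluing with gap words of length at most $\tau$. For $l\in\Nb^+$, let $\mathcal{F}_l=\{w\in\mathcal{F}:|w|\leq l\}$. Let $X_l$ be the smallest subshift containing all (one- or two-sided) infinite concatenations of words of $\mathcal{F}_l$.

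For item (1), observe that each $X_l$ is the orbit closure of a finitely generated concatenation system, so it is sofic: it is presented by a finite labelled graph consisting of one loop per word of $\mathcal{F}_l$ through a common vertex. That graph is irreducible, so $X_l$ is topologically transitive. Free concatenation of $\mathcal{F}_l$ gives $(W)$-specification for the collection of words in $\cL(X_l)$ that are concatenations of $\mathcal{F}_l$-words, with gap $0$. For an arbitrary subword of $X_l$, I would complete the two partial blocks at its ends to full $\mathcal{F}_l$-words, which costs at most $2l$ extra symbols. The sequence $X_l$ is increasing because $\mathcal{F}_l\subseteq\mathcal{F}_{l+1}$.

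For item (2), take $w\in\cL(X_n)$. Then $w$ sits inside a concatenation of $\mathcal{F}_n$-words, and completing the partial first and last blocks adds at most $n$ symbols on each side and yields a word of $\mathcal{F}$. Each word of $\mathcal{F}$ is itself a $\mathcal{G}$-word up to the boundary gluing in the Climenhaga–Thompson–Yamamoto construction. I would then apply $(W)$-specification of $\mathcal{G}$ once more on each side to land in $\mathcal{G}$. This gives $|u|,|v|\leq n+T$ with $T$ depending only on $\tau$.

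For item (3), fix $\mu$, a neighbourhood $U$ and $\eta>0$. By ergodic decomposition and affinity of entropy, pick ergodic $\nu_1,\dots,\nu_r$ with rational weights whose barycenter lies in $U$ and whose average entropy exceeds $h_\mu(\sigma)-\eta/4$. For each $\nu_i$ and large $n$, the words of length $n$ whose empirical measures lie near $\nu_i$ number at least $e^{n(h_{\nu_i}-\eta/4)}$, by Shannon–McMillan–Breiman or Katok-type counting. Edit-approximate each such word by a word of $\mathcal{F}$ within $g(n)$ edits. By Lemma \ref{Lem-the-size-of-balls} with $\delta=g(n)/n\to0$, each $\mathcal{F}$-word is hit by at most $e^{o(n)}$ originals, so we keep $e^{n(h_{\nu_i}-\eta/2)}$ distinct $\mathcal{F}$-words. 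By Lemma \ref{Lem-closedistance} their empirical measures stay close to $\nu_i$.

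The approximating words have lengths in $[n-g(n),n+g(n)]$, and there are only $O(g(n))$ possible lengths. Pigeonholing on length therefore loses only a subexponential factor and makes all chosen words for $\nu_i$ have a common length $n_i$. Concatenating them freely in blocks, with repetition counts proportional to the weights and lengths, gives a full shift on a finite alphabet embedded (via a $\sigma^N$-conjugacy) into $X_l$ for $l\geq\max n_i$. Its measure of maximal entropy, pushed forward by the averaging map $\tau$ of Lemma \ref{lemTau}, is an ergodic $\nu\in\Merg(\sigma,X_l)\cap U$ with $h_\nu(\sigma)>h_\mu(\sigma)-\eta$.

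For the last assertion, assume every word of $\cL$ extends to a $\mathcal{G}$-word. Then each word of $\cL$ occurs inside an $\mathcal{F}$-word, hence in $\cL(X_l)$ for large $l$, and therefore $X=\overline{\bigcup_l X_l}$. I expect the main obstacle to be the bookkeeping in item (2) and in the final assertion. Specifically, one must check that the completion of partial $\mathcal{F}$-blocks, and the passage from $\mathcal{F}$ back to $\mathcal{G}$, costs at most $n+T$ symbols. This forces reading off the explicit form of $\mathcal{F}$ from \cite{climenhaga2017large} rather than using Lemma \ref{Lem-FLapproachable} as a black box. In item (3), the only delicate point is the length mismatch from edits, handled by the pigeonhole argument above.
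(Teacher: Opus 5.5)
The paper gives no proof of this statement; it imports it from \cite[Proposition 3.6, Remark 4.5]{climenhaga2017large}. Measured against that, your items (1) and (3) are fine. The renewal systems generated by the finite sets $\mathcal{F}_l$ are increasing, transitive sofic shifts whose languages have $(W)$-specification with gap $2l$. Your argument for (3) is a one-measure version of the construction in the proof of Theorem \ref{prop-nonuniform-multi-horseshoe}. It is also non-circular: you use the ergodic decomposition and Lemma \ref{Lemma-number-approximate}, not item (3) itself, which that proof invokes.

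The gap is in item (2) and in the final assertion. Lemma \ref{Lem-FLapproachable} only gives free concatenation of $\mathcal{F}$ and edit approachability of $\mathcal{L}$ by $\mathcal{F}$; it records no relation between $\mathcal{F}$ and $\mathcal{G}$. After completing partial blocks you have $u'wv'\in\mathcal{F}$ with $|u'|,|v'|\le n-1$. To finish, you need a uniform $T$ such that every $f\in\mathcal{F}$ satisfies $afb\in\mathcal{G}$ for some $|a|,|b|\le T$. This must hold for $f$ of arbitrary length, since arbitrarily long concatenations of $\mathcal{F}_n$-words occur.

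Your patch, ``apply $(W)$-specification of $\mathcal{G}$ once more on each side'', cannot give this. $(W)$-specification takes two words already in $\mathcal{G}$ and asserts that \emph{some} short gluing word exists. It says nothing about a word $f\notin\mathcal{G}$. In particular, it cannot certify that a word with prescribed gap blocks (say $f=agb$ with $g\in\mathcal{G}$ and $a,b$ fixed) extends into $\mathcal{G}$. And if $f\in\mathcal{G}$, there is nothing to apply.

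Likewise, the final assertion needs every $\mathcal{G}$-word to occur as a subword of some $\mathcal{F}$-word. Edit approachability gives only proximity in edit distance, not containment. Both properties have to be extracted from the explicit construction behind \cite[Proposition 4.2, Lemma 4.3]{climenhaga2017large}, or you must redefine $X_l$ so that they hold. You correctly sense that the explicit form of $\mathcal{F}$ is needed, but as written, item (2) and the closure statement are unproved, not merely a matter of bookkeeping.
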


\subsection{Some useful facts and lemmas}
In this subsection, we recall some useful facts and lemmas. Let $X$ be a one-sided or two-sided subshift on a finite alphabet with language $\mathcal{L}$.

Given $\mu \in  \mathcal{M}(\sigma, X)$ and $\ve> 0$, let
\begin{equation}\label{def:L}
    \mathcal{L}_n^{\mu,\ve}:=\{w \in \mathcal{L}_n : D(\cE_n(x), \mu) < \ve \textit{ for any } x \in [w] \}.
\end{equation}
By \cite[Proposition 2.1, Proposition 4.1]{pfister2005large}, we have the following lemma. It shows the relation between entropy and the cardinality of $\mathcal{L}_n^{\mu,\ve}$.

\begin{lemma}\label{Lemma-number-approximate}
    For any ergodic measure $\mu \in \Merg(\sigma, X)$, any $\ve > 0$ and any $\delta > 0$, there exists $N(\mu, \ve, \delta) \in \Nb^+$ such that for any $n \geq N(\mu, \ve, \delta)$, we have
    $\left\lvert \mathcal{L}_n^{\mu,\ve}\right\rvert \geq e^{n\left(h_{\mu}(\sigma)-\delta\right)}.$
\end{lemma}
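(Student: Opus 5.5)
The plan is a direct measure-theoretic count combining the Birkhoff ergodic theorem with the Shannon--McMillan--Breiman theorem. A preliminary step handles the fact that $\mathcal{L}_n^{\mu,\ve}$ asks for \emph{all} points of $[w]$ to have empirical measure close to $\mu$, not just one.

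\textbf{Step 1 (uniformity inside a cylinder).} I first show that $\sup\{D(\cE_n(x),\cE_n(y)) : x,y\in[w],\ w\in\mathcal{L}_n\}\to 0$ as $n\to\infty$.

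Fix $k\geq 1$ and let $x,y\in[w]$. For $k\leq i\leq n-1-k$, the points $\sigma^i x$ and $\sigma^i y$ agree on the coordinates $-i,\dots,n-1-i$. In the two-sided case this gives $d(\sigma^ix,\sigma^iy)\leq q^{-k}$. In the one-sided case only the coordinates $0,\dots,n-1-i$ matter, and the same bound holds for $i\leq n-1-k$.

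Now apply Lemma \ref{lemMeasuresDistance} with $J=\{k,\dots,n-1-k\}$, so that $(n-|J|)/n\leq 2k/n$. This yields
\[
D\bigl(\cE_n(x),\cE_n(y)\bigr)\leq q^{-k}+2k/n .
\]
Choosing $k$ with $q^{-k}<\ve/4$ and then $n$ large with $2k/n<\ve/4$ gives $D(\cE_n(x),\cE_n(y))<\ve/2$ uniformly in $w$. I expect this to be the only delicate point, since it is exactly where the ``for any $x\in[w]$'' in \eqref{def:L} is used.

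\textbf{Step 2 (typical sets).} Let $G_n=\{x\in X: D(\cE_n(x),\mu)<\ve/2\}$. Since $\mu$ is ergodic and $D$ metrizes the weak* topology on the compact space $\M(X)$, Birkhoff's theorem applied to a countable dense family in $C(X)$ gives $\mu(G_n)\to 1$.

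Let $B_n=\{x: \mu([x_0\dots x_{n-1}])\leq e^{-n(h_\mu(\sigma)-\delta/2)}\}$. The partition into first-coordinate cylinders is a generator, and its refinements $\bigvee_{i=0}^{n-1}\sigma^{-i}$ are exactly the cylinders on coordinates $0,\dots,n-1$ (in the two-sided case as well). So Shannon--McMillan--Breiman gives $\mu(B_n)\to 1$. Hence there is $N$ with $\mu(G_n\cap B_n)>1/2$ and Step 1 valid for all $n\geq N$.

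\textbf{Step 3 (counting).} Let $\mathcal{W}_n$ be the set of words $w\in\mathcal{L}_n$ whose cylinder $[w]$ meets $G_n\cap B_n$. For such $w$ pick $z\in[w]\cap G_n$. By Step 1, every $x\in[w]$ satisfies
\[
D(\cE_n(x),\mu)<\ve/2+\ve/2=\ve,
\]
so $\mathcal{W}_n\subseteq\mathcal{L}_n^{\mu,\ve}$. Each such $w$ also satisfies $\mu([w])\leq e^{-n(h_\mu(\sigma)-\delta/2)}$. Because the cylinders in $\mathcal{W}_n$ cover $G_n\cap B_n$,
\[
\frac12<\sum_{w\in\mathcal{W}_n}\mu([w])\leq |\mathcal{W}_n|\,e^{-n(h_\mu(\sigma)-\delta/2)} .
\]
Therefore $|\mathcal{L}_n^{\mu,\ve}|\geq \tfrac12 e^{n(h_\mu(\sigma)-\delta/2)}\geq e^{n(h_\mu(\sigma)-\delta)}$, after enlarging $N$ so that $e^{n\delta/2}\geq 2$.

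If $h_\mu(\sigma)=0$, the same argument still works, since $\mathcal{W}_n\neq\emptyset$ and $e^{-n\delta}\leq 1$.
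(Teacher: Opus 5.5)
Your proof is correct, and it takes a different route from the paper. The paper gives no argument of its own for this lemma; it imports the estimate from Pfister--Sullivan \cite[Proposition 2.1, Proposition 4.1]{pfister2005large}.

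You instead give a self-contained symbolic argument in three parts:
\begin{enumerate}
\item The partition into $1$-cylinders is generating. In fact, with the paper's definition of $h_\mu(\sigma)$ for subshifts, its entropy is $h_\mu(\sigma)$ by definition. So Shannon--McMillan--Breiman directly bounds $\mu([x_0\dots x_{n-1}])$ for typical $x$.
\item Birkhoff's theorem makes most points $\ve/2$-generic at time $n$.
\item Your Step 1 estimate $D(\cE_n(x),\cE_n(y))\leq q^{-k}+2k/n$ for $x,y\in[w]$ upgrades ``some point of $[w]$ is good'' to ``every point of $[w]$ is good.'' This is exactly what the definition \eqref{def:L} requires, and it is the genuinely symbolic ingredient.
\end{enumerate}

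One cosmetic point on Step 1. It is cleaner to bound $D(\cE_n(x),\cE_n(y))\leq\frac1n\sum_{i=0}^{n-1}d(\sigma^ix,\sigma^iy)$ by convexity of $D$ and $d\leq 1$, rather than invoking Lemma \ref{lemMeasuresDistance}. That lemma's hypothesis is the strict inequality $(n-|J|)/n<\delta$ with $J$ nonempty. The conclusion is unchanged.

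As for what each route buys: yours is elementary and uses only the finite-alphabet structure, namely a generating cylinder partition and uniformity of empirical measures within a cylinder. The paper's route relies on more general results of Pfister--Sullivan and avoids reproving anything, at the cost of making the argument depend on an external reference.
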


Recall we call $\mu \in \M(\sigma, X)$ a periodic measure if $\supp(\mu)$ is a periodic orbit.

\begin{lemma}\label{Lem-dense-periodic-measure}
    Suppose that $\mathcal{G} \subset \mathcal{L}$ has $(W)$-specification and $\mathcal{L}$ is edit approachable by $\mathcal{G}$. Then periodic measures are dense in $\M(\sigma, X)$.
\end{lemma}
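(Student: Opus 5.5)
To prove Lemma \ref{Lem-dense-periodic-measure}, I will reduce density of periodic measures in $\M(\sigma,X)$ to density among ergodic measures, and then realize each ergodic measure approximately by a periodic orbit built from a single freely concatenable word. By Lemma \ref{Lem-FLapproachable} there is $\mathcal F\subset\cL$ with the free concatenation property such that $\cL$ is edit approachable by $\mathcal F$ with some mistake function $g$. Since every invariant measure is a limit of finite convex combinations $\sum_{i=1}^r \frac{p_i}{q}\mu_i$ of ergodic measures with rational weights, it suffices to approximate such a combination by a single periodic measure.

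Fix ergodic $\mu_1,\dots,\mu_r$, rational weights with common denominator $q$, and $\ve>0$. For each $i$, I take $n$ large and a word $w_i\in\cL_n^{\mu_i,\ve}$; this set is nonempty for large $n$, e.g.\ by Lemma \ref{Lemma-number-approximate} with $\delta$ larger than $h_{\mu_i}(\sigma)$ or simply by the ergodic theorem. By edit approachability I replace $w_i$ by $v_i\in\mathcal F$ with $\widehat d(w_i,v_i)\le g(n)$. Lemma \ref{Lem-closedistance} then shows that for any $x\in[w_i]$ and $y\in[v_i]$ one has $D(\cE_n(x),\cE_{|v_i|}(y))\le\delta_n$, so $\cE_{|v_i|}(y)$ lies within $\ve+\delta_n$ of $\mu_i$. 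Note also $\bigl||v_i|-n\bigr|\le g(n)=o(n)$.

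Next I form the word $u=v_1^{k_1}v_2^{k_2}\cdots v_r^{k_r}\in\mathcal F$, with the exponents $k_i$ chosen so that the proportion $k_i|v_i|/|u|$ approximates $p_i/q$; this is possible since all $|v_i|$ are comparable to $n$. Because $\mathcal F$ has free concatenation, every power $u^m$ lies in $\mathcal F\subset\cL$. By compactness of $X$ there is then a point whose coordinates read $uuu\cdots$ (two-sided: bi-infinitely), namely the periodic point $z$ of period $|u|$. Its periodic measure is $\cE_{|u|}(z)$. Splitting the orbit segment into the blocks $v_i$, and using the convexity of $D$ together with Lemma \ref{lemMeasuresDistance}, I get $D(\cE_{|u|}(z),\sum_i \frac{p_i}{q}\mu_i)$ small. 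The boundary effect here is that the empirical measure of a block read inside the periodic point differs from that of a point in $[v_i]$; this is controlled because $\cE_{|v_i|}(y)$ depends only weakly on coordinates far from the block, with an error of order (constant)$/n$ in the symbolic metric.

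The main obstacle is the justification that the infinite periodic concatenation actually lies in $X$ and that the block empirical measures match. The first point follows because $X$ is closed and every finite subword of $u^\infty$ is a subword of some $u^m\in\cL$. The second point requires care in the two-sided metric, where points in $[v_i]$ may differ in negative coordinates; I would handle it by passing to $\sigma^{s}$-shifts and applying Lemma \ref{lemMeasuresDistance} with $J$ excluding $O(\log_q(1/\ve))$ indices near block endpoints.
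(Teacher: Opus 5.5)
Your argument is correct, but it is organized differently from the paper's. The paper first applies Theorem \ref{Thm-horseshoe}(3), the Climenhaga--Thompson--Yamamoto entropy-approachability result, to replace $\nu$ by a single ergodic measure $\mu$ with $D(\nu,\mu)\le\ve/2$. It then realizes $\mu$ by the periodic point obtained by repeating one word $\phi_{\mathcal F}(w)$ with $w\in\cL_n^{\mu,\ve/4}$.

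You instead use the ergodic decomposition to reduce to rational convex combinations of finitely many ergodic measures. You then use free concatenation to glue several blocks $v_1^{k_1}\cdots v_r^{k_r}$ into one periodic orbit that realizes the whole combination. Your version is more self-contained. It uses only Lemma \ref{Lem-FLapproachable} and Lemma \ref{Lem-closedistance}, and in effect proves weak* density of periodic (hence ergodic) measures directly, rather than importing it from the much deeper Theorem \ref{Thm-horseshoe}. The paper's version is shorter precisely because that theorem is already available. You can even make the proportions exact: with $L=\prod_j|v_j|$ and $k_i=p_iL/|v_i|$ one has $k_i|v_i|/|u|=p_i/q$.

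Two small corrections.

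First, Lemma \ref{Lemma-number-approximate} with $\delta>h_{\mu_i}(\sigma)$ does not show $\cL_n^{\mu_i,\ve}\neq\emptyset$. The lower bound $e^{n(h_{\mu_i}(\sigma)-\delta)}$ is then less than $1$, and when $h_{\mu_i}(\sigma)=0$ no choice of $\delta$ helps. The paper's own citation of that lemma at this point has the same defect. Your alternative via the ergodic theorem is the right one: for a generic point $x$ of $\mu_i$ and large $n$, every point of $[x_0\ldots x_{n-1}]$ has empirical measure within $O(1/n)$ of $\cE_n(x)$, so $x_0\ldots x_{n-1}\in\cL_n^{\mu_i,\ve}$.

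Second, the boundary effect you single out as the main obstacle does not actually occur. If $s$ is the position where a block $v_i$ starts in $z$, then $\sigma^s z\in[v_i]$, because the cylinder $[v_i]$ constrains only coordinates $0,\dots,|v_i|-1$, also in the two-sided case. Lemma \ref{Lem-closedistance} holds uniformly for all such points. Hence $D(\cE_{|v_i|}(\sigma^s z),\mu_i)<\ve+\delta_n$ directly, and convexity of $D$ finishes the estimate without excluding any indices.
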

\begin{proof}
    We only prove the case of two-sided subshifts since the case of one-sided subshifts is similar. Fix $\ve >0$, and $\nu \in \mathcal{M}(\sigma, X)$. By Theorem \ref{Thm-horseshoe}(3), there exists $\mu \in \Merg(\sigma, X)$ such that $D(\nu,\mu)\leq \ve/2$. By Lemma \ref{Lem-FLapproachable}, there exists $\mathcal{F} \subset \mathcal{L}$  which has free concatenation property and $\mathcal{L}$ is edit approachable by  $\mathcal{F}$. Then there exists a mistake function $g$ and a map $\phi_{\mathcal{F}}: \mathcal{L} \rightarrow \mathcal{F}$ such that $\widehat{d}(w, \phi_{\mathcal{F}}(w)) \leq g(|w|)$ for every $w \in \mathcal{L}$. By Lemma \ref{Lem-closedistance}, there is a sequence of positive numbers $\left\{\delta_n\right\}_n^\infty$ with $\lim_{n \to \infty}\delta_n = 0$ such that
    \[D(\cE_n(x), \cE_{|\phi_{\mathcal{F}}(w)|}(y)) \leq \delta_n\]
    for any $w\in \mathcal{L}_n$, any $x \in [w]$ and any $ y \in [\phi_{\mathcal{F}}(w)]$.
    Take $N\in \Nb^+$ such that
    \begin{equation}\label{eq-lem-n(2)}
        \delta_n \leq \ve/4
    \end{equation}
    for any $n \geq N$. By Lemma \ref{Lemma-number-approximate}, there exists \(n\geq N\) such that $\mathcal L_n^{\mu,\ve/4}\neq\emptyset$. Choose \(w\in\mathcal L_n^{\mu,\ve/4}\) and \(x\in[w]\). Let $y = \dots \phi_{\mathcal{F}}(w) \phi_{\mathcal{F}}(w) \dots $ generated by $\phi_{\mathcal{F}}(w)$. It has a period $|\phi_{\mathcal{F}}(w)|$. By the definition of $\mathcal{L}^{\mu ,\frac{\ve}{4}}_n$, we have $D(\cE_n(x), \mu)<\ve / 4$. By (\ref{eq-lem-n(2)}),
    \begin{align*}
        D\left(\cE_{\left\lvert \phi_{\mathcal{F}}(w)\right\rvert }(y),\nu\right) & \leq  D\left(\cE_{\left\lvert \phi_{\mathcal{F}}(w)\right\rvert }(y),\cE_n(x)\right)  + D\left(\cE_n(x),\mu\right) + D(\mu,\nu) \\
                                                                                  & < \frac{\ve}{4} + \frac{\ve}{4} + \frac{\ve}{2} = \ve,
    \end{align*}
    which completes the proof.
\end{proof}

Note every periodic measure has zero metric entropy. Then invariant measures with zero metric entropy are dense in the space of invariant measures by the above lemma.

\section{Conditional entropy realization for symbolic systems with non-uniform structure}\label{sec:minimal_symbolic}
In this section, we extend the preceding multi-horseshoe result to symbolic systems with non-uniform structure and use it to obtain exact entropy-and-average realization and weak* approximation by uniquely ergodic measures with compact support.

\begin{theoremx}\label{prop-nonuniform-multi-horseshoe}
    Let $X$ be a one-sided or two-sided subshift on a finite alphabet with language $\mathcal{L}$ and positive topological entropy. Suppose that $\mathcal{G} \subset \mathcal{L}$ has $(W)$-specification and $\mathcal{L}$ is edit approachable by $\mathcal{G}$. Then for any $W = \cov\{\alpha_i\}^m_{i=1} \subseteq \mathcal{M}(\sigma, X)$ and any $\eta,\xi> 0$, there exist compact invariant subsets $\Lambda_i\subseteq \Lambda \varsubsetneq X$ such that for each $1 \leq i \leq m$, the following hold.
    \begin{enumerate}
        \item If $(X,\sigma)$ is a two-sided shift, then $(\Lambda_i, \sigma|_{\Lambda_i})$ and $(\Lambda, \sigma|_{\Lambda})$ are transitive subshifts of finite type; if $(X,\sigma)$ is a one-sided shift, then $(\Lambda_i, \sigma|_{\Lambda_i})$ and $(\Lambda, \sigma|_{\Lambda})$ satisfy $(W)$-specification.
        \item $\htop(\sigma,\mathcal{L}(\Lambda_i)) \geq h_{\alpha_i}(\sigma)-\eta $ and consequently, $\htop(\sigma,\mathcal{L}( \Lambda)) \ge \sup\{h_{\kappa}(\sigma) : \kappa \in W\}-\eta$.
        \item $D_{\mathrm{H}}\left(W, \mathcal{M}(\sigma, \Lambda)\right)  < \xi, D_{\mathrm{H}}\left(\left\{\alpha_i\right\} , \mathcal{M}\left(\sigma, \Lambda_i\right) \right)  < \xi$.
    \end{enumerate}
\end{theoremx}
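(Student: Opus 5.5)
We build the multi-horseshoe from a single freely concatenable set of good words, in the spirit of Lemma \ref{lemSeparatedSet} and Lemma \ref{lem:SeparatedSetCMS}, with edit approachability taking the place of shadowing. First apply Lemma \ref{Lem-FLapproachable} to obtain $\mathcal F\subset\mathcal L$ with the free concatenation property, a mistake function $g$, and a map $\phi_{\mathcal F}:\mathcal L\to\mathcal F$ with $\widehat d(w,\phi_{\mathcal F}(w))\le g(|w|)$. Let $\{\delta_n\}$ be the sequence from Lemma \ref{Lem-closedistance}.

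\textbf{Step 1: good words for each $\alpha_i$.} For each $i$, use ergodic decomposition to replace $\alpha_i$ by a rational average $\frac1{r}\sum_s\nu_{i,s}$ of ergodic measures. This average should be $\xi/8$-close to $\alpha_i$, with entropy at least $h_{\alpha_i}-\eta/4$. For a large common length $n$, Lemma \ref{Lemma-number-approximate} gives sets $\mathcal L_n^{\nu_{i,s},\xi/8}$ of cardinality at least $e^{n(h_{\nu_{i,s}}-\eta/4)}$. Concatenate one word for each $s$ and map the result through $\phi_{\mathcal F}$. This yields a set $\mathcal W_i\subset\mathcal F$ of words whose empirical measures lie within $\xi/4+\delta_{rn}$ of $\alpha_i$. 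The main obstacle comes here: $\phi_{\mathcal F}$ is not injective, and the images have varying lengths. The multiplicity is controlled by Lemma \ref{Lem-the-size-of-balls}: at most $Cn^C(e^{C\delta}e^{-\delta\log\delta})^{rn}$ words collapse to one image, with $\delta=g(rn)/(rn)\to0$, so the entropy loss is $o(1)$. Lengths lie in $[rn-g(rn),rn+g(rn)]$, so pigeonholing a single length $\ell_i$ costs only a polynomial factor. Next pad all $\mathcal W_i$ to a common length $\ell$. One option is to concatenate a fixed number of copies, taking $\ell=\operatorname{lcm}$-type multiples; the other is to append fixed short words from $\mathcal F$, whose effect is negligible in both density and entropy. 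Finally, to ensure that distinct concatenations of different words give distinct points, prefix each word with a fixed marker word $u\in\mathcal F$. We then pass to a subset in which $u$ appears only at the start; alternatively, we decode using the fixed block length $\ell$ and accept a bounded-to-one loss, which does not affect entropy.

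\textbf{Step 2: the horseshoes.} Put $\mathcal W=\bigcup_i\mathcal W_i$. Define $\Delta_i$, respectively $\Delta$, as the set of bi-infinite (or one-sided) concatenations of words of $\mathcal W_i$, respectively $\mathcal W$. These sets are closed, lie in $X$ because $\mathcal F$ is freely concatenable and its concatenations lie in $\mathcal L$, and are $\sigma^{\ell}$-invariant. Set $\Lambda_i=\bigcup_{j<\ell}\sigma^j\Delta_i$ and $\Lambda=\bigcup_{j<\ell}\sigma^j\Delta$. In the two-sided case, $\Lambda$ is the image of a full shift on $|\mathcal W|$ symbols under a block code that is injective by the marker argument. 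Lemma \ref{lemma-f^n-f} then gives transitivity, expansivity and shadowing, so $\Lambda$ is a transitive SFT by Walters' characterization; if needed, we arrange disjointness of the $\sigma^j\Delta$ by the marker. In the one-sided case, (W)-specification follows directly from free concatenation of blocks, with gaps of length less than $\ell$ covering shifts. Item (2) is immediate: $|\mathcal L_{k\ell}(\Lambda_i)|\ge|\mathcal W_i|^k\ge e^{k\ell(h_{\alpha_i}-\eta)}$. We get $\Lambda\ne X$ by choosing $\mathcal W$ so as to avoid some word of $\mathcal L$, which is possible since $\htop(X)>0$.

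\textbf{Step 3: Hausdorff estimates.} These follow exactly as in the proofs of Theorems \ref{thmC} and \ref{thmD}. Every ergodic $\nu$ on $\Lambda_i$ has a generic point in $\Delta_i$, whose empirical measures over block times are averages of empirical measures of words in $\mathcal W_i$, each within $\xi/2$ of $\alpha_i$. Lemma \ref{lemMeasuresDistance} handles the boundary terms, and ergodic decomposition extends the estimate to all of $\M(\sigma,\Lambda_i)$. For $\Lambda$, the block empirical measures are convex combinations, so $\M(\sigma,\Lambda)\subseteq B(W,\xi)$; conversely, $W\subseteq B(\M(\sigma,\Lambda),\xi)$ because $\Lambda_i\subseteq\Lambda$ and both $W$ and $\M(\sigma,\Lambda)$ are convex.
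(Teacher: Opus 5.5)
Your architecture is essentially the paper's: the freely concatenable $\mathcal F$, $\phi_{\mathcal F}$ with multiplicity controlled by Lemma \ref{Lem-the-size-of-balls}, a pigeonholed length, a common block length via products, $\Lambda=\bigcup_j\sigma^j\Delta$, Lemma \ref{lemma-f^n-f} plus Walters, and Hausdorff bounds via generic points and ergodic decomposition. However, the one genuinely hard step, synchronization, is not carried out.

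\textbf{Why synchronization is needed.} Item (1) in the two-sided case requires $\sigma^j\Delta\cap\sigma^{j'}\Delta=\emptyset$ for $0\le j<j'<\ell$, and this is not cosmetic. For $\mathcal W=\{00,01\}$, the set $\Delta\cup\sigma\Delta$ consists of the points whose $1$'s all sit at coordinates of one parity. This shift is strictly sofic: $10^{2N}$ and $0^{2N}1$ are allowed, but $10^{2N}1$ is not.

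\textbf{Why your two suggestions do not close it.} Your fallback, decoding with the fixed block length, only gives injectivity of the coding of $\Delta$. That is automatic for equal-length blocks anyway, and it says nothing about disjointness of the phases. Your main option, a marker prefix $u$ plus passing to words in which $u$ occurs only at the start, is unsupported.
\begin{itemize}
\item If $|u|$ is fixed, words whose empirical measures are close to $\alpha_i$ typically contain $u$ (for instance when $\supp\alpha_i=X$), so the subset may be empty.
\item If $|u|$ grows with $\ell$, you must count the words of $\mathcal W_i$ containing $u$. Since $\mathcal W_i$ has no product structure, the natural bound is $\ell\,|\mathcal L_{\ell-|u|}|$. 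This beats $|\mathcal W_i|\approx e^{\ell(h_{\alpha_i}-\eta)}$ only when $|u|/\ell$ is at least about $(\htop-h_{\alpha_i}+\eta)/\htop$. Such a marker then costs far more than $\eta$ in item (2) and distorts the empirical measures in item (3).
\item In either case you would also have to exclude occurrences of $u$ straddling block boundaries, which depends on the self-overlaps of $u$.
\end{itemize}

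\textbf{The paper's device.} The paper ties the marker to the block length.
\begin{itemize}
\item It fixes an auxiliary measure $\alpha_0$ with $h_{\alpha_0}>\htop-\eta/4$. By counting periodic words (this is where $\htop>0$ is used), it finds $\pi\in\prod_{i=1}^{T/t_0}\Gamma^0_{t_0}\subseteq\mathcal F$ of length $T$, the common block length, whose periodic extension has minimal period exactly $T$.
\item Super-blocks are $\pi\pi y^1\cdots y^{M-2}$ with $y^i\in\Theta^j_T$. Here $\Theta^j_T$ removes from $\prod_{i=1}^{T/t_j}\Gamma^j_{t_j}$ only the at most $T$ subwords of length $T$ of $\pi\pi$.
\item By alignment, a misplaced copy of $\pi\pi$ either covers a whole excluded block or contradicts minimality of the period; these are the four cases of the Claim. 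The marker's weight $2/M$ is negligible.
\end{itemize}

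\textbf{Smaller points.}
\begin{itemize}
\item Because blocks are deleted, each block set must be exponentially large. This fails for zero-entropy $\alpha_i$, which is why the paper first replaces $\alpha_i$ by $(1-\lambda)\alpha_i+\lambda\nu$ with $h_\nu>0$; any exclusion-based marker in your approach needs the same reduction.
\item ``Avoid some word of $\mathcal L$'' is not an argument for $\Lambda\neq X$. The paper takes the tolerance below $D_{\mathrm H}(W,\M(\sigma,X))$, which is positive because $X$ has infinitely many ergodic measures while $W$ has finitely many extreme points.
\item The concatenation $w_1\cdots w_r$ need not lie in $\mathcal L$, so $\phi_{\mathcal F}$ must be applied blockwise before concatenating.
\end{itemize}
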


\begin{proof}
    We first reduce the proof to the case where $h_{\alpha_i}(\sigma) > 0$ for any $1 \leq i \leq m$. To distinguish the original data from the
    auxiliary data used below, write
    \[
        \overline{W}=\cov\{\overline{\alpha}_i\}_{i=1}^m,
        \qquad \overline{\eta}>0,
        \qquad \overline{\xi}>0
    \]
    for the set and the two error tolerances appearing in the statement.
    After deleting repetitions among the measures
    $\overline{\alpha}_i$, and assigning the same subsystem to repeated
    measures at the end, we may assume that they are pairwise distinct.

    Put $H=\htop(\sigma,\mathcal L)>0$. By the variational principle, there is a measure
    $\nu\in\M(\sigma,X)$ with $h_\nu(\sigma)>0$.  Choose
    $\lambda\in(0,1)$ sufficiently small that
    \begin{equation}\label{eq:positive-entropy-perturbation}
        \lambda <\frac{\overline{\xi}}4,
        \qquad
        \lambda H<\frac{\overline{\eta}}4.
    \end{equation}
    Define
    \[
        \beta_i=(1-\lambda)\overline{\alpha}_i+\lambda\nu,
        \qquad
        W_\lambda=\cov\{\beta_i\}_{i=1}^m.
    \]
    Note
    \begin{equation}\label{eq:positive-entropy-beta}
        b:=\min_{1\leq i\leq m}h_{\beta_i}(\sigma) > 0.
    \end{equation}
    Fix
    \begin{equation}\label{eq:auxiliary-errors}
        0 < \eta_0<\min\left\{\frac b2,\frac H4,
        \overline{\eta}-\lambda H\right\},
        \qquad
        0 < \xi_0<\overline{\xi}-\lambda.
    \end{equation}
    We now deal with the construction for
    $W_\lambda=\cov\{\beta_i\}_{i=1}^m$ with
    $\eta_0$ and $\xi_0$. To avoid introducing new notation throughout
    the construction, for the remainder of the main argument we relabel
    \[
        \beta_i,\quad W_\lambda,\quad \eta_0,\quad \xi_0
        \quad\text{as}\quad
        \alpha_i,\quad W,\quad \eta,\quad \xi,
    \]
    respectively. Thus
    \begin{equation}\label{eq:positive-entropy-relabelled}
        0<\eta<\frac12\min_{1\leq i\leq m}h_{\alpha_i}(\sigma),
        \qquad \eta<\frac H4.
    \end{equation}

    By the classical variational principle, there exists $\alpha_0\in \M(\sigma,X)$ such that
    \begin{equation}\label{eq:alpha_0}
        h_{\alpha_0}(\sigma) > \htop(\sigma,\mathcal{L}) - \eta / 4.
    \end{equation}
    Let
    \[
        \rho_0=
        \begin{cases}
            \min\{D(\alpha_i,\alpha_j):1 \leq i<j\leq m\}/2, & m\geq2, \\
            +\infty,                                         & m=1.
        \end{cases}
    \]
    Note that $\rho_0>0$. There are infinitely many ergodic measures on $X$  by Theorem \ref{Thm-horseshoe}(\ref{item:thm 8.10(3)}). Since an ergodic measure is an extreme point of $\M(\sigma,X)$, not all of these ergodic measures can belong to the convex hull of the finitely many measures $\alpha_i$. Hence $W\varsubsetneq\M(\sigma,X)$, and compactness gives $D_{\mathrm{H}}(W, \M(\sigma,X)) > 0$. Take
    \[0<\ve < \min\{D_{\mathrm{H}}(W, \M(\sigma,X)),\eta/4,\rho_0,\xi\}\]
    small enough such that
    \begin{equation}\label{eq:Cve}
        C\ve -\ve \log \ve < \eta / 20
    \end{equation}
    where $C>0$ is decided by Lemma \ref{Lem-the-size-of-balls}. By Theorem \ref{Thm-horseshoe}(\ref{item:thm 8.10(3)}), for any $0 \leq j \leq m$, there exists $X_{l_j}\subseteq X$ and $\mu_j \in \Merg(\sigma, X_{l_j})\subset \Merg(\sigma, X)$ such that
    \begin{equation}\label{eq-Thm1.1-1}
        \mu_j \in B\left(\alpha_j,\ve/4\right)  \textit{ and } h_{\mu_j}(\sigma)>h_{\alpha_j}(\sigma)-\eta/16.
    \end{equation}
    It follows from Lemma \ref{Lemma-number-approximate} and the definition of topological entropy that there exists $\widehat{N} \in \Nb^+$ such that for any $n >\widehat{N}$ and any $0 \leq j \leq m$, one has
    \begin{equation}\label{eq:9.2}
        \left\lvert \mathcal{L}_n^{\mu_j,\frac{\ve}{4}}\right\rvert  \geq e^{n\left(h_{\mu_j}(\sigma)-\eta/16\right)}
    \end{equation}
    and
    \begin{equation}\label{eq-entropy-n}
        \left\lvert \mathcal{L}_n\right\rvert  < e^{n \left(\htop(\sigma, \mathcal{L})+\eta/8\right)}
    \end{equation}
    By the definition of $\mathcal{L}_n^{\alpha_j,\frac{\ve}{2}}$ (see (\ref{def:L})), (\ref{eq-Thm1.1-1}) and (\ref{eq:9.2}) we have
    \begin{equation}\label{eq-number-of-L}
        \left\lvert \mathcal{L}_n^{\alpha_j,\frac{\ve}{2}}\right\rvert \geq \left\lvert \mathcal{L}_n^{\mu_j,\frac{\ve}{4}} \right\rvert  \geq e^{n\left(h_{\alpha_j}(\sigma)-\eta/8\right)}.
    \end{equation}

    By Lemma \ref{Lem-FLapproachable}, there exists a subset $\mathcal{F} \subset \mathcal{L}$ with the free concatenation property such that $\mathcal{L}$ is edit approachable by $\mathcal{F}$. Then there is a mistake function $g$ and a map $\phi_{\mathcal{F}} : \mathcal{L}\rightarrow \mathcal{F}$ such that
    \begin{equation}\label{eq:phi_cF}
        \widehat{d}(w, \phi_{\mathcal{F}}(w)) \leq g(|w|)
    \end{equation}
    for any $w \in \mathcal{L}$. By Lemma \ref{Lem-closedistance}, there is a sequence of positive numbers $\left\{\delta_n\right\}_{n = 1}^\infty$ with $\lim_{n \to \infty} \delta_n = 0$ such that
    \begin{equation}\label{eq:D}
        D\left(\cE_n(x), \cE_{|\phi_{\mathcal{F}}(w_j)|}(y)\right)  \leq \delta_n
    \end{equation}
    for any $0 \leq j \leq m$, any $w_j \in \mathcal{L}_n^{\alpha_j,\frac{\ve}{2}}$, any $x \in [w_j]$ and any $y \in [\phi_{\mathcal{F}}(w_j)]$. We claim that there exists $n>\widehat{N}$ such that
    \begin{equation}\label{eq-n0}
        \left\lfloor \frac{n-g(n)}{2}\right\rfloor  > \widehat{N},
    \end{equation}
    \begin{equation}\label{eq-n(1)}
        g(n)/n \leq \min\left\{1/9,\ve/4\right\}  ,\  2g(n)+1\leq e^{(n-g(n))\eta/4},
    \end{equation}
    \begin{equation}\label{eq-n(1.1)}
        g(n)\left(\max_{0 \leq j \leq m}\{h_{\alpha_j}(\sigma)\}-\eta/2\right)   \leq \eta n/4,
    \end{equation}
    \begin{equation}\label{eq-n(2)}
        \delta_n \leq\ve/4,
    \end{equation}
    \begin{equation}\label{eq-n(3)} C(n+g(n))^Ce^{\eta\left(n+g(n)\right)/20 }\leq e^{\eta (n-g(n))/16},
    \end{equation}
    \begin{equation}\label{eq-n(4)}
        e^{{(n-g(n))}^{m+1}(h_{\alpha_j}(\sigma)-3\eta/4)}-(n-g(n))^{m+1}\geq  e^{{(n+g(n))}^{m+1} \left(h_{\alpha_j}(\sigma)-9\eta/10\right)}
    \end{equation}
    for any $0 \leq j \leq m$ and
    \begin{equation}\label{eq-n(5)}
        \left\lfloor \frac{(n+g(n))^{m+1}}{2}\right\rfloor  e^{ \left\lfloor \frac{(n+g(n))^{m+1}}{2}\right\rfloor  \left(\htop(\sigma,  \mathcal{L})+\eta/2\right)  } +\sum_{k=1}^{\widehat{N}}  \left\lvert \mathcal{L}_k\right\rvert
        \leq e^{(n-g(n))^{m+1}(\htop(\sigma,\mathcal{L})-\eta)}.
    \end{equation}
    Here, (\ref{eq-n0}), (\ref{eq-n(1)}), (\ref{eq-n(1.1)}),
    (\ref{eq-n(3)}) and (\ref{eq-n(5)}) follow from
    $\lim_{n \to \infty}g(n)/n=0$, together with
    (\ref{eq:positive-entropy-relabelled}).  Let us justify
    (\ref{eq-n(4)}) explicitly, since this is where positivity of the
    entropies is needed.  For $0\leq j\leq m$, put
    \[
        A_{j,n}=(n-g(n))^{m+1}
        \left(h_{\alpha_j}(\sigma)-\frac{3\eta}{4}\right),
        \qquad
        B_{j,n}=(n+g(n))^{m+1}
        \left(h_{\alpha_j}(\sigma)-\frac{9\eta}{10}\right).
    \]
    For $1\leq j\leq m$, (\ref{eq:positive-entropy-relabelled}) gives
    $h_{\alpha_j}(\sigma)>2\eta$.  For $j=0$, equations
    (\ref{eq:alpha_0}) and (\ref{eq:positive-entropy-relabelled}) give
    $h_{\alpha_0}(\sigma)>H-\eta/4>3\eta/4$.  Consequently,
    $A_{j,n}\to+\infty$ for every $0\leq j\leq m$.  Moreover, writing
    $r_n=g(n)/n$, we have
    \begin{align*}
        \frac{A_{j,n}-B_{j,n}}{n^{m+1}}
        ={} & (1-r_n)^{m+1}
        \left(h_{\alpha_j}(\sigma)-\frac{3\eta}{4}\right) \\
            & -(1+r_n)^{m+1}
        \left(h_{\alpha_j}(\sigma)-\frac{9\eta}{10}\right)
        \longrightarrow \frac{3\eta}{20}>0.
    \end{align*}
    Thus $A_{j,n}-B_{j,n}\to+\infty$, uniformly over the finite set
    $0\leq j\leq m$.  It follows that, for all sufficiently large $n$,
    \[
        e^{A_{j,n}}-(n-g(n))^{m+1}\geq e^{B_{j,n}}
        \qquad(0\leq j\leq m),
    \]
    which is precisely (\ref{eq-n(4)}).  Finally,
    (\ref{eq-n(2)}) follows from
    $\lim_{n \to \infty}\delta_n=0$.
    By the definition of $\phi_{\cF}$ (see (\ref{eq:phi_cF})), one has
    \begin{equation}\label{eq:phi_cF_control}
        n-g(n) \leq |\phi_{\mathcal{F}}(w)| \leq n + g(n)
    \end{equation}
    for any $w \in \mathcal{L}_n^{\alpha_j,\frac{\ve}{2}}$. For any $v \in \mathcal{F}$, we have
    \begin{equation}\label{eq-numberofpreimage}
        \begin{split}
                                                                      & \phantom{{}\leq{}} \left\lvert \left\{w\in \mathcal{L}_n^{\alpha_j,\frac{\ve}{2}} : \phi_{\mathcal{F}}(w)=v\right\}\right\rvert \\
            (\text{by (\ref{eq:phi_cF})})                             & \leq \left\lvert \left\{w\in \mathcal{L}_n^{\alpha_j,\frac{\ve}{2}} : \widehat{d}(w,v)\leq g(n)\right\} \right\rvert \quad      \\
            (\text{by (\ref{eq-n(1)}) and (\ref{eq:phi_cF_control})}) & \leq   \left\lvert \left\{w\in \mathcal{L}_n^{\alpha_j,\frac{\ve}{2}} : \widehat{d}(w,v)\leq \ve |v|\right\} \right\rvert       \\
            (\text{by Lemma \ref{Lem-the-size-of-balls}})             & \leq C{|v|}^C(e^{C\ve}e^{-\ve \log \ve})^{|v|}                                                                                  \\
            (\text{by (\ref{eq:Cve}) and (\ref{eq:phi_cF_control})})  & \leq C{(n+g(n))}^C e^{\eta(n+g(n))/20}                                                                                          \\
            (\text{by (\ref{eq-n(3)})})                               & \leq  e^{\eta {(n-g(n))}/16}                                                                                                    \\
            (\text{by (\ref{eq:phi_cF_control})})                     & \leq e^{\eta |v|/16} \leq e^{\eta n/8}.
        \end{split}
    \end{equation}
    By (\ref{eq-number-of-L}) and (\ref{eq-numberofpreimage}), we have
    \begin{equation}\label{eq-number-of-Ln}
        \left\lvert \mathcal{L}_n^{\alpha_j,\frac{\ve}{2}}\right\rvert
        \geq  \left\lvert \phi_{\mathcal{F}}(\mathcal{L}_n^{\alpha_j,\frac{\ve}{2}})\right\rvert
        \geq \frac{e^{n\left(h_{\alpha_j}(\sigma)-\eta/8\right)}}{e^{\eta n/8}}
        = e^{n\left(h_{\alpha_j}(\sigma)-\eta/4\right)}.
    \end{equation}
    We denote by $\Gamma_t^j \subseteq \phi_{\mathcal{F}}(\mathcal{L}_n^{\alpha_j,\frac{\ve}{2}}) \subseteq \cF$ the collection of words in $\phi_{\mathcal{F}}(\mathcal{L}_n^{\alpha_j,\frac{\ve}{2}})$ with length $t$.
    By the pigeonhole principle, there exists
    \begin{equation}\label{eq:t_j}
        n-g(n) \leq t_j \leq n + g(n)
    \end{equation}
    such that
    \begin{equation}\label{eq-number-of-fixlength}
        \left\lvert \Gamma_{t_j}^j\right\rvert  \geq\frac{\left\lvert \left\{\phi_{\mathcal{F}}(w):w \in \mathcal{L}_n^{\alpha_j,\frac{\ve}{2}}\right\} \right\rvert }{2g(n)+1}.
    \end{equation}
    Then
    \begin{equation}\label{eq:9.20}
        \begin{split}
                                                                                                   & \phantom{{}\geq{}} \left\lvert \Gamma_{t_j}^j\right\rvert                                   \\
            (\text{by (\ref{eq-n(1)}) (\ref{eq-number-of-Ln}) and (\ref{eq-number-of-fixlength})}) & \geq \frac{e^{n \left(h_{\alpha_j}(\sigma)-\eta/4\right)  }}{e^{\left(n-g(n)\right)\eta/4}} \\
            (\text{by (\ref{eq-n(1.1)}) and (\ref{eq:t_j})})                                       & \geq \frac{e^{(n+g(n))\left(h_{\alpha_j}(\sigma)-\eta/2\right)}}{e^{{t_j} \eta/4}}          \\
            (\text{by (\ref{eq:t_j})})                                                             & \geq \frac{e ^{{t_j}\left(h_{\alpha_j}(\sigma)-\eta/2\right)}}{e^{{t_j}\eta/4}}             \\
                                                                                                   & \geq e^{{t_j}\left(h_{\alpha_j}(\sigma)-3\eta/4\right)}.
        \end{split}
    \end{equation}

    Let $T=t_0 t_1\dots t_m$. By (\ref{eq:t_j}), we have
    \begin{equation}\label{eq:T}
        (n-g(n))^{m+1} \leq T \leq (n+g(n))^{m+1}
    \end{equation}
    For any $1 \leq k \leq T$, let
    \[P^*_k(\sigma)=\left\{v \in  \prod_{i=1}^{T/t_0}\Gamma_{t_0}^0 : y=\dots v v \dots \mathrm{\ has\ minimal\ positive\ period\ }k   \right\}\]
    where
    \[\prod_{i=1}^{T/t_0}\Gamma_{t_0}^0 = \left\{v = v_1\dots v_{T/t_0}: v_i \in \Gamma_{t_0}^0 \mathrm{\ for\ any\ } 1 \leq i \leq T/t_0\right\}.\]
    Note $\prod_{i=1}^{T/t_0}\Gamma_{t_0}^0 \subseteq \cF$ by the free concatenation property of $\cF$. By (\ref{eq-n0}), (\ref{eq-entropy-n}) and (\ref{eq-n(5)}),
    \begin{align*}
                                                     & \phantom{{}\geq{}} \sum_{k=1}^{\left\lfloor T/2\right\rfloor}   \left\lvert P^*_k(\sigma)\right\rvert                                                                              \\
                                                     & = \sum_{k=\widehat{N}+1}^{\left\lfloor T/2\right\rfloor} \left\lvert P^*_k(\sigma)\right\rvert  +\sum_{k=1}^{\widehat{N}}\left\lvert P^*_k(\sigma)\right\rvert                     \\
        (\text{by (\ref{eq-entropy-n})})             & < \sum_{k=\widehat{N}+1}^{\left\lfloor T/2\right\rfloor}e^{k\left(\htop(\sigma,\mathcal{L})+\eta/8\right)} +\sum_{k=1}^{\widehat{N}} \left\lvert \mathcal{L}_k\right\rvert         \\
                                                     & \leq \left\lfloor T/2\right\rfloor e^{\left\lfloor T/2\right\rfloor \left(\htop(\sigma,\mathcal{L})+\eta/2\right)} +\sum_{k=1}^{\widehat{N}} \left\lvert \mathcal{L}_k\right\rvert \\
        (\text{by (\ref{eq-n(5)}) and (\ref{eq:T})}) & \leq e^{T(\htop(\sigma,\mathcal{L})-\eta)}                                                                                                                                         \\
        (\text{by (\ref{eq:alpha_0})})               & < e^{T\left(h_{\alpha_0}(\sigma)-3\eta/4\right)}                                                                                                                                   \\
        (\text{by (\ref{eq:9.20})})                  & \leq  \left\lvert \prod_{i=1}^{T/t_0}\Gamma_{t_0}^0\right\rvert = \left\lvert \Gamma_{t_0}^0\right\rvert^{T/t_0}.
    \end{align*}
    Then $P_T^*(\sigma)\neq\emptyset$. For one-sided subshifts, we can prove the set
    $$P^*_T(\sigma)=\{v \in \prod_{i=1}^{T/t_0}\Gamma_{t_0}^0 : y= v v \dots \mathrm{\ has\ minimal\ period\ } T   \}$$
    is nonempty by a similar method.

    Take $\pi \in P_T^*(\sigma)$. Denote $\tilde{\pi}=\dots\pi\pi\dots$ and
    $$\Delta=\left\{w \in \mathcal{L}_{T}:w \text{ occurs in }\pi\pi \right\}.$$
    Then $ \left\lvert \Delta\right\rvert \leq T$. Let $ \Theta_{T}^j=\left(\prod_{i=1}^{T/t_j}\Gamma_{t_j}^j\right) \setminus \Delta \subseteq \cF$. Then
    \begin{equation}\label{eq-numberofGamma}
        \left\lvert \Theta_{T}^j\right\rvert
        \geq \left\lvert \prod_{i=1}^{T/t_j}\Gamma_{t_j}^j\right\rvert -T
        \overset{\text{by (\ref{eq:9.20})}}{\geq} e^{T\left(h_{\alpha_j}(\sigma)-3\eta/4\right)} - T
        \overset{\text{by (\ref{eq-n(4)}) and (\ref{eq:T})}}{\geq} e^{T\left(h_{\alpha_j}(\sigma)-9\eta/10\right)}.
    \end{equation}
    Fix $M \in \Nb^+$ such that
    \begin{equation}\label{eq-M}
        M \geq \max\left\{24/\ve , 20\max_{1 \leq j\leq m}\{h_{\alpha_j}(\sigma)\}/\eta\right\}.
    \end{equation}
    Denote
    $$ H^j_M:= \{ \pi\pi y^1 \dots y^{M-2} \in \cL: y^i \in \Theta_{T}^j \mathrm{\ for\ any\ } 1\leq i\leq M-2\}$$
    and
    $$ H_M:= \{\pi \pi y^1 \dots y^{M-2} \in \cL: y^i \in \bigcup_{j=1}^m \Theta_{T}^j \mathrm{\ for\ any\ } 1\leq i\leq M-2 \}. $$
    Here $H^j_M \subseteq H_M \subseteq \cF$ by the free concatenation property of $\cF$. Let $Y_j$ and $Y$ be the sets of points generated by words in $H^j_M$ and $H_M$ respectively, i.e.,
    \[Y_j =\left\{\dots w^{-1}w^0w^1\dots \in X:w^i \in H^j_M \mathrm{\ for\ any\ } i \in \Zb \right\} \]
    and
    \[Y =\left\{\dots w^{-1}w^0w^1\dots\in X:w^i \in H_M \mathrm{\ for\ any\ } i \in \Zb \right\} \]
    Note $(Y_j,\sigma^{MT})$ and $(Y,\sigma^{MT})$ are full shifts. By Lemma \ref{lemFullShiftsProperties}, they are topologically transitive and satisfy the shadowing property. Define
    $$\Lambda_j=\bigcup_{i=0}^{MT-1}\sigma^i(Y_j),\ \Lambda=\bigcup_{i=0}^{MT-1}\sigma^i(Y).$$
    $\Lambda_j$ and $\Lambda$ are compact and $\sigma$-invariant. Next, we show that $\Lambda_j$ and $\Lambda$ satisfy (1)-(3).

    (1)  We have the following claim.
    \begin{claim}
        $\sigma^k(Y_j)\cap\sigma^{k'}(Y_j)=\emptyset$ for any $0 \leq k < k' \leq MT-1$.
    \end{claim}
    \begin{proof}[Proof of the claim]
        Otherwise, there exists $\theta \in Y_j\cap\sigma^{k-k'}(Y_j)$.
        \begin{itemize}
            \item  If $1 \leq k'-k < T $,  let $\theta =\dots \theta_{-1}\theta_0\theta_1 \dots \in Y_j$ where $\theta_0 \dots \theta_{2T-1}=\pi\pi$.  Since $\sigma^{k'-k} \theta\in Y_j$, then $ \theta_{k'-k} \dots \theta_{k'-k+T-1}=\pi $. This implies $\sigma^{k'-k}\tilde{\pi}=\tilde{\pi}$, which contradicts $\pi \in P^*_T(\sigma)$.
            \item  If $T \leq k'-k < (M-1)T$, assume $k'-k=lT+p$ where $1 \leq l\leq M-2$ and $0 \leq p \leq T-1$. Since $\sigma^{k'-k}\theta\in Y_j$, we have $\theta_{lT+p}\dots \theta_{(l+2)T+p-1}=\pi\pi$. Since $\theta \in Y_j$, we have
                  $\theta_{(l+1)T}\dots \theta _{(l+2)T-1} \in \Theta_T^j$. This contradicts  $\Delta\cap \Theta_T^j=\emptyset$.
            \item If $k'-k=(M-1)T$, we have $\theta_{(M-1)T}\dots \theta_{MT-1}=\pi$ by $\sigma^{k'-k}\theta\in Y_j$. Since $\theta\in Y_j$, we have $\theta_{(M-1)T}\dots \theta_{MT-1} \in \Theta_T^j$. This contradicts $\Theta_{T}^j\cap \Delta=\emptyset$.
            \item If $(M-1)T<k'-k <MT$, we have $MT < k'-k+T<(M+1)T$. By $\theta\in Y_j$, we have $\theta_{MT} \dots \theta_{(M+2)T-1}=\pi\pi$. By $\sigma^{k'-k} \theta\in Y_j$, we have $ \theta_{k'-k+T} \dots \theta_{k'-k+2T-1}=\pi $. This  implies $\sigma^{k'-k-(M-1)T}\tilde{\pi}=\tilde{\pi}$, which contradicts $\pi \in P^*_T(\sigma)$.
        \end{itemize}
        Therefore, the claim is proved.
    \end{proof}

    For two-sided shifts, $\sigma$ is a homeomorphism. By Lemma \ref{lemma-f^n-f}, $(\Lambda_j, \sigma)$ is topologically transitive and satisfies the shadowing property. Then it is a topologically transitive subshift of finite type. By a similar method, $(\Lambda, \sigma)$ is also topologically transitive and satisfies the shadowing property. Then $(\Lambda, \sigma)$ is a topologically transitive subshift of finite type.

    For one-sided shifts, let $Y_j$ and $Y$ be the sets of points generated by words in $H^j_M$ and $H_M$ respectively, i.e.,
    \[Y_j =\left\{w^0w^1\dots\in X:w^i \in H^j_M \mathrm{\ for\ any\ } i \in \Nb \right\},\]
    \[Y =\left\{w^0w^1\dots\in X:w^i \in H_M \mathrm{\ for\ any\ } i \in \Nb \right\} \]
    and
    $$\Lambda_j=\bigcup_{i=0}^{MT-1}\sigma^i(Y_j),\ \Lambda=\bigcup_{i=0}^{MT-1}\sigma^i(Y).$$
    For any $u,v\in \mathcal{L}(\Lambda_j)$, there exists $x,y \in \Lambda_j$ such that $x_0\dots x_{|u|-1}=u$ and $y_0 \dots y_{|v|-1}=v$. Then there are $\tilde{x}, \tilde{y}\in Y_j$ and  $0 \leq i_1, i_2 \leq MT-1$ such that $x=\sigma^{i_1}\tilde{x}$ and $y=\sigma^{i_2}\tilde{y}$, which implies $\tilde{x}_{i_1}\dots \tilde{x}_{i_1+|u|-1}=u$ and $\tilde{y}_{i_2}\dots \tilde{y}_{i_2+|v|-1}=v$. There exist $0 \leq l \leq MT-1$ and $k\in\Nb^+$ such that $i_1+|u|+l=kMT$. Let
    $ w= \tilde{x}_{i_1+|u|} \dots \tilde{x}_{kMT-1} \tilde{y}_{0}\dots\tilde{y}_{i_2-1}$. Note $w$ is a subword of $w^{1}w^2 \in \mathcal{L}(Y_j)$ for some $w^{1}, w^2 \in \mathcal{L}(Y_j)$. We have $w \in \mathcal{L}(\Lambda_j)$, $|w|\leq 2MT$ and
    $$uwv=\tilde{x}_{i_1}\dots \tilde{x}_{i_1+|u|-1} \tilde{x}_{i_1+|u|}\dots \tilde{x}_{kMT-1} \tilde{y}_{0}\dots\tilde{y}_{i_2-1} \tilde{y}_{i_2}\dots \tilde{y}_{i_2+|v|-1}\in \mathcal{L}(\Lambda_j).$$
    Therefore, $(\Lambda_j,\sigma)$ satisfies (W)-specification property with gap length $2MT$. By a similar argument,
    $(\Lambda, \sigma)$ also satisfies (W)-specification property with gap length $2MT$.

    (2)
    \begin{align*}
        \htop\left(\sigma, \mathcal{L}(\Lambda_j)\right) & \geq \frac{1}{MT}\htop\left(\sigma^{MT},Y_j\right)                    \\
                                                         & = \frac{\log|\Theta_T^j|^{(M-2)}}{MT}                                 \\
        (\text{by (\ref{eq-numberofGamma})})             & \geq \frac{M-2 }{M}\left(h_{\alpha_j}(\sigma)-\frac{9\eta}{10}\right) \\
        (\text{by (\ref{eq-M})})                         & \geq h_{\alpha_j}(\sigma)-\eta.
    \end{align*}

    (3)
    Define
    \[\mathcal{F}^{*} := \bigcup_{m=1}^\infty\prod_{i=1}^m \cF = \bigcup_{m=1}^\infty\left\{\left(w^1 \dots w^m\right) : w^i \in \mathcal{F} \mathrm{\ for\ any\ } 1 \leq i \leq m\right\} \]
    and $\mathcal{L}^{*}$ similarly. Let $\phi :\mathcal{F}^{*} \rightarrow \cL$ be the concatenation map $ \left(w^1, \dots , w^m\right)  \rightarrow w^1\dots w^m$. Since $\cF$ has the free concatenation property, we have $\phi\left(\cF^{*}\right) = \cF$. We extend the map $\phi: \mathcal{F}^*\rightarrow \mathcal{F}$ to a map $\phi: \mathcal{L}^*\rightarrow \mathcal{F}$ in the following way. Given $\left(w^1, \dots, w^m\right) \in \mathcal{L}^*$, let $\phi\left(\left(w^1, \dots , w^m\right)\right)  = \phi_{\mathcal{F}}(w^1) \dots \phi_{\mathcal{F}}(w^m)$. Since $\cL$ is edit approachable by $\cF$ and $\cF$ has the free concatenation property, such $\phi$ is well-defined.

    We only deal with the case of two-sided subshifts since the case of one-sided shifts is similar. For any
    $\theta=\dots\theta^{-1}\theta^0\theta^1\ldots \in Y_j$ where $\theta^k=y^k_0 y^k_0 y^k_1 \dots y^k_{M-2} \in H^j_M$ where $y^k_0 = \pi$ for any $k \in \Zb$. Note $y^k_i \in \Theta_{T}^j$ and $\Theta_{T}^j=\left(\prod_{i=1}^{T/t_j}\Gamma_{t_j}^j\right) \setminus \Delta$. Since $\Gamma_{t_j}^j \subseteq \phi_{\mathcal{F}}(\mathcal{L}_n^{\alpha_j,\frac{\ve}{2}})$, there exists $\left\{w_{l}^{i,j,k}\right\}_{l=1}^{T/t_j}\subseteq \mathcal{L}^{\alpha_j,\frac{\ve}{2}}_n$ such that
    $$\phi\left( \left(w_1^{i,j,k},w_2^{i,j,k},\dots,w_{T/t_j}^{i,j,k}\right) \right)=\phi_{\mathcal{F}}\left(w_1^{i,j,k}\right) \phi_{\mathcal{F}}\left(w_2^{i,j,k}\right)\dots \phi_{\mathcal{F}}\left(w_{T/t_j}^{i,j,k }\right)=y_{i}^k$$
    for any $1\leq i\leq M-2$. By the definition of $\Gamma_t^j$, we have the length of $\phi_{\mathcal{F}}\left(w_{l}^{i,j,k}\right)$ is $t_j$ for any $1\leq i\leq M-2$, any $1 \leq l \leq T/t_j$ and any $k \in \Zb$.

    For any $1\leq i\leq M-2$ and any $1\leq l \leq T/{t_j}$, take $x^{i,j,k}_{l}\in [w^{i,j,k}_{l}]$. Let $b=kMT+2T$ and $d_{l,i}=T(i-1)+t_j( l-1)$. Then $\sigma^{b+d_{l,i}}(\theta) \in \left[\phi_{\mathcal{F}}\left(w_{l}^{i,j,k}\right)\right] $. We have
    \begin{align*}
                                                   & \phantom{{}\leq{}} D\left(\cE_{t_j}\left(\sigma^{b+d_{l,i}}(\theta)\right),\alpha_j\right)                                                                     \\
                                                   & \leq  D\left(\cE_{t_j}\left(\sigma^{b+d_{l,i}}(\theta )\right),\cE_n\left(x^{i,j,k}_{l}\right)\right) + D\left(\cE_n\left(x^{i,j,k}_{l}\right),\alpha_j\right) \\
        (\text{by (\ref{eq:D}) and (\ref{def:L})}) & \leq \delta_{n} + \ve / 2                                                                                                                                      \\
        (\text{by (\ref{eq-n(2)})})                & <  \ve / 4+ \ve / 2 = 3\ve/4.
    \end{align*}
    Then for any $1 \leq i \leq M-2$,
    \begin{equation}\label{eq-estimate-1}
        D\left(\cE_{T}\left(\sigma^{b+(i-1)T}\theta\right),\alpha_j\right)
        = D\left(\frac{t_j}{T}\sum_{l=1}^{T/{t_j}}\cE_{t_j}\left(\sigma^{b+d_{l,i}}(\theta)\right), \alpha_j\right) < 3\ve / 4.
    \end{equation}
    Therefore,
    \begin{equation}\label{eq-estimateofmeasure-i}
        \begin{split}
            D\left(\cE_{MT}\left(\sigma^{kMT}(\theta)\right) ,\alpha_j\right)
                                                                                  & \leq D\left(\cE_{MT}\left(\sigma^{kMT}(\theta)\right),\cE_{(M-2)T}\left(\sigma^{b}(\theta)\right)\right)                               \\
                                                                                  & + D\left(\cE_{(M-2)T}\left(\sigma^{b}(\theta)\right),\frac{1}{M-2}\sum_{l=1}^{M-2}\cE_{T}\left(\sigma^{b+(l-1)T}(\theta)\right)\right) \\
                                                                                  & + D\left(\frac{1}{M-2}\sum_{l=1}^{M-2}\cE_{T}\left(\sigma^{b+(l-1)T}(\theta)\right),\alpha_j\right)                                    \\
            (\text{by Lemma \ref{lemMeasuresDistance} and (\ref{eq-estimate-1})}) & < 4/M + 3\ve/{4}.
        \end{split}
    \end{equation}
    For any \(\nu_j\in\M(\sigma,\Lambda_j)\), by
    Lemma \ref{lemTau} and Remark \ref{rmk:tau-barycenter}, there exists
    \(\lambda_j\in\M(\sigma^{MT},Y_j)\) such that
    \[
        \nu_j=\tau_{MT}\lambda_j
        =\int_{Y_j}\mathcal E_{MT}(\theta)
        \,\dif\lambda_j(\theta).
    \]
    It follows from (\ref{eq-estimateofmeasure-i}) with \(k=0\)
    and the convexity of \(D\) that
    \[
        \begin{split}
            D(\nu_j,\alpha_j)
             & \leq \int_{Y_j}
            D\bigl(\mathcal E_{MT}(\theta),\alpha_j\bigr)
            \,\dif\lambda_j(\theta)              \\
             & < \frac{4}{M}+\frac{3\ve}{4}<\ve.
        \end{split}
    \]

    By the ergodic decomposition theorem, we have $D_{\mathrm{H}}\left(\left\{\alpha_j\right\} ,\M(\sigma, \Lambda_j)\right) < \ve<\xi$. Since $W$ is convex and $\Lambda_j \subset \Lambda$, we have $W \subseteq B\left(\M(\sigma, \Lambda), \ve\right)$.

    On the other hand, for any ergodic measure $\nu \in \M(\sigma, \Lambda)$, pick $\theta=\dots \theta^{-1}\theta^0\theta^1\dots \in G_\nu \cap Y$ where $\theta^k=y_0^ky_0^ky^k_1\dots y^k_{M-2} \in H_M$ and $y_0^k=\pi$ for any $k\in \mathbb{Z}$. Denote $Q_j^k = \left\{1 \leq i \leq M-2  :  y_i^k \in \Theta_T^j \right\}$ and $q_j^k = \left\lvert Q_j^k\right\rvert $. By the choice of $\rho_0$, we have $\Theta_T^{j_1} \cap \Theta_T^{j_2} = \emptyset$ for any $1 \leq j_1 \ne j_2 \leq m$. Then $\sum_{j=1}^{m}q_j^k=M-2$. We have
    \begin{equation}\label{eq-estimateofmeasure2-i}
        \begin{split}
            D\left(\cE_{MT}\left(\sigma^{kMT}\theta\right), \frac{\sum_{j=1}^{m}q_j^k\alpha_j}{\sum_{j=1}^{m}q_j^k}\right)
                                                        & \leq D\left(\cE_{MT}\left(\sigma^{kMT}\theta\right), \frac{1}{M-2}\sum_{i=1}^{M-2}\cE_T\left(\sigma^{b+(i-1)T}\theta\right)\right)              \\
                                                        & + D\left(\frac{1}{M-2}\sum_{i=1}^{M-2}\cE_T\left(\sigma^{b+(i-1)T}\theta\right), \frac{\sum_{j=1}^{m}q_j^k\alpha_j}{\sum_{j=1}^{m}q_j^k}\right) \\
            (\text{by Lemma \ref{lemMeasuresDistance}}) & \leq \frac{4}{M}+ \frac{1}{\sum_{j=1}^{m}q_j^k}\sum_{j=1}^{m}\sum_{i\in Q_j^k}D\left(\cE_{T}(\sigma^{b+(i-1)T}\theta),\alpha_j\right)           \\
            (\text{by (\ref{eq-estimate-1})})           & < \frac{4}{M}+\frac{3\ve}{4}.
        \end{split}
    \end{equation}
    For any $k \in \mathbb{Z}$, there exists a sequence of non-negative integers $\left\{q_i^k\right\}_{i=1}^m$ such that
    $$D\left(\cE_{MT}\left(\sigma^{kMT}\theta\right), \frac{\sum_{i=1}^{m}q_i^k\alpha_i}{\sum_{i=1}^{m}q_i^k}\right)\le 4/M + 3\ve/{4} < \ve$$
    by (\ref{eq-estimateofmeasure2-i}). Therefore, $\nu \in B(W, \ve)$. By the ergodic decomposition theorem, we have $\M(\sigma, \Lambda) \subseteq B(W, \ve)$. Thus, we obtain that $D_{\mathrm{H}}(W,\M(\sigma, \Lambda)) < \ve<\xi $. Moreover, we have $\Lambda \varsubsetneq X$. Otherwise, $D_{\mathrm{H}}(W,\M(\sigma, \Lambda)) = D_{\mathrm{H}}(W, \M(\sigma,X))\geq \ve$, which leads to a contradiction.

    It remains to return from the measures $\beta_i$ to the
    original measures $\overline{\alpha}_i$. Convexity of the
    Wasserstein distance gives
    \begin{equation}\label{eq:return-measures}
        D(\overline{\alpha}_i,\beta_i)\leq\lambda,
        \qquad
        D_{\mathrm H}(\overline W,W_\lambda)\leq\lambda.
    \end{equation}
    For any
    $\overline\kappa=\sum_{i=1}^m a_i\overline\alpha_i\in\overline W$,
    the corresponding measure
    \[
        \kappa_\lambda=\sum_{i=1}^m a_i\beta_i
        =(1-\lambda)\overline\kappa+\lambda\nu
    \]
    belongs to $W_\lambda$ and satisfies
    $D(\overline\kappa,\kappa_\lambda)\leq\lambda $. The same
    correspondence works in the opposite direction. Hence, the estimates
    obtained above and (\ref{eq:auxiliary-errors}) imply
    \begin{align*}
        D_{\mathrm H}\left(\left\{\overline\alpha_i\right\} ,
        \M(\sigma,\Lambda_i)\right)
         & \leq D(\overline\alpha_i,\beta_i)
        +D_{\mathrm H}\left(\left\{\beta_i\right\} ,
        \M(\sigma,\Lambda_i)\right)          \\
         & <\lambda +\xi_0<\overline\xi,
    \end{align*}
    and
    \begin{align*}
        D_{\mathrm H}\left(\overline W,
        \M(\sigma,\Lambda)\right)
         & \leq D_{\mathrm H}(\overline W,W_\lambda)
        +D_{\mathrm H}\left(W_\lambda,
        \M(\sigma,\Lambda)\right)                    \\
         & <\lambda +\xi_0<\overline\xi.
    \end{align*}
    Since
    $0\leq h_\omega(\sigma)\leq H$ for any
    $\omega\in\M(\sigma,X)$, we obtain
    \[
        h_{\beta_i}(\sigma)
        \geq h_{\overline\alpha_i}(\sigma)-\lambda H.
    \]
    By (\ref{eq:auxiliary-errors}), we have
    \begin{align*}
        \htop(\sigma,\mathcal L(\Lambda_i))
         & \geq h_{\beta_i}(\sigma)-\eta_0              \\
         & >h_{\overline\alpha_i}(\sigma)-\overline\eta
    \end{align*}
    and hence
    \[
        \htop(\sigma,\mathcal L(\Lambda))
        >\sup_{\overline\kappa\in\overline W}
        h_{\overline\kappa}(\sigma)-\overline\eta.
    \]
    The dynamical properties proved in item (1), as well as
    $\Lambda\varsubsetneq X$, are unaffected by the passage back to the
    original data.  If repeated measures were deleted at the beginning,
    assign to each repeated index the subsystem already constructed for
    the corresponding distinct measure.  This completes the proof.
\end{proof}

\begin{remark}
    As can be seen from the proof of the theorem, there exists $n \in \Nb^+$, $\Delta \subseteq \Lambda$ and $\Delta_i \subseteq \Lambda_i$ such that
    $\Lambda = \bigcup_{j=0}^{n-1} \sigma^j(\Delta)$, $\Lambda_i = \bigcup_{j=0}^{n-1} \sigma^j\left(\Delta_i\right) $ for any $1 \leq i \leq m$, and $\left(\Delta_i, \sigma^{n}\right)$ and $\left(\Delta, \sigma^{n}\right)$ are topologically conjugate to full shifts.
\end{remark}

Theorem \ref{prop-nonuniform-multi-horseshoe} provides the
non-uniform symbolic input needed to run the same nested construction.
We therefore obtain the following realization theorem.

\begin{theorem}\label{thm:cmed_nonuniform}
    Let $X$ be a one-sided or two-sided subshift on a finite alphabet with language $\mathcal{L}$. Suppose that $\mathcal{G} \subset \mathcal{L}$ has $(W)$-specification and $\mathcal{L}$ is edit approachable by $\mathcal{G}$. Let $\vphi \in C(X)$ such that $\mathrm{Int} L_\vphi \ne \emptyset$. Then for any $\ve > 0$, any $a \in \mathrm{Int} L_\vphi$, any $\mu \in \M(\sigma,X)$ satisfying $\int\vphi\dif\mu = a$ and $h_\mu(\sigma) < \sup\left\{h_\omega(\sigma): \omega \in \M(\sigma,X), \int \varphi \dif \omega = a\right\} $, and any $0 \leq h \leq h_\mu(\sigma)$, there exists a compact $\sigma$-invariant set $Y \subseteq X$ such that
    \begin{enumerate}
        \item $\left(Y,\sigma\right) $ is minimal and uniquely ergodic;
        \item $h_{\nu}(\sigma) = \htop(\sigma,Y) = h$ where $\nu$ is the unique $\sigma$-invariant measure on $Y$;
        \item $\int \vphi \dif \nu = a$;
        \item $D(\nu,\mu) < \ve$.
    \end{enumerate}
\end{theorem}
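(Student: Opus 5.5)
The plan is to run the nested-horseshoe induction from the proof of Theorem \ref{thmA} verbatim, with Theorem \ref{thmC} replaced by Theorem \ref{prop-nonuniform-multi-horseshoe} (together with the remark following it) as the horseshoe input. The proof of Theorem \ref{thmA} uses four facts about the ambient system, and for Step (1) I would check each of them in the present setting.

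First, periodic measures are dense; this is Lemma \ref{Lem-dense-periodic-measure}. It yields zero-entropy measures $\mu_0^1$ with $\int\vphi\dif\mu_0^1=a$ close to $\mu$, and measures $\mu_l^1,\mu_r^1$ on either side of $a$. Second, the entropy map is upper semi-continuous and finite, which holds for any subshift on a finite alphabet. Third, there is a multi-horseshoe producing compact invariant sets $\Lambda^1_1,\Lambda^1_2\subseteq\Lambda^1$ with the required entropy lower bounds and Hausdorff estimates. Applying Theorem \ref{prop-nonuniform-multi-horseshoe} to $W=[\mu^1_1,\mu^1_2]$ with $\xi=\eta_1$ gives exactly items (2), (3) and (\ref{1.3}) of Step (1). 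Fourth, by the subsequent remark, $\Lambda^1=\bigcup_{j<n}\sigma^j(\Delta^1)$ with $(\Delta^1,\sigma^{n})$ and $(\Delta^1_i,\sigma^n)$ conjugate to full shifts. This is the structure needed to define $\tau_1$ via Lemma \ref{lemTau} and to pass to Step (2).

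From Step (2) onward, the construction takes place inside $(\Delta^{k-1},\sigma^{N_{k-1}})$, which is a full shift over a finite alphabet. Such a system is transitive, positively expansive (or expansive) and has shadowing, by Lemma \ref{lemFullShiftsProperties}, so it is topologically expanding or topologically Anosov. Hence Theorem \ref{thmC} (or its Anosov analogue) applies directly there, as in Theorem \ref{thmA}. Alternatively, one can view a full shift as itself satisfying (H3) with $\mathcal G=\mathcal L$ and reuse Theorem \ref{prop-nonuniform-multi-horseshoe}. In the two-sided case one should note that this remains consistent.

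Once the horseshoes are in place, the remaining steps carry over unchanged. The choice of $\mu^k_1,\mu^k_2$ by convex combinations uses the affinity and continuity of $\tau_{k-1}$ and the density of periodic measures in full shifts. The diameters satisfy $\diam\M(\sigma,\Lambda^k)\to0$, the entropy bounds satisfy $h+\delta/2^{k+1}<\htop(\sigma,\Lambda^k)<h+\delta/2^{k-1}$, and the sets are nested, $\Lambda^{k+1}\subseteq\Lambda^k$. The intersection $\Lambda=\bigcap_k\Lambda^k$ then carries a unique invariant measure $\nu$. Upper semi-continuity gives $h_\nu\ge h$, and the topological entropy bound gives $h_\nu\le\htop(\sigma,\Lambda)\le h$. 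The integral conditions pinch $\int\vphi\dif\nu=a$. Finally, $Y=\supp\nu$ is minimal and uniquely ergodic by \cite[Theorem 6.17]{walters1982introduction}, and $D(\nu,\mu)<\ve$ follows from the same triangle inequality as before.

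The main obstacle I expect is Step (1). There, Theorem \ref{prop-nonuniform-multi-horseshoe} controls $\htop(\sigma,\mathcal L(\Lambda_i))$ only from below, while the induction in Theorem \ref{thmA} also needs an upper bound on $\htop(\sigma,\Lambda^1)$ from Step (2) onward. That upper bound comes from item (d) (upper semi-continuity near $[\mu^k_1,\mu^k_2]$) together with the variational principle applied on the full-shift horseshoe. It therefore does not need to be supplied by Theorem \ref{prop-nonuniform-multi-horseshoe}, but I would verify this carefully. I would also check that positive topological entropy, which Theorem \ref{prop-nonuniform-multi-horseshoe} assumes, is automatic: the hypothesis $h<\sup\{h_\omega:\int\vphi\dif\omega=a\}$ with $h\ge0$ forces $\htop(\sigma,X)>0$.
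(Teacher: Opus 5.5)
Your proposal is correct and follows the paper's proof essentially step for step. You use Lemma \ref{Lem-dense-periodic-measure} and convex combinations to choose the Step (1) measures. You then apply Theorem \ref{prop-nonuniform-multi-horseshoe}, with the remark after it, to obtain the first horseshoe, where $(\Delta^1,\sigma^{N_1})$ is conjugate to a full shift. Finally you run Steps (2)--$(k)$ of Theorem \ref{thmA} inside finite full shifts. Your two side observations also match the paper's argument: Step (1) needs only the entropy lower bound, since the upper bounds enter from Step (2) onward via upper semi-continuity and the variational principle, and $\htop(\sigma,X)>0$ is automatic from the hypotheses.
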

\begin{proof}[Sketch of proof]
    Fix $\ve>0$, $a\in\Int L_\varphi$, $\mu\in\M(\sigma,X)$ and
    $0\leq h\leq h_\mu(\sigma)$ as in the statement. Since
    \[
        h_\mu(\sigma)
        <
        \sup\left\{
        h_\omega(\sigma):
        \omega\in\M(\sigma,X),\
        \int\varphi\dif\omega=a
        \right\},
    \]
    we have $\htop(\sigma,X)>0$, and hence Theorem
    \ref{prop-nonuniform-multi-horseshoe} is applicable.

    We first choose the measures used in Step (1) of the proof of
    Theorem \ref{thmA}. By Lemma \ref{Lem-dense-periodic-measure},
    periodic measures are dense in $\M(\sigma,X)$. Thus, using convex
    combinations as in that proof, we can find two invariant measures
    $\mu_1^1,\mu_2^1$, sufficiently close to $\mu$, such that
    \[
        \int\varphi\dif\mu_1^1<a<
        \int\varphi\dif\mu_2^1
    \]
    and their entropies lie in the required interval above $h$.

    Apply Theorem \ref{prop-nonuniform-multi-horseshoe} to
    $\cov\{\mu_1^1,\mu_2^1\}$. By the remark following that theorem,
    there exist $N_1\in\Nb^+$ and compact sets
    $\Delta_1^1,\Delta_2^1\subseteq\Delta^1$ such that
    \[
        \Lambda_i^1=\bigcup_{j=0}^{N_1-1}\sigma^j(\Delta_i^1),
        \qquad
        \Lambda^1=\bigcup_{j=0}^{N_1-1}\sigma^j(\Delta^1),
    \]
    and $(\Delta_i^1,\sigma^{N_1})$ and
    $(\Delta^1,\sigma^{N_1})$ are conjugate to finite full shifts.
    Moreover,
    \[
        \htop(\sigma,\Lambda_i^1)
        =
        \frac{1}{N_1}\htop(\sigma^{N_1},\Delta_i^1).
    \]
    The Hausdorff estimates in Theorem
    \ref{prop-nonuniform-multi-horseshoe} ensure that the invariant
    measures supported on $\Lambda_1^1$ and $\Lambda_2^1$ have
    $\varphi$-integrals on the two opposite sides of $a$.

    The remainder of the proof is the same as Steps (2)--$(k)$ and the
    argument after the countable steps in the proof of Theorem
    \ref{thmA}. Indeed, after Step (1), the construction takes place
    inside compact finite full shifts. Lemma \ref{lemTau} transfers
    invariant measures, entropy and the $\varphi$-integral between
    $(\Delta^k,\sigma^{N_k})$ and
    $\bigcup_{j=0}^{N_k-1}\sigma^j(\Delta^k)$. We therefore obtain a
    decreasing sequence of compact invariant sets $\{\Lambda^k\}$ such
    that
    \[
        \diam\M(\sigma,\Lambda^k)\longrightarrow0,
        \qquad
        \htop(\sigma,\Lambda^k)\longrightarrow h,
    \]
    while the corresponding integrals converge to $a$. Consequently,
    $\bigcap_{k\geq1}\Lambda^k$ supports a unique invariant measure
    $\nu$ satisfying
    \[
        h_\nu(\sigma)=h,\qquad
        \int\varphi\dif\nu=a,\qquad
        D(\nu,\mu)<\ve.
    \]
    Taking $Y=\supp(\nu)$, we obtain that $(Y,\sigma)$ is minimal and
    uniquely ergodic and that
    $\htop(\sigma,Y)=h_\nu(\sigma)=h$.
\end{proof}

\bigskip
\section*{Acknowledgements}
X. Hou is supported by the National Natural Science Foundation of China No. 12401231 and the Fundamental Research Funds for the Central Universities No. DUT25RC(3)106. W. Lin is supported by the National Natural Science Foundation of China (No. 124B2010). X. Tian is supported by the National Natural Science Foundation of China (No. 12471182) and Natural Science Foundation of Shanghai (No. 23ZR1405800). X. Zhao is supported by the National Key R\&D Program of China (No. 2021YFA1001900).

\bibliographystyle{plain}
\bibliography{ref}

\end{document}